\documentclass[11pt,a4paper]{article}
\usepackage[T1]{fontenc}
\usepackage[utf8]{inputenc}
\usepackage{lmodern}
\usepackage[margin=25mm,headheight=15pt]{geometry}
\usepackage{microtype}
\usepackage{amsmath,amssymb,amsthm,mathtools}
\usepackage{tikz-cd}
\usepackage{needspace}
\usepackage{booktabs,tabularx,longtable,array}
\usepackage{enumitem}
\usepackage{xcolor}
\usepackage{fancyhdr}
\usepackage{hyperref}
\usepackage[nameinlink,noabbrev]{cleveref}
\hypersetup{colorlinks=true,linkcolor=blue!42!black,citecolor=blue!42!black,
 urlcolor=blue!42!black,bookmarksnumbered=true,bookmarksdepth=2,pdftitle={Transfer of noetherian forms
with applications to topological categories},
 pdfsubject={Noetherian forms with modular fibres, transfer functors, and topological categories},
 pdfauthor={Kishan Dayaram, Zurab Janelidze, Francois van Niekerk},
 pdfkeywords={Noetherian form, proper factorization system, conservative functor,
 modular lattice, modular connection, essentially algebraic category,
 topological category, topological congruence,
 weak quotient, homeomorphism theorem}}
\setlist[enumerate]{itemsep=0.2em,topsep=0.4em}
\setlist[itemize]{itemsep=0.2em,topsep=0.4em}
\numberwithin{equation}{section}
\newtheorem{theorem}{Theorem}[section]
\newtheorem{lemma}[theorem]{Lemma}
\newtheorem{proposition}[theorem]{Proposition}
\newtheorem{corollary}[theorem]{Corollary}

\theoremstyle{definition}
\newtheorem{definition}[theorem]{Definition}
\newtheorem{example}[theorem]{Example}
\newtheorem{counterexample}[theorem]{Counterexample}
\theoremstyle{remark}
\newtheorem{remark}[theorem]{Remark}
\newtheorem{question}[theorem]{Question}
\crefname{lemma}{Lemma}{Lemmas}
\Crefname{lemma}{Lemma}{Lemmas}
\crefname{proposition}{Proposition}{Propositions}
\Crefname{proposition}{Proposition}{Propositions}
\crefname{Corollary}{corollary}{corollaries}
\Crefname{corollary}{Corollary}{Corollaries}
\crefname{Metatheorem}{metatheorem}{metatheorems}
\Crefname{metatheorem}{Metatheorem}{Metatheorems}
\crefname{Counterexample}{counterexample}{counterexamples}
\Crefname{counterexample}{Counterexample}{Counterexamples}
\crefname{Remark}{remark}{remarks}
\Crefname{remark}{Remark}{Remarks}
\crefname{Question}{question}{questions}
\Crefname{question}{Question}{Questions}
\crefname{theorem}{Theorem}{Theorem}
\crefname{section}{Section}{Section}
\newcommand{\C}{\mathcal C}
\newcommand{\D}{\mathcal D}
\newcommand{\A}{\mathcal A}
\newcommand{\V}{\mathcal V}
\newcommand{\E}{\mathcal E}
\newcommand{\M}{\mathcal M}
\newcommand{\N}{\mathsf N}
\newcommand{\F}{\mathsf F}
\newcommand{\G}{\mathsf G}
\newcommand{\Set}{\mathbf{Set}}
\newcommand{\Ab}{\mathbf{Ab}}
\newcommand{\Mlc}{\mathbf{Mlc}}
\newcommand{\Ltc}{\mathbf{Ltc}}
\newcommand{\Nsb}{\operatorname{Nsb}}
\newcommand{\Top}{\mathbf{Top}}

\newcommand{\Bool}{\mathbf{Bool}}
\newcommand{\Stone}{\mathbf{Stone}}

\newcommand{\FinVect}{\mathbf{FinVect}}
\newcommand{\op}{\mathrm{op}}
\newcommand{\id}{\mathrm{id}}
\newcommand{\Iso}{\mathrm{Iso}}
\newcommand{\All}{\mathrm{All}}

\newcommand{\Quot}{\mathrm{Quot}}
\newcommand{\Sub}{\operatorname{Sub}}
\newcommand{\Ker}{\operatorname{Ker}}
\newcommand{\Img}{\operatorname{Im}}

\newcommand{\ran}{\operatorname{ran}}
\newcommand{\Cl}{\operatorname{Cl}}
\newcommand{\Eq}{\operatorname{Eq}}
\newcommand{\Con}{\operatorname{Con}}
\newcommand{\Hom}{\operatorname{Hom}}

\newcolumntype{Y}{>{\raggedright\arraybackslash}X}

\newcommand{\Equiv}{\mathbf{Equiv}}
\newcommand{\LRB}{\mathbf{LRBMon}}
\newcommand{\Pow}{\mathcal P}
\newcommand{\BP}{\mathsf P}
\newcommand{\two}{\mathbf 2}
\newcommand{\red}{\operatorname{red}}
\newcommand{\eps}{\varepsilon}

\newcommand{\FZ}{\mathbb Z^{(-)}}

\newcommand{\VCon}{\operatorname{Con}_{\mathrm V}}
\newcommand{\Nrm}{\operatorname{Nrm}}
\newcommand{\Cnm}{\operatorname{Cnm}}
\newcommand{\topgen}[1]{\langle #1\rangle_{\mathrm{top}}}
\allowdisplaybreaks[1]
\makeatletter
\let\@author\@empty
\renewcommand{\author}[1]{%
  \ifx\@author\@empty
    \gdef\@author{#1}%
  \else
    \g@addto@macro\@author{\quad #1}%
  \fi}
\renewcommand{\@maketitle}{%
  \newpage\null\vskip 1em
  \begin{center}
    {\LARGE\@title\par}
    \vskip 1em
    {\large\lineskip .5em
      \begin{tabular}[t]{c}\@author\end{tabular}\par}
    \vskip .6em
    {\normalsize\@date\par}
  \end{center}
  \par\vskip 1em}
\makeatother
\title{\bfseries Transfer theory for noetherian forms}
\author{Kishan Dayaram}
\author{Zurab Janelidze\thanks{Zurab Janelidze used Chat GPT-6 Astra
as a research and writing assistant. The authors take full responsibility
for the final manuscript.}}
\author{Francois van Niekerk}
\date{23 September 2026}
\begin{document}
\maketitle
\begin{abstract}
A noetherian form over a category enables one to formulate and prove homomorphism theorems in that category, such as the isomorphism theorems and the diagram lemmas of homological algebra. In this paper we develop a strategy for establishing existence of a noetherian form over a given category, which enables one to find noetherian forms for a broad range of concrete categories. While it was already known that all semi-abelian categories, Grandis exact categories and algebraic categories have noetherian forms, we now establish that so do wide classes of essentially algebraic and topological categories, which include the categories of small categories, groupoids, topological spaces, extended pseudometric spaces, approach spaces, graphs, measurable spaces and axiom-free relational structures.
\end{abstract}
\noindent\textbf{2020 Mathematics Subject Classification.}
Primary 18A32; Secondary 18A22, 18F60, 54B30.

\noindent\textbf{Keywords.} Noetherian form; proper factorization system;
conservative functor; modular lattice; modular connection;
essentially algebraic category; geometric theory; matrix condition;
topological category; topological congruence;
weak quotient; homeomorphism theorem.

\section{Introduction}\label{sec:intro}

A noetherian form equips a category with an abstract calculus of
substructures and their images and inverse images, generalizing the
familiar calculus of subgroups under group homomorphisms and extending
the perspective of Mac Lane's subobject chasing in abelian categories,
as presented in \emph{Homology}
\cite[Chapter~XII, Sections~2--3]{MacLane1963}.
Its purpose
is to identify common principles from which the isomorphism theorems
and homological diagram lemmas can be proved uniformly across
different mathematical settings \cite{GJ,DGJRV}. This aim continues
a line of structural thinking in algebra exemplified by Noether's
papers on ideal theory and representation theory
\cite{Noether1927,Noether1929}, where relations among submodules,
quotient modules, and homomorphisms organize the theory. Ore's two
papers \emph{On the Foundation of Abstract Algebra} \cite{Ore1935,Ore1936} pursued a common
foundation for algebraic structure theorems through lattices, while
Mac Lane's \emph{Duality for Groups} \cite{MacLane1950} brought
categorical duality into the study of subgroups, quotients, and
homomorphisms. Noetherian forms bring these lattice-theoretic and
categorical perspectives together in a self-dual framework: reversing
arrows and the ordering of substructures exchanges the roles of images
and kernels, and of embeddings and quotients, while leaving the
governing axioms unchanged.

Noetherian forms unify the subobject forms of semi-abelian categories
and the normal-subobject forms of Grandis exact categories
\cite{Janelidze2014,JW2016,GJ}. Semi-abelian categories
\cite{JMT2002} supply the familiar subobject calculus of groups,
rings without identity, Lie algebras, and other group-like structures;
Grandis exact categories \cite{Grandis1992,Grandis2012} supply a
self-dual calculus of normal subobjects and quotients, including
examples described by modular lattices. More recently, the range of
examples has expanded beyond them: a noetherian forms for topoi was constructed in
\cite{JVN}, and one for every algebraic category was obtained in
\cite{VN}. These results show that a category may admit
a noetherian form even when its ordinary subobject calculus does not
satisfy the noetherian axioms.

The present paper provides a further significant expansion of the range of examples of noetherian forms. The new examples are given by wide classes of essentially algebraic categories and topological categories. In particular, we show that each of the following categories have noetherian forms: small categories,
groupoids, simplicial categories, simplicial
groupoids, topological spaces, preorders, equivalence relations, reflexive directed and
symmetric graphs, and structures for arbitrary finitary relational
signatures of positive arity without additional axioms. Among the new examples are also categories enriched over a set-sized commutative unital
quantale and their symmetric variants: extended quasi-pseudometric
spaces, extended quasi-ultrametric spaces, quasi-pseudometric spaces
bounded by~$1$, fuzzy preorders and similarity spaces, and generalized
probabilistic quasi-pseudometric spaces, together with the corresponding
symmetric metric and probabilistic examples. Further new examples are
measurable spaces, pretopological spaces, Moore closure spaces, abstract convexity spaces,
fuzzy and frame-valued topological spaces, approach spaces, totally
bounded uniform spaces, Efremovi\v{c} proximity spaces,
$\ell^\infty$-bornological spaces, and abstract simplicial complexes.
We also deduce the well-powered ideally exact case
\cite{GJIdeallyExact} from van Niekerk's earlier monadic theorem. Further examples come from geometric theories: torsion groups, groups
of prime-power torsion, periodic monoids, nil associative rings without
identity, and locally nilpotent associative rings without identity.
We also obtain forms for classes of relational structures defined by implications satisfying either of two syntactic criteria, and
for total preorders.

The means of reaching these examples is a class of functors that we
call \emph{transfer functors}. For categories equipped with proper
factorization systems, these are conservative functors that preserve
the two factorization classes, pullbacks of two right-class morphisms
with common codomain, and pushouts of two left-class morphisms with
common domain; the indicated diagrams are required to exist in the
source. The transfer theorem is an \emph{if and only if} statement:
given a noetherian form on the target inducing the specified factorization system,
its pullback is noetherian and induces the prescribed source factorization system
exactly when the functor is a transfer functor.
For the essentially algebraic examples, we pass from partially defined
operations to ordinary algebraic structures in a way that detects
isomorphisms and respects subobjects and their intersections.
Universal semigroup constructions provide this passage for categories
and groupoids. For the topological examples, maps into suitable test
objects encode the structure of each object. An extension property
for these maps makes it possible to construct transfer functors to
sets, and then to algebraic categories by free-algebra constructions.
The notion also includes
Grandis's canonical transfer functors from exact categories with
set-sized normal-subobject lattices to modular lattices
\cite{Grandis1984,Grandis1992,Grandis2012}, and
the monadic functors considered in
\cite[Theorem~5.5]{VN}.
A further restriction principle applies in both settings, and more
generally to any category carrying a noetherian form: a full
subcategory inherits such a form whenever it is closed under the
subobjects and quotients specified by the ambient form.
This allows suitable additional axioms to be imposed on both algebraic
and relational examples, yielding the geometric-theory examples
above. For relational structures, we also give direct syntactic
criteria for constructing transfer functors, covering the stated
matrix conditions.

The transfer method also yields characterizations of existence of a noetherian form on a given category. In particular, we show that a
category with a fixed proper factorization system (having the two types of diagrams specified above) admits a compatible
noetherian form if and only if one can be transferred from $\Set$;
the same assertion holds with $\Ab$, $\Top$, the category of Stone
spaces, or \mbox{Grandis's} category $\Mlc$ of bounded modular lattices and
modular connections as target. The target
factorization systems are surjections and injections in $\Set$ and $\Ab$,
continuous surjections and embeddings in $\Top$ and Stone spaces,
and upper-interval quotients and lower-interval embeddings in
$\Mlc$. Further universal targets include every nontrivial
single-sorted finitary variety, every nonzero well-powered abelian
category with small coproducts, every nondegenerate Grothendieck
topos, and the opposites of universal targets, with the corresponding
factorization systems. In general, changing the target category changes the form, so the result above opens a range of constructions of noetherian forms over the
same category. A principal consequence is that
\emph{every category admitting a compatible noetherian form admits
one with complete algebraic modular fibres}, without changing the
factorization system. Nevertheless, this does not mean that such form is the ``best'' one. What is ``best'' is in fact relative as different noetherian forms can give different formulations and
hypotheses for homomorphism theorems, even when they induce the same
factorization system: the
subobject and quotient orders separately are determined, but not how subobjects and quotients compare with each other.

Thus, the usefulness of a form is relative to the theorems one wishes to formulate and
prove. We discuss topological spaces in detail from this perspective.
There the specific form constructed in this paper recovers the congruence calculus and
isomorphism theorems developed by Veldsman
\cite{Veldsman2019,Veldsman2022}. Parallel descriptions for measurable
spaces, preorders, relational structures, and extended pseudometric
spaces show how the abstract theorems acquire concrete meanings in
the different examples.

The second author used Chat GPT-6 Astra to assist with both the
writing and the research for this paper. The main ideas and the
general results about transfer functors, namely the characterization
theorem (\cref{thm:transfer}) and the result that $\Set$ is a
universal target (\cref{thm:characterization}), together with their
complete proofs, were obtained by the third author without the use of
AI. It is notable that this work grew out of a desire to determine
whether the category of topological spaces has a noetherian form,
a problem that had remained open for several years. Chat GPT-6
Astra (Ultra) was initially asked to work on this problem for
several hours, but it was not able to solve the problem or make any meaningful progress,
despite having access to van Niekerk's monadicity result
\cite[Theorem~5.5]{VN}, a predecessor of the transfer theory developed
here. Once the transfer results were supplied to it, Chat GPT-6
Astra was able to solve the problem in a few minutes.

\tableofcontents

\section{Preliminaries}\label{sec:axioms}

\paragraph{Conventions.}
Categories are locally small and cluster fibres are sets, relative to a
fixed universe. A proper factorization system $(\E,\M)$ is orthogonal,
with epimorphisms in $\E$ and monomorphisms in $\M$. We write $f=me$,
where $e\in\E$ and $m\in\M$. A conservative functor reflects
isomorphisms; faithfulness is not assumed. Preservation statements require
the homogeneous diagrams to exist in the source. In particular, ``two
embeddings'' does not mean an arbitrary map pulled back along an embedding.

\subsection{An equivalent transport presentation}

We use the orean and noetherian axioms of
\cite[Definitions~2.3--2.4]{VN} and \cite[Definitions~21 and~48]{JVN}.
The elementary results recalled below are included with proofs to fix
the transport notation.

An orean form may be presented by a bounded lattice \(\Lambda_X\) for every
object \(X\), together with an adjunction of monotone maps
\[
 f_!:\Lambda_X\rightleftarrows\Lambda_Y:f^*
 \qquad(f:X\to Y),
\]
where
\[
 f_!S\le T\quad\Longleftrightarrow\quad S\le f^*T,
 \qquad
 (gf)_!=g_!f_!,\quad (gf)^*=f^*g^*,
\]
and identities act as identities. This is the bounded-lattice/Galois-connection
presentation of the orean axioms in \cite[Section~2]{JVN}.
The associated relation on clusters is \(T\ge_f S\) if and only if
\(f_!S\le T\).

In particular, \(f_!\) preserves binary joins and bottom, while \(f^*\)
preserves binary meets and top. Define
\[
 \Img f=f_!\top_X,\qquad \Ker f=f^*\bot_Y.
\]
A cluster is \emph{conormal} if it is the image of a morphism and
\emph{normal} if it is the kernel of a morphism. These notions need not
coincide, and an arbitrary cluster need be neither.

For a conormal \(S\in\Lambda_X\), an \emph{embedding} of \(S\) is a morphism
\(\iota_S:S_0\to X\) with image \(S\), universal among morphisms into \(X\)
whose images are at most \(S\). Thus
\[
 \Img f\le S\quad\Longleftrightarrow\quad
 f=\iota_S h\text{ for a unique }h.
\]
Dually, a \emph{quotient} of a normal \(K\in\Lambda_X\) is a morphism
\(\pi_K:X\to Q_K\) with kernel \(K\), universal among morphisms out of \(X\)
whose kernels contain \(K\).

\subsection{The noetherian axioms}

An orean form is noetherian when it satisfies
\begin{gather*}
 f^*f_!S=S\vee\Ker f,\qquad
 f_!f^*T=T\wedge\Img f;\tag{N1}\label{eq:N1}\\
 f=me,\quad e\text{ a quotient of }\Ker f,\quad
 m\text{ an embedding of }\Img f;\tag{N2}\label{eq:N2}
\end{gather*}
and
\begin{equation*}
 \begin{gathered}
 \text{the join of two normal clusters is normal, and}\\
 \text{the meet of two conormal clusters is conormal.}
 \end{gathered}
 \tag{N3}\label{eq:N3}
\end{equation*}
Axiom (N2) includes existence of the indicated universal morphisms.
The representatives $e,m$ are chosen with a common intermediate object;
if representatives are fixed in advance, an isomorphism between their
intermediate objects is understood. As images
and kernels are, by definition, witnessed by morphisms, it supplies embeddings
of all conormal clusters and quotients of all normal clusters.

The following collects the standard embedding and quotient properties
\cite[Lemmas~2.6--2.7]{VN}; see also
\cite[Remark~42, Lemma~44, and Section~4]{JVN} for their
factorization-system formulation. We give the argument using (N2) alone.
\begin{lemma}\label[lemma]{lem:basic}
Let $\F$ be an orean form on a category $\C$ satisfying (N2).
Images, kernels, embeddings, and quotients in the following assertions
are those of $\F$. Then:
\begin{enumerate}[label=\textup{(\roman*)}]
\item Embeddings are monomorphisms and quotients are epimorphisms.
\item An embedding of top, or a quotient of bottom, is an isomorphism.
\item A morphism is an embedding exactly when its kernel is bottom, and is a
 quotient exactly when its image is top.
\item The quotient and embedding classes form a proper factorization system.
\item If \(f=me\) with \(e\) a quotient and \(m\) an embedding, then
 \(\Ker f=\Ker e\) and \(\Img f=\Img m\).
\end{enumerate}
\end{lemma}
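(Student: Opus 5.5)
We work only from the orean adjunction $f_!\dashv f^*$, functoriality of $(-)_!$ and $(-)^*$, and the universal properties in (N2). The order is (i), (ii), then (v) together with (iii), and finally (iv).

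\emph{(i).} Let $\iota_S$ be an embedding and suppose $\iota_S a=\iota_S b=:g$. Then $\Img g=(\iota_S)_!a_!\top\le(\iota_S)_!\top=S$, so $g$ factors uniquely through $\iota_S$. Hence $a=b$. Dually, a quotient is an epimorphism: if $c\pi_K=d\pi_K=:g$, then $\Ker g\ge\pi_K^*\bot=K$, so $c=d$ by uniqueness.

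\emph{(ii).} If $S=\top_X$, every morphism into $X$ has image at most $S$. In particular $1_X=\iota_S h$ for some $h$. Then $\iota_S h\iota_S=\iota_S$, and since $\iota_S$ is monic, $h\iota_S=1$. So $\iota_S$ is an isomorphism. The statement for a quotient of $\bot$ is dual.

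\emph{(v) and (iii).} These require a small computation. Given $f=me$ with $e=\pi_K$ and $m=\iota_S$ arbitrary (not necessarily the (N2) ones), we have $\Img f=m_!e_!\top$.

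First I show $e_!\top=\top$ for a quotient $e$. Write $e=m'e'$ by (N2), with $\Img e'=\Img e$ (since $\Img e\le \Img m'$ and conversely $e'$ factors as computed). Using the universal property of $e$, the inequality $\Ker e'\ge\Ker e$ must be checked. It follows from $\Ker e = e'^*m'^*\bot\ge e'^*\bot$, which gives equality of the kernels. Then $e'$ factors through $e$ via some $u$ with $ue=e'$, so $m'ue=e$ and hence $m'u=1$, because $e$ is epi. Thus $m'$ is a split epi and a mono, hence an isomorphism. It follows that $\Img e=\Img m'=m'_!\top=\top$, where the last step uses that $m'$ is an isomorphism, so that $m'_!$ is bijective and monotone and therefore preserves top.

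Consequently $\Img f=m_!\top=\Img m$. Dually, $\Ker m=\bot$ for an embedding $m$, and $\Ker f=e^*m^*\bot=e^*\bot=\Ker e$.

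For the converses in (iii), suppose $\Ker f=\bot$. Factor $f=me$ as in (N2). Then $e$ is a quotient of $\bot$, hence an isomorphism by (ii), so $f$ is an embedding (of $\Img f$, since precomposing with an isomorphism preserves the universal property). The case $\Img f=\top$ is dual.

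\emph{(iv).} Existence of factorizations is (N2). Both classes contain the isomorphisms and are closed under composition. For the embeddings, composition preserves $\Ker=\bot$, and (iii) then applies; the quotients are handled dually. Properness is (i). It remains to check orthogonality, which I expect to be the main obstacle.

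Given a square $vq=\iota u$ with $q=\pi_K$ a quotient and $\iota=\iota_S$ an embedding, I would first obtain a morphism $w$ with $\iota w=v$ from the universal property of $\iota$. This requires $\Img v\le S$. Now
\[
\Img v=v_!q_!\top=(vq)_!\top=\iota_!u_!\top\le S,
\]
where the first equality uses $q_!\top=\top$, established above. Next,
\[
\iota wq=vq=\iota u,
\]
and since $\iota$ is monic, $wq=u$. Uniqueness of $w$ again follows from $\iota$ being monic.

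Orthogonality and the factorization property together make the quotients and embeddings a factorization system, and by (i) it is proper.
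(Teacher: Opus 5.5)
Your proof is correct and follows essentially the paper's route: the orthogonality argument is the same, and your derivation of $e_!\top=\top$ from the (N2)-factorization $e=m'e'$ (comparing $e'$ with $e$ and cancelling the epimorphism $e$) is the dual form of the paper's comparison argument for (iii). One slip to fix: the computation $\Ker e=e'^*m'^*\bot\ge e'^*\bot$ gives $\Ker e\ge\Ker e'$, which is the opposite of the inequality you need, but $\Ker e'=\Ker e$ holds by definition because (N2) makes $e'$ a quotient of $\Ker e$ (and in the preceding line you mean $\Img m'=\Img e$ rather than $\Img e'=\Img e$).
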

\begin{proof}
The universal properties give (i). The identity is an embedding of top and a
quotient of bottom, so uniqueness of universal morphisms gives (ii).
If \(\Ker f=\bot\), its (N2)-factorization starts with an isomorphism, giving
one direction of (iii). Conversely, if \(f\) is an embedding, it and the
embedding in its (N2)-factorization represent the same cluster. Their
comparison is an isomorphism; hence the quotient factor is an isomorphism,
and its kernel, which is \(\Ker f\), is bottom. The other half is dual.

Factorization is (N2). To check orthogonality, consider a commuting square
\(ma=be\), where \(e\) is a quotient and \(m\) an embedding. By (iii),
\(\Img e=\top\), so
\[
 \Img b=\Img(be)=\Img(ma)\le\Img m.
\]
There is a unique \(d\) with \(md=b\); cancellation of the monomorphism
\(m\) gives \(de=a\). The resulting factorization classes are the mutual
orthogonals, as follows from factorization and this lifting property.
Finally, (v) follows from \(e_!\top=\top\), \(m^*\bot=\bot\), and
functoriality of transport.
\end{proof}

The interval correspondence is the lattice isomorphism theorem in
\cite[Remark~51]{JVN}. The equality criteria below are its immediate
transport consequences.
\begin{lemma}[Intervals and equality under transport]\label[lemma]{lem:intervals}
Let $\F$ be a noetherian form on a category $\C$, with fibre
$\Lambda_X$ over $X$ and transport $f_!\dashv f^*$ along a morphism
$f:X\to Y$. For every such $f$, all $S,T\in\Lambda_X$, and all
$U,V\in\Lambda_Y$, the following identities hold, where
$\Img f=f_!\top_X$ and $\Ker f=f^*\bot_Y$:
\begin{align}
 \ran(f_!)&=\downarrow\Img f,&
 \ran(f^*)&=\uparrow\Ker f,\label{eq:ranges}\\
 f_!S=f_!T&\Longleftrightarrow S\vee\Ker f=T\vee\Ker f,\label{eq:equivjoin}\\
 f^*U=f^*V&\Longleftrightarrow U\wedge\Img f=V\wedge\Img f.\label{eq:equivmeet}
\end{align}
For an embedding \(m\), direct image identifies its domain fibre with
\(\downarrow\Img m\). Dually, inverse image along a quotient \(e\)
identifies its codomain fibre with \(\uparrow\Ker e\).
\end{lemma}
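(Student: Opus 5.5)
The plan is to derive every identity from the Galois connection $f_!\dashv f^*$ together with (N1); no factorization machinery is needed except in the final sentence about embeddings and quotients.

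\emph{Ranges.} For \eqref{eq:ranges}, first note that $f_!$ is monotone, so $f_!S\le f_!\top_X=\Img f$; thus $\ran(f_!)\subseteq\downarrow\Img f$. Conversely, if $T\le\Img f$, then (N1) gives $f_!f^*T=T\wedge\Img f=T$, so $T\in\ran(f_!)$. The second range identity is dual: $f^*U\ge f^*\bot_Y=\Ker f$, and for $S\ge\Ker f$ we get $f^*f_!S=S\vee\Ker f=S$.

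\emph{Equality criteria.} For \eqref{eq:equivjoin}, apply $f^*$ and use (N1). If $f_!S=f_!T$, then $S\vee\Ker f=f^*f_!S=f^*f_!T=T\vee\Ker f$. Conversely, suppose $S\vee\Ker f=T\vee\Ker f$. Since $f_!$ preserves binary joins and $f_!\Ker f=f_!f^*\bot_Y=\bot_Y\wedge\Img f=\bot_Y$, we obtain
\[
f_!S=f_!S\vee f_!\Ker f=f_!(S\vee\Ker f).
\]
The same holds for $T$, so $f_!S=f_!T$. Alternatively, one can note that $f_!f^*f_!=f_!$ holds for any adjunction. The statement \eqref{eq:equivmeet} is the order-dual argument: $f^*$ preserves meets and top, and $f^*\Img f=f^*f_!\top_X=\top_X\vee\Ker f=\top_X$.

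\emph{Embeddings and quotients.} For an embedding $m$, \cref{lem:basic}(iii) gives $\Ker m=\bot$. Then (N1) yields $m^*m_!S=S$, so $m_!$ is injective and hence an order embedding, since $m_!S\le m_!T$ iff $S\le m^*m_!T=T$. By \eqref{eq:ranges} its range is $\downarrow\Img m$, so $m_!$ is an order isomorphism onto that interval. Dually, for a quotient $e$ we have $\Img e=\top$ by \cref{lem:basic}(iii), so $e_!e^*U=U$. Hence $e^*$ is an order isomorphism from the codomain fibre onto $\uparrow\Ker e$.

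The only mildly delicate step is the converse direction of \eqref{eq:equivjoin}, which requires $f_!\Ker f=\bot$. This follows from (N1) as shown above. Everything else is direct bookkeeping with the adjunction.
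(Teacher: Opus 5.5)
Your proposal is correct and follows essentially the same route as the paper. Both arguments get the ranges from monotonicity and (N1), prove \eqref{eq:equivjoin} by applying \(f^*\) in one direction and \(f_!\) (with join preservation and \(f_!\Ker f=\bot\)) in the other, obtain the meet criterion by duality, and derive the final assertion from \(m^*m_!=\id\) and \(e_!e^*=\id\); the only difference is that you explicitly invoke \cref{lem:basic}(iii) for \(\Ker m=\bot\) and \(\Img e=\top\), which the paper leaves implicit.
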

\begin{proof}
For \(T\le\Img f\), (N1) gives \(f_!f^*T=T\); every direct image is at most
\(\Img f\). The assertion about inverse images is dual. One direction of
\eqref{eq:equivjoin} follows by applying \(f^*\). For the converse, apply
\(f_!\), using preservation of joins and \(f_!\Ker f=\bot\). The meet
identity is dual. For an embedding, (N1) gives \(m^*m_!=\id\), and for a
quotient it gives \(e_!e^*=\id\).
\end{proof}

\subsection{Existence of homogeneous diagrams}

This is \cite[Lemma~2.8]{VN} and its dual, applied using (N2) and (N3).
\begin{lemma}\label[lemma]{lem:homogeneous-exist}
Let $\F$ be a noetherian form on a category $\C$, and let
$\M$ and $\E$ be its embedding and quotient classes. For embeddings
$m:A\to X$ and $n:B\to X$, their pullback exists and its common
composite into $X$ embeds $\Img_\F m\wedge\Img_\F n$.
For quotients $e:X\to Y$ and $d:X\to Z$, their pushout exists and
its common composite out of $X$ is a quotient of
$\Ker_\F e\vee\Ker_\F d$.
\end{lemma}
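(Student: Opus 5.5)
By duality (the axioms are self-dual under reversing arrows and order), it suffices to prove the pullback statement; the pushout statement is its formal dual. So let $m:A\to X$ and $n:B\to X$ be embeddings. Put $S=\Img m$ and $T=\Img n$. By (N3) the meet $S\wedge T$ is conormal, and by (N2) it has an embedding $\iota:P_0\to X$. Since $\Img\iota=S\wedge T\le S$, the universal property of $m$ as an embedding of $S$ gives a unique $p:P_0\to A$ with $mp=\iota$. Symmetrically there is a unique $q:P_0\to B$ with $nq=\iota$. We claim that $(P_0,p,q)$ is a pullback of $m$ and $n$. Its common composite $mp=nq=\iota$ then embeds $S\wedge T$, as the statement requires.

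For the universal property, let $a:W\to A$ and $b:W\to B$ satisfy $ma=nb=:g$. The key computation is the bound $\Img g\le S\wedge T$. By functoriality, $\Img g=m_!\Img a\le m_!\top_A=S$, and likewise $\Img g=n_!\Img b\le T$. Hence $\Img g\le S\wedge T$. The universal property of $\iota$ then yields a unique $h:W\to P_0$ with $\iota h=g$. It remains to show that $ph=a$ and $qh=b$. We have $mph=\iota h=g=ma$, and $m$ is a monomorphism by \cref{lem:basic}(i), so $ph=a$; the argument for $qh=b$ is the same. Uniqueness of $h$ follows because $\iota$ is a monomorphism: any $h'$ with $ph'=a$ satisfies $\iota h'=mph'=ma=g$, so $h'=h$.

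Dually, let $e:X\to Y$ and $d:X\to Z$ be quotients. Take the quotient $\pi:X\to Q$ of $\Ker e\vee\Ker d$, which is normal by (N3) and has a quotient by (N2). We obtain $u,v$ with $ue=\pi=vd$ from the universal properties of $e$ and $d$. For a cocone $x e=y d=:g$, we have $\Ker g=e^*\Ker x\ge e^*\bot=\Ker e$, and likewise $\Ker g\ge\Ker d$. So $\Ker g$ contains the join, and $g$ factors uniquely through $\pi$. Equality of the legs follows by cancelling the epimorphisms $e$ and $d$, using \cref{lem:basic}(i).

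No step is a serious obstacle. The only points needing care are that (N3) together with (N2) guarantee the existence of $\iota$ and $\pi$, and that the functoriality $(gf)_!=g_!f_!$ gives the image bound.
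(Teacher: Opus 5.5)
Your proof is correct and follows essentially the same route as the paper: you embed $\Img m\wedge\Img n$ using (N3) and (N2), factor that embedding through $m$ and $n$, bound the image of the common composite of any cone, and finish by cancelling monomorphisms; the pushout case is the dual argument with kernels and cancellation of epimorphisms. The paper also remarks that the pullback projections lie in $\M$ by pullback stability of the right class, but the statement does not require this.
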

\begin{proof}
Given embeddings \(m:A\to X\), \(n:B\to X\), take an embedding
\(r:P\to X\) of \(\Img m\wedge\Img n\), using (N3) and (N2).
It factors through both embeddings, say \(r=mp=nq\). Any cone to \(m,n\)
has common composite whose image lies below both images, so factors uniquely
through \(r\). This makes \((p,q)\) a pullback cone. Its projections belong
to \(\M\), by pullback stability of the right class.

Dually, for quotients \(e:X\to Y\), \(d:X\to Z\), the quotient
\(q:X\to P\) of \(\Ker e\vee\Ker d\) factors as \(q=ue=vd\).
A compatible pair of maps out of \(Y,Z\) has a common composite with kernel
containing this join, so factors uniquely through \(q\). Cancellation of
\(e,d\) proves the pushout property. Its other two arrows belong to \(\E\).
\end{proof}

In particular, inverse image of a conormal cluster along an embedding is
conormal, and direct image of a normal cluster along a quotient is normal.
For the first assertion, the pullback above represents \(m^*\Img n\):
apply (N1) to the meet \(\Img m\wedge\Img n\). The second assertion is dual.
These facts do not assert closure under the corresponding operations along
arbitrary morphisms.

\section{The transfer functors}\label{sec:transfer}

\subsection{Definition and elementary properties}\label{sec:transfer-definition}

\begin{definition}\label{def:transfer-functor}
Let $\C$ and $\D$ have specified proper factorization systems
$(\E_\C,\M_\C)$ and $(\E_\D,\M_\D)$, and suppose that both
categories have the corresponding homogeneous pullbacks and pushouts.
A \emph{transfer functor} $T:\C\to\D$ is a functor such that:
\begin{enumerate}[label=\textup{(\roman*)}]
\item $T(\E_\C)\subseteq\E_\D$ and
$T(\M_\C)\subseteq\M_\D$;
\item $T$ is conservative;
\item $T$ preserves pullbacks of two $\M_\C$-morphisms;
\item $T$ preserves pushouts of two $\E_\C$-morphisms.
\end{enumerate}
We write $\C\rightsquigarrow\D$ when such a functor exists.
\end{definition}

The definition depends on the specified factorization systems, but does
not require a choice of noetherian form on either category. Theorem~\ref{thm:transfer}
explains the terminology by characterizing pullback of a compatible form.
Grandis used the term ``transfer functor'' for canonical functors of
normal subobjects \cite{Grandis1984,Grandis1992,Grandis2012}.
Section~\ref{sec:grandis-transfer} shows that his functors for exact
categories are examples of \cref{def:transfer-functor}; the two uses of
the term do not coincide for arbitrary semiexact categories.

We first record the formal closure properties of the definition.
The assertions about adjoints use the standard limit-preservation theorem
and its dual \cite[Proposition~18.9]{AHS}.
\begin{proposition}\label{prop:transfer-elementary}
Consider categories equipped with specified proper factorization
systems and having pullbacks of two right-class morphisms and pushouts
of two left-class morphisms. Between such categories, transfer functors
contain identities and are closed under composition.
Equivalences transporting the specified factorization systems are
transfer functors. If $T:\C\to\D$ is a transfer functor between
systems $(\E_\C,\M_\C)$ and $(\E_\D,\M_\D)$, then
$T^{\op}:\C^{\op}\to\D^{\op}$ is one for the opposite systems
$(\M_\C^{\op},\E_\C^{\op})$ and
$(\M_\D^{\op},\E_\D^{\op})$.

More generally, let $T:\C\to\D$ be a conservative functor between
such categories preserving their specified classes. If $T$ is a left
adjoint, it is a transfer functor precisely when it preserves pullbacks
of two $\M_\C$-morphisms. If $T$ is a right adjoint, it is a transfer
functor precisely when it preserves pushouts of two $\E_\C$-morphisms.
\end{proposition}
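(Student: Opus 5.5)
Each claim reduces to checking the four clauses (i)--(iv) of \cref{def:transfer-functor}, and in every case the clauses are preserved by elementary categorical facts. For identities, the clauses hold trivially. For a composite $S\circ T$ of transfer functors $T:\C\to\D$ and $S:\D\to\mathcal{B}$, we check the clauses one at a time. Class preservation (i) composes. Conservativity (ii) composes: if $STf$ is invertible then $Tf$ is invertible, hence so is $f$. For (iii), a pullback of two $\M_\C$-morphisms is sent by $T$ to a pullback square in $\D$. Its two legs lie in $\M_\D$ by (i) for $T$, so the square is a pullback of two $\M_\D$-morphisms, and $S$ preserves it. Clause (iv) is the dual argument with $\E$-classes. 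The point to state explicitly is that the image under $T$ is again a \emph{homogeneous} diagram, since only such diagrams are required to be preserved.

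For an equivalence $T$ transporting the factorization systems, we read this as $T(\E_\C)\subseteq\E_\D$ and $T(\M_\C)\subseteq\M_\D$, which is the only part needing an assumption. Clause (i) is then this hypothesis. Equivalences are conservative, since they are fully faithful and fully faithful functors reflect isomorphisms. They also preserve all limits and colimits that exist, which gives (iii) and (iv). For $T^{\op}$, note that $(\M_\C^{\op},\E_\C^{\op})$ is a proper factorization system on $\C^{\op}$. Pullbacks of two $\E_\C^{\op}$-morphisms in $\C^{\op}$ are exactly pushouts of two $\E_\C$-morphisms in $\C$, and dually for pushouts. Conservativity is self-dual. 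Hence the four clauses for $T^{\op}$ are precisely the four clauses for $T$ with (iii) and (iv) interchanged.

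For the adjoint statement, if $T$ is a left adjoint it preserves all colimits that exist in $\C$ \cite[Proposition~18.9]{AHS}. In particular it preserves pushouts of two $\E_\C$-morphisms, so (iv) is automatic. Since (i) and (ii) are assumed, $T$ is a transfer functor precisely when (iii) holds. The right adjoint case is dual: limit preservation gives (iii), leaving (iv) as the only condition, or one may apply the left-adjoint case to $T^{\op}$, which is a left adjoint. There is no real obstacle here. The only care needed is that the homogeneous diagrams are assumed to exist in the source, so "preserves" refers to those existing diagrams, and that in the composition step homogeneity survives application of the first functor.
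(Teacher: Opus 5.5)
Your proposal is correct and follows the paper's own argument: check each defining clause under composition and equivalence, swap the two classes and the two kinds of homogeneous diagram for opposites, and use preservation of existing colimits or limits by adjoints. You spell out details the paper leaves implicit, notably that $T$ carries homogeneous diagrams to homogeneous diagrams, which is what makes the composition step work.
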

\begin{proof}
Each defining property is preserved by composition and by transport
along an equivalence. Passage to opposites interchanges the two classes
and the two kinds of homogeneous diagrams. A left adjoint preserves
all existing pushouts, and a right adjoint preserves all existing
pullbacks, giving the final assertions.
\end{proof}

The next observation follows from uniqueness of factorizations
\cite[Proposition~14.4]{AHS}.
\begin{lemma}\label[lemma]{lem:classreflection}
Let $\C$ and $\D$ have proper factorization systems
$(\E_\C,\M_\C)$ and $(\E_\D,\M_\D)$, and let
$T:\C\to\D$ be conservative with
$T(\E_\C)\subseteq\E_\D$ and
$T(\M_\C)\subseteq\M_\D$. Then $T$ reflects both classes:
\[
 \E_\C=T^{-1}(\E_\D),\qquad \M_\C=T^{-1}(\M_\D).
\]
\end{lemma}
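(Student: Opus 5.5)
The inclusions $\E_\C\subseteq T^{-1}(\E_\D)$ and $\M_\C\subseteq T^{-1}(\M_\D)$ are exactly the hypothesis $T(\E_\C)\subseteq\E_\D$, $T(\M_\C)\subseteq\M_\D$. So only the reverse inclusions need proof, and by duality (passing to $T^{\op}$ with the opposite systems, which swaps the roles of the two classes) it suffices to show $T^{-1}(\M_\D)\subseteq\M_\C$.

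Let $f:X\to Y$ in $\C$ with $Tf\in\M_\D$. First factor $f=me$ in $\C$, with $e\in\E_\C$ and $m\in\M_\C$. Applying $T$ gives $Tf=Tm\circ Te$, where $Te\in\E_\D$ and $Tm\in\M_\D$. But $Tf$ also has the trivial factorization $Tf=Tf\circ\id$, with $\id\in\E_\D$ and $Tf\in\M_\D$. By uniqueness of $(\E_\D,\M_\D)$-factorizations up to a unique comparison isomorphism \cite[Proposition~14.4]{AHS}, there is an isomorphism $u$ with $u\circ Te=\id$ and $Tf\circ u=Tm$. In particular $Te$ has a left inverse and lies in $\E_\D$. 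Since $\E_\D$ consists of epimorphisms, $Te$ is a split monic epimorphism, hence an isomorphism. (One can also read this off directly: $Te\in\E_\D\cap\M_\D$, which is the class of isomorphisms for an orthogonal factorization system.)

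Conservativity of $T$ then makes $e$ an isomorphism in $\C$. Hence $f=me$ lies in $\M_\C$, because the right class of a factorization system is closed under composition with isomorphisms. The dual argument shows that $Tf\in\E_\D$ forces $Tm$ to be an isomorphism, so $m$ is an isomorphism and $f\in\E_\C$.

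There is no real obstacle in this argument. The only point needing care is that the comparison from uniqueness of factorizations identifies $Te$ with an identity only up to an isomorphism, so one has to conclude explicitly that $Te$ itself is an isomorphism. The cleanest way to do this is through $\E_\D\cap\M_\D=\Iso$.
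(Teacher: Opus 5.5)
Your proof is correct and follows the paper's argument: factor $f=me$, use uniqueness of the $(\E_\D,\M_\D)$-factorization of $Tf$ to make $Te$ invertible, then conservativity makes $e$ invertible, so $f\in\M_\C$, with the other class handled dually. The extra step through $\E_\D\cap\M_\D=\Iso$ is not needed, since $u\circ Te=\id$ with $u$ invertible already gives $Te=u^{-1}$.
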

\begin{proof}
Factor \(f=me\). If \(Tf\in\M_\D\), uniqueness of target factorizations
forces \(Te\) to be invertible. Conservativity makes \(e\) invertible,
so \(f\in\M_\C\). The other assertion is dual.
\end{proof}

Reflection of the individual classes is not the same assertion as reflection
of their subobject and quotient orders. In the proof of the transfer theorem,
the homogeneous diagrams are what allow conservativity to imply the latter.

\subsection{The pullback criterion}\label{sec:pullback-criterion}

Let \(T:\C\to\D\) be a functor, with proper factorization systems
\((\E_\C,\M_\C)\) and \((\E_\D,\M_\D)\).
Let \(\F\) be a noetherian form on \(\D\) inducing its specified system,
and put \(\G=T^*\F\).

\subsubsection{What always transfers}

The following pullback description and automatic preservation of the
orean structure and (N1) are due to van Niekerk
\cite[Lemma~3.1, Proposition~3.3, and Corollary~3.4]{VN}.
The pullback form is given by
\begin{equation}\label{eq:basechange}
 \G(X)=\F(TX),\qquad
 f_!^{\G}=(Tf)_!^{\F},\qquad
 f^{*\G}=(Tf)^{*\F}.
\end{equation}
Its fibre lattice is literally the target fibre lattice, so
\[
 \Ker_{\G}f=\Ker_{\F}(Tf),\qquad
 \Img_{\G}f=\Img_{\F}(Tf).
\]
Consequently, \(\G\) is orean and satisfies (N1), for \emph{every} functor
\(T\). No conservativity, factorization preservation, or diagram preservation
is needed for these assertions.

What can change is normality and conormality: in the pullback, these must be
witnessed by arrows in \(\C\), rather than arbitrary arrows in \(\D\).
This accounts for the separate analyses of (N2) and (N3).

\subsubsection{Order reflection is exactly (N2)}

For now assume only
\begin{equation}\label{eq:preserveclasses}
 T(\E_\C)\subseteq\E_\D,\qquad
 T(\M_\C)\subseteq\M_\D.
\end{equation}
Order \(\M\)-subobjects by factorization. Order \(\E\)-quotients by
increasing identification:
\[
 [e]\le[d]\quad\Longleftrightarrow\quad d=he\text{ for some }h.
\]
Define monotone maps, or initially maps of quotient preorders if size has not
yet been established,
\begin{align}
 a_X:\Sub_{\M_\C}(X)&\longrightarrow\F(TX),&
 [m]&\longmapsto\Img_{\F}(Tm),\label{eq:alpha-transfer}\\
 b_X:\Quot_{\E_\C}(X)&\longrightarrow\F(TX),&
 [e]&\longmapsto\Ker_{\F}(Te).\label{eq:beta-transfer}
\end{align}
When these maps reflect order, their domains are essentially sets, since their
codomain is a set.

Factoring any source arrow as \(f=me\) and using
\cref{lem:basic}\textup{(v)} in the target gives
\begin{equation}\label{eq:transfer-ranges}
 \G(X)^{\mathrm c}=\ran a_X,\qquad
 \G(X)^{\mathrm n}=\ran b_X.
\end{equation}

\Needspace{10\baselineskip}
\begin{theorem}[The (N2) criterion]\label{thm:N2}
Let $\C$ and $\D$ have proper factorization systems
$(\E_\C,\M_\C)$ and $(\E_\D,\M_\D)$.
Let $\F$ be a noetherian form on $\D$ inducing
$(\E_\D,\M_\D)$, and let $T:\C\to\D$ satisfy
$T(\E_\C)\subseteq\E_\D$ and
$T(\M_\C)\subseteq\M_\D$. Write $\G=T^*\F$ for the
pullback form, whose fibre over $X$ is $\F(TX)$ and whose transports
along $f$ are those of $\F$ along $Tf$.
For each $X\in\C$, define
\begin{align*}
 a_X:\Sub_{\M_\C}(X)&\to\F(TX),&
 [m]&\mapsto\Img_\F(Tm),\\
 b_X:\Quot_{\E_\C}(X)&\to\F(TX),&
 [e]&\mapsto\Ker_\F(Te).
\end{align*}
Here subobjects are ordered by factorization, and quotients by
$[e]\le[d]$ when $d=he$ for some $h$; their domains may initially
be treated as preorders of representatives. The following are equivalent:
\begin{enumerate}[label=\textup{(\roman*)}]
\item \(\G\) satisfies (N2).
\item Every \(a_X\) and \(b_X\) reflects order.
\item The following two factorization-reflection properties hold:
\begin{align*}
 Tm_1\text{ factors through }Tm_2
 &\Longrightarrow m_1\text{ factors through }m_2,\\
 Te_2\text{ factors through }Te_1
 &\Longrightarrow e_2\text{ factors through }e_1,
\end{align*}
where the \(m_i\in\M_\C\) have common codomain and the
\(e_i\in\E_\C\) have common domain.
\end{enumerate}
Whenever these conditions hold, the factorization system of \(\G\) is exactly
\((\E_\C,\M_\C)\).
\end{theorem}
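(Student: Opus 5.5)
The plan is to prove (ii)\(\Leftrightarrow\)(iii) first, then (i)\(\Rightarrow\)(ii), then (ii)\(\Rightarrow\)(i), and finally identify the factorization system of \(\G\).

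\emph{(ii)\(\Leftrightarrow\)(iii).} This is a translation through the target form. Since \(\F\) induces \((\E_\D,\M_\D)\), \cref{lem:basic}(iii) shows that \(Tm\) is an \(\F\)-embedding of \(\Img_\F(Tm)\). By the universal property of embeddings, \(Tm_1\) factors through \(Tm_2\) if and only if \(\Img_\F(Tm_1)\le\Img_\F(Tm_2)\), that is, \(a_X[m_1]\le a_X[m_2]\). Dually, \(Te_2\) factors through \(Te_1\) if and only if \(\Ker_\F(Te_1)\le\Ker_\F(Te_2)\). Hence reflection of order by \(a_X,b_X\) is exactly the implication in (iii). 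Monotonicity of \(a_X,b_X\) is automatic from functoriality of transport.

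\emph{(i)\(\Rightarrow\)(ii).} Assume \(\G\) satisfies (N2). By \cref{lem:basic}(iv), \(\G\) has a factorization system \((\E_\G,\M_\G)\). I first show \(\M_\C\subseteq\M_\G\). Take \(m\in\M_\C\). Then \(\Ker_\G m=\Ker_\F(Tm)=\bot\), because \(Tm\) is an \(\F\)-embedding; so \(m\) is a \(\G\)-embedding by \cref{lem:basic}(iii) applied to \(\G\). Dually \(\E_\C\subseteq\E_\G\). Two proper factorization systems with \(\E_\C\subseteq\E_\G\) and \(\M_\C\subseteq\M_\G\) must coincide: factor \(f\in\M_\G\) as \(me\) in \((\E_\C,\M_\C)\); this is also a \(\G\)-factorization, so \(e\) is an isomorphism by uniqueness. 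This gives the final sentence of the theorem whenever (N2) holds.

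Now suppose \(a_X[m_1]\le a_X[m_2]\), i.e.\ \(\Img_\G m_1\le\Img_\G m_2\). Since \(m_2\) is a \(\G\)-embedding of its image, \(m_1\) factors through \(m_2\). The quotient case is dual.

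\emph{(ii)\(\Rightarrow\)(i).} This is the main step. Given \(f\) in \(\C\), factor \(f=me\) in \((\E_\C,\M_\C)\). I claim that \(m\) is a \(\G\)-embedding of \(\Img_\G f\) and \(e\) a \(\G\)-quotient of \(\Ker_\G f\). By \cref{lem:basic}(v) in \(\D\), \(\Img_\G f=\Img_\F(Tm)=\Img_\G m\). For the universal property, let \(g:W\to X\) satisfy \(\Img_\G g\le\Img_\G m\). Factor \(g=m'e'\) in \(\C\); then \(\Img_\G g=a_X[m']\). Order reflection gives \(m'=mh\), so \(g\) factors through \(m\). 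Uniqueness of the factorization holds because \(m\) is a monomorphism, being in the right class of a proper system. The quotient side is dual, using \(b_X\) and the fact that \(e\) is epic.

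The one point needing care is that (N2) is required for every image and kernel cluster. These are exactly the clusters of the form \(\Img_\G f\) and \(\Ker_\G f\), which the argument above covers via \eqref{eq:transfer-ranges}. Representatives chosen in advance then differ from ours by an isomorphism, so nothing further is needed.
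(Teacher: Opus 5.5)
Your proposal is correct and follows essentially the same route as the paper. Both arguments translate (ii)$\Leftrightarrow$(iii) through the universal properties of $\F$-embeddings and $\F$-quotients, and both get (i)$\Rightarrow$(ii) from $\Ker_\G m=\bot$ via \cref{lem:basic}. Both prove (ii)$\Rightarrow$(i) by factoring an arbitrary test arrow and reflecting the order, and both obtain the final claim from the inclusions $\M_\C\subseteq\M_\G$ and $\E_\C\subseteq\E_\G$; where the paper invokes orthogonality for that last step, you use uniqueness of factorizations, which is equivalent.
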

\begin{proof}
In the target form, comparison of images of embeddings is equivalent to
factorization of the embeddings, and comparison of kernels of quotients is
equivalent to factorization of the quotients. Thus (ii) and (iii) are equivalent.

Assume (ii). If \(m:M\to X\) belongs to \(\M_\C\), take any
\(f:Y\to X\) with \(\Img_\G f\le\Img_\G m\), and factor it as
\(f=ne\). Then
\[
 \Img_\F(Tn)\le\Img_\F(Tm).
\]
Order reflection yields \(n=mh\), so \(f=mhe\); uniqueness follows because
\(m\) is monic. Therefore \(m\) is a \(\G\)-embedding of its image.
The dual argument shows that every \(e\in\E_\C\) is a \(\G\)-quotient
of its kernel. Factoring any arrow in the specified source system now gives
its required (N2)-factorization, because images and kernels agree with those
of the factors after applying \(T\).

Conversely, assume (i). For \(m\in\M_\C\), preservation of the target
class gives \(\Ker_\G m=\Ker_\F(Tm)=\bot\). By
\cref{lem:basic}, \(m\) is a \(\G\)-embedding. Its universal property
reflects the comparisons in (ii). The quotient argument is dual.

We have obtained both inclusions \(\M_\C\subseteq\M_\G\) and
\(\E_\C\subseteq\E_\G\). Both pairs are factorization systems, so
orthogonality reverses either inclusion and forces equality of both classes.
\end{proof}

For right adjoints, this observation is implicit in the class-reflection
result \cite[Proposition~4.4]{VN} and the isomorphism criterion
\cite[Lemma~2.7]{VN}. The argument below requires no adjoint.
\begin{proposition}[Conservativity forced by (N2)]\label[proposition]{prop:N2conservative}
Let $T:\C\to\D$ be a functor and $\F$ a noetherian form on
$\D$. If the pullback form $\G=T^*\F$, with fibres
$\G(X)=\F(TX)$ and transports along $f$ given by those along $Tf$,
satisfies (N2), then $T$ is conservative. No factorization system on
$\C$ is assumed in advance.
\end{proposition}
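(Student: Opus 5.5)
Suppose $f:X\to Y$ in $\C$ has $Tf$ invertible in $\D$. The plan is to show that $f$ is both a $\G$-embedding and a $\G$-quotient. Then \cref{lem:basic} applies, since $\G$ is orean and satisfies (N2), and shows that $f$ is an isomorphism.

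First, as recorded after \eqref{eq:basechange}, we have $\Ker_\G f=\Ker_\F(Tf)$ and $\Img_\G f=\Img_\F(Tf)$. Since $Tf$ is an isomorphism in $\D$, it is invertible in the groupoid of transports. So $(Tf)_!$ and $(Tf)^*$ are mutually inverse lattice isomorphisms: adjunction plus functoriality give $(Tf)_!(Tf^{-1})_!=\id$, and $(Tf^{-1})_!\dashv (Tf^{-1})^*$, so $(Tf)^*=(Tf^{-1})_!$. Hence $\Ker_\F(Tf)=(Tf)^*\bot=\bot$ and $\Img_\F(Tf)=(Tf)_!\top=\top$.

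Next, \cref{lem:basic}\textup{(iii)}, applied to the orean form $\G$ on $\C$ satisfying (N2), says that a morphism with $\G$-kernel bottom is a $\G$-embedding. It likewise says that a morphism with $\G$-image top is a $\G$-quotient. So $f$ is simultaneously a $\G$-embedding and a $\G$-quotient.

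Finally, $f$ is an isomorphism, and there are two ways to see it. In the first, \cref{lem:basic}\textup{(iv)} gives that these classes form a factorization system, and a morphism in both classes of an orthogonal factorization system is invertible: lift in the square $f\circ\id=\id\circ f$ with $f$ on both sides. Alternatively, $f$ is an embedding of its image $\top$, and \cref{lem:basic}\textup{(ii)} says an embedding of top is an isomorphism. No factorization system on $\C$ is used beyond the one induced by $\G$, which matches the claim that none is assumed in advance.

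The only step needing care is the first, namely that the transports along an isomorphism are inverse bijections. This follows purely from functoriality of $(-)_!$ and $(-)^*$ together with uniqueness of adjoints.
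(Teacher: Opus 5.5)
Your proof is correct and follows essentially the paper's argument. The paper also notes $\Ker_\G f=\bot$ and $\Img_\G f=\top$, then reads the (N2)-factorization of $f$ directly as a quotient of bottom followed by an embedding of top, both isomorphisms by \cref{lem:basic}; you route the same step through part (iii) of that lemma, and you additionally supply the check, left implicit in the paper, that transports along an isomorphism are mutually inverse lattice isomorphisms.
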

\begin{proof}
If \(Tf\) is invertible, then \(\Ker_\G f=\bot\) and
\(\Img_\G f=\top\). Its (N2)-factorization is an embedding of top after
a quotient of bottom. Both are isomorphisms by \cref{lem:basic}.
\end{proof}

\subsubsection{Range closure is exactly (N3)}

\begin{proposition}[The (N3) criterion]\label[proposition]{prop:N3}
Let $\C$ and $\D$ have proper factorization systems
$(\E_\C,\M_\C)$ and $(\E_\D,\M_\D)$.
Let $\F$ be a noetherian form on $\D$ inducing
$(\E_\D,\M_\D)$, and let $T:\C\to\D$ satisfy
$T(\E_\C)\subseteq\E_\D$ and
$T(\M_\C)\subseteq\M_\D$. Write $\G=T^*\F$ for the
pullback form, whose fibre over $X$ is $\F(TX)$ and whose transports
along $f$ are those of $\F$ along $Tf$.
For each $X\in\C$, put
\begin{align*}
 A_X&=\{\Img_\F(Tm):m\in\M_\C,\ \operatorname{cod}m=X\},\\
 B_X&=\{\Ker_\F(Te):e\in\E_\C,\ \operatorname{dom}e=X\}.
\end{align*}
Then $\G$ satisfies (N3) if and only if every $A_X$ is closed under
binary meets and every $B_X$ under binary joins in $\F(TX)$.
This equivalence does not require (N2) or conservativity.
\end{proposition}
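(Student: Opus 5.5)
The plan is to identify $A_X$ and $B_X$ with the conormal and normal clusters of $\G$ over $X$, after which the equivalence becomes a tautology. By \eqref{eq:transfer-ranges}, $\G(X)^{\mathrm c}=\ran a_X$ and $\G(X)^{\mathrm n}=\ran b_X$. By definition, $\ran a_X=A_X$ and $\ran b_X=B_X$. Since (N3) for $\G$ says exactly that $\G(X)^{\mathrm n}$ is closed under binary joins and $\G(X)^{\mathrm c}$ under binary meets in $\G(X)=\F(TX)$, the statement follows once \eqref{eq:transfer-ranges} is justified using only the hypotheses of the proposition.

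I would therefore reprove \eqref{eq:transfer-ranges} carefully, so that it visibly uses only class preservation \eqref{eq:preserveclasses} and \cref{lem:basic}(v) in the target, and not (N2) for $\G$ or conservativity. A cluster of $\G(X)$ is conormal when it equals $\Img_\G f=\Img_\F(Tf)$ for some $f:Y\to X$ in $\C$.

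\emph{Inclusion $\G(X)^{\mathrm c}\subseteq A_X$.} Factor $f=me$ in $(\E_\C,\M_\C)$. Then $Tf=(Tm)(Te)$ with $Te\in\E_\D$ and $Tm\in\M_\D$. Since $\F$ induces $(\E_\D,\M_\D)$, $Te$ is an $\F$-quotient and $Tm$ an $\F$-embedding. Hence \cref{lem:basic}(v) gives $\Img_\F(Tf)=\Img_\F(Tm)$, which lies in $A_X$.

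\emph{Inclusion $A_X\subseteq\G(X)^{\mathrm c}$.} This is immediate: $\Img_\F(Tm)=\Img_\G m$ for $m\in\M_\C$.

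Dually, $\Ker_\G f=\Ker_\F(Tf)=\Ker_\F(Te)$ by the kernel half of \cref{lem:basic}(v), so $\G(X)^{\mathrm n}=B_X$.

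There is no real obstacle. The only point needing care is that the factorization $Tf=(Tm)(Te)$ is an $\F$-factorization in the sense of \cref{lem:basic}(v), and this is exactly where the assumption that $\F$ induces $(\E_\D,\M_\D)$ is used. With that in place, both directions of the equivalence read off directly.
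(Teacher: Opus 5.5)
Your proposal is correct and matches the paper's proof, which is simply (N3) read through \eqref{eq:transfer-ranges}. Your re-derivation of those range identities, by factoring $f=me$ in $(\E_\C,\M_\C)$ and applying \cref{lem:basic}\textup{(v)} in the target, is the same justification the paper gives just before \cref{thm:N2}; like yours, it uses neither (N2) for $\G$ nor conservativity.
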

\begin{proof}
This is (N3) applied to \eqref{eq:transfer-ranges}.
\end{proof}

The following argument extends the homogeneous-diagram proof used for
right adjoints in \cite[Propositions~4.5--4.6]{VN}.
\begin{proposition}\label[proposition]{prop:squares-N3}
Let $\C$ and $\D$ have proper factorization systems
$(\E_\C,\M_\C)$ and $(\E_\D,\M_\D)$.
Let $\F$ be a noetherian form on $\D$ inducing
$(\E_\D,\M_\D)$, and let $T:\C\to\D$ satisfy
$T(\E_\C)\subseteq\E_\D$ and
$T(\M_\C)\subseteq\M_\D$. Write $\G=T^*\F$ for the
pullback form, whose fibre over $X$ is $\F(TX)$ and whose transports
along $f$ are those of $\F$ along $Tf$.
If pullbacks of two $\M_\C$-morphisms and pushouts of two
$\E_\C$-morphisms exist in $\C$ and are preserved by $T$, then
$\G$ satisfies (N3). Conservativity of $T$ is not required.
\end{proposition}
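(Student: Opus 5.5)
By \cref{prop:N3}, it suffices to show that each $A_X$ is closed under binary meets and each $B_X$ under binary joins in $\F(TX)$. I treat meets; the joins case is the formal dual, obtained by passing to opposite categories and exchanging $\E$ with $\M$, images with kernels, and pullbacks with pushouts. Since $\F$ is self-dual in this sense, the dual form of \cref{lem:homogeneous-exist} in $\D$ supplies exactly what is needed.

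Take $m:M\to X$ and $n:N\to X$ in $\M_\C$, and form their pullback $p:P\to M$, $q:P\to N$ in $\C$. Put $r=mp=nq$. Since $\M_\C$ is stable under pullback, $p\in\M_\C$, and hence $r\in\M_\C$. So $\Img_\F(Tr)\in A_X$. It remains to show that
\[
\Img_\F(Tr)=\Img_\F(Tm)\wedge\Img_\F(Tn).
\]

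The square $(Tp,Tq)$ is a pullback of $Tm$ and $Tn$ in $\D$, because $T$ preserves it. By hypothesis $Tm,Tn\in\M_\D$, and $\M_\D$ is the embedding class of $\F$. \Cref{lem:homogeneous-exist}, applied in $\D$, then says that some pullback of $Tm$ and $Tn$ has common composite an $\F$-embedding of $\Img_\F(Tm)\wedge\Img_\F(Tn)$. Pullbacks are unique up to isomorphism over $TX$, so $Tr$ differs from that composite by an isomorphism. Their images therefore coincide, which gives the displayed identity.

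For joins, take $e,d\in\E_\C$ with common domain $X$, form their pushout in $\C$, and let $q$ be the common composite out of $X$. Then $q\in\E_\C$ by pushout stability of the left class. Since $T$ preserves the pushout, $Tq$ agrees up to isomorphism with the quotient of $\Ker_\F(Te)\vee\Ker_\F(Td)$ from \cref{lem:homogeneous-exist}. Hence $\Ker_\F(Tq)=\Ker_\F(Te)\vee\Ker_\F(Td)$, and this join lies in $B_X$.

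The only delicate point is that $\F$ induces $(\E_\D,\M_\D)$, so \cref{lem:homogeneous-exist} applies to $Tm,Tn$ and to $Te,Td$; this is exactly what the hypothesis provides. No step uses conservativity.
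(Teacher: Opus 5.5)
Your proposal is correct and follows the paper's own proof: you reduce to the range-closure criterion of \cref{prop:N3}, observe that the common composite of the homogeneous pullback (resp.\ pushout) lies in $\M_\C$ (resp.\ $\E_\C$), and read off its image (resp.\ kernel) by applying \cref{lem:homogeneous-exist} to the preserved square in $\D$. The only difference is that you spell out details the paper leaves implicit, namely the stability and composition properties of the two classes and the up-to-isomorphism comparison of pullbacks over $TX$.
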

\begin{proof}
If \(r:P\to X\) is the common composite in a pullback of
\(m,n\in\M_\C\), preservation and \cref{lem:homogeneous-exist} in the
target give
\[
 \Img_\F(Tr)=\Img_\F(Tm)\wedge\Img_\F(Tn).
\]
As \(r\in\M_\C\), this proves meet closure of \(\ran a_X\).
The common composite \(q\) in a pushout of \(e,d\in\E_\C\) similarly
satisfies
\[
 \Ker_\F(Tq)=\Ker_\F(Te)\vee\Ker_\F(Td),
\]
proving join closure of \(\ran b_X\).
\end{proof}

\subsubsection{The complete equivalence}

For right adjoints, the relation between (N3) and preservation of
quotient pushouts is established in \cite[Proposition~4.6]{VN},
and the monadic criterion is \cite[Theorem~5.5]{VN}.
The following criterion treats arbitrary functors and a prescribed
source factorization system.
\begin{theorem}[Transfer and necessity]\label{thm:transfer}
Let $\C$ and $\D$ be categories with proper factorization systems
$(\E_\C,\M_\C)$ and $(\E_\D,\M_\D)$, let $\F$ be
a noetherian form on $\D$ inducing its specified system, and let
$T:\C\to\D$ be a functor. The pullback form $T^*\F$ has fibre
$\F(TX)$ over $X$ and uses the $\F$-transports along $Tf$ for
each morphism $f$ of $\C$. The following are equivalent.
\begin{enumerate}[label=\textup{(\roman*)}]
\item \(T^*\F\) is a noetherian form whose underlying factorization system
 is \((\E_\C,\M_\C)\).
\item \(T\) preserves the specified factorization systems, is conservative,
 and pullbacks of two \(\M_\C\)-morphisms and pushouts of two
 \(\E_\C\)-morphisms exist in \(\C\) and are preserved by \(T\).
\end{enumerate}
\end{theorem}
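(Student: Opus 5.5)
The plan is to prove the two implications separately, combining \cref{thm:N2}, \cref{prop:N2conservative}, \cref{prop:N3} and \cref{prop:squares-N3}. Recall that $T^*\F$ is always orean and satisfies (N1), so only (N2) and (N3) need attention.

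For (ii)$\Rightarrow$(i), I first derive order reflection, i.e.\ condition (iii) of \cref{thm:N2}. Let $m_1,m_2\in\M_\C$ have common codomain $X$, and suppose $Tm_1$ factors through $Tm_2$. I form the pullback of $m_1,m_2$ in $\C$, with projection $p:P\to M_1$ onto the domain of $m_1$; by pullback stability of the right class, $p\in\M_\C$. Since $T$ preserves this pullback, $Tp$ is the pullback of $Tm_2$ along $Tm_1$. Because $Tm_1$ factors through the monomorphism $Tm_2$, this pullback projection is invertible: the pair $(\id,\text{factor})$ is a cone, and it gives a section of $Tp$, which is monic. Conservativity then makes $p$ invertible, so $m_1=m_2\circ q\circ p^{-1}$, where $q$ is the other projection. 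The quotient case is the dual argument using the pushout of $e_1,e_2$. By \cref{thm:N2}, $T^*\F$ therefore satisfies (N2) and has factorization system exactly $(\E_\C,\M_\C)$. \Cref{prop:squares-N3} then gives (N3). This direction is routine.

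For (i)$\Rightarrow$(ii), I proceed in four steps:
\begin{enumerate}
\item \Cref{prop:N2conservative} gives conservativity.
\item Since $\M_{T^*\F}=\M_\C$, every $m\in\M_\C$ has $\Ker_\F(Tm)=\bot$, so $Tm$ is an $\F$-embedding by \cref{lem:basic}(iii), i.e.\ $Tm\in\M_\D$. Dually $T(\E_\C)\subseteq\E_\D$.
\item Existence of the homogeneous diagrams in $\C$ follows from \cref{lem:homogeneous-exist} applied to the noetherian form $T^*\F$, whose classes are $(\E_\C,\M_\C)$.
\item It remains to show preservation. That lemma says the pullback of $m,n\in\M_\C$ has common composite $r$ embedding $\Img_{T^*\F}m\wedge\Img_{T^*\F}n=\Img_\F(Tm)\wedge\Img_\F(Tn)$. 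Since $\Img_\F(Tr)=\Img_{T^*\F}r$ and $Tr\in\M_\D$, the morphism $Tr$ is an $\F$-embedding of the meet. The same lemma in $\D$ then shows that $Tr$, together with $Tp$ and $Tq$ (which factor $Tr$ through $Tm$ and $Tn$), is a pullback cone: any $\D$-cone factors uniquely through $Tr$, and the legs match by cancelling the monomorphisms $Tm,Tn$. Pushouts are dual.
\end{enumerate}

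The main obstacle is this last preservation step. One must be careful that the universal property in $\D$ is that of an $\F$-embedding of the meet cluster, not merely of some $\M_\D$-morphism. I would secure it by invoking \cref{lem:basic}(iii) and uniqueness of embeddings in $\F$, so that any $\M_\D$-morphism with that image is an embedding of it.
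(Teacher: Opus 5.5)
Your proposal is correct and follows essentially the same route as the paper's proof. For (ii)$\Rightarrow$(i) you reflect the two factorization orders using the preserved homogeneous diagrams and conservativity, then apply \cref{thm:N2} and \cref{prop:squares-N3}; for (i)$\Rightarrow$(ii) you obtain conservativity from \cref{prop:N2conservative}, class preservation from the kernel/image criterion, existence from \cref{lem:homogeneous-exist}, and preservation from the universal property of $Tr$ as an $\F$-embedding of the meet (dually for pushouts), with your section-plus-monic argument for invertibility of $Tp$ simply spelling out a step the paper states briefly.
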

\begin{proof}
Assume (ii). Orean structure and (N1) transfer automatically, and (N3) follows
from \cref{prop:squares-N3}. To obtain (N2), it suffices by \cref{thm:N2}
to reflect the two factorization orders.

Suppose \(Tm\) factors through \(Tn\), for
\(m:A\to X\), \(n:B\to X\) in \(\M_\C\). Form
\[
\begin{tikzcd}
P\ar[r,"q"]\ar[d,"p"'] & B\ar[d,"n"]\\
A\ar[r,"m"'] & X.
\end{tikzcd}
\]
In the preserved pullback, \(Tp\) is an isomorphism, since \(Tm\) factors
through the monomorphism \(Tn\). Conservativity makes \(p\) invertible,
and \(m=nqp^{-1}\). The dual pushout argument gives quotient-order
reflection. Now apply \cref{thm:N2}.

Conversely, assume (i). For \(m\in\M_\C\),
\(\Ker_\F(Tm)=\Ker_\G m=\bot\), hence \(Tm\in\M_\D\).
Dually, quotients are preserved. Conservativity follows from
\cref{prop:N2conservative}.

The homogeneous diagrams exist in \(\C\) by
\cref{lem:homogeneous-exist}. In the pullback of two embeddings, the common
composite \(r\) embeds the meet of their \(\G\)-images. Since the fibre
lattices in \eqref{eq:basechange} are identical,
\[
 \Img_\F(Tr)=\Img_\F(Tm)\wedge\Img_\F(Tn).
\]
The map \(Tr\) is an embedding in \(\D\). Its universal property therefore
makes the image square the target pullback. Dually, the image of the quotient
of the join of two kernels is the corresponding target quotient, giving
pushout preservation.
\end{proof}

\begin{corollary}[Two useful partial equivalences]\label[corollary]{cor:partialiff}
Let $\C$ and $\D$ have proper factorization systems
$(\E_\C,\M_\C)$ and $(\E_\D,\M_\D)$.
Let $\F$ be a noetherian form on $\D$ inducing
$(\E_\D,\M_\D)$, and let $T:\C\to\D$ satisfy
$T(\E_\C)\subseteq\E_\D$ and
$T(\M_\C)\subseteq\M_\D$. Write $\G=T^*\F$ for the
pullback form, whose fibre over $X$ is $\F(TX)$ and whose transports
along $f$ are those of $\F$ along $Tf$.
Call pullbacks of two $\M_\C$-morphisms and pushouts of two
$\E_\C$-morphisms the homogeneous diagrams in $\C$.
Then:
\begin{enumerate}[label=\textup{(\roman*)}]
\item If the homogeneous diagrams exist in $\C$ and are preserved by $T$, then \(T^*\F\)
 is orean and satisfies (N1) and (N3), and
 \[
 T^*\F\text{ satisfies (N2)}\quad\Longleftrightarrow\quad T\text{ is conservative}.
 \]
\item If $T^*\F$ satisfies (N2), then it satisfies (N3) if and only if
 the homogeneous diagrams exist in $\C$ and are preserved by $T$.
\end{enumerate}
\end{corollary}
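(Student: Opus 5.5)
Both parts of \cref{cor:partialiff} come from the proof of \cref{thm:transfer}, with each implication traced back to the specific earlier results it uses.

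\emph{Part (i).} Assume the homogeneous diagrams exist in \(\C\) and are preserved by \(T\). For every functor the pullback form \(\G=T^*\F\) is orean and satisfies (N1), as noted after \eqref{eq:basechange}. (N3) then follows from \cref{prop:squares-N3}, which needs no conservativity. For the equivalence, the direction ``(N2) \(\Rightarrow\) conservative'' is \cref{prop:N2conservative}. For the converse, I rerun the first half of the proof of \cref{thm:transfer}. Suppose \(Tm\) factors through \(Tn\) with \(m,n\in\M_\C\). In the preserved pullback of \(m\) and \(n\), the projection \(Tp\) is invertible, because \(Tn\) is monic and \(Tm\) factors through it. Conservativity makes \(p\) invertible, so \(m\) factors through \(n\). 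The dual argument with the preserved pushout reflects the quotient order. Condition (iii) of \cref{thm:N2} therefore holds, and so \(\G\) satisfies (N2).

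\emph{Part (ii).} Assume \(\G\) satisfies (N2). By \cref{thm:N2} the factorization system of \(\G\) is exactly \((\E_\C,\M_\C)\), so \(\G\)-embeddings are the \(\M_\C\)-morphisms and \(\G\)-quotients are the \(\E_\C\)-morphisms. The direction ``diagrams exist and are preserved \(\Rightarrow\) (N3)'' is again \cref{prop:squares-N3}. Conversely, suppose \(\G\) also satisfies (N3). Then \(\G\) is noetherian, and \cref{lem:homogeneous-exist}, applied to \(\G\), shows that the pullback of two \(\M_\C\)-morphisms \(m,n\) exists in \(\C\). Its common composite \(r\) is a \(\G\)-embedding of \(\Img_\G m\wedge\Img_\G n\). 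Since the fibres of \(\G\) and \(\F\) coincide, this gives \(\Img_\F(Tr)=\Img_\F(Tm)\wedge\Img_\F(Tn)\). Also \(Tr\in\M_\D\) by the standing hypothesis on \(T\).

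The remaining step is to show that the image square is a pullback in \(\D\). By \cref{lem:homogeneous-exist} in \(\D\), the pullback of \(Tm\) and \(Tn\) has common composite an \(\F\)-embedding of the same meet. Two \(\M_\D\)-morphisms with the same \(\F\)-image differ by an isomorphism, by the universal property of embeddings. Because \(Tm\) and \(Tn\) are monic, that isomorphism carries the projections \(Tp,Tq\) to the target pullback projections, so the image square is a pullback. The pushout case is the exact dual, using kernels and quotients. This is the same reasoning as in the converse half of \cref{thm:transfer}; I expect the bookkeeping matching the projections to be the only step needing care, and it is handled by monicity of \(Tm,Tn\).
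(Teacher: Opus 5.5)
Your proposal is correct and is essentially the paper's argument. The paper simply cites \cref{thm:transfer}, using \cref{prop:N2conservative} in part (ii) to supply the conservativity that theorem needs. You instead inline the relevant halves of its proof, so in (ii) you never need conservativity: one direction is \cref{prop:squares-N3} and the other is the existence and preservation half of \cref{thm:transfer}.
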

\begin{proof}
Orean structure and (N1) always transfer. In (i), (N3) follows from
\cref{prop:squares-N3}, so \cref{thm:transfer} identifies (N2) with
conservativity. In (ii), (N2) already forces conservativity by
\cref{prop:N2conservative}; apply \cref{thm:transfer} again.
\end{proof}

\subsubsection{Why existence must be stated}

\begin{counterexample}\label[counterexample]{ex:missingpullback}
Preservation only of homogeneous diagrams that happen to exist is insufficient.
Let \(\D\) be the four-element Boolean lattice \(\{0,a,b,1\}\), regarded as
a category, with system \((\Iso,\All)\). Give it the subobject form, whose
fibre over \(x\) is \([0,x]\). Along \(x\le y\), direct image is inclusion
and inverse image is meet with \(x\). This is noetherian: the only normal
cluster is \(0\), every cluster is conormal, and (N1)--(N3) follow immediately.

Let \(\C\) be the full subcategory on \(\{a,b,1\}\), with the same type of
factorization system, and let \(T\) be its inclusion. It is conservative,
preserves the two classes, and preserves all homogeneous pullbacks and
pushouts which exist in \(\C\). But the pullback of
\(a\to1\leftarrow b\) does not exist in \(\C\).

The fibre of \(T^*\F\) over \(1\) is still \(\{0,a,b,1\}\), while its
conormal clusters there are only \(a,b,1\). Their meet \(a\wedge b=0\)
is not conormal. Thus (N3) fails.
\end{counterexample}

\subsection{The cluster-set representation theorem}\label{sec:clusters}

Let \(\N\) be a noetherian form on \(\C\), with system \((\E,\M)\).
Define
\[
 \Cl_\N:\C\longrightarrow\Set,\qquad
 \Cl_\N(X)=\Lambda_X,\qquad \Cl_\N(f)=f_!.
\]
The orean composition laws make this a covariant functor. It uses
\emph{all} clusters, not just the normal or conormal clusters.

\begin{theorem}[Cluster-set representation]\label{thm:cluster}
Let $\N$ be a noetherian form on a category $\C$, with cluster
fibres $\Lambda_X$, direct images $f_!$, and induced quotient--embedding
system $(\E,\M)$. Define the cluster-set functor by
\[
 \Cl_\N:\C\to\Set,\qquad
 \Cl_\N(X)=\Lambda_X,\qquad \Cl_\N(f)=f_!.
\]
Then $\Cl_\N$ is conservative and, for every morphism $f$ of $\C$,
preserves and reflects the factorization classes:
\[
 f\in\E\iff f_!\text{ is surjective},\qquad
 f\in\M\iff f_!\text{ is injective},
\]
It also preserves pullbacks of two $\M$-morphisms and pushouts of
two $\E$-morphisms.
\end{theorem}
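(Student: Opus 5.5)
We prove the class characterizations first, then conservativity, and finally the preservation of the two homogeneous diagrams. By \cref{lem:intervals}, $\ran(f_!)=\downarrow\Img f$. Hence $f_!$ is surjective exactly when $\Img f=\top$, which by \cref{lem:basic}\textup{(iii)} means $f\in\E$. For injectivity we use \eqref{eq:equivjoin}: $f_!S=f_!T$ iff $S\vee\Ker f=T\vee\Ker f$.
\begin{itemize}
\item If $\Ker f=\bot$, this condition reduces to $S=T$, so $f_!$ is injective.
\item Conversely, if $f_!$ is injective, then $f_!\bot=\bot=f_!\Ker f$ by (N1). Hence $\Ker f=\bot$, so $f\in\M$ by \cref{lem:basic}\textup{(iii)}.
\end{itemize}
This gives both equivalences, so preservation and reflection of the classes are immediate, with surjections and injections as the target system in $\Set$.

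Conservativity then follows. If $f_!$ is bijective, then $f$ lies in both $\E$ and $\M$. In a proper factorization system $\E\cap\M$ consists of isomorphisms: orthogonality of $f$ against itself produces an inverse. Alternatively, apply \cref{lem:classreflection}-style uniqueness of factorizations.

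For pullbacks, let $m:A\to X$ and $n:B\to X$ be in $\M$. By \cref{lem:homogeneous-exist} their pullback $P$ exists, with projections $p,q$ in $\M$ and common composite $r=mp$ embedding $\Img m\wedge\Img n$. Since $m_!,n_!,r_!$ are injective, \cref{lem:intervals} identifies $\Lambda_A$, $\Lambda_B$, $\Lambda_P$ with $\downarrow\Img m$, $\downarrow\Img n$ and $\downarrow(\Img m\wedge\Img n)$ inside $\Lambda_X$. The pullback in $\Set$ of two injections is the intersection of their images, and $\downarrow\Img m\cap\downarrow\Img n=\downarrow(\Img m\wedge\Img n)$. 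So the canonical comparison $\Lambda_P\to\Lambda_A\times_{\Lambda_X}\Lambda_B$ is a bijection, using injectivity of $r_!$ and surjectivity onto the intersection.

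The pushout case is the main obstacle, because $\Cl_\N$ uses direct images, which are not the adjoints naturally suited to quotients. Let $e:X\to Y$ and $d:X\to Z$ be in $\E$, with pushout $u e=v d=q$ where $q$ is a quotient of $K\vee L$, $K=\Ker e$, $L=\Ker d$. Here we transport along inverse images: $e^*$ identifies $\Lambda_Y$ with $\uparrow K$, and under this identification $e_!$ becomes $S\mapsto S\vee K$ on $\Lambda_X$.

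The set-theoretic pushout of the surjections $e_!,d_!$ is $\Lambda_X$ modulo the equivalence relation generated by $S\sim S'$ when $S\vee K=S'\vee K$ or $S\vee L=S'\vee L$. We must show this relation coincides with $S\vee K\vee L=S'\vee K\vee L$, which is the kernel of $q_!$ by \eqref{eq:equivjoin}. One direction is clear. For the other, suppose $S\vee K\vee L=S'\vee K\vee L$ and form the chain
\[
S\ \sim\ S\vee K\ \sim\ S\vee K\vee L\ =\ S'\vee K\vee L\ \sim\ S'\vee K\ \sim\ S'.
\]
Each step is a relation of the allowed kind, since for instance $(S\vee K)\vee K=S\vee K$. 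The two-step zigzag thus suffices, and no modularity is needed. Consequently, the induced map from the set pushout to $\Lambda_P$ is injective; it is surjective because $q_!$ is. This establishes pushout preservation.
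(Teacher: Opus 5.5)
Your proposal is correct and follows essentially the same route as the paper. The classes are characterized via $\ran(f_!)=\downarrow\Img f$, \eqref{eq:equivjoin} and \cref{lem:basic}; conservativity comes from $\E\cap\M$ consisting of isomorphisms; pullbacks follow from $\downarrow M\cap\downarrow N=\downarrow(M\wedge N)$; and pushouts follow from the zigzag $S\sim_K S\vee K\sim_L S\vee K\vee L$, which identifies the join of the kernel equivalences of $e_!$ and $d_!$ with that of $q_!$. Your extra details — why injectivity of $f_!$ forces $\Ker f=\bot$, and the orthogonality argument for $\E\cap\M$ — only spell out steps the paper leaves implicit.
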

\begin{proof}
We separate the assertions.

\emph{Injections and surjections.}
By \cref{lem:intervals}, the range of $f_!$ is
$\downarrow\Img f$, so $f_!$ is surjective exactly when
$\Img f=\top$. Equation~\eqref{eq:equivjoin} gives injectivity
exactly when $\Ker f=\bot$. The class assertions now follow from
\cref{lem:basic}.
A bijective \(f_!\) therefore makes \(f\in\E\cap\M\), proving
conservativity.

\emph{Pullbacks.}
For \(m:A\to X\), \(n:B\to X\) in \(\M\), put
\(M=\Img m\), \(N=\Img n\), and let \(r:P\to X\) embed \(M\wedge N\).
By \cref{lem:intervals}, applying \(\Cl_\N\) to their pullback identifies
it with the square of inclusions
\[
\begin{tikzcd}
\downarrow(M\wedge N)\ar[r,hook]\ar[d,hook] & \downarrow N\ar[d,hook]\\
\downarrow M\ar[r,hook] & \Lambda_X.
\end{tikzcd}
\]
It is a set-theoretic pullback because
\(\downarrow(M\wedge N)=\downarrow M\cap\downarrow N\).

\emph{Pushouts.}
For \(e:X\to Y\), \(d:X\to Z\) in \(\E\), put
\(K=\Ker e\), \(L=\Ker d\). Their pushout has common composite
\(q:X\to P\) with \(\Ker q=K\vee L\), by
\cref{lem:homogeneous-exist}. On the set \(\Lambda_X\), define
\[
 S\sim_K T\quad\Longleftrightarrow\quad S\vee K=T\vee K.
\]
By \eqref{eq:equivjoin}, these are precisely the kernel equivalence relations
of \(e_!,d_!,q_!\), for \(K,L,K\vee L\), respectively.

In any join-semilattice,
\begin{equation}\label{eq:join-equivalences}
 \sim_K\vee_{\Eq}\sim_L=\sim_{K\vee L}.
\end{equation}
Both relations on the left are contained in the right. For the converse,
connect any \(S\) to \(S\vee K\vee L\) by
\[
 S\sim_K S\vee K\sim_L S\vee K\vee L.
\]
If \(S\vee K\vee L=T\vee K\vee L\), the two such chains connect \(S\)
and \(T\).

A pushout of two surjections from a common set is the quotient by the join of
their kernel equivalence relations: a map out of the common set factors through
both exactly when it is constant on that join. Since \(e_!,d_!,q_!\) are
surjective, \eqref{eq:join-equivalences} proves the required pushout property.
\end{proof}

The representation concerns the specified factorization classes and
homogeneous diagrams. It does not assert faithfulness or recovery of the
original form.

\subsection{Existence and universal targets}\label{sec:targets}\label{app:targets}

\begin{theorem}[Existence characterization]\label{thm:characterization}
Let $\C$ be a category with a specified proper factorization system
$(\E,\M)$, pullbacks of two $\M$-morphisms, and pushouts of two
$\E$-morphisms. Equip $\Set$ with its surjection--injection system.
The following are equivalent:
\begin{enumerate}[label=\textup{(\roman*)}]
\item $\C$ has a noetherian form inducing $(\E,\M)$;
\item There is a conservative functor $T:\C\to\Set$ sending $\E$
to surjections and $\M$ to injections, preserving pullbacks of two
$\M$-morphisms and pushouts of two $\E$-morphisms; equivalently,
$\C\rightsquigarrow\Set$.
\end{enumerate}
\end{theorem}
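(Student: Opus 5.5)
The plan is to obtain (i)$\Rightarrow$(ii) from the cluster-set representation and (ii)$\Rightarrow$(i) from the transfer theorem applied to a fixed noetherian form on $\Set$.

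For (i)$\Rightarrow$(ii), let $\N$ be a noetherian form on $\C$ inducing $(\E,\M)$, and take $T=\Cl_\N$. \Cref{thm:cluster} gives what is needed. The functor is conservative. It sends $\E$ to surjections and $\M$ to injections, since $f\in\E$ iff $f_!$ is surjective and $f\in\M$ iff $f_!$ is injective. It also preserves pullbacks of two $\M$-morphisms and pushouts of two $\E$-morphisms. These diagrams exist in $\C$ by hypothesis, and also by \cref{lem:homogeneous-exist}. So $T$ is a transfer functor in the sense of \cref{def:transfer-functor}.

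For (ii)$\Rightarrow$(i), I would use \cref{thm:transfer}. This requires a noetherian form $\F$ on $\Set$ inducing the surjection--injection system. The natural candidate is the subset form: $\F(X)=\Pow X$, with $f_!$ the direct image and $f^*$ the inverse image. This fails (N1), because $f^*f_!S$ is the saturation of $S$, which is not $S\cup f^*\varnothing$. The fibre therefore has to encode both subsets and equivalence relations. I would take the form from \cite{JVN} or \cite{VN} for $\Set$, seen as a topos or as an algebraic category (the variety with no operations). This form has fibres built from pairs (subset, equivalence relation), or equivalently from the congruence-style clusters used there for algebraic categories. Those papers verify that it is noetherian and that its induced system is (surjections, injections). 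If a self-contained check is wanted, it reduces to the following:
\begin{itemize}[label={}]
\item \emph{Images and kernels:} images are the subset parts and kernels are the equivalence parts;
\item \emph{(N2):} embeddings are injections and quotients are surjections;
\item \emph{(N3):} intersections of subsets and joins of equivalence relations are again images and kernels.
\end{itemize}

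Given such an $\F$, \cref{thm:transfer} applied to $T$ shows that $T^*\F$ is a noetherian form on $\C$ whose underlying factorization system is $(\E,\M)$. This is exactly (i). The equivalence of (ii) with $\C\rightsquigarrow\Set$ is just \cref{def:transfer-functor}, once it is noted that $\Set$ has the relevant homogeneous diagrams: intersections of subsets, and pushouts of surjections.

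The main obstacle is securing the noetherian form on $\Set$ with the correct factorization system. If I cannot cite it directly from \cite{VN} (the algebraic case, with the empty signature), I would construct it explicitly and check (N1) by hand. The key step there is to show that the transport formulas make $f^*f_!S=S\vee\Ker f$ hold. This is where the fibre must be richer than $\Pow X$.
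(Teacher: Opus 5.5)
Your proposal is correct and is essentially the paper's own proof: the cluster-set functor of \cref{thm:cluster} gives (i)$\Rightarrow$(ii), and pulling back the noetherian form on $\Set$ cited from \cite{VN} along $T$ via \cref{thm:transfer} gives (ii)$\Rightarrow$(i). Your tentative description of that form's fibres is not needed, and a componentwise product of subset and equivalence-relation lattices with componentwise transports would in fact fail (N1) for the same saturation reason you noted. To avoid the citation, use \cref{thm:free-variety}: it makes $\mathbb Z^{(-)}:\Set\to\Ab$ a transfer functor, so pulling back the subgroup form of $\Ab$ already gives a suitable form on $\Set$.
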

\begin{proof}
The cluster-set theorem gives (i)$\Rightarrow$(ii). Conversely, sets
admit a noetherian form inducing their usual factorization system
\cite{VN}; pulling it back along the transfer functor gives (i), by
\cref{thm:transfer}.
\end{proof}

The existence of the homogeneous diagrams can be included in each
representation condition instead of being assumed in advance. In the
noetherian-form condition it follows automatically. No particular original
form is recovered by this equivalence: it is an existence theorem for a
compatible form.

\begin{proposition}[Criterion for a universal target]\label[proposition]{prop:target}
Let $\D$ have a specified proper factorization system
$(\E_\D,\M_\D)$, pullbacks of two $\M_\D$-morphisms, and
pushouts of two $\E_\D$-morphisms. Equip $\Set$ with its
surjection--injection system, and write $\A\rightsquigarrow\mathcal B$
when a transfer functor exists between the specified systems on
$\A$ and $\mathcal B$. The following are equivalent:
\begin{enumerate}[label=\textup{(\roman*)}]
\item For every category $\C$ with a proper factorization system
$(\E_\C,\M_\C)$ and its corresponding homogeneous pullbacks and
pushouts, $\C$ has a noetherian form inducing that system if and
only if $\C\rightsquigarrow\D$.
\item $\D$ has a noetherian form inducing $(\E_\D,\M_\D)$,
and $\Set\rightsquigarrow\D$.
\item $\Set\rightsquigarrow\D$ and $\D\rightsquigarrow\Set$.
\end{enumerate}
A category $\D$ with its specified system satisfying these conditions
is called a universal target.
\end{proposition}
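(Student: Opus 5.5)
The plan is to prove the cycle (i)$\Rightarrow$(iii)$\Rightarrow$(ii)$\Rightarrow$(i). Throughout, transfer functors compose (\cref{prop:transfer-elementary}), and \cref{thm:characterization} identifies existence of a compatible noetherian form with $\C\rightsquigarrow\Set$.

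\emph{(i)$\Rightarrow$(iii).} Apply (i) with $\C=\D$. The identity functor is a transfer functor, so $\D\rightsquigarrow\D$, and (i) then gives $\D$ a noetherian form inducing $(\E_\D,\M_\D)$. By \cref{thm:characterization}, $\D\rightsquigarrow\Set$. Next apply (i) with $\C=\Set$. The category $\Set$ has a noetherian form inducing its surjection--injection system \cite{VN}, and it has the homogeneous diagrams. Hence (i) yields $\Set\rightsquigarrow\D$.

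\emph{(iii)$\Rightarrow$(ii).} Pull back the noetherian form of $\Set$ along the given transfer functor $\D\to\Set$. By \cref{thm:transfer} (equivalently, by \cref{thm:characterization} applied to $\D$), this is a noetherian form on $\D$ inducing $(\E_\D,\M_\D)$. The condition $\Set\rightsquigarrow\D$ is simply carried over.

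\emph{(ii)$\Rightarrow$(i).} Let $\F$ be the given form on $\D$, and let $\C$ have a proper factorization system together with its homogeneous diagrams.
\begin{itemize}
\item If $\C\rightsquigarrow\D$ via $T$, then \cref{thm:transfer} shows that $T^*\F$ is a noetherian form inducing $(\E_\C,\M_\C)$.
\item If instead $\C$ has such a form, then \cref{thm:characterization} gives a transfer functor $\C\to\Set$. Composing it with the given transfer functor $\Set\to\D$ produces $\C\rightsquigarrow\D$, by \cref{prop:transfer-elementary}.
\end{itemize}

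The argument is essentially formal. The only delicate point is checking the hypotheses of the cited results when they are applied to $\C=\Set$ and $\C=\D$. Specifically, the standing hypotheses of (i) must apply to $\Set$ and to $\D$, which requires that both have the homogeneous pullbacks and pushouts. For $\D$ this is assumed. For $\Set$ it holds because pullbacks and pushouts exist there and surjections and injections form a proper factorization system.
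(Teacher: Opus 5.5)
Your proposal is correct and is essentially the paper's argument. Necessity comes from applying (i) to $\C=\D$ with the identity functor and to $\C=\Set$. Sufficiency comes from composing the cluster-set transfer functor with $\Set\rightsquigarrow\D$, with the transfer theorem giving the converse, and \cref{thm:characterization} links (ii) and (iii); you only arrange these steps as the cycle (i)$\Rightarrow$(iii)$\Rightarrow$(ii)$\Rightarrow$(i) rather than as the two equivalences (i)$\Leftrightarrow$(ii) and (ii)$\Leftrightarrow$(iii).
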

\begin{proof}
Sufficiency follows by composing a cluster functor with
\(\Set\rightsquigarrow\D\), and by applying the transfer theorem in the
reverse direction. For necessity, apply the proposed characterization first
to \(\C=\D\), using the identity functor, and then to \(\C=\Set\).
The alternative formulation follows from \cref{thm:characterization}.
\end{proof}

\subsection{Grandis's transfer functors and modular fibres}\label{sec:grandis-transfer}

We recall Grandis's lattice category and transfer construction
\cite{Grandis1984,Grandis1992,Grandis2012}. The formulation of modular
connections below is also recalled in \cite[Section~3]{GJM2025}.

\begin{definition}\label{def:mlc}
Let $\Mlc$ denote the category whose objects are set-sized bounded
modular lattices. An arrow $f:L\to M$ is an adjunction
$f_!:L\rightleftarrows M:f^*$ satisfying
\begin{equation}\label{eq:modular-connection}
 f^*f_!(x)=x\vee f^*(0_M),\qquad
 f_!f^*(y)=y\wedge f_!(1_L)
 \quad(x\in L,\ y\in M).
\end{equation}
Such an adjunction is called a \emph{modular connection}. Composition
uses composition of left adjoints and reverse composition of right
adjoints. We write $\mathsf U$ for the form on $\Mlc$ with fibre $L$
over $L$ and transport given by the components of each connection.
\end{definition}

On modular lattices, \eqref{eq:modular-connection} is equivalent to
Frobenius reciprocity and its dual, which are stable under composition;
thus these arrows do form a category
\cite[Sections~3--4]{GJM2025}. The next lemma is the standard
interval description of Grandis's exact category $\Mlc$
\cite[Section~1.4]{Grandis1992}; we include a verification in our
noetherian-form language.

\begin{lemma}[The canonical form on modular lattices]\label{lem:mlc-form}
Let $\Mlc$ be the category of set-sized bounded modular lattices and
modular connections, and let $\mathsf U$ be its form whose fibre at
$L$ is $L$ itself and whose transports are the adjoints of each
connection. Then $\mathsf U$ is noetherian, and every element of
every fibre is normal and conormal. Its quotient and embedding classes,
denoted $(\E_{\Mlc},\M_{\Mlc})$, consist, up to isomorphism, of
the upper-interval quotients and lower-interval embeddings
\[
 q_a:L\longrightarrow[a,1_L],\qquad
 j_a:[0_L,a]\longrightarrow L \quad(a\in L).
\]
Their adjoints are
\[
 (q_a)_!(x)=x\vee a,\quad q_a^*(z)=z,
 \qquad (j_a)_!(z)=z,\quad j_a^*(x)=x\wedge a.
\]
This is a proper factorization system, and its homogeneous pullbacks
and pushouts exist.
\end{lemma}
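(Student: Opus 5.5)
The plan is to verify the noetherian axioms directly, working throughout with the two identities \eqref{eq:modular-connection} and the fact that the fibre of \(\mathsf U\) over \(L\) is \(L\) itself.

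\emph{Orean structure and (N1).} The orean structure is immediate. Composition of connections composes left adjoints and reverse-composes right adjoints, and the fibres are bounded lattices. For a connection \(f\) we have \(\Img f=f_!(1_L)\) and \(\Ker f=f^*(0_M)\), so the defining identities \eqref{eq:modular-connection} are literally (N1).

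\emph{Normality and conormality of every element.} Fix \(a\in L\). First check that \(q_a\) and \(j_a\), with the stated adjoints, are modular connections. The adjunctions are clear. For \(q_a\), the identities reduce to \(x\vee a=x\vee a\) and \(z=z\wedge 1\). For \(j_a\), they reduce to \((x\wedge a)=x\wedge a\) on the codomain side and \(z=z\) on the domain side. No modularity is used in this step; modularity is only needed so that the intervals are again objects of \(\Mlc\). We then have \(\Img j_a=a\) and \(\Ker q_a=q_a^*(0_{[a,1]})=a\). Hence every element is conormal and normal, and (N3) holds trivially.

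\emph{(N2) and the universal properties.} Next show that \(j_a\) embeds \(a\). Let \(g:N\to L\) satisfy \(g_!(1)\le a\). Define \(h:N\to[0,a]\) by \(h_!=g_!\) and \(h^*(z)=g^*(z)\). Then:
\begin{itemize}
\item \(h\) is a connection, since \(g^*(z)=g^*(z\wedge a)\) for \(z\le a\) by the second identity;
\item \(j_ah=g\), because \(h^*j_a^*(x)=g^*(x\wedge a)=g^*(x)\), using \eqref{eq:equivmeet}, i.e.\ \(g^*(x)=g^*(x\wedge \Img g)\);
\item \(h\) is unique, because \(j_a\) is monic: \(j_a^*j_{a!}=\id\), so both adjoints of \(h\) are forced.
\end{itemize}
Dually, \(q_a\) is a quotient of \(a\). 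For an arbitrary \(f:L\to M\), factor it through \([k,1_L]\) and \([0_M,i]\), where \(k=\Ker f\) and \(i=\Img f\), via the middle connection \(x\mapsto f_!x\), \(y\mapsto f^*y\). This middle map is a lattice isomorphism \([k,1]\cong[0,i]\) by (N1), which is the modular-lattice isomorphism theorem. So \(f=j_i\circ\phi\circ q_k\) with \(\phi\) invertible, and (N2) follows.

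By \cref{lem:basic}, the resulting classes form a proper factorization system. Up to isomorphism they are exactly the \(q_a\) and \(j_a\): any quotient has image top, so its factorization reduces to \(\phi q_k\), and dually for embeddings. The homogeneous pullbacks and pushouts exist by \cref{lem:homogeneous-exist}; concretely they are \([0,a\wedge b]\) and \([a\vee b,1]\).

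\emph{Main obstacle.} The step I expect to be the most delicate is checking that the middle map \(\phi\) and the induced maps \(h\) really are modular connections, i.e.\ that the restricted adjunctions satisfy both identities of \eqref{eq:modular-connection} on the intervals. I would handle this by computing directly from the identities for \(f\), together with \(f_!(k)=0\) and \(f^*(i)=1\) relative to the intervals.
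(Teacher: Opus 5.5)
Your proposal is correct and follows essentially the same route as the paper. You verify that $q_a$ and $j_a$ are modular connections with kernel and image $a$, read (N1) off the defining identities, get the universal properties of the interval arrows by restricting adjoints, and factor an arbitrary connection as $j_i\phi q_k$ through $[k,1_L]\cong[0_M,i]$; \cref{lem:basic,lem:homogeneous-exist} then supply the factorization system and the homogeneous diagrams. Two small tidy-ups: $g^*(x\wedge a)=g^*(x)$ follows directly from meet preservation and $g^*(a)=1_N$ (because $g_!(1_N)\le a$), so you need not cite \eqref{eq:equivmeet}, whose lemma presupposes a noetherian form; and the modular-connection identities for $h$ carry over verbatim from those of $g$ because $0_{[0,a]}=0_L$ and $h_!(1_N)=g_!(1_N)$, so your "main obstacle" is immediate.
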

\begin{proof}
The displayed pairs are adjunctions satisfying
\eqref{eq:modular-connection}. Their kernels and images are respectively
$a$ and $a$, so every cluster is normal and conormal. Axiom (N1) is
\eqref{eq:modular-connection}, and (N3) is therefore automatic.

For a connection $f:L\to M$, put $k=f^*(0_M)$ and $i=f_!(1_L)$.
Its adjoints restrict to mutually inverse order isomorphisms
$[k,1_L]\cong[0_M,i]$, giving
\[
 L\xrightarrow{q_k}[k,1_L]\xrightarrow{\cong}
 [0_M,i]\xrightarrow{j_i}M.
\]
The interval arrows have the required universal properties. If
$g:K\to L$ has $g_!(1_K)\le a$, restrict $g_!$ to $[0_L,a]$
as codomain and $g^*$ to that interval as domain. The resulting
adjunction is again a modular connection and is the unique factor
through $j_a$. Dually, if $h:L\to K$ has $a\le h^*(0_K)$,
its restrictions define the unique factor through $q_a$. This proves
(N2). The proper factorization system and the existence of the
homogeneous diagrams follow from
\cref{lem:basic,lem:homogeneous-exist}.
\end{proof}

A noetherian form with modular fibres is recovered from this canonical
form, whether or not all its clusters are normal and conormal.

\begin{proposition}[Recovering a form with modular fibres]\label{prop:modular-form-representation}
Let $\N$ be a noetherian form on a category $\C$, with set-sized
modular fibre lattices $\Lambda_X$, transports $f_!\dashv f^*$,
and induced factorization system $(\E,\M)$. Equip $\Mlc$, the
category of bounded modular lattices and modular connections, with its
canonical form $\mathsf U$ and the factorization system of that form.
Then
\[
 T_{\N}:\C\longrightarrow\Mlc,\qquad
 T_{\N}(X)=\Lambda_X,\qquad T_{\N}(f)=(f_!\dashv f^*)
\]
is a transfer functor and $T_{\N}^*\mathsf U=\N$.
Conversely, a form obtained by pulling back $\mathsf U$ along a
functor to $\Mlc$ has modular fibres.
\end{proposition}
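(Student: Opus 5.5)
First I check that $T_{\N}$ is a well-defined functor into $\Mlc$. For a morphism $f:X\to Y$, the pair $f_!\dashv f^*$ is an adjunction between bounded modular lattices. Axiom (N1) for $\N$ gives $f^*f_!S=S\vee\Ker f$ and $f_!f^*T=T\wedge\Img f$. Since $\Ker f=f^*\bot_Y$ and $\Img f=f_!\top_X$, these are exactly the identities \eqref{eq:modular-connection}, so $T_{\N}(f)$ is a modular connection. Functoriality follows from the orean composition laws $(gf)_!=g_!f_!$ and $(gf)^*=f^*g^*$. The identity $T_{\N}^*\mathsf U=\N$ then holds by inspection of \eqref{eq:basechange}: the fibre of $T_{\N}^*\mathsf U$ over $X$ is $\mathsf U(\Lambda_X)=\Lambda_X$, and its transports along $f$ are the components of $T_{\N}(f)$, namely $f_!$ and $f^*$.

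For the transfer property, the quickest route is \cref{thm:transfer}, direction (i)$\Rightarrow$(ii). By \cref{lem:mlc-form}, $\mathsf U$ is a noetherian form on $\Mlc$ inducing $(\E_{\Mlc},\M_{\Mlc})$. The pullback $T_{\N}^*\mathsf U=\N$ is noetherian and induces $(\E,\M)$ by hypothesis. Hence (i) of \cref{thm:transfer} holds with $\C$ carrying $(\E,\M)$, and the theorem yields that $T_{\N}$ preserves both classes, is conservative, and preserves homogeneous pullbacks and pushouts. Their existence in $\C$ is supplied by \cref{lem:homogeneous-exist}.

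As a sanity check, I would also verify class preservation directly. If $m\in\M$, then $\Ker m=\bot$ by \cref{lem:basic}(iii), and $m_!$ identifies $\Lambda_A$ with $\downarrow\Img m$ by \cref{lem:intervals}. So $T_{\N}(m)$ is isomorphic in $\Mlc$ to the lower-interval embedding $j_{\Img m}$, and dually for quotients. This direct check is not logically needed.

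For the converse, let $S:\C\to\Mlc$ be any functor. The fibre of $S^*\mathsf U$ over $X$ is $\mathsf U(SX)=SX$, a bounded modular lattice, so the form has modular fibres. There is essentially no obstacle here. The only point that needs care is the $\Mlc$-side bookkeeping: one must make sure that \cref{lem:mlc-form} is the form whose induced system is the one specified on $\Mlc$, so that \cref{thm:transfer} applies with matching factorization systems on both sides.
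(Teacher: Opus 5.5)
Your proposal is correct and follows the paper's proof: (N1) makes each $T_{\N}(f)$ a modular connection, $T_{\N}^*\mathsf U=\N$ holds by construction, the implication (i)$\Rightarrow$(ii) of \cref{thm:transfer} then gives the transfer property, and the converse holds because every pullback fibre is an object of $\Mlc$. Your direct check of class preservation via \cref{lem:basic,lem:intervals} is a harmless extra that the argument does not need.
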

\begin{proof}
Axiom (N1) makes each transport a modular connection, and functoriality
of transport gives $T_{\N}$. Its pullback is $\N$ by construction,
so \cref{thm:transfer} applies. The converse follows because each
pullback fibre is an object of $\Mlc$.
\end{proof}

\Needspace{13\baselineskip}
Grandis's normal-subobject transfer functor is constructed in
\cite[Section~1.5]{Grandis1992}; exact categories are modular by
\cite[Section~1.7]{Grandis1992}. Its canonical form is the standard
binormal noetherian form associated with a Grandis exact category
\cite[Section~4]{JW2016}; see also \cite[Sections~1--2]{DGJRV}.
Its transfer property follows by specializing
\cref{prop:modular-form-representation}.

\begin{corollary}[Grandis transfer functors]\label[corollary]{prop:grandis-transfer}
Let $(\A,\mathcal J)$ be a Grandis exact category with set-sized
lattices of normal subobjects. Here $\mathcal J$ is its closed ideal
of null arrows, kernels and cokernels are taken relative to
$\mathcal J$, and exactness means that every arrow factors as its
normal coimage followed by its normal image, up to isomorphism.
Equip $\A$ with the normal-epimorphism--normal-monomorphism system,
and equip the category $\Mlc$ of bounded modular lattices and
modular connections with its upper-interval-quotient--lower-interval-embedding
system. Then Grandis's canonical functor
\[
 S_{\A}:\A\longrightarrow\Mlc,\qquad
 A\longmapsto\Nsb_{\A}(A),\qquad
 f\longmapsto(f_!\dashv f^*)
\]
is a transfer functor, where $f_!$ and $f^*$ are normal direct and
inverse images. Its pullback of the canonical form on $\Mlc$ is the
normal-subobject form of $\A$.
\end{corollary}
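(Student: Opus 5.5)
The plan is to obtain the corollary as a special case of \cref{prop:modular-form-representation}, so the real work is to identify the relevant noetherian form on $\A$ and check its hypotheses. Let $\N_\A$ be the normal-subobject form of $\A$. Its fibre over $A$ is $\Nsb_\A(A)$, its direct image $f_!$ is the normal image of the composite with $f$, and its inverse image $f^*$ is the pullback of a normal subobject along $f$ (that is, the kernel of the composite with the cokernel). That this is a noetherian form, and that its quotient and embedding classes are exactly the normal epimorphisms and normal monomorphisms, is the standard binormal form of a Grandis exact category \cite[Section~4]{JW2016}; see also \cite[Sections~1--2]{DGJRV}. I will take that result as given rather than re-derive (N1)--(N3).

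Next I check the remaining hypotheses of \cref{prop:modular-form-representation}. The fibres are set-sized by assumption. They are modular lattices by \cite[Section~1.7]{Grandis1992}; alternatively, modularity can be read off (N1) together with the fact that every normal subobject is both normal and conormal in this form. The induced factorization system coincides with the specified normal-epi/normal-mono system. Granting these, \cref{prop:modular-form-representation} says that $T_{\N_\A}:A\mapsto\Nsb_\A(A)$, $f\mapsto(f_!\dashv f^*)$, is a transfer functor into $\Mlc$ with its upper-interval-quotient/lower-interval-embedding system, and that $T_{\N_\A}^*\mathsf U=\N_\A$.

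It remains to identify $T_{\N_\A}$ with Grandis's canonical functor $S_\A$ of \cite[Section~1.5]{Grandis1992}. On objects both give $\Nsb_\A(A)$. On arrows, Grandis defines $f_!$ as the normal image and $f^*$ as the normal inverse image, which are precisely the transports of $\N_\A$. The two functors therefore agree, and the pullback statement is the equation $T_{\N_\A}^*\mathsf U=\N_\A$.

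I expect the main obstacle to be the bookkeeping in matching conventions. One must confirm that the cited binormal form uses exactly Grandis's transports, so that direct image along a normal epi followed by a normal mono is computed via the exact factorization. One must also confirm that its factorization system is the normal one rather than some larger class. If the match is not literally available from the cited sources, the fallback is a direct check that each normal mono $m$ has $\Ker=\bot$ and each normal epi $e$ has $\Img=\top$, and then to invoke \cref{lem:basic}(iii) and orthogonality as in the last paragraph of the proof of \cref{thm:N2}.
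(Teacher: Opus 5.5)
Your proposal is correct and follows the paper's proof. Like the paper, you specialize \cref{prop:modular-form-representation} to the cited binormal normal-subobject form, which is noetherian, has modular fibres and induces the normal-epimorphism--normal-monomorphism system, and you observe that its functor $T_{\N}$ is literally Grandis's $S_{\A}$, so the transfer property and the equality of forms both follow immediately (your aside that modularity also follows from (N1) is sound as well: apply (N1) to the composite $em$ of an embedding $m$ of $b$ and a quotient $e$ of $a$, identify the domain fibre of $m$ with $\downarrow b$, and you get $(z\vee a)\wedge b=z\vee(a\wedge b)$ for $z\le b$).
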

\begin{proof}
The cited normal-subobject form on $\A$ is noetherian, has modular
fibres, and induces the normal-epimorphism--normal-monomorphism
system. Its functor $T_{\N}$ in
\cref{prop:modular-form-representation} is precisely Grandis's
functor $S_{\A}$. That proposition gives both the transfer property
and the stated equality of forms.
\end{proof}

The scope of Grandis's terminology is broader. For a semiexact category,
his functor takes values in $\Ltc$, the category of bounded lattices
and arbitrary Galois connections, and can fail our conservativity
condition even when it factors through $\Mlc$. The following
counterexample makes explicit the topological-vector-space example
in \cite[Section~1.7]{Grandis1992}.

\begin{counterexample}[A nonconservative Grandis transfer functor]\label{ex:grandis-semiexact}
Let $\mathbf{TVect}_{\mathbb R}$ be the category of real topological
vector spaces, without a Hausdorff requirement, and continuous linear
maps, with the usual ideal of zero maps. Grandis's normal-subobject
functor
\[
 \Nsb:\mathbf{TVect}_{\mathbb R}\longrightarrow\Mlc
\]
is not conservative, and hence cannot be a transfer functor in the
sense of \cref{def:transfer-functor}.
\end{counterexample}
\begin{proof}
Normal subobjects are all linear subspaces with their induced
topologies: a subspace is the kernel of the quotient by that subspace,
which is allowed to be non-Hausdorff. Normal direct and inverse images
are the ordinary linear image and preimage. In particular the continuous
linear identity
\[
 \mathbb R_{\mathrm{usual}}\longrightarrow
 \mathbb R_{\mathrm{indiscrete}}
\]
induces the identity connection between the two-element subspace
lattices. It is not a homeomorphism, so $\Nsb$ does not reflect
isomorphisms.
\end{proof}

Thus Grandis's canonical functors for exact categories are examples of
our class, but his semiexact transfer construction does not in general
define an example. Our definition concerns arbitrary functors between
categories with specified factorization systems, rather than only
canonical normal-subobject functors.

\subsection{Algebraic forgetful and free functors}

For a finitary variety, we use all set-based models, including empty
carriers when its operations and equations permit them. Its specified
factorization system consists of sortwise surjective and sortwise
injective homomorphisms. In the single-sorted case these are the usual
surjective and injective homomorphisms. The left class consists of
regular epimorphisms; it need not contain every categorical epimorphism,
as the example of rings shows.

Preservation of surjective pushouts by algebraic forgetful functors and
the resulting existence of noetherian forms are established in
\cite[Theorem~6.1 and Corollary~6.2]{VN}. The congruence-join fact behind
the preservation statement is \cite[Chapter~II, Theorem~5.3]{BS1981}.
We state the sortwise extension once, retaining the sort of every
element as a label when passing to a single set.
\begin{proposition}[Underlying-set functors]
\label[proposition]{prop:variety-forgetful}
\label[proposition]{top:lem:variety-pushout}
\label[proposition]{prop:many-sorted-transfer}
Let $I$ be a set of sorts and let $\V$ be a finitary $I$-sorted
variety of all set-based algebras, allowing empty carriers when its
operations and equations permit them. Equip $\V$ with the
sortwise-surjective--sortwise-injective factorization system and
$\Set$ with the surjection--injection system. The functor
\[
 U_\#: \V\longrightarrow\Set,\qquad
 U_\#A=\coprod_{i\in I}A_i,\qquad
 U_\#f(i,a)=(i,f_i(a))
\]
is a transfer functor. Consequently, $\V$ admits a noetherian form
inducing its specified factorization system. In particular, for a
single-sorted variety, the ordinary underlying-set functor is a
transfer functor and preserves pushouts of two surjective
homomorphisms.
\end{proposition}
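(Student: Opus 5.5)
We verify the four conditions of \cref{def:transfer-functor} for $U_\#$ directly, and then obtain the noetherian form from \cref{thm:characterization}, or equivalently by pulling back the noetherian form on $\Set$ along $U_\#$ as in \cref{thm:transfer}.

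\emph{Existence of homogeneous diagrams.} A pullback of two sortwise injective homomorphisms $m:A\to X$ and $n:B\to X$ is given by the subalgebra with carriers $m_i(A_i)\cap n_i(B_i)$. This family is closed under the operations and satisfies the equations, and it may have empty sorts, which is why empty carriers are admitted. Its projections are sortwise injective. A pushout of two sortwise surjective homomorphisms $e:X\to Y$ and $d:X\to Z$ is $X/(\ker e\vee\ker d)$, where the join is taken in the lattice of $I$-sorted congruences of $X$.

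\emph{Preservation of the classes and conservativity.} $U_\#f$ is injective or surjective exactly when every component $f_i$ is, because the sort label keeps the summands disjoint. The label also gives conservativity: if $U_\#f$ is bijective, each $f_i$ is bijective, and a bijective homomorphism has a homomorphic inverse.

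\emph{Preservation of the pullback.} Since coproducts in $\Set$ are disjoint, $\coprod_i(m_iA_i\cap n_iB_i)$ is the set-theoretic intersection of the images of $U_\#m$ and $U_\#n$. Hence the image square is a pullback of injections in $\Set$.

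\emph{Preservation of the pushout.} This is the main step. In $\Set$, the pushout of the surjections $U_\#e$ and $U_\#d$ is the quotient of $U_\#X$ by the equivalence relation generated by their kernels. That relation is sort-preserving, so it is the disjoint union over $i$ of the sortwise equivalence joins $\ker e_i\vee_{\Eq}\ker d_i$. We therefore need
\[
(\ker e\vee\ker d)_i=\ker e_i\vee_{\Eq}\ker d_i \qquad\text{for every } i.
\]
This is the many-sorted version of the fact that two permuting or, more generally, arbitrary congruences have as their congruence join the transitive closure of their union \cite[Chapter~II, Theorem~5.3]{BS1981}.

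To prove it, let $\theta_i$ be the transitive closure of $\ker e_i\cup\ker d_i$. The family $\theta$ is reflexive, symmetric, and transitive, and it is contained in every congruence containing both kernels. It remains to show that $\theta$ is compatible with each finitary operation $\omega$. Given related tuples $(a_1,\dots,a_n)$ and $(b_1,\dots,b_n)$, change one argument at a time. Each coordinate is joined by a chain of $\ker e$- and $\ker d$-steps, and each single step, with the other arguments held fixed, stays inside $\ker e$ or $\ker d$ because these are congruences. Concatenating these chains shows that $\omega(a)$ and $\omega(b)$ are $\theta$-related. Finiteness of arity makes the concatenation a finite chain.

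Once compatibility is established, the pushout quotient $X/(\ker e\vee\ker d)$ is carried by $U_\#$ to the set quotient by the generated equivalence relation, which is the set pushout. The single-sorted case is the special case $I=\{*\}$.
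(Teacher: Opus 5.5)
Your proposal is correct and follows essentially the paper's argument. In both, the key step is that the sortwise equivalence-relation joins of $\ker e$ and $\ker d$ already form a congruence, shown by changing the finitely many arguments of an operation one at a time along $\ker e$--$\ker d$ chains; the only organizational difference is that the paper factors $U_\#$ as the carrier functor $\V\to\Set^I$ followed by the disjoint-union functor $\Set^I\to\Set$, whereas you handle the sort labels directly.
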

\begin{proof}
The carrier functor $\V\to\Set^I$ is conservative, creates limits,
and preserves the specified classes. Given two sortwise surjections
from an algebra $A$, let $\rho_i$ and $\sigma_i$ be their kernel
equivalence relations on each carrier $A_i$. Their equivalence-relation
joins, formed sortwise, constitute a congruence: change the finitely
many inputs of an operation one at a time along finite
$\rho$--$\sigma$ chains. Quotienting by this congruence computes
the pushout both in $\V$ and on every carrier. Thus the carrier
functor preserves the required pushouts.

The disjoint-union functor $\Set^I\to\Set$ is conservative on
sort-preserving maps and preserves the two classes, pullbacks, and
pushouts. For pullbacks, equality of two tagged elements forces
equality of their sorts; colimits are computed by identifying elements
within each sort. The composite is therefore a transfer functor.
Pull back a noetherian form on $\Set$ and apply \cref{thm:transfer}.
Taking $I$ to be a singleton gives the single-sorted assertion.
\end{proof}

The nonempty-intersection property used here is classical for set
functors; see \cite[Section~2]{Gumm2020} for Trnkov\'a's theorem.
The next statement combines it with the standard free-algebra
construction \cite[Chapter~II, Sections~10--11]{BS1981}, while making
the empty-intersection issue explicit.
\begin{theorem}[Free-algebra functors]\label{thm:free-variety}
Let $\V$ be a nontrivial single-sorted finitary variety of all
set-based algebras, including empty algebras when permitted by its
signature, and let $F_\V:\Set\to\V$ be its free-algebra functor.
Equip $\Set$ with the surjection--injection system and $\V$ with
the system of surjective and injective homomorphisms.
\begin{enumerate}[label=\textup{(\roman*)}]
\item If $\V$ has a closed term, then $F_\V$ is a transfer functor.
\item Fix a singleton $\{*\}$. Without any assumption on closed terms,
the functor $H:\Set\to\V$ defined on sets $X$ and functions $f$ by
\[
 H(X)=F_\V(X\sqcup\{*\}),\qquad
 H(f)=F_\V(f\sqcup\id_{\{*\}})
\]
is a transfer functor.
\end{enumerate}
Both functors in these assertions preserve all pushouts and preserve and
reflect injections and surjections.
\end{theorem}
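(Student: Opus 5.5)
We check the four clauses of \cref{def:transfer-functor} for $F_\V$ and for $H$, using \cref{prop:transfer-elementary}. Since $F_\V$ is a left adjoint, it preserves all pushouts. The functor $H$ is the composite of $-\sqcup\{*\}:\Set\to\Set$ with $F_\V$. The first functor preserves pushouts of pairs of maps, because a pushout of $X\to Y$, $X\to Z$ becomes the pushout of $X+1\to Y+1$, $X+1\to Z+1$ along identities on the extra point. So $H$ preserves pushouts of two surjections, indeed all pushouts of spans.

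By \cref{prop:transfer-elementary}, it then remains to show three things for $F=F_\V$ (case (i)) and $F=H$ (case (ii)): $F$ preserves and reflects injections and surjections, $F$ is conservative, and $F$ preserves pullbacks of two injections.

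\emph{Surjections.} A surjection $f$ is split epi in $\Set$ (choice), so $Ff$ is split epi, hence surjective. Reflection holds because $X$ embeds as generators in $F_\V X$ and the generators not in the image of $f$ would not be hit. For this we need nontriviality, so that $x\mapsto x$ is injective into $F_\V X$ and distinct generators are not identified. A generator $y\notin\operatorname{im} f$ can be separated from the subalgebra generated by $f(X)$ (plus $*$) using a homomorphism to a nontrivial algebra sending $y$ and the rest differently.

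\emph{Injections.} An injection $X\to Y$ with $X$ nonempty is split mono, so $F$ preserves it. The only nonsplit case is $\emptyset\to Y$, where $F_\V\emptyset$ is the algebra of closed terms. This is why (i) assumes a closed term, and why (ii) adds $*$, making every domain nonempty. In (i), a closed term $c$ lets us retract $Y\to F_\V\emptyset$? Not quite. Instead we argue that $F_\V\emptyset\to F_\V Y$ is injective because $F_\V Y\to F_\V\{pt\}$, and then to $F_\V\emptyset$ via sending $pt\mapsto c$, gives a retraction, provided $Y$ is nonempty. Reflection of injections uses injectivity of the generator map.

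\emph{Conservativity.} This follows from reflection of both classes together with properness: $Ff$ bijective gives $f$ bijective.

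\emph{Pullbacks of two injections.} This is the main obstacle: we must show that for subsets $A,B\subseteq X$, the subalgebras $F(A)$ and $F(B)$ of $F(X)$ intersect exactly in $F(A\cap B)$. This is a Trnkov\'a-type statement. Our plan is to use retractions. When $A\cap B\neq\emptyset$ (always true in case (ii) thanks to $*$), choose $r:X\to A\cap B$ restricting to the identity on $A\cap B$. Then choose $r_A:X\to A$ that is identity on $A$ and sends $X\setminus A$ into $A\cap B$; this map sends $B$ into $A\cap B$. Hence for $t\in F(A)\cap F(B)$ we get $t=F(r_A)t$, while $F(r_A)$ maps $F(B)$ into $F(A\cap B)$, so $t\in F(A\cap B)$.

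In case (i), the remaining case $A\cap B=\emptyset$ with $A,B$ nonempty is handled by a separation argument. Take $t\in F(A)\cap F(B)$ and apply the retraction $X\to B$ that collapses $A$ to a point $b\in B$ and fixes $B$. If $t$ is not closed, nontriviality yields a two-element variation: vary the image of $A$ over two distinct points of a nontrivial free algebra to show that $t$ does not depend on variables from $A$, and similarly for $B$. Hence $t$ lies in the closed-term subalgebra $F_\V\emptyset$. This last step, showing that a term equal to terms over disjoint variable sets is equivalent to a closed term, is where we expect the most care to be needed. The standard trick is to substitute all variables by a fixed closed term $c$, which gives a homomorphism $F_\V X\to F_\V\emptyset$ that restricts correctly on both sides.
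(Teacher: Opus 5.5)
Your treatment of pushouts, of preservation and reflection of both classes, of conservativity, and of intersections in part~(ii) and in part~(i) when $A\cap B\neq\varnothing$ is the paper's argument. The map fixing the generators of $A$ (or of $A\sqcup\{*\}$) and sending the others into the intersection is exactly the endomorphism the paper uses.

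The gap is the case you flag yourself: part~(i) with $A\cap B=\varnothing$. This is precisely the case the closed-term hypothesis is needed for, and your sketch does not establish $t\in F_\V(\varnothing)$.
\begin{itemize}
\item The collapsing retraction $X\to B$ only yields $t\in F_\V(\{b\})$.
\item The homomorphism $F_\V(X)\to F_\V(\varnothing)$ substituting $c$ for \emph{all} variables does not fix $t$. Applying it says nothing about $t$ unless you already know $t=t(c,\ldots,c)$.
\item The independence you want does not follow from nontriviality or from ``varying $A$ over two points''. It needs a mixing of assignments. Given $\alpha,\beta\colon X\to M$, let $\gamma$ agree with $\alpha$ on $A$ and with $\beta$ off $A$. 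Then $t\in F_\V(A)$ gives $\bar\alpha(t)=\bar\gamma(t)$. Since $B\subseteq X\setminus A$, $t\in F_\V(B)$ gives $\bar\gamma(t)=\bar\beta(t)$. Now take $M=F_\V(X)$, $\alpha$ the generic assignment and $\beta$ constant at $c$; this gives $t=t(c,\ldots,c)\in F_\V(\varnothing)$.
\end{itemize}

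The $\bar\gamma$ in this repair is just your own retraction trick with $c$ in place of $*$, and that is how the paper proceeds, with no case split. The endomorphism of $F_\V(X)$ fixing the generators in $A$ and sending every other generator to $c$ fixes $F_\V(A)$ pointwise. It maps $F_\V(B)$ into $F_\V(A\cap B)$, because the generators of $B\setminus A$ go to $c\in F_\V(\varnothing)\subseteq F_\V(A\cap B)$. This gives $F_\V(A)\cap F_\V(B)=F_\V(A\cap B)$ uniformly, including the cases where $A$, $B$ or $A\cap B$ is empty. The same substitution, sending all generators outside the domain to $c$, also supplies the retraction for $F_\V(\varnothing)\to F_\V(Y)$ directly, without your detour through $F_\V$ of a point.
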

\begin{proof}
Nontriviality means that some algebra has at least two elements. We first
prove (ii). Surjections of generating sets induce surjections of free
algebras. An injection of the pointed generating sets has a retraction
obtained by sending unused generators to $*$, so its image under $F_\V$
is injective.

If $f$ identifies two elements, the corresponding distinct free generators
of $H(X)$ are identified by $H(f)$. They are distinct because assignments
to a nontrivial algebra distinguish them. If $f$ misses $y\in Y$, two
assignments of the generators of $H(Y)$, agreeing everywhere except at
$y$, induce distinct homomorphisms that agree after $H(f)$. Thus $H(f)$
is not epic, and in particular is not surjective. These arguments give
reflection of both classes and conservativity.

The operation $X\mapsto X\sqcup\{*\}$ preserves pushouts: the added
points on the two sides are identified through the added point of their
common domain. Since $F_\V$ is a left adjoint, $H$ preserves pushouts.
For injections represented by subsets $A,B\subseteq X$, regard $H(A)$
and $H(B)$ as subalgebras of $H(X)$. The endomorphism that fixes the
generators of $A\sqcup\{*\}$ and sends all other generators to $*$
fixes $H(A)$ pointwise and sends $H(B)$ into $H(A\cap B)$. Therefore
\[
 H(A)\cap H(B)=H(A\cap B),
\]
which proves preservation of pullbacks of two injections.

For (i), replace every use of the extra generator by a fixed closed term.
This gives retractions for injections of free algebras, including those
whose generating domain is empty, and the same intersection argument.
The preceding generator-separation arguments apply unchanged, while
preservation of pushouts follows directly from the free-algebra adjunction.
\end{proof}

\begin{remark}[The empty-intersection issue]\label{rem:free-empty-intersection}
A closed term is sufficient, but not necessary: the ordinary free-semigroup
functor, for example, preserves intersections because a word belongs to
both generated subsemigroups exactly when all its letters belong to both
sets of generators. The condition cannot simply be omitted for arbitrary
varieties. Consider the variety with one unary operation $p$ and identity
$p(x)=p(y)$. Its free algebra on a nonempty set $X$ is $X\sqcup\{c\}$,
with $p$ constant at $c$, whereas its free algebra on the empty set is
empty. In the free algebra on $\{a,b\}$, the free subalgebras on
$\{a\}$ and $\{b\}$ intersect in $\{c\}$. Thus their intersection is
not the free algebra on their empty intersection. The additional generator
in part~\textup{(ii)} removes this obstruction uniformly.
\end{remark}

The existence of noetherian forms for varieties is already
\cite[Corollary~6.2]{VN}. The additional assertion here is their
universal-target property.
\begin{theorem}[Varieties as universal targets]\label{thm:varieties}
Let $\V$ be a nontrivial single-sorted finitary variety of all
set-based algebras, including empty algebras when its signature permits
them, with the factorization system of surjective and injective
homomorphisms. Then $\V$ is a universal target for existence of a
noetherian form.
\end{theorem}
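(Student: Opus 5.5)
By \cref{prop:target}, it suffices to verify condition (iii): $\Set\rightsquigarrow\V$ and $\V\rightsquigarrow\Set$, both for the surjection--injection system on $\Set$ and the surjective--injective homomorphism system on $\V$. Both directions are already available from earlier results, so the proof is an assembly step.

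For $\V\rightsquigarrow\Set$, I would invoke \cref{prop:variety-forgetful} with a singleton set of sorts. The ordinary underlying-set functor is then a transfer functor. Its hypotheses require the homogeneous pullbacks and pushouts to exist in $\V$. Pullbacks of two injective homomorphisms are intersections of subalgebras, computed on carriers. Pushouts of two surjections are quotients by the congruence join, as in the proof of that proposition. Empty algebras cause no difficulty: the intersection of two subalgebras is closed under the operations even when it is empty, and it is a legitimate object because the variety is taken to include all set-based algebras allowed by the signature.

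For $\Set\rightsquigarrow\V$, I would use \cref{thm:free-variety}(ii). The functor $H(X)=F_\V(X\sqcup\{*\})$ is a transfer functor for every nontrivial variety, with no closed-term assumption. Nontriviality is exactly the hypothesis of the present theorem.

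With both transfer functors in hand, \cref{prop:target}(iii)$\Rightarrow$(i) shows that $\V$ is a universal target. Alternatively, one can argue directly. A noetherian form on $\C$ gives $\C\rightsquigarrow\Set$ via \cref{thm:cluster}, and composing with $H$ gives $\C\rightsquigarrow\V$ by \cref{prop:transfer-elementary}. Conversely, $\V$ has a noetherian form, obtained by pulling back along $U$ by \cref{thm:transfer}, so a transfer functor $\C\to\V$ pulls that form back to $\C$.

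The only point needing care is that the specified systems match exactly and the homogeneous diagrams exist in $\V$, since \cref{prop:target} presupposes this. I expect no real obstacle. The substantive content, namely preservation of pullbacks of injections by $H$, including the empty-intersection issue, was already handled in \cref{thm:free-variety}.
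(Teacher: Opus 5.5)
Your proposal is correct and follows the paper's proof: the underlying-set transfer functor from \cref{prop:variety-forgetful} and the pointed free-algebra functor from \cref{thm:free-variety}(ii) give $\V\rightsquigarrow\Set\rightsquigarrow\V$. Then \cref{prop:target} yields the universal-target property. Your extra checks on the homogeneous diagrams and on empty algebras are consistent with what those results already establish.
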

\begin{proof}
Proposition~\ref{prop:variety-forgetful} and
Theorem~\ref{thm:free-variety} give transfer functors in both directions
$\V\rightsquigarrow\Set\rightsquigarrow\V$.
Apply \cref{prop:target}.
\end{proof}

Examples include groups, monoids, semigroups, Boolean algebras, bounded
lattices, unital rings, and modules over every nonzero unital ring. The
ordinary free functor works for all these examples; for semigroups this
follows from the word description in
\cref{rem:free-empty-intersection}.
The transferred form need not be the ordinary subobject form, and
semi-abelianity is not required. Finitarity is used in the congruence-join
argument for the underlying-set functor and cannot be dropped from that
argument without a separate proof.

\subsection{Modular-fibre replacement and the lattice target}\label{sec:modular-replacement}

The free-functor theorem has a further consequence for the possible
fibre lattices. It strengthens the existence assertion
without imposing modularity on the given form. We use the classical
completeness and algebraicity of subalgebra lattices \cite{BS1981},
together with the modular law for subgroups of an abelian group.

\begin{theorem}[Modular-fibre replacement]\label{thm:modular-replacement}
Let $\N$ be a noetherian form on a category $\C$, with set-sized
fibre lattices $\Lambda_X$, transports $f_!\dashv f^*$, and induced
factorization system $(\E,\M)$. Define $H:\C\to\Ab$ by
\[
 H(X)=\mathbb Z^{(\Lambda_X)},\qquad
 H(f)[S]=[f_!S]\quad(f:X\to Y,\ S\in\Lambda_X),
\]
where $\mathbb Z^{(\Lambda_X)}$ is the free abelian group with basis
the underlying set of $\Lambda_X$. Then the form $\N^{\mathrm{mod}}$
with fibres and transports
\begin{equation}\label{eq:modular-replacement}
 \N^{\mathrm{mod}}(X)=\Sub(HX),\qquad
 f^{\mathrm{mod}}_!(A)=H(f)(A),\qquad
 (f^{\mathrm{mod}})^*(B)=H(f)^{-1}(B)
\end{equation}
for $A\le HX$ and $B\le HY$ is noetherian and induces precisely
$(\E,\M)$. Every fibre is a complete algebraic modular lattice.
Here a complete lattice is algebraic when each of its elements is a
join of compact elements, and $c$ is compact when
$c\le\bigvee\mathcal S$ implies $c\le\bigvee\mathcal S_0$ for
some finite subset $\mathcal S_0\subseteq\mathcal S$.
\end{theorem}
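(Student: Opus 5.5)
The plan is to realize $H$ as a composite of transfer functors and then apply \cref{thm:transfer} to the subgroup form on $\Ab$. Write $H=F\circ\Cl_\N$, where $\Cl_\N:\C\to\Set$ is the cluster-set functor of \cref{thm:cluster} and $F=F_{\Ab}:\Set\to\Ab$ is the free abelian group functor. By \cref{thm:cluster}, $\Cl_\N$ is a transfer functor from $(\E,\M)$ to the surjection--injection system on $\Set$. For $F$ I will invoke \cref{thm:free-variety}\textup{(i)}. The variety of abelian groups is nontrivial, and it has the closed term $0$. So $F$ is a transfer functor for the surjection--injection systems. By \cref{prop:transfer-elementary}, $H$ is then a transfer functor $\C\to\Ab$, with $\Ab$ carrying the surjective--injective homomorphism system.

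Next I need a noetherian form on $\Ab$ that induces this system and whose fibres are the full subgroup lattices $\Sub(A)$, with transports given by image and preimage. This is the ordinary subobject form of the abelian (hence semi-abelian) category $\Ab$, which is noetherian; I take this as known from the cited literature. A quick check is also available. (N1) says $f^{-1}(f(A))=A+\ker f$ and $f(f^{-1}(B))=B\cap\Img f$. (N2) is the image factorization. (N3) holds because every subgroup is both a kernel and an image. Its quotients are the surjections and its embeddings are the injections. The pulled-back form $H^*$ of this form is then exactly \eqref{eq:modular-replacement}. By \cref{thm:transfer}, \textup{(ii)}$\Rightarrow$\textup{(i)}, it is noetherian and its underlying factorization system is precisely $(\E,\M)$.

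Finally, each fibre $\Sub(HX)$ is the subgroup lattice of an abelian group. It is therefore modular, by the modular law $A\le C\Rightarrow A+(B\cap C)=(A+B)\cap C$. It is also complete and algebraic, because it is the subalgebra lattice of an algebra in a finitary variety: the compact elements are the finitely generated subgroups, and every subgroup is the join of its finitely generated subgroups.

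The one step needing care is the appeal to \cref{thm:free-variety}. The empty-set case has to be handled there: $F(\varnothing)=0$, and intersections of free subgroups $\mathbb Z^{(A)}\cap\mathbb Z^{(B)}=\mathbb Z^{(A\cap B)}$ hold also when $A\cap B=\varnothing$. This is exactly what the closed term $0$ guarantees, and it can also be verified directly by comparing supports of finite formal sums. Everything else is an application of results already established.
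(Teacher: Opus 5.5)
Your proposal is correct and follows essentially the same route as the paper: you factor $H$ as the free-abelian-group functor after the cluster-set functor, use \cref{thm:cluster} and \cref{thm:free-variety} to conclude that the composite is a transfer functor, pull back the subgroup form on $\Ab$ via \cref{thm:transfer}, and then check completeness, modularity and algebraicity of subgroup lattices directly. Your extra checks of (N1)--(N3) for the subgroup form and of the empty-intersection case are accurate, though the cited results already supply them.
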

\begin{proof}
The functor $H$ is the composite of the cluster-set transfer functor
of \cref{thm:cluster} and the free abelian group transfer functor
of \cref{thm:free-variety}, applied to $\Ab$. Their composite is a
transfer functor.
Pulling back the usual subgroup form on $\Ab$ therefore gives
\eqref{eq:modular-replacement}, with the prescribed factorization
system, by \cref{thm:transfer}.

For any abelian group $V$, the lattice $\Sub(V)$ is complete:
meets are intersections and joins are subgroup sums. For subgroups
$A,B,C\le V$ with $A\le C$, the elementary identity
\[
 (A+B)\cap C=A+(B\cap C)
\]
proves modularity. Every subgroup is the join of its finitely generated
subgroups. These are compact, since expressing finitely many generators
in a sum of subgroups involves only finitely many summands. Hence
$\Sub(V)$ is algebraic as well.
\end{proof}

\begin{corollary}[Modularity imposes no additional existence condition]\label{cor:modular-existence}
Let $\C$ be a category with a specified proper factorization system
$(\E,\M)$. The following conditions are equivalent:
\begin{enumerate}[label=\textup{(\roman*)}]
\item $\C$ admits a noetherian form inducing $(\E,\M)$;
\item $\C$ admits such a form with modular fibre lattices;
\item $\C$ admits such a form with complete algebraic modular fibre
lattices.
\end{enumerate}
No separate existence assumption on homogeneous diagrams is needed.
\end{corollary}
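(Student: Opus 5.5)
The implications (iii)$\Rightarrow$(ii)$\Rightarrow$(i) are trivial: a complete algebraic modular lattice is in particular modular, and a form with modular fibres is in particular a noetherian form inducing $(\E,\M)$. So the content lies in (i)$\Rightarrow$(iii), which I would obtain directly from \cref{thm:modular-replacement}.

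Starting from a noetherian form $\N$ inducing $(\E,\M)$, I would apply that theorem to get the form $\N^{\mathrm{mod}}$. It is noetherian, induces exactly $(\E,\M)$, and has fibres $\Sub(HX)$, which are complete algebraic modular lattices.

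The only point needing care is the hypothesis of \cref{thm:modular-replacement}, whose proof composes the cluster-set functor of \cref{thm:cluster} with the free abelian group functor and then invokes \cref{thm:transfer}. Transfer functors are defined only between categories possessing the homogeneous pullbacks and pushouts, so these diagrams must exist in $\C$. This is the remark ``no separate existence assumption is needed''. I would discharge it with \cref{lem:homogeneous-exist}: any noetherian form on $\C$ supplies pullbacks of two $\M$-morphisms and pushouts of two $\E$-morphisms. In $\Ab$ these homogeneous diagrams exist (intersections of subgroups and quotients by sums), and \cref{thm:free-variety} applies since $\Ab$ is nontrivial and has the closed term $0$.

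Once the diagrams are available, the composite $\C\to\Set\to\Ab$ is a transfer functor by \cref{prop:transfer-elementary}. Pulling back the subgroup form then yields (iii) by \cref{thm:transfer}. The main obstacle, to the extent there is one, is this bookkeeping on existence of diagrams; the lattice-theoretic properties are already established in the proof of \cref{thm:modular-replacement}.
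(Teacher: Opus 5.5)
Your proposal is correct and matches the paper's proof. The paper likewise obtains (i)$\Rightarrow$(iii) from \cref{thm:modular-replacement}, treats the other implications as immediate, and uses \cref{lem:homogeneous-exist} to supply the homogeneous diagrams; your extra checks for $\Ab$ and for the composite functor just unpack the proof of \cref{thm:modular-replacement}.
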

\begin{proof}
The implication (i)$\Rightarrow$(iii) is
\cref{thm:modular-replacement}, and the other implications are immediate.
Homogeneous diagrams exist whenever (i) holds, by
\cref{lem:homogeneous-exist}.
\end{proof}

The replacement preserves the represented subobject and quotient orders,
because it preserves their defining factorization classes. It need not
recover the original fibre lattices. Indeed, on the terminal category,
any set-sized bounded lattice with identity transport defines a
noetherian form: only bottom is normal and only top is conormal.
Taking a nonmodular lattice shows why modularity is a property of the
replacement, rather than a necessary property of a specified form.
Nor must every cluster of the replacement be normal or conormal:
the witnessing morphisms must belong to the source category.

For the target-category formulation, the functor
$S_{\Ab}:\Ab\to\Mlc$ below is Grandis's classical subobject
transfer functor \cite[Introduction, Section~0.3]{Grandis2012}, rather
than a new construction. Its transfer property in our terminology is
\cref{prop:grandis-transfer}; the universal-target conclusion follows
by composing it with our set and abelian representations.

\begin{corollary}[Modular lattices as a universal target]\label{cor:mlc-target}
Let $\Mlc$ be the category of set-sized bounded modular lattices and
modular connections, with its upper-interval-quotient--lower-interval-embedding
factorization system. Let $\C$ have a specified proper factorization
system $(\E,\M)$, pullbacks of two $\M$-arrows with common
codomain, and pushouts of two $\E$-arrows with common domain.
Then $\C$ has a noetherian form inducing $(\E,\M)$ if and only
if there is a transfer functor $\C\to\Mlc$ for these systems.
Thus $\Mlc$ is a universal target. An explicit transfer functor from
sets, equipped with their surjection--injection system, is
\[
 \Set\xrightarrow{\mathbb Z^{(-)}}\Ab
 \xrightarrow{S}\Mlc,
 \qquad X\longmapsto\Sub\bigl(\mathbb Z^{(X)}\bigr),
\]
where $\Ab$ has its surjection--injection system, $S=S_{\Ab}$ is
Grandis's subobject functor, and $S(h)$ is subgroup direct image paired with subgroup inverse
image along the homomorphism $h$.
\end{corollary}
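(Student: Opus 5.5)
We will apply \cref{prop:target} with $\D=\Mlc$, checking its condition~(ii). The first half of (ii) is \cref{lem:mlc-form}. That lemma shows that the canonical form $\mathsf U$ on $\Mlc$ is noetherian and induces the upper-interval-quotient--lower-interval-embedding system. It also shows that the homogeneous pullbacks and pushouts exist in $\Mlc$, so the hypotheses of \cref{prop:target} hold for $\D=\Mlc$. It therefore remains to exhibit a transfer functor $\Set\rightsquigarrow\Mlc$, namely the displayed composite. Once we have it, condition~(i) of \cref{prop:target} gives exactly the asserted equivalence for every $\C$ with the stated homogeneous diagrams.

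For the composite, we show that each factor is a transfer functor and then use closure under composition from \cref{prop:transfer-elementary}.

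The first factor is the free abelian group functor $\mathbb Z^{(-)}:\Set\to\Ab$. The variety $\Ab$ is nontrivial and has the closed term $0$, so this is a transfer functor by \cref{thm:free-variety}\textup{(i)}.

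The second factor is $S=S_{\Ab}$. We view $\Ab$ as a Grandis exact category with respect to zero maps. Its normal subobjects are all subgroups, and its normal epimorphisms and normal monomorphisms are exactly the surjective and injective homomorphisms. The subgroup lattices are sets. Hence \cref{prop:grandis-transfer} shows that $S_{\Ab}$, which sends $h$ to subgroup direct image paired with subgroup inverse image, is a transfer functor for the surjection--injection system on $\Ab$. Alternatively, we can cite \cref{prop:modular-form-representation}, applied to the usual subgroup form on $\Ab$. That form is noetherian with modular fibres, as in the proof of \cref{thm:modular-replacement}, and it induces the surjection--injection system.

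On objects the composite sends $X$ to $\Sub(\mathbb Z^{(X)})$, as displayed.

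The only point that needs care is matching the factorization system on $\Ab$ between the two factors. The free functor's codomain system and the source system of $S_{\Ab}$ must both be the surjective--injective homomorphism system. In $\Ab$, normal epimorphisms are exactly the surjections and normal monomorphisms are exactly the injections, so the two systems coincide. Everything else is formal bookkeeping through \cref{prop:target}.
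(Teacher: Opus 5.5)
Your proposal is correct and follows the paper's own proof. Both factors are transfer functors by \cref{thm:free-variety} and \cref{prop:grandis-transfer}, \cref{lem:mlc-form} supplies the canonical form on $\Mlc$, and \cref{prop:target} then gives the equivalence; the paper also notes that for a given form $\N$ on $\C$, the composite $S_{\Ab}H$ (with $H$ from \cref{thm:modular-replacement}) is a transfer functor pulling the canonical form on $\Mlc$ back to $\N^{\mathrm{mod}}$.
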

\begin{proof}
The two displayed functors are transfer functors by
\cref{thm:free-variety,prop:grandis-transfer}. The target has its canonical
noetherian form by \cref{lem:mlc-form}, so \cref{prop:target} applies.
Equivalently, starting from a form $\N$ on $\C$, the composite
$S_{\Ab}H$, for $H$ in \cref{thm:modular-replacement}, is a
transfer functor whose pullback of the canonical form on $\Mlc$
is exactly $\N^{\mathrm{mod}}$.
\end{proof}

When the given form already has modular fibres,
\cref{prop:modular-form-representation} recovers that very form from
$\Mlc$. For an arbitrary form, the composite through $\Ab$ instead
recovers the replacement of \cref{thm:modular-replacement}. This is
the distinction between representing a specified form and characterizing
the existence of some compatible form.

\subsection{Essentially algebraic categories and algebraic envelopes}
\label{sec:essentially-algebraic}

Essentially algebraic theories allow partial operations whose domains
are specified by equations. We use the usual small, many-sorted
theories, with a set bound on operation arities. Their categories of
models are precisely the locally presentable categories; finitary
essentially algebraic theories correspond to locally finitely
presentable categories \cite[Theorem~3.36]{AR1994}.
Every locally presentable category has all small limits and colimits
and the proper factorization system of strong epimorphisms and
monomorphisms \cite[Proposition~1.61]{AR1994}. These facts supply the
source structure for the following application of our transfer theorem.

\subsubsection{A conservative left-adjoint criterion}

By \cref{prop:many-sorted-transfer}, every finitary many-sorted variety
has a noetherian form for its sortwise-surjective--sortwise-injective
system. We now transfer this form along a suitable left adjoint.

The next criterion combines the cited factorization theorem with
\cref{thm:transfer}. Its left-adjoint hypothesis supplies both
preservation of strong epimorphisms and preservation of quotient
pushouts, so these need not be verified separately in applications.

\begin{theorem}[Essentially algebraic existence criterion]
\label{thm:essentially-algebraic}
Let $\C$ be an essentially algebraic category, equipped with its
strong-epimorphism--monomorphism factorization system, and let $\V$
be a finitary many-sorted variety, equipped with its
sortwise-surjective--sortwise-injective factorization system.
Suppose that a functor $L:\C\to\V$ has a right adjoint and that
\begin{enumerate}[label=\textup{(\roman*)}]
\item $L$ reflects isomorphisms;
\item $L$ preserves monomorphisms;
\item $L$ preserves pullbacks of two monomorphisms with a common
codomain.
\end{enumerate}
Then $L$ is a transfer functor. The category $\C$ admits a
noetherian form whose quotients are exactly its strong epimorphisms
and whose embeddings are exactly its monomorphisms. Such a form can
be chosen with complete algebraic modular fibres.
\end{theorem}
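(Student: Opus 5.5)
The plan is to verify the four clauses of \cref{def:transfer-functor} for $L$, with $\C$ carrying the strong-epimorphism--monomorphism system and $\V$ the sortwise-surjective--sortwise-injective system. Once $L$ is a transfer functor, we pull back the noetherian form on $\V$ supplied by \cref{prop:many-sorted-transfer}. We then apply \cref{thm:transfer}, and finally upgrade the fibres by \cref{thm:modular-replacement}.

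First, the homogeneous diagrams exist in $\C$: it is locally presentable, hence complete and cocomplete, and carries the proper factorization system of strong epimorphisms and monomorphisms \cite[Proposition~1.61]{AR1994}. In $\V$ these diagrams also exist. Conservativity is hypothesis (i), and preservation of the right class is hypothesis (ii), since monomorphisms in a variety are exactly the sortwise injective homomorphisms. Pullbacks of two monomorphisms are preserved by hypothesis (iii). Since $L$ is a left adjoint, it preserves all colimits, and in particular pushouts of two strong epimorphisms. This is exactly the last clause of \cref{prop:transfer-elementary}.

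The remaining point, and the one I expect to need the most care, is that $L$ sends strong epimorphisms to sortwise surjections. I would argue as follows. A left adjoint preserves epimorphisms and, more precisely, preserves the property of being a strong epimorphism. The reason is that, via the adjunction $L\dashv R$, a lifting problem against a monomorphism $m$ in $\V$ transposes to one against $Rm$ in $\C$. Here $Rm$ is again a monomorphism, because right adjoints preserve monomorphisms.

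So $Le$ is a strong epimorphism in $\V$ whenever $e$ is one in $\C$. In a variety, every strong epimorphism is sortwise surjective: factor $Le$ as a sortwise surjection followed by a sortwise injection. The injective factor is then a monomorphism through which a strong epimorphism factors, hence an isomorphism. Thus $L(\E_\C)\subseteq\E_\V$, and all four conditions of \cref{def:transfer-functor} hold.

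Finally, \cref{thm:transfer} applied to the form of \cref{prop:many-sorted-transfer} gives a noetherian form $L^*\F$ on $\C$ whose factorization system is exactly the strong-epimorphism--monomorphism system. Its quotients are therefore the strong epimorphisms and its embeddings the monomorphisms, by \cref{lem:basic}. Applying \cref{thm:modular-replacement} (or \cref{cor:modular-existence}) to this form produces a form inducing the same system with complete algebraic modular fibres.
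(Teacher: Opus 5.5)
Your proposal is correct and follows the paper's proof essentially step for step. In both, local presentability supplies the system and the diagrams, and left-adjoint transposition against $Rm$ shows that strong epimorphisms go to strong epimorphisms, hence to sortwise surjections. Both then combine \cref{prop:transfer-elementary}, hypotheses (i) and (iii), and \cref{prop:many-sorted-transfer,thm:transfer,thm:modular-replacement} to finish.
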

\begin{proof}
Local presentability supplies the stated proper factorization system
and all the required pullbacks and pushouts. If $R$ is right adjoint
to $L$, then $R$ preserves monomorphisms. Under the adjunction, a
lifting square from $Le$ to a monomorphism in $\V$ corresponds to
a lifting square from $e$ to its image under $R$. Thus $L$ takes
strong epimorphisms to strong epimorphisms. In a variety the latter
are precisely the sortwise surjections, by the usual image
factorization. Together with (ii), this proves preservation of the
specified factorization systems.

Now \cref{prop:transfer-elementary}, together with (i) and (iii),
makes $L$ a transfer functor. Apply
\cref{prop:many-sorted-transfer,thm:transfer} to obtain a noetherian
form on $\C$, and then \cref{thm:modular-replacement} to obtain
complete algebraic modular fibres without changing the two classes.
\end{proof}

In applications, $L$ can be a universal algebraic envelope of the
partial operations. A normal-form description is useful when it shows
that envelopes of subobjects intersect in the envelope of their
intersection. Existence and uniqueness of normal forms alone do not
give the theorem: their compatibility with subobjects, and reflection
of isomorphisms, must also be checked. The following example carries
out these checks.

\subsubsection{Small categories and their semigroup envelopes}

The Karoubi envelope of a semigroup is a standard construction;
we use the description in \cite[Section~3.2]{CostaSteinberg2015}.
We give the presentation of its left adjoint and the normal-form
argument explicitly, since retaining each category identity as a
separate generator is essential for conservativity.

\begin{proposition}[The semigroup transfer for small categories]
\label{prop:cat-semigroup-transfer}
Let $\mathbf{Cat}$ be the category of small categories and functors,
with its strong-epimorphism--monomorphism factorization system. Let
$\mathbf{Sgp}$ be the variety of semigroups, including the empty
semigroup, with its surjection--injection factorization system.
For a small category $C$, define
\[
 S(C)=\left\langle [f]\ (f\in\operatorname{Mor}C)
 \ \middle|\ [g][f]=[g\circ f]
 \text{ whenever }g\circ f\text{ is defined}\right\rangle,
\]
and for a functor $F:C\to D$ let $S(F)[f]=[Ff]$.
Then $S:\mathbf{Cat}\to\mathbf{Sgp}$ is a conservative left
adjoint preserving monomorphisms and pullbacks of two monomorphisms
with a common codomain, and hence is a transfer functor. Here the monomorphisms in $\mathbf{Cat}$ are
the functors injective on objects and faithful; the strong
epimorphisms are the functors surjective on objects whose image
arrows generate the codomain under composition.
\end{proposition}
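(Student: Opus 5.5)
The plan is to verify the four hypotheses of \cref{thm:essentially-algebraic}, with $\V=\mathbf{Sgp}$: $S$ has a right adjoint, reflects isomorphisms, preserves monomorphisms, and preserves pullbacks of two monomorphisms. That theorem, via \cref{prop:transfer-elementary}, then makes $S$ a transfer functor. The descriptions of monomorphisms and strong epimorphisms in $\mathbf{Cat}$ are standard for the strong-epi--mono factorization in a locally presentable category, and I would only recall them. Everything rests on a normal-form description of $S(C)$, which I would establish first.

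\emph{Normal forms.} Consider the rewriting system on nonempty words in $\operatorname{Mor}C$ that replaces an adjacent pair $g\,f$ with $\operatorname{dom}g=\operatorname{cod}f$ by the single letter $g\circ f$.
\begin{itemize}
\item It terminates, since length strictly decreases.
\item It is locally confluent: overlapping redexes $h\,g\,f$ resolve to $h\circ g\circ f$ either way by associativity, and disjoint redexes commute.
\end{itemize}
By Newman's lemma, $S(C)$ is therefore isomorphic to the set of nonempty \emph{reduced} words, meaning those with no composable adjacent pair. Multiplication is concatenation followed by at most one reduction at the junction. Indeed, $g\circ f$ has the domain of $f$ and the codomain of $g$, so its outer neighbours stay non-composable. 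Identities $1_x$ survive as distinct length-one words, which is what makes conservativity possible.

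\emph{Right adjoint.} I would take the Karoubi-type construction $K(M)$ on a semigroup $M$. Its objects are the idempotents of $M$, and its arrows $e\to e'$ are the triples $(e',s,e)$ with $s=e'se$. A functor $C\to K(M)$ amounts to sending each $1_x$ to an idempotent and each $f:x\to y$ to some $s_f$ with $s_f=s_{1_y}s_fs_{1_x}$, compatibly with composition. This is exactly a semigroup map $S(C)\to M$, and the correspondence is natural in both variables.

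\emph{Monomorphisms.} If $F$ is injective on objects and faithful, then it is injective on morphisms. It also preserves non-composability, because $\operatorname{dom}f_{i+1}\ne\operatorname{cod}f_i$ implies $F\operatorname{dom}f_{i+1}\ne F\operatorname{cod}f_i$. Hence $S(F)$ sends reduced words letterwise to reduced words, and it is injective.

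\emph{Pullbacks of two monomorphisms.} Up to isomorphism these are subcategories $A,B\subseteq D$ whose pullback is $A\cap B$. By the previous step, $S(A)$ and $S(B)$ are the reduced words of $D$ with all letters in $A$, respectively in $B$. Their intersection is therefore the set of reduced words with letters in $A\cap B$, which is $S(A\cap B)$. The empty semigroup covers the case of the empty category.

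\emph{Conservativity.} Suppose $S(F)$ is bijective. Reduced words of length one map to length-one words $[Ff]$, so injectivity of $S(F)$ makes $F$ injective on morphisms, and hence on objects through identities. Then $F$ preserves non-composability, so $S(F)$ preserves word length. For any $g$ in $D$, the length-one word $[g]$ must then be the image of a length-one word $[f]$, which gives $g=Ff$. Thus $F$ is bijective on objects and morphisms, hence an isomorphism.

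I expect the main obstacle to be the normal-form step, in particular checking that multiplication needs only one junction reduction. Both the monomorphism and conservativity arguments depend on length and letter bookkeeping that this step makes rigorous.
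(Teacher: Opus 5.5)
Your proposal is correct and follows the paper's proof essentially step for step. Both use the same terminating, locally confluent rewriting to unique reduced words, the same Karoubi-envelope right adjoint on idempotents, and the letterwise description of $S$ on subcategory inclusions giving $S(A)\cap S(B)=S(A\cap B)$. The conservativity argument by length bookkeeping is also the same. The only difference is that the paper additionally sketches, for completeness, the orthogonality argument behind the stated description of the strong-epimorphism--monomorphism system and notes explicitly that $\mathbf{Cat}$ is finitary essentially algebraic. You take both of these as standard.
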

\begin{proof}
\emph{Normal forms.} The presentation uses nonempty words and does
not identify identity arrows with an empty word. Replace any adjacent
composable pair by its composite. Every reduction shortens the word.
Disjoint reductions commute, and overlapping reductions concern a
composable triple, where associativity gives the same composite.
Equivalently, a word has a canonical reduction obtained by composing
its maximal composable blocks: composing within a block preserves
its outer source and target, so the noncomposable boundaries remain.
Thus every element of $S(C)$ has a unique reduced word, with no
adjacent composable pair. In particular, distinct arrows of $C$
give distinct elements, including its distinct identity arrows.

\emph{The adjunction.} For a semigroup $T$, let $K(T)$ have as
objects its idempotents. An arrow $e\to d$ is a tagged element
$(d,t,e)$ with $dt=t=te$; multiplication gives composition and
$(e,e,e)$ is the identity at $e$. A homomorphism $\varphi:S(C)\to T$
determines a functor $C\to K(T)$ by
\[
 x\longmapsto\varphi([1_x]),\qquad
 f:x\to y\longmapsto
 \bigl(\varphi([1_y]),\varphi([f]),\varphi([1_x])\bigr).
\]
Conversely, a functor to $K(T)$ gives a homomorphism by assigning
to $[f]$ the middle element of its image. These assignments are
mutually inverse and natural, proving $S\dashv K$.

\emph{Monomorphisms and intersections.} A functor injective on
objects and faithful is, up to isomorphism, the inclusion of a
subcategory. Such an inclusion preserves and reflects composability
of its arrows. Its map under $S$ therefore includes precisely the
reduced words all of whose letters belong to that subcategory. If
$A,B$ are subcategories of $C$, then, inside $S(C)$,
\[
 S(A)\cap S(B)=S(A\cap B).
\]
This proves preservation of monomorphisms and pullbacks of two
monomorphisms with a common codomain, including when $A\cap B$
is empty.

\emph{Conservativity.} Suppose $S(F)$ is an isomorphism. Its
injectivity on one-letter words makes $F$ injective on arrows and,
by considering identities, on objects. Object injectivity makes
$F$ reflect composability, so $S(F)$ preserves lengths of reduced
words. Every target arrow is a one-letter word. Surjectivity of
$S(F)$ therefore makes it the image of a one-letter word from $C$.
Thus $F$ is bijective on arrows and surjective on objects as well.
It is consequently an isomorphism of categories.

For completeness, every functor factors through the subcategory
generated by its image, followed by the inclusion. The first factor
is surjective on objects and its image arrows generate. It has the
unique lifting property against subcategory inclusions: lifts are
forced on image arrows and hence on their composites. Object
injectivity ensures matching endpoints for the lifted composites,
and equality of composites can be checked after the inclusion. This proves the
stated description of the strong-epimorphism--monomorphism system.
The category $\mathbf{Cat}$ is finitary essentially algebraic
(composition is defined on the pullback of source and target).
\Cref{thm:essentially-algebraic} now applies.
\end{proof}

\begin{corollary}[Categories, groupoids, and semicategories]
\label{cor:cat-gpd-semicat}
Let $\C$ be any one of the following categories: small categories
and functors; small groupoids and functors; or small semicategories
and semifunctors. A semicategory here has a set of objects, a set
of arrows with source and target, and associative composition of
composable arrows, with no identity requirement; semifunctors
preserve this data. Then $\C$ admits a noetherian form inducing its
strong-epimorphism--monomorphism factorization system and having
complete algebraic modular fibres.
\end{corollary}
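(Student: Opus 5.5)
The plan is to reduce all three cases to \cref{thm:essentially-algebraic}, modelled on \cref{prop:cat-semigroup-transfer}. Small categories are already settled by that proposition together with \cref{thm:essentially-algebraic}. Groupoids and semicategories are both finitary essentially algebraic: for groupoids add a total inverse operation with the two inverse equations, and for semicategories drop the identities. Each category is therefore locally presentable and carries its strong-epimorphism--monomorphism system. It then suffices in each case to exhibit a semigroup-valued left adjoint that reflects isomorphisms, preserves monomorphisms, and preserves pullbacks of two monomorphisms.

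\emph{Semicategories.} I would use the same presentation $S(C)$, with generators $[f]$ and relations $[g][f]=[g\circ f]$, now without identities. The normal-form argument of \cref{prop:cat-semigroup-transfer} uses only associativity, so reduced words again form unique normal forms. The right adjoint sends a semigroup $T$ to the semicategory $K'(T)$ whose objects are idempotents and whose arrows are tagged elements $(d,t,e)$ with $dt=t=te$, without using identities. Given a homomorphism $\varphi$, however, there is no canonical idempotent to assign to an object $x$.

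I would therefore change the envelope rather than the tags. I add, for each object $x$, a generator $[x]$ satisfying
\[
[x][x]=[x],\qquad [y][f]=[f]=[f][x]\qquad (f:x\to y).
\]
Then objects map to the images of the $[x]$, and the adjunction with $K'$ goes through exactly as before. Normal forms become alternating words in arrows and object letters, with no adjacent composable pair and no object letter adjacent to an arrow at its matching end. The rewriting remains confluent: the only new overlaps are of the form $[y][f][x]$, which resolve trivially, and all rules shorten words. Monomorphisms of semicategories are again the subsemicategory inclusions, meaning injective on objects and on arrows. Hence $S(A)\cap S(B)=S(A\cap B)$ by the letter criterion, including when the intersection is empty. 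Conservativity follows as before: one-letter words detect objects and arrows, and injectivity on objects makes $S(F)$ preserve normal forms.

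\emph{Groupoids.} Monomorphisms are again faithful functors injective on objects, equivalently subgroupoid inclusions. I would use the categorical envelope restricted along $\mathbf{Gpd}\hookrightarrow\mathbf{Cat}$, that is, the composite of the inclusion with the functor $S$ of \cref{prop:cat-semigroup-transfer}. Since $S$ as given is a left adjoint only on $\mathbf{Cat}$, I would check the composite directly against the transfer conditions of \cref{def:transfer-functor} rather than through the adjoint criterion. Conservativity, preservation of monomorphisms, and preservation of intersections restrict immediately, because a subgroupoid intersection is a subcategory intersection.

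It remains to check the factorization classes and pushouts. First, strong epimorphisms of groupoids coincide with those of $\mathbf{Cat}$ between groupoids: a generating image is closed under inverses, so images are subgroupoids. Second, the inclusion $\mathbf{Gpd}\hookrightarrow\mathbf{Cat}$ preserves pushouts of two strong epimorphisms. I would show this by noting that a quotient of a groupoid in $\mathbf{Cat}$ is automatically a groupoid, since inverses persist. Then $S$ preserves these pushouts as a left adjoint.

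The main obstacle is this groupoid step: verifying that $\mathbf{Cat}$-pushouts of two strong epimorphisms out of a groupoid remain groupoids, and that strong epimorphisms agree in the two categories. A fallback is to use $\mathbf{Gpd}\hookrightarrow\mathbf{Cat}$ as a transfer functor itself; it is a right and left adjoint, and its composite with $S$ then suffices. In every case, \cref{thm:transfer} and \cref{thm:modular-replacement} give the form with complete algebraic modular fibres.
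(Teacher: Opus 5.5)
Your proposal is correct and essentially follows the paper. Your semicategory envelope with object letters $[x]$ is precisely $S(J(C))$, where $J$ freely adjoins a new identity at each object, so confluence is just associativity in $J(C)$ (reduced words need not actually alternate); for groupoids the paper uses the same composite of $S$ with $\mathbf{Gpd}\hookrightarrow\mathbf{Cat}$, and the obstacle you single out disappears because that inclusion is left adjoint to the maximal-subgroupoid functor, so the composite is itself a left adjoint and \cref{thm:essentially-algebraic} applies directly, as your fallback essentially recognizes.
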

\begin{proof}
All three categories are finitary essentially algebraic. The case
of categories follows from \cref{prop:cat-semigroup-transfer} and
\cref{thm:essentially-algebraic}.

The inclusion of groupoids into categories is left adjoint to the
functor taking the maximal subgroupoid: every functor from a groupoid
sends arrows to invertible arrows. It is conservative and preserves
monomorphisms and their pullbacks. Composing this inclusion with
$S$ gives the required left adjoint into $\mathbf{Sgp}$.

For semicategories, freely adjoin a new identity at each object.
This defines a functor $J$ to $\mathbf{Cat}$, left adjoint to
forgetting the identity requirement. Its arrow set is the disjoint
union of the old arrows and the newly adjoined identities; these
remain distinct even if an old arrow already acts as an identity.
This description shows that $J$ reflects isomorphisms and preserves
monomorphisms and their pullbacks: in a pullback, old arrows can
match only old arrows and new identities only new identities.
The composite $SJ$ again satisfies
\cref{thm:essentially-algebraic}.
\end{proof}

\subsubsection{Diagrams, slices, and coslices}

The closure of locally presentable categories under small diagram
categories, slices, and coslices is standard
\cite[Corollary~1.54 and Proposition~1.57]{AR1994}. The next result
shows that the additional left-adjoint criterion is retained by
these constructions.

\begin{proposition}[Closure of the left-adjoint criterion]
\label{prop:essentially-algebraic-closure}
Let $\C$ be an essentially algebraic category, let $\V$ be a
finitary many-sorted variety, and let $L:\C\to\V$ be a
conservative left adjoint which preserves monomorphisms and
pullbacks of two monomorphisms with a common codomain. Let
$\mathcal D$ be a small category and $B$ an object of $\C$.
Each of
\[
 [\mathcal D,\C],\qquad \C/B,\qquad B/\C
\]
is essentially algebraic and admits a conservative left adjoint
into a finitary many-sorted variety preserving monomorphisms and
pullbacks of two monomorphisms with a common codomain.
Consequently, each has a noetherian form with
complete algebraic modular fibres inducing its
strong-epimorphism--monomorphism factorization system.
\end{proposition}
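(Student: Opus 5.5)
We handle the three constructions separately. In each case we produce a functor into a finitary many-sorted variety that satisfies the hypotheses of \cref{thm:essentially-algebraic}; that theorem then supplies the noetherian form with complete algebraic modular fibres. Local presentability of $[\mathcal D,\C]$, $\C/B$ and $B/\C$ is the cited standard closure \cite[Corollary~1.54 and Proposition~1.57]{AR1994}, so each is essentially algebraic. In every case monomorphisms are computed as in $\C$, componentwise for diagrams and underlying for slices and coslices, since the forgetful functors create the relevant pullbacks.

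\emph{Diagrams.} Let $\V^{\mathcal D_0}$ be the finitary variety whose sorts are pairs (object $d$ of $\mathcal D$, sort of $\V$). It is the product of copies of $\V$ indexed by $\operatorname{ob}\mathcal D$. The first candidate is
\[
 [\mathcal D,\C]\to\C^{\operatorname{ob}\mathcal D}\to\V^{\operatorname{ob}\mathcal D},
\]
namely evaluation at objects followed by $L$ componentwise. Evaluation reflects isomorphisms, and so does componentwise $L$, and both preserve monomorphisms and pullbacks of monomorphisms, which are computed pointwise. The trouble is that evaluation has a left adjoint (left Kan extension along $\operatorname{ob}\mathcal D\to\mathcal D$), not necessarily a right adjoint, since right adjoints would require right Kan extensions to preserve $L$-structure. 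I would therefore use the variety of $\mathcal D$-indexed families of $\V$-algebras equipped with unary $\V$-homomorphisms $\V$-actions along the arrows of $\mathcal D$. This is the finitary variety $[\mathcal D,\V]$, presented by adding, for each arrow of $\mathcal D$ and each sort, a unary operation satisfying the homomorphism and functoriality equations. The functor $[\mathcal D,L]$ is a left adjoint, with right adjoint $[\mathcal D,R]$ given by the pointwise adjunction. It is conservative, and it preserves monomorphisms and their pullbacks because these are pointwise in both categories.

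\emph{Slices.} For $\C/B$, use $L/B:\C/B\to\V/LB$. It is a left adjoint: its right adjoint sends $v\colon V\to LB$ to the pullback of $Rv$ along the unit $B\to RLB$. The functor is conservative and preserves monomorphisms. Pullbacks of two monomorphisms over a common codomain in a slice are computed in the underlying category, so these are preserved too. It remains to see that $\V/LB$ is a finitary many-sorted variety. Take one sort $(i,b)$ for each sort $i$ and each element $b\in(LB)_i$, and one operation for each operation of $\V$ and each tuple of labels, with the output labelled by the value in $LB$. Equations are instantiated accordingly. This equivalence is routine, and it transports the sortwise-surjective--sortwise-injective system. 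By \cref{prop:transfer-elementary} an equivalence transporting the systems is harmless.

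\emph{Coslices.} This is the main obstacle. Take $L^B:B/\C\to LB/\V$, with right adjoint $u\mapsto R$ precomposed with the unit. Again $L^B$ is conservative and preserves monomorphisms. Pullbacks of monomorphisms in $B/\C$ are computed in $\C$ because the forgetful functor from a coslice creates connected limits, and the same holds in $LB/\V$, so $L^B$ preserves them. The coslice $LB/\V$ is again a finitary variety: adjoin a constant for each element of $LB$ and impose the equations of the diagram of $LB$. All three functors are left adjoints satisfying (i)--(iii), so \cref{thm:essentially-algebraic} applies to give the noetherian forms. The verification I expect to require the most care is the adjoint in the coslice case together with the presentation of $LB/\V$, in particular the case of empty sorts.
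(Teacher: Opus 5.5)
Your proposal is correct. For diagram categories and coslices, your final argument is the paper's. There, $[\mathcal D,L]\dashv[\mathcal D,R]$ lands in the variety $[\mathcal D,\V]$, whose unary operations encode the arrows of $\mathcal D$. For coslices, $L_B\colon B/\C\to L(B)/\V$ has right adjoint $v\mapsto R(v)\eta_B$, and the target is presented by adjoining constants for the elements of $L(B)$. Empty sorts cause no trouble: no constants of that sort are adjoined, and the equations recording the operations are closed.

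You justify uniformly that monomorphisms are computed underlying: a morphism is monic exactly when its kernel pair is trivial, and the forgetful functors create pullbacks. This replaces the paper's two separate arguments, namely that evaluation functors are right adjoints, and the test object $B\to B\sqcup Z$ in the coslice. It is equally valid.

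The genuine difference is the slice case. You use $L/B\colon\C/B\to\V/L(B)$, whose right adjoint pulls $R(v)$ back along $\eta_B$. You must then present $\V/L(B)$ as a variety whose sorts are labelled by elements of $L(B)$. The paper avoids this. The domain functor $P\colon\C/B\to\C$ is left adjoint to $X\mapsto(B\times X\to B)$; it is conservative, creates pullbacks, and preserves and reflects monomorphisms. Hence $LP\colon\C/B\to\V$ already satisfies the criterion, with the original variety as target. Your version treats slices in parallel with coslices and keeps the labelling over $L(B)$ in the envelope. For the existence statement, though, it costs an extra presentation that the paper does not need.

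Your reason for discarding the first candidate in the diagram case is wrong. Since $\C$ is complete and $\mathcal D$ is small, the restriction functor $[\mathcal D,\C]\to\C^{\operatorname{ob}\mathcal D}$ has a right adjoint, the right Kan extension $G\mapsto\bigl(d\mapsto\prod_{u\colon d\to d'}G(d')\bigr)$. Its composite with $L$ componentwise, $[\mathcal D,\C]\to\V^{\operatorname{ob}\mathcal D}$, is therefore a conservative left adjoint preserving monomorphisms and their pullbacks. It would also have worked, with an even simpler target variety. This does not affect your final argument, which does not rely on the claim.
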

\begin{proof}
Write $L\dashv R$, with unit $\eta$. For diagrams, use
\[
 [\mathcal D,L]:[\mathcal D,\C]\longrightarrow[\mathcal D,\V].
\]
It is left adjoint to $[\mathcal D,R]$, and conservativity and
pullback preservation are pointwise. Monomorphisms in these diagram
categories are pointwise monomorphisms: each evaluation functor has
a left adjoint formed using coproducts, so it preserves
monomorphisms. The target is a finitary many-sorted variety. Its
sorts are pairs consisting of an object of $\mathcal D$ and a
sort of $\V$; unary operations encode arrows of $\mathcal D$,
and equations impose functoriality and compatibility with the
operations of $\V$.

For slices, the domain functor $P:\C/B\to\C$ is left adjoint
to $X\mapsto(B\times X\to B)$. It is conservative and preserves
monomorphisms and pullbacks. Thus $LP$ has the required properties.

For coslices, use
\[
 L_B:B/\C\longrightarrow L(B)/\V,
 \qquad (B\xrightarrow{b}X)\longmapsto
 (L(B)\xrightarrow{L(b)}L(X)).
\]
Its right adjoint sends $L(B)\xrightarrow{v}Y$ to
$B\xrightarrow{\eta_B}RL(B)\xrightarrow{R(v)}R(Y)$.
Conservativity follows from that of $L$. The forgetful functors
from the coslices create pullbacks. They also preserve and reflect
monomorphisms: to test the underlying map against two maps out of
$Z$, use the object $B\to B\sqcup Z$ of the coslice.
Hence $L_B$ preserves monomorphisms and pullbacks of two
monomorphisms with a common codomain.
The category $L(B)/\V$ is a finitary many-sorted variety: adjoin
a constant for every element of every carrier of $L(B)$ and
equations recording all its operations. These constants specify
exactly a homomorphism from $L(B)$.

The cited closure results give essential algebraicity, and
\cref{thm:essentially-algebraic} gives the asserted forms.
\end{proof}

The criterion includes every finitary many-sorted variety, by taking
its identity functor. With $\C=\Set$, the diagram construction
therefore gives all small presheaf categories, including simplicial
sets $[\Delta^{\op},\Set]$. Simplicial sets are distinct from the
abstract simplicial complexes of \cref{tc:cor:simplicial}.
With $\C=\mathbf{Cat}$ or $\mathbf{Gpd}$, it gives simplicial
categories and simplicial groupoids. Slices give categories over a
fixed category, while $B/\mathbf{Cat}$ consists of categories
equipped with a specified functor from $B$. The constructions may
be iterated, and in every case the two classes are strong
epimorphisms and monomorphisms.

This is a sufficient criterion for essentially algebraic categories,
not a conclusion from essential algebraicity alone. The argument
requires an algebraic envelope with the stated properties; it does
not establish that every essentially algebraic category has such
an envelope, or settle existence of a noetherian form in full
generality.

\subsection{Geometric theories and inherited forms}
\label{sec:geometric-theories}

A full subcategory need not be closed under all limits and colimits
to inherit a noetherian form. Closure under the subobjects and
quotients of the given form suffices. We use the standard stability
properties of orthogonal factorization systems \cite{AHS} together
with \cref{lem:homogeneous-exist,thm:transfer}.

\begin{proposition}[Inheritance by subobjects and quotients]
\label{prop:closed-subcategory}
Let $\F$ be a noetherian form on a category $\D$, with induced
proper factorization system $(\E,\M)$, and let $\C$ be a full,
replete subcategory of $\D$. Suppose that
\begin{enumerate}[label=\textup{(\roman*)}]
\item if $m:A\to X$ belongs to $\M$ and $X$ belongs to $\C$,
then $A$ belongs to $\C$;
\item if $e:X\to B$ belongs to $\E$ and $X$ belongs to $\C$,
then $B$ belongs to $\C$.
\end{enumerate}
Then the restrictions of $\E$ and $\M$ to $\C$ form a proper
factorization system, the inclusion $J:\C\hookrightarrow\D$
is a transfer functor, and $J^*\F$ is a noetherian form inducing
that system. If the fibres of $\F$ are complete, algebraic, or
modular, the restricted form has the corresponding properties.
\end{proposition}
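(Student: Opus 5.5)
Since $J^*\F$ is defined as the pullback form, the plan is to check the conditions of \cref{thm:transfer}(ii) for the inclusion $J$ and then read off the fibre properties.

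\emph{Step 1: the restricted factorization system.} Let $f:X\to Y$ be a morphism of $\C$, factored in $\D$ as $f=me$ through an object $I$. Condition (ii) applied to $e:X\to I$ puts $I$ in $\C$. (Condition (i) applied to $m:I\to Y$ would do equally well.) Because $\C$ is full, both factors lie in $\C$. Orthogonality of $\E\cap\C$ against $\M\cap\C$ is inherited: the unique diagonal in $\D$ has domain and codomain in $\C$, so by fullness it lies in $\C$. The two classes are closed under composition and contain the isomorphisms, since $\C$ is replete. Epimorphisms of $\D$ stay epic in $\C$ and monomorphisms stay monic, so the system is proper. The system is exactly $(\E|_\C,\M|_\C)$, so $J$ preserves both classes tautologically.

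\emph{Step 2: transfer conditions.} $J$ is conservative because it is full and faithful: an inverse in $\D$ between objects of $\C$ lies in $\C$. For homogeneous pullbacks, take $m,n\in\M$ with common codomain $X\in\C$. By \cref{lem:homogeneous-exist} their pullback exists in $\D$, and its vertex $P$ maps to $X$ by an $\M$-morphism, namely the common composite. Condition (i) then gives $P\in\C$. A limit in $\D$ whose vertex lies in the full subcategory $\C$ is also a limit in $\C$ and is preserved by $J$. Dually, condition (ii) puts the vertex of the pushout of two $\E$-morphisms into $\C$. \Cref{thm:transfer} now shows that $J^*\F$ is noetherian and induces $(\E|_\C,\M|_\C)$.

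\emph{Step 3: fibres.} By \eqref{eq:basechange}, the fibre of $J^*\F$ over $X$ is literally $\F(X)$. Completeness, algebraicity and modularity are therefore inherited verbatim.

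I expect the only real point to be Step 2. One must see that existence of the homogeneous diagrams in $\C$ is supplied by the $\D$-pullback together with the closure hypotheses. That is why the common composite has to be in $\M$ (respectively in $\E$), and this is exactly what \cref{lem:homogeneous-exist} provides.
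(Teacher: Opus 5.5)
Your proposal is correct and follows the paper's proof essentially step for step. The factorization system is inherited via fullness and either closure hypothesis, the homogeneous diagrams exist in $\D$ by \cref{lem:homogeneous-exist} and have their vertices in $\C$ by (i) and (ii), and \cref{thm:transfer} then applies with the fibres unchanged; the only cosmetic difference is that you apply (i) to the pullback's common composite rather than to its projections, and either works.
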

\begin{proof}
For a morphism of $\C$, its $(\E,\M)$-factorization in $\D$
has intermediate object in $\C$, by either closure assumption.
Fullness retains orthogonality, so these factorizations define the
asserted proper factorization system.

A pullback of two $\M$-morphisms between objects of $\C$
exists in $\D$ by \cref{lem:homogeneous-exist}. Its projections
belong to $\M$, so its vertex belongs to $\C$ by (i).
Dually, a pushout of two $\E$-morphisms has coprojections in
$\E$, so its vertex belongs to $\C$ by (ii). Fullness makes
these diagrams pullbacks and pushouts in $\C$ as well.
The inclusion preserves the two classes and these diagrams, and
reflects isomorphisms. It is therefore a transfer functor.
Apply \cref{thm:transfer}; the fibres themselves are unchanged.
\end{proof}

Geometric formulas are built from atomic formulas using finite
conjunctions, arbitrary set-indexed disjunctions, and existential
quantification; their free variables lie in a finite context
\cite[Section~2.1.1]{Caramello2018}. We use set-based models, permitting
empty carriers when the axioms permit them, and the usual
homomorphisms. The following sufficient condition uses the fragment
without existential quantifiers or nontrivial antecedents.
The ambient existence result is \cite[Corollary~6.2]{VN}, in the
many-sorted form of \cref{prop:many-sorted-transfer}.

\begin{theorem}[A sufficient geometric condition]
\label{thm:geometric-equational}
Let $\mathbb T_0$ be a finitary algebraic theory with a set of
sorts and a set-sized signature. Let $\mathbb T$ be obtained by
adding to $\mathbb T_0$, in the same signature, a set of axioms
of the form
\begin{equation}\label{eq:geometric-equational}
 \top\ \vdash_{\vec x}\quad
 \bigvee_{i\in I}\ \bigwedge_{j=1}^{n_i}
 t_{ij}(\vec x)=s_{ij}(\vec x),
\end{equation}
where $\vec x$ is a finite context of sorted variables, $I$ is
a set, each $n_i$ is finite, and each displayed equation is
well-sorted. Then the category
$\operatorname{Mod}_{\Set}(\mathbb T)$ of set-based models
and homomorphisms admits a noetherian form whose quotients are
exactly the sortwise-surjective homomorphisms and whose embeddings
are exactly the sortwise-injective homomorphisms. Its fibres can
be chosen complete, algebraic and modular. Moreover, the full
inclusion into $\operatorname{Mod}_{\Set}(\mathbb T_0)$ is a
transfer functor for these factorization systems.
\end{theorem}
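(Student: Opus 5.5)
The plan is to apply \cref{prop:closed-subcategory} with $\D=\operatorname{Mod}_{\Set}(\mathbb T_0)$ and $\C=\operatorname{Mod}_{\Set}(\mathbb T)$. By \cref{prop:many-sorted-transfer}, $\D$ carries a noetherian form $\F$ whose quotients are the sortwise surjections and whose embeddings are the sortwise injections. By \cref{thm:modular-replacement}, we may replace $\F$ by $\F^{\mathrm{mod}}$ with complete algebraic modular fibres, without changing these two classes. The subcategory $\C$ is full by definition, since homomorphisms of $\mathbb T$-models are just $\mathbb T_0$-homomorphisms. It is replete, because satisfaction of the axioms \eqref{eq:geometric-equational} is invariant under isomorphism. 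Once closure conditions (i) and (ii) of \cref{prop:closed-subcategory} are verified, that proposition gives four things: the restricted factorization system, the inclusion as a transfer functor, a noetherian form on $\C$ with exactly the stated quotients and embeddings, and inheritance of completeness, algebraicity and modularity.

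Closure under subobjects comes first. Let $m:A\to X$ be sortwise injective with $X$ a $\mathbb T$-model, and take a tuple $\vec a$ in $A$ of the right sorts. Applying the axiom in $X$ to $m(\vec a)$ gives some $i\in I$ with $t_{ij}(m\vec a)=s_{ij}(m\vec a)$ for all $j\le n_i$. Since $m$ is a homomorphism, this reads $m(t_{ij}(\vec a))=m(s_{ij}(\vec a))$, and injectivity yields $t_{ij}(\vec a)=s_{ij}(\vec a)$. Hence $A$ satisfies the axiom. This uses that $A$ is already a $\mathbb T_0$-algebra, so the terms are interpreted in $A$. Empty carriers cause no trouble, since an empty context sort makes the axiom vacuous.

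Closure under quotients is the dual step. Let $e:X\to B$ be sortwise surjective and take a tuple $\vec b$ in $B$. By surjectivity in each sort, choose $\vec x$ with $e(\vec x)=\vec b$; the context is finite, so finitely many choices suffice and no choice principle beyond finite choice is needed. Applying the axiom at $\vec x$ and pushing the resulting equalities forward along $e$ gives the required disjunct at $\vec b$. This step is where the restricted shape of the axioms matters. Trivial antecedent and no existential quantifiers ensure that equalities are preserved by homomorphisms. Pulling back along injections needs equalities to be reflected, which holds for atomic equations but would fail for antecedents or negated formulas. The main care point is exactly this: checking that both closure directions use only positive, quantifier-free consequents, with disjunctions over arbitrary $I$ harmless because a single witness $i$ is transported.

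Finally, I would remark that the factorization in $\C$ is the one from $\D$, restricted as in \cref{prop:closed-subcategory}. So the quotients and embeddings of $J^*\F^{\mathrm{mod}}$ are precisely the sortwise surjective and sortwise injective homomorphisms between $\mathbb T$-models. The fibres are literally those of $\F^{\mathrm{mod}}$, so they are complete, algebraic and modular.
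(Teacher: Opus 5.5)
Your proposal is correct and follows the paper's proof essentially step for step. Like the paper, you apply \cref{prop:closed-subcategory} to $\operatorname{Mod}_{\Set}(\mathbb T)\subseteq\operatorname{Mod}_{\Set}(\mathbb T_0)$, with the ambient form supplied by \cref{prop:many-sorted-transfer} and \cref{thm:modular-replacement}, and you check closure under sortwise-injective subobjects and sortwise-surjective images by the same reflection and lift-then-push-forward arguments. One small correction to your side commentary only: nontrivial antecedents and negated consequents are harmless for subalgebras and break the quotient direction instead, whereas existential quantifiers in the consequent are what would break the subalgebra direction.
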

\begin{proof}
Put $\V=\operatorname{Mod}_{\Set}(\mathbb T_0)$ and
$\C=\operatorname{Mod}_{\Set}(\mathbb T)$.
The category $\V$ has a noetherian form for the stated system by
\cref{prop:many-sorted-transfer}; by
\cref{thm:modular-replacement} we may choose one with complete
algebraic modular fibres.

The full, replete subcategory $\C$ is closed under subalgebras:
given a tuple in a subalgebra, a disjunct of
\eqref{eq:geometric-equational} holds in the ambient algebra,
and its equations hold in the subalgebra because the inclusion
reflects equality. It is also closed under sortwise-surjective
homomorphic images. Lift the finitely many components of a tuple
to the domain, choose a disjunct satisfied there, and apply the
homomorphism to its equations. Thus
\cref{prop:closed-subcategory} applies and proves all assertions.
\end{proof}

This criterion does not require closure under products, and the
resulting model categories need not be varieties. The exponent or
finite system of equations selected by a disjunction may depend on
the tuple. The following examples are all direct specializations,
with their ordinary homomorphisms.

\begin{example}\label{ex:geometric-algebraic-examples}
Each of the following categories has a noetherian form with
surjective homomorphisms as quotients, injective homomorphisms as
embeddings, and complete algebraic modular fibres:
\begin{enumerate}[label=\textup{(\roman*)}]
\item Torsion groups, by adding to the group axioms
\[
 \top\vdash_x\bigvee_{n\geq1}x^n=1.
\]
For a fixed prime $p$, groups in which every element has
$p$-power order are obtained instead from
$\top\vdash_x\bigvee_{r\geq0}x^{p^r}=1$.
\item Periodic monoids, meaning monoids in which each element
generates a finite submonoid, by adding
\[
 \top\vdash_x\bigvee_{1\leq m<n}x^m=x^n.
\]
\item Nil associative rings without a prescribed multiplicative
identity, by adding to the corresponding ring axioms
\[
 \top\vdash_x\bigvee_{n\geq1}x^n=0.
\]
\item Locally nilpotent associative rings without a prescribed
multiplicative identity. For each $k\geq1$, impose
\[
 \top\vdash_{x_1,\ldots,x_k}
 \bigvee_{N\geq1}\,
 \bigwedge_{(i_1,\ldots,i_N)\in\{1,\ldots,k\}^{N}}
 x_{i_1}\cdots x_{i_N}=0.
\]
Each conjunction is finite. Its equations say precisely that the
subring generated by the tuple is nilpotent: longer products also
vanish, and every element of that subring is an integer linear
combination of nonempty words in the generators.
\end{enumerate}
In every case the assertion follows from
\cref{thm:geometric-equational}.
\end{example}

The requirement on the axioms cannot be replaced merely by
geometricity, or even regularity. We use here the standard fact
that a split monomorphism has the right lifting property against
every epimorphism \cite{AHS}, together with
\cref{lem:homogeneous-exist}.

\begin{counterexample}[The theory of inhabited sets]
\label{ex:inhabited-obstruction}
Let $\mathbb T$ be the one-sorted theory with no operations or
relations and with the regular, hence geometric, axiom
\[
 \top\vdash\exists x\,\top.
\]
Its category of set-based models and homomorphisms is
$\Set_{\ne\varnothing}$, the category of nonempty sets and all
functions. This category admits no noetherian form, for any
factorization system.
\end{counterexample}
\begin{proof}
If a noetherian form existed, every split monomorphism would
belong to its embedding class: this is the right class of a
proper factorization system, whose left class consists of
epimorphisms. Thus every pair of split monomorphisms with a common
codomain would have a pullback by \cref{lem:homogeneous-exist}.
However, the two inclusions
\[
 \{*\}\xrightarrow{\ i_0\ }\{0,1\}
 \xleftarrow{\ i_1\ }\{*\},
 \qquad i_0(*)=0,\quad i_1(*)=1,
\]
are split monomorphisms with no cone in $\Set_{\ne\varnothing}$:
a cone would equate the two distinct constant functions from a
nonempty set to $\{0,1\}$. In particular, they have no pullback.
\end{proof}

Specifying a constant instead gives pointed sets, a variety to
which \cref{prop:many-sorted-transfer} applies. The distinction
between an existential witness and a specified witness therefore
matters for existence. Likewise, the classifying topos of a
geometric theory and its category of set-based models are different
categories \cite[Section~2.1.2]{Caramello2018}; the existence theorem for topoi
\cite{JVN} concerns the former and does not imply existence for
the latter.

\subsection{Monadic functors and ideally exact categories}
\label{sec:ideally-exact}

For right adjoints, the pullback problem was characterized by van Niekerk
\cite[Theorem~5.5]{VN}. We recall his theorem to make explicit its relation
to \cref{thm:transfer}; here, as throughout this paper, a pushout of two
quotients means that the two quotients have a common domain.

\begin{theorem}[Monadic pullback criterion]\label{thm:monadic-pullback}
Let $\C$ and $\D$ be categories, let $U:\C\to\D$ have a left
adjoint $L:\D\to\C$, and let $\F$ be a noetherian form on $\D$.
Give the pullback $U^*\F$ the fibres $\F(UX)$ and the transports
along $Uf$ for each morphism $f$ of $\C$. Then $U^*\F$ is
noetherian if and only if $U$ is monadic and the induced monad
$UL:\D\to\D$ preserves pushouts of two $\F$-quotients with
common domain.
\end{theorem}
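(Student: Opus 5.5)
The plan is to reduce both directions to \cref{thm:transfer}, applied with the source system $\E_\C=U^{-1}(\E_\D)$, $\M_\C=U^{-1}(\M_\D)$, where $(\E_\D,\M_\D)$ is the system of $\F$. Since $U$ is a right adjoint, it preserves every existing pullback, in particular pullbacks of two $\M_\C$-morphisms. By \cref{prop:transfer-elementary}, whenever $U$ is conservative and this choice of classes is a proper factorization system, $U^*\F$ is noetherian exactly when $U$ preserves pushouts of two $\E_\C$-morphisms. Under (i), \cref{lem:classreflection} forces the induced classes to be precisely these preimages, so the choice is not a loss of generality. The argument then has three strands: conservativity, existence of the factorization system (which is where monadicity enters), and translation of pushout preservation by $U$ into pushout preservation by the monad $UL$.

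\emph{Necessity.} Assume $U^*\F$ is noetherian. Then $U$ is conservative by \cref{prop:N2conservative}. By \cref{lem:classreflection}, the induced system is $(U^{-1}\E_\D,U^{-1}\M_\D)$. By \cref{thm:transfer}, $U$ preserves pushouts of two quotients.

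For monadicity I would use Beck's theorem, so it remains to show that $\C$ has, and $U$ preserves, coequalizers of $U$-split pairs $f,g:X\rightrightarrows Y$. The intended construction goes as follows. Let $Uf,Ug$ have split coequalizer $c:UY\to Z$ in $\D$. Its kernel $K=\Ker_\F c$ is a cluster of $U^*\F$ over $Y$, and one must show that $K$ is normal there, i.e.\ $K=\Ker_\F(Ue)$ for some $e\in\E_\C$. I would try to obtain $e$ as the quotient of the join of normal clusters generated by the relation $(f,g)$ using (N3), for instance through the image of $\langle f,g\rangle$ transported along the splitting. Then (N2) gives $e$, and \cref{thm:N2}\textup{(iii)} reflects the factorization of $Ue$ through $c$ back to $\C$. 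Since $L$ preserves pushouts and $U$ preserves pushouts of two quotients, the monad $UL$ preserves pushouts of two $\F$-quotients: $L$ of an $\E_\D$-map lies in $\E_\C$ by adjunction, since orthogonality transposes against $U(\M_\C)\subseteq\M_\D$.

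\emph{Sufficiency.} Assume $U$ is monadic and $T=UL$ preserves pushouts of two quotients. Then $U$ is conservative, and $\C$ may be identified with the category of $T$-algebras.

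The first step is to show $T(\E_\D)\subseteq\E_\D$. An $\E_\D$-map $e$ is epic, so its pushout with itself is the trivial square on its codomain; preservation of that pushout makes $Te$ epic. To upgrade this to $Te\in\E_\D$, I would factor $Te=m'e'$ and use preservation of the pushout of $e$ with a quotient $e''$ for which $Te''$ is controlled, hoping to force $m'$ to be invertible. Once $T(\E_\D)\subseteq\E_\D$ holds, the standard argument lifts the factorization to algebras: for an algebra map $f$ with $Uf=me$, the structure on the intermediate object is the diagonal filler of the square $e\,\xi_X$ against $m$ with $\xi_Y\circ Tm$, where $\xi_X,\xi_Y$ are the algebra structures. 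This yields the proper factorization system $(U^{-1}\E_\D,U^{-1}\M_\D)$, together with pullbacks of embeddings, which are created by $U$.

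Pushouts of two $\E_\C$-maps are handled similarly. Form the pushout of the underlying quotients in $\D$. Its images under $T$ and $T^2$ are again pushouts, so the algebra structure and its axioms are induced on the pushout, and the resulting cocone is a pushout in $\C$ preserved by $U$. \cref{thm:transfer} then finishes the proof.

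The main obstacle is the normality step in the Beck argument of the necessity direction, with the claim $T(\E_\D)\subseteq\E_\D$ second. For the latter, the fallback is to quote \cite[Proposition~4.4]{VN}, which supplies exactly this class-reflection step for right adjoints.
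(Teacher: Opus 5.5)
\textbf{Sufficiency: $T(\E_\D)\subseteq\E_\D$ does not follow from the hypotheses.} Your sufficiency direction rests on this inclusion for $T=UL$. It is not merely the hard step. It does not follow, so the ``if'' direction as literally stated is false. Your own necessity argument shows the inclusion is necessary, since $L(\E_\D)\subseteq\E_\C$ and $U(\E_\C)\subseteq\E_\D$. Here is a counterexample.
\begin{itemize}
\item Let $\D$ be the chain $0<1<2$, with $a\colon0\to1$ and $b\colon1\to2$. Give it the form with $\Lambda_0=\Lambda_2=\{\bot<\top\}$ and $\Lambda_1=\{\ast\}$. The adjunctions force $a^*(\ast)=\top$ and $b_!(\ast)=\bot$.
\item A direct check of (N1)--(N3) shows this form is noetherian. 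Its quotients are $a$ and the identities, its embeddings are $b$ and the identities, and $\Ker(ba)=\top$, $\Img(ba)=\bot$.
\item The closure operator $T$ with $T0=0$ and $T1=T2=2$ is a monad. Its algebras form the full subchain $\C=\{0<2\}$, so $U\colon\C\hookrightarrow\D$ is monadic with $UL=T$. Being monotone on a chain, $T$ preserves all pushouts.
\item However, $Ta=ba$ is epic without being a quotient, and $U^*\F$ violates (N2). In $\C$ the only arrow out of $0$ with kernel $\top$ is $0\to2$, and the only arrow into $2$ with image $\bot$ is also $0\to2$. So this arrow cannot factor as a quotient of its kernel followed by an embedding of its image.
\end{itemize}
Thus your epi argument cannot be upgraded, and no class-reflection result can supply the inclusion. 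What is true is the statement with the extra hypothesis that $UL$ preserves $\F$-quotients, which the application in \cref{thm:ideally-exact} does verify. Under that hypothesis the rest of your sufficiency argument is sound: diagonal fillers lift the factorization system to algebras, $U$ creates pullbacks, the algebra structure descends to the pushout using $T$ and $T^2$, and then \cref{thm:transfer} applies.

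\textbf{Necessity: the monadicity step is left open.} You only state a hope here, and the route via $\langle f,g\rangle$ is unavailable since $\C$ need not have products. The missing idea is that the splitting itself computes the kernel. If $c\,Uf=c\,Ug$, $cs=1$, $Uf\,t=1$ and $Ug\,t=sc$, then
\[
 \Ker_\F c=(Uf)_!\Ker_\F(Ug).
\]
For one inequality, $c^*\bot=(Uf)_!t_!c^*\bot$ and $t_!c^*\bot\le(Ug)^*\bot$, because $(Ug)_!t_!c^*\bot=s_!c_!c^*\bot\le s_!\bot=\bot$. For the other, $(Ug)^*\bot\le(Ug)^*c^*\bot=(Uf)^*c^*\bot$. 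Since $Uf$ is split epic, $f\in\E_\C$. So $\Ker_\F c$ is the direct image of a normal cluster along a quotient, hence normal by the remark after \cref{lem:homogeneous-exist}. This gives $e\in\E_\C$ with $Ue\cong c$ under $UY$. To see that $e$ is a coequalizer of $f,g$, note that $U$ is faithful: each counit component is a $U^*\F$-quotient, since $U\varepsilon_X$ is split epic. Hence $U(ef)=U(eg)$ gives $ef=eg$. Any $h$ with $hf=hg$ has $\Ker_\F(Uh)\ge\Ker_\F c$, so it factors uniquely through $e$. Beck's theorem then applies.
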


The following preservation statement uses the regular-pushout
characterization of exact Mal'tsev categories
\cite[Theorem~5.7]{CKP1993}; see also
\cite[Section~2.2 and Lemma~3.16]{SVL2018}.
A \emph{regular pushout} is a commutative square of regular epimorphisms
whose comparison from its upper-left vertex to the pullback of the other
three vertices is a regular epimorphism.

\begin{lemma}\label{lem:maltsev-pushouts}
A functor between Barr-exact Mal'tsev categories that preserves pullbacks
and regular epimorphisms preserves pushouts of two regular epimorphisms.
Consequently, every conservative regular functor between such categories
is a transfer functor for their regular-epimorphism--monomorphism systems.
\end{lemma}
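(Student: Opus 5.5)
The plan is to reduce the pushout assertion to the regular-pushout characterization of exact Mal'tsev categories \cite[Theorem~5.7]{CKP1993}. In a Barr-exact Mal'tsev category $\A$, a commutative square of regular epimorphisms is a pushout if and only if it is a regular pushout; that is, if and only if the comparison from its upper-left vertex to the pullback of the other three vertices is a regular epimorphism. Let $F:\A\to\mathcal B$ preserve pullbacks and regular epimorphisms, and let $e:X\to Y$, $d:X\to Z$ be regular epimorphisms in $\A$. Since $\A$ is exact, hence regular and cocomplete enough for this purpose, their pushout $P$ exists. Its coprojections $u:Y\to P$ and $v:Z\to P$ are regular epimorphisms, because pushouts of regular epimorphisms are regular epimorphisms. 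By the characterization, the square is a regular pushout, so the comparison $c:X\to Y\times_P Z$ is a regular epimorphism.

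Now apply $F$. All four sides of $F$ applied to the square are regular epimorphisms. Because $F$ preserves pullbacks, $F(Y\times_PZ)$ is the pullback $FY\times_{FP}FZ$, and $Fc$ is the comparison map for the image square. Because $F$ preserves regular epimorphisms, $Fc$ is a regular epimorphism. Hence the image square is a regular pushout in $\mathcal B$, and the converse direction of the characterization, valid in any regular category, makes it a pushout. This converse is worth checking separately: a regular pushout is always a pushout. I would recall this from \cite[Section~2.2]{SVL2018} rather than reprove it. The main obstacle is only to make sure the hypotheses are applied in the correct directions. We need exactness and the Mal'tsev property in the source to get ``pushout $\Rightarrow$ regular pushout'', and only regularity in the target to get ``regular pushout $\Rightarrow$ pushout''.

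For the consequence, equip both categories with the regular-epimorphism--monomorphism system. This system is proper and orthogonal in any regular category. A regular functor preserves finite limits, and therefore monomorphisms and pullbacks of two monomorphisms. It also preserves regular epimorphisms by definition. Pushouts of two regular epimorphisms exist in an exact Mal'tsev category: their common composite is the quotient by the join of the two kernel equivalence relations, which is again effective by exactness, and the first part shows they are preserved. Together with conservativity, all four clauses of \cref{def:transfer-functor} hold.
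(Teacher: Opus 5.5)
Your proposal is correct and follows the paper's proof: like the paper, you use the Carboni--Kelly--Pedicchio characterization (pushouts of regular epimorphisms are exactly regular pushouts), note that preserving pullbacks and regular epimorphisms carries the comparison morphism and its regular-epimorphism property across, and get the transfer axioms from regularity plus conservativity. You also observe that the target needs only regularity for ``regular pushout $\Rightarrow$ pushout'' (the paper uses the full biconditional in both categories), and you explain why the pushouts exist. Both are harmless sharpenings.
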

\begin{proof}
In an exact Mal'tsev category, pushouts of two regular epimorphisms exist,
and a commutative square of regular epimorphisms is a pushout if and only
if it is a regular pushout. Preservation of pullbacks and regular
epimorphisms preserves the comparison morphism and its regular-epimorphism
property, so preserves these pushouts. For the last assertion, a regular
functor preserves finite limits and regular epimorphisms, hence also
monomorphisms and the required pullbacks. Conservativity supplies the
remaining transfer axiom.
\end{proof}

An \emph{ideally exact category} is Barr exact, is protomodular, has finite
coproducts, and has a regular epimorphism $0\to1$ from its initial to its
terminal object \cite[Theorem~3.1 and Definition~3.2]{GJIdeallyExact}.
The next result is a consequence of Janelidze's monadic characterization
\cite[Theorems~3.1 and~3.3]{GJIdeallyExact} and van Niekerk's monadic
pullback criterion \cite[Theorem~5.5]{VN}, using the usual subobject
noetherian form of a semi-abelian category \cite[Example~50]{JVN}.

\begin{theorem}[Ideally exact categories]\label{thm:ideally-exact}
Let $\C$ be a well-powered ideally exact category: it is Barr exact
and protomodular, has finite coproducts, and the unique morphism
$p:0\to1$ from its initial object $0$ to its terminal object $1$
is a regular epimorphism. Then $\C$ has a noetherian form inducing
its regular-epimorphism--monomorphism factorization system.
A canonical choice has, for each object $X\in\C$, the fibre
\[
 \Lambda_X=\Sub_{\C/0}\bigl(0\times X\longrightarrow0\bigr),
\]
where $0\times X\to0$ is the first projection and $\C/0$ is
the slice category. For a morphism $f:X\to Y$ of $\C$, direct
and inverse images are the subobject transports in $\C/0$ along
$p^*f=1_0\times f:(0\times X\to0)\to(0\times Y\to0)$.
\end{theorem}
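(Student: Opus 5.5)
The plan is to realize the form as the pullback of the subobject form of a semi-abelian category along the change-of-base functor \(p^*=0\times(-):\C\to\C/0\). Then I would invoke \cref{thm:monadic-pullback} or \cref{thm:transfer} to conclude that it is noetherian. I will take from \cite[Theorems~3.1 and~3.3]{GJIdeallyExact} Janelidze's monadic characterization: for an ideally exact \(\C\), the slice \(\C/0\) is semi-abelian, and the pullback functor \(p^*:\C\to\C/0\), \(X\mapsto(0\times X\to0)\), is monadic with left adjoint \(\Sigma_p\) given by composition with \(p\). Since \(\C/0\) is well-powered whenever \(\C\) is, its subobject form \(\F\) is noetherian by \cite[Example~50]{JVN}. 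That form induces the regular-epimorphism--monomorphism system of \(\C/0\). Its fibre over \(p^*X\) is \(\Sub_{\C/0}(0\times X\to0)\), and its transports along \(p^*f=1_0\times f\) are exactly the ones displayed in the statement. So the displayed fibres and transports are precisely the pullback form \((p^*)^*\F\), and what remains is to prove that this form is noetherian and to identify its factorization system.

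For noetherianity I would use \cref{thm:monadic-pullback}. Monadicity of \(p^*\) is supplied by the cited theorem, so the remaining hypothesis is that the monad \(p^*\Sigma_p\) on \(\C/0\) preserves pushouts of two regular epimorphisms with common domain. \(\C/0\) is semi-abelian, hence exact Mal'tsev. \(\C\) is exact and protomodular, hence also exact Mal'tsev. The functor \(\Sigma_p\) is a left adjoint, so it preserves pushouts. The functor \(p^*\) is a right adjoint, so it preserves pullbacks. It also preserves regular epimorphisms because regular epimorphisms in a regular category are pullback-stable. By \cref{lem:maltsev-pushouts}, \(p^*\) therefore preserves pushouts of two regular epimorphisms, and so does the composite monad. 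This gives noetherianity. Alternatively, the same facts verify \cref{thm:transfer}(ii) directly. Here \(p^*\) is conservative because it is monadic; it is regular; and the homogeneous diagrams exist in \(\C\) because \(\C\) is exact and has pullbacks. That route also yields the factorization system at once.

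To identify the factorization system, I would use \cref{lem:classreflection} together with \cref{thm:transfer}. The functor \(p^*\) preserves monomorphisms (right adjoint) and regular epimorphisms (pullback stability), so the induced system is \((\E_\C,\M_\C)\) with \(\E_\C\) the regular epimorphisms and \(\M_\C\) the monomorphisms of \(\C\).

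I expect the main obstacle to be extracting precisely the right statement from \cite{GJIdeallyExact}, namely that \(\C/0\) is semi-abelian and that the monadic functor is \(p^*\) rather than some other comparison. I would also need to check that \(\C/0\) is well-powered, which holds because subobjects in the slice are subobjects in \(\C\). If regularity of \(p^*\) turns out to be subtle, the fallback is pullback stability of regular epimorphisms in the Barr-exact category \(\C\).
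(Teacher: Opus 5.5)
Your proposal is correct, and its overall structure matches the paper's. You pull back the subobject form of the semi-abelian slice $\C/0$ along the monadic functor $p^*$, and you combine \cref{thm:monadic-pullback} with \cref{lem:maltsev-pushouts}.

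The difference in your main route is where you apply the Mal'tsev lemma.
\begin{itemize}
\item The paper applies it to the monad $UL$ on $\C/0$. For this it imports from \cite[Theorems~2.4(b) and~3.3(b)]{GJIdeallyExact} that $UL$ preserves pullbacks and regular epimorphisms.
\item You apply it to $p^*:\C\to\C/0$ itself. This needs $\C$ to be exact Mal'tsev (protomodular implies Mal'tsev) and $p^*$ to be regular. You then use that $\Sigma_p$, as a left adjoint, sends a pushout of two regular epimorphisms in $\C/0$ to a pushout of two regular epimorphisms in $\C$.
\end{itemize}
So you replace the cited properties of the monad with a property of $\C$ that its hypotheses already give. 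The paper identifies the induced factorization system via \cref{thm:N2}. Your appeal to \cref{thm:transfer}, or to \cref{lem:basic}(iii) combined with \cref{lem:classreflection}, works equally well.

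Your alternative route is more economical still. By the second assertion of \cref{lem:maltsev-pushouts}, the conservative regular functor $p^*$ is already a transfer functor. \cref{thm:transfer} then gives noetherianity and the factorization system in one step, bypassing \cref{thm:monadic-pullback}. From Janelidze's theorem it uses only semi-abelianness of $\C/0$ and conservativity of $p^*$. The paper's route instead presents the result as a corollary of van Niekerk's monadic criterion.

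One small correction to that alternative. Existence of pushouts of two regular epimorphisms in $\C$ comes from $\C$ being exact Mal'tsev, where the join of two equivalence relations is their composite. Exactness alone does not give it. In your monadic route this existence is not needed in advance, because it follows from \cref{lem:homogeneous-exist}.
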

\begin{proof}
By \cite[Theorem~3.1]{GJIdeallyExact}, the category
$\mathcal B=\C/0$ is semi-abelian and the base-change functor
\[
 U=p^*:\C\simeq\C/1\longrightarrow\mathcal B,
 \qquad X\longmapsto(0\times X\to0),
\]
is monadic, with left adjoint $L=p_!$ given by composition with $p$.
The canonical monad $UL$ preserves pullbacks and regular epimorphisms
\cite[Theorem~2.4(b) and Theorem~3.3(b)]{GJIdeallyExact}.
By \cref{lem:maltsev-pushouts}, it therefore preserves pushouts of two
regular epimorphisms. These are precisely the quotients of the usual
subobject noetherian form on $\mathcal B$. Thus
\cref{thm:monadic-pullback} makes the pullback of that form along $U$
noetherian, with the displayed fibres and transports.

The functor $U$ preserves monomorphisms and regular epimorphisms by
base change. Since its pullback form satisfies (N2), \cref{thm:N2}
identifies the induced system with the specified
regular-epimorphism--monomorphism system. Finally, well-poweredness of
$\C$ implies well-poweredness of $\C/0$, so these fibres are sets in
the fixed universe.
\end{proof}

Only preservation of pullbacks and regular epimorphisms is needed from
the monad $UL$; it need not preserve a terminal object. Monadicity by
itself would not supply the pushout-preservation hypothesis of
\cref{thm:monadic-pullback}.

Janelidze's quotient--ideal correspondence
\cite[Theorem~4.3 and Definition~4.4]{GJIdeallyExact} identifies the normal
clusters of this form with all ideals of $X$: every normal subobject of
$U(X)$ is the kernel of $U(q)$ for a regular quotient $q$ of $X$.
For unital rings, the equivalence $\C/0\simeq\mathbf{Rng}$ with rings
without a prescribed identity identifies the fibre with the lattice of
subrings that need not contain the identity. Its normal clusters are the
usual two-sided ideals, while its conormal clusters are the subrings
containing the identity. This illustrates why the resulting form need
not be the ordinary subobject form on $\C$.

\subsection{Copower and constant-object functors}

\subsubsection{Copowers in abelian categories}

We use the standard noetherian subobject form of an abelian category
\cite[Example~50]{JVN}. The proposition concerns the additional
transfer and universal-target properties of its copower functors.
\begin{proposition}\label[proposition]{prop:abelian-targets}
Let $\A$ be a well-powered abelian category with all small coproducts,
equipped with its epi--mono system, and let $G\in\A$ be nonzero.
Equip $\Set$ with its surjection--injection system. Define the
copower functor $H_G:\Set\to\A$ by
$H_G(X)=\coprod_{x\in X}G$ and
$H_G(f)i_x=i_{f(x)}$ for each function $f:X\to Y$, where $i_x$
and $i_{f(x)}$ are the corresponding coproduct injections.
Then $H_G$ is a transfer functor.
Consequently every nonzero such category is a universal target;
in particular, this holds for every nonzero Grothendieck abelian category.
\end{proposition}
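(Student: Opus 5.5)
We verify the four conditions of \cref{def:transfer-functor} for $H_G$, using that $\A$ is abelian, so its epi--mono system is the (proper) factorization system of the standard subobject form. Pullbacks of two monomorphisms and pushouts of two epimorphisms exist in $\A$ since it has all finite limits and colimits. For the universal-target conclusion we then invoke \cref{prop:target}\textup{(ii)}: $\A$ carries its noetherian subobject form, and $H_G$ witnesses $\Set\rightsquigarrow\A$. Grothendieck abelian categories are well-powered and cocomplete, so they are covered.

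\emph{Classes.} A surjection $f:X\to Y$ has a section $s$, and $H_G$ is a functor, so $H_G(f)$ is split epic. An injection $f:X\to Y$ with $X$ nonempty has a retraction, so $H_G(f)$ is split monic. If $X=\varnothing$, then $H_G(f)$ is the map $0\to H_G(Y)$, which is monic. Preservation of pushouts of two surjections is immediate, since $H_G$ is a left adjoint (to $\Hom(G,-)$ viewed as a set) and so preserves all colimits; \cref{prop:transfer-elementary} then leaves only pullbacks of injections and conservativity.

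\emph{Pullbacks of two injections.} For $A,B\subseteq X$, write $X=(A\cap B)\sqcup(A\setminus B)\sqcup(B\setminus A)\sqcup R$. Since coproducts in an abelian category are compatible with biproduct decompositions, $H_G(X)$ is the direct sum of the four corresponding copowers. Inside this sum, $H_G(A)$ is the sum of the first two summands and $H_G(B)$ the sum of the first and third. The intersection of these subobjects is the first summand: use the projection idempotents onto the summands, which exist because $H_G(X)=H_G(A\cap B)\oplus H_G(\text{rest})$ splits as coproducts. Concretely, an element of $H_G(A)\cap H_G(B)$ is killed by the projections onto $H_G(A\setminus B)$ and $H_G(B\setminus A)$. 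Arguing with generalized elements, or via the idempotent endomorphism $H_G(A)\to H_G(A)$ collapsing $A\setminus B$ to zero, gives $H_G(A)\cap H_G(B)=H_G(A\cap B)$. This is the main step to write carefully. I would phrase it with the projection $\pi:H_G(X)\to H_G(X\setminus B)$, whose kernel is $H_G(B)$. The kernel property holds because $H_G(X)=H_G(B)\oplus H_G(X\setminus B)$, and for infinite coproducts in an abelian category a coproduct of two objects is a biproduct. Restricting $\pi$ to $H_G(A)=H_G(A\cap B)\oplus H_G(A\setminus B)$ gives the projection onto the second summand, followed by a split mono. Its kernel is therefore $H_G(A\cap B)$.

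\emph{Conservativity.} Suppose $H_G(f)$ is invertible and factor $f=me$. By the class step, $H_G(e)$ is epic and $H_G(m)$ is monic, so both are isomorphisms, and it suffices to treat injections and surjections separately. If an injection $f$ misses $y$, then $H_G(Y)=H_G(f(X))\oplus G$ with $G\neq0$, so $H_G(f)$ is not epic. If a surjection $f$ identifies $x\neq x'$, compose $H_G(f)$ with the two coproduct injections $i_x,i_{x'}$, which are distinct on the nonzero summand $G$: the composite $i_x-i_{x'}:G\to H_G(X)$ is nonzero, since projecting to the $x$-summand gives $1_G$, yet $H_G(f)$ kills it. Hence $H_G(f)$ is not monic. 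The main obstacle is the clean intersection argument above, particularly with the empty set $A\cap B=\varnothing$, where the first summand is $0$; the argument is uniform in that case.
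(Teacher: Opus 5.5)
Your proof is correct and follows essentially the same route as the paper: colimit preservation from the copower adjunction, split coordinate-summand inclusions and the four-part biproduct decomposition for $H_G(A)\cap H_G(B)=H_G(A\cap B)$, the maps $i_x-i_{x'}$ and the missed $y$-summand for conservativity, and then \cref{prop:target}. Two cosmetic points: when $f$ misses $y$, the decomposition should read $H_G(Y)=H_G(Y\setminus\{y\})\oplus G$, with $H_G(f)$ factoring through the first summand. Also, the reduction via $f=me$ is unnecessary, since both conservativity arguments apply directly to an arbitrary $f$.
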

\begin{proof}
The ordinary subobject form is noetherian. Choose any nonzero object \(G\),
and put \(H_G(X)=\coprod_{x\in X}G\). The adjunction
\[
 \Hom_\A(H_G(X),A)\cong\Hom_\Set(X,\Hom_\A(G,A))
\]
shows that \(H_G\) preserves colimits; in particular it preserves pushouts
and surjections, which are regular epimorphisms in \(\Set\).
Injections give split coordinate-summand inclusions. Partitioning \(X\)
into \(A\cap B\), \(A\setminus B\), \(B\setminus A\), and the remaining
part gives a finite biproduct decomposition and proves
\[
 H_G(A)\cap H_G(B)=H_G(A\cap B).
\]

If \(f(x)=f(x')\) for \(x\ne x'\), the nonzero map
\(i_x-i_{x'}:G\to H_G(X)\) is killed by \(H_G(f)\). It is nonzero since
the coordinate projection at \(x\) composes with it to \(\id_G\).
If \(f\) misses \(y\), the nonzero coordinate projection
\(p_y:H_G(Y)\to G\) kills \(H_G(f)\). Thus a noninjective or nonsurjective
\(f\) cannot be sent to an isomorphism. Apply \cref{prop:target}.
\end{proof}

The object \(G\) need be neither a generator nor projective. The use of
arbitrary coproducts here does not require a separate exactness assumption on
infinite coproducts: the intersection argument uses a finite partition into
summands.

\subsubsection{Constant objects in Grothendieck toposes}

The existence of a noetherian form on a topos is
\cite[Theorem~172 and Example~173]{JVN}. We combine this with the
constant-object geometric morphism \cite[Chapter~VII]{MM} to obtain
the following universal-target statement.
\begin{proposition}\label[proposition]{prop:topos-targets}
Let $\mathcal T$ be a nondegenerate Grothendieck topos with terminal
object $1$ and its epi--mono factorization system. Equip $\Set$ with
the surjection--injection system. Define the constant-object functor
$\Delta:\Set\to\mathcal T$ by $\Delta(X)=\coprod_{x\in X}1$
and $\Delta(f)i_x=i_{f(x)}$ for a function $f:X\to Y$, where the
$i_x$ are coproduct injections. Then $\Delta$ is a transfer functor,
and $\mathcal T$ is a universal target.
\end{proposition}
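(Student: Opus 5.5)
The argument follows the pattern of \cref{prop:abelian-targets}, with the copower of $1$ playing the role of the copower of $G$, and it uses standard facts about Grothendieck toposes \cite{MM}. Throughout, $\mathcal T$ carries the noetherian form of \cite[Theorem~172 and Example~173]{JVN}, which induces its epi--mono system. Once we show that $\Delta$ is a transfer functor, we have $\Set\rightsquigarrow\mathcal T$, and \cref{prop:target}\textup{(ii)} then makes $\mathcal T$ a universal target. So it suffices to check the four clauses of \cref{def:transfer-functor}.

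\emph{Colimits and the left class.} $\Delta$ is left adjoint to the global-sections functor $\Gamma=\Hom_{\mathcal T}(1,-)$, since
\[
 \Hom_{\mathcal T}\bigl(\textstyle\coprod_{x\in X}1,A\bigr)\cong\Hom_\Set(X,\Gamma A).
\]
Hence $\Delta$ preserves all colimits. In particular it preserves pushouts, and it preserves surjections, because surjections are split (or simply regular) epimorphisms and these go to epimorphisms. By \cref{prop:transfer-elementary}, what remains is to show that $\Delta$ preserves injections and pullbacks of two injections, and that it is conservative.

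\emph{Monos and intersections.} In a topos, coproducts are disjoint and universal. An injection $A\hookrightarrow X$ therefore gives $\Delta X\cong\Delta A\sqcup\Delta(X\setminus A)$ with a monic summand inclusion. For subsets $A,B\subseteq X$, partition $X$ into $A\cap B$, $A\setminus B$, $B\setminus A$ and the remaining part. Since $\Delta X$ is the coproduct of the four corresponding summands, disjointness and pullback stability of coproducts give
\[
 \Delta A\times_{\Delta X}\Delta B\cong\Delta(A\cap B).
\]
This uses that in an extensive category the intersection of $U\sqcup V$ and $U\sqcup W$ inside $U\sqcup V\sqcup W\sqcup Z$ is $U$. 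The case $A\cap B=\varnothing$ is included, since disjointness then gives the initial object. In fact $\Delta$ preserves all finite limits, being the inverse image part of a geometric morphism, which covers this step directly.

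\emph{Conservativity.} This is the step where nondegeneracy is needed, and I expect it to be the main point. Suppose $\Delta f$ is invertible. If $f(x)=f(x')$ with $x\neq x'$, then the injections $i_x,i_{x'}:1\to\Delta X$ are identified by $\Delta f$, so they must be equal. But their equalizer is the pullback of $i_x$ and $i_{x'}$, which is initial by disjointness. Equality would force $1\cong0$, contradicting nondegeneracy. If $f$ misses $y$, then $\Delta Y\cong\Delta(\operatorname{im}f)\sqcup\Delta(Y\setminus\operatorname{im}f)$, and $\Delta f$ factors through the first summand. Being an isomorphism, $\Delta f$ would make that summand inclusion split epic, hence an isomorphism. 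Then the complementary summand, which contains a copy of $1$ via $i_y$, would be initial by disjointness. Again this gives $1\cong0$. So $f$ is bijective, $\Delta$ is conservative, and the proof concludes by \cref{prop:target}.
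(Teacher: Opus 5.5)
Your proof is correct and follows essentially the same route as the paper. You use that $\Delta$, as the inverse-image part of the global-sections geometric morphism, preserves colimits and finite limits, and you derive conservativity from disjointness of coproducts together with $0\not\cong 1$; the only minor variation is that for a missed point you show the image-summand inclusion would become an isomorphism, whereas the paper pulls $\Delta(f)$ back over the $y$-summand to obtain $0\to 1$.
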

\begin{proof}
The constant-object functor is
\[
 \Delta:\Set\to\mathcal T,\qquad \Delta(X)=\coprod_{x\in X}1.
\]
It is the inverse-image part of the global-sections geometric morphism, so
preserves finite limits and all small colimits; see \cite{MM} for these
standard topos constructions. Thus it preserves the classes and homogeneous
diagrams.

For clarity, conservativity does not require enough points. If \(f\)
identifies two elements, their distinct coproduct injections are identified
by \(\Delta(f)\), so \(\Delta(f)\) is not monic. Their distinctness follows
from disjointness of coproducts and \(0\not\cong1\). If \(f\) misses
\(y\), pullback of \(\Delta(f)\) over the \(y\)-summand is \(0\to1\),
which is not an isomorphism. Thus \(\Delta(f)\) cannot be invertible.
\end{proof}

The claim concerns Grothendieck toposes. An arbitrary elementary topos need
not have the required set-indexed coproducts. Nondegeneracy cannot be omitted
from this particular universal-target assertion.

\subsection{Duality and the scope of universal targets}\label{sec:target-scope}

\subsubsection{Opposites and Stone spaces}

Duality of noetherian forms is recalled in \cite[Section~2.5]{VN};
the following is its consequence for universal targets.
\begin{proposition}\label[proposition]{prop:opposite}
Let $\D$, with proper factorization system $(\E_\D,\M_\D)$,
be a universal target for existence of a noetherian form.
Then $\D^{\op}$ is a universal target for the system
$(\M_\D^{\op},\E_\D^{\op})$. If $\mathcal B$ is equivalent
to $\D$, it too is a universal target when equipped with the
factorization system transported along the equivalence.
\end{proposition}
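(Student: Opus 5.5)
The plan is to verify condition (iii) of \cref{prop:target} for $\D^{\op}$, namely $\Set\rightsquigarrow\D^{\op}$ and $\D^{\op}\rightsquigarrow\Set$, using only the closure properties in \cref{prop:transfer-elementary} and the existence characterization \cref{thm:characterization}. First note that $\D^{\op}$ with $(\M_\D^{\op},\E_\D^{\op})$ is again a proper factorization system, and its homogeneous pullbacks and pushouts are exactly the homogeneous pushouts and pullbacks of $\D$. These exist because $\D$ is a universal target, so \cref{prop:target} applies to $\D^{\op}$.

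For $\D^{\op}\rightsquigarrow\Set$, I will not dualize a functor $\D\to\Set$, since that lands in $\Set^{\op}$. Instead I will use that $\D$ carries a noetherian form $\F$ inducing $(\E_\D,\M_\D)$, by \cref{prop:target}(ii). The dual form $\F^{\op}$ on $\D^{\op}$ has opposite fibres, with the roles of $f_!$ and $f^*$ exchanged along opposite arrows. By the self-duality of the axioms (N1)--(N3) recalled from \cite[Section~2.5]{VN}, $\F^{\op}$ is noetherian. Its quotients are the former embeddings and its embeddings are the former quotients, so it induces $(\M_\D^{\op},\E_\D^{\op})$. \Cref{thm:characterization}, via the cluster-set functor of \cref{thm:cluster}, then gives $\D^{\op}\rightsquigarrow\Set$.

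For $\Set\rightsquigarrow\D^{\op}$, I would compose $\Set\rightsquigarrow\Set^{\op}$ with the opposite $T^{\op}:\Set^{\op}\to\D^{\op}$ of a transfer functor $T:\Set\to\D$. By \cref{prop:transfer-elementary}, $T^{\op}$ is a transfer functor for the opposite systems. This reduces the problem to showing $\Set\rightsquigarrow\Set^{\op}$, where $\Set^{\op}$ carries (injections$^{\op}$, surjections$^{\op}$). By the paragraph above applied to $\D=\Set$, the category $\Set^{\op}$ has a noetherian form inducing this system. It therefore suffices to exhibit an explicit transfer functor $\Set\to\Set^{\op}$.

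\textbf{This is the main obstacle.} My first candidate is the contravariant power-set functor $X\mapsto\Pow X$ with $f\mapsto f^{-1}$, regarded as a covariant functor $\Set\to\Set^{\op}$. The checks are as follows.
\begin{itemize}
\item A surjection $f$ gives an injective $f^{-1}$, which is an injection$^{\op}$ and hence lies in the left class of $\Set^{\op}$. An injection gives a surjective $f^{-1}$, which lies in the right class.
\item The functor is conservative, since $f^{-1}$ is bijective only when $f$ is.
\item A pullback of two injections $A\cap B\subseteq X$ must go to a pushout in $\Set$ of the restriction maps $\Pow X\to\Pow A$ and $\Pow X\to\Pow B$. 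This holds because the kernel equivalences $S\cap A=T\cap A$ and $S\cap B=T\cap B$ join to $S\cap A\cap B=T\cap A\cap B$, by a two-step chain through $(S\cap A)\cup(T\setminus A)$.
\item A pushout of two surjections, which is the quotient by the join of their kernel relations, must go to a pullback of the injections $f^{-1}$. This holds because a subset saturated for both relations is saturated for their join.
\end{itemize}
Should some detail fail, the fallback is to compose $\Pow$ with the free-algebra functor of \cref{thm:free-variety} applied to Boolean algebras, via Stone duality. Finally, the equivalence assertion is immediate: compose transfer functors with the equivalence and its quasi-inverse, using \cref{prop:transfer-elementary}.
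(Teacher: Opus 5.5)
Your proof is correct, but it takes a different route from the paper's.

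\textbf{The paper's route.} The paper verifies condition (i) of \cref{prop:target} for $\D^{\op}$ directly. If $\C$ has a form inducing its system, then $\C^{\op}$ carries the dual form. Universality of $\D$ then gives a transfer functor $\C^{\op}\to\D$, and its opposite is a transfer functor $\C\to\D^{\op}$. Conversely, $\D^{\op}$ carries the dual of a form on $\D$, and one pulls that form back.

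\textbf{Your route.} You verify condition (iii) instead. This obliges you to produce $\Set\rightsquigarrow\D^{\op}$, which you route through an explicit transfer functor $\Set\to\Set^{\op}$. Your four checks for the contravariant power set are all correct. That functor is the Boolean power $\BP$, and the properties you verify are exactly those of \cref{top:lem:boolean}. So the fallback via Boolean algebras is not needed.

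\textbf{The flagged obstacle is formal.} The step you call the main obstacle follows without any construction. The category $\Set^{\op}$ carries the dual of the form on $\Set$. Applying condition (i) for $\D$ to $\C=\Set^{\op}$ gives $\Set^{\op}\rightsquigarrow\D$, and taking opposites gives $\Set\rightsquigarrow\D^{\op}$. Alternatively, \cref{thm:characterization} gives $\Set^{\op}\rightsquigarrow\Set$, and the opposite of that functor is already a transfer functor $\Set\to\Set^{\op}$ for the relevant systems.

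\textbf{What each route buys.} Yours gives a concrete description of the resulting functor $\Set\to\D^{\op}$, namely $X\mapsto T(\Pow X)$ and $f\mapsto T(f^{-1})$ read in $\D^{\op}$. The paper's is shorter. It also shows directly that the characterization property (i) is self-dual, without singling out $\Set$.

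Your treatment of the equivalence assertion matches the paper's.
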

\begin{proof}
Equivalence transports all the required structure. For opposites, a
noetherian form on \(\C\) dualizes to one on \(\C^{\op}\). A suitable
functor \(\C^{\op}\to\D\) then gives a suitable functor
\(\C\to\D^{\op}\), with the two types of diagrams interchanged. Conversely,
\(\D^{\op}\) has a compatible noetherian form by dualizing one on \(\D\),
so pullback gives the reverse implication in the characterization.
\end{proof}

By classical Stone duality \cite{Stone},
\(\Bool^{\op}\simeq\Stone\), where \(\Stone\) is the category of compact
zero-dimensional Hausdorff spaces. Under this duality the transported system
is
\[
 (\text{surjective continuous maps},\text{topological embeddings}).
\]
Indeed, surjective Boolean homomorphisms dualize to embeddings, and injective
Boolean homomorphisms dualize to surjections. Therefore \(\Stone\) is a
universal target. Its pushouts are taken in \(\Stone\), not in \(\Top\).

A concrete representation obtained from a noetherian form is
\begin{equation}\label{eq:Stone-representation}
 X\longmapsto\{0,1\}^{\Lambda_X},\qquad
 f\longmapsto\bigl(u\longmapsto u\circ f^*\bigr).
\end{equation}
This is the cluster functor for the dual form, followed by the free Boolean
algebra functor and Stone duality. The Stone space of the free Boolean algebra
on a set \(S\) is the Cantor cube \(\{0,1\}^S\). The points in
\eqref{eq:Stone-representation} are all set-functions from \(\Lambda_X\) to
\(\{0,1\}\), not necessarily lattice homomorphisms.

\subsubsection{Having a form is not enough to be a universal target}

\begin{counterexample}\label[counterexample]{ex:finite-dimensional}
The category \(\FinVect_k\) has its usual noetherian subobject form, but it
is not a universal target for all sets. Suppose that a conservative functor
\(T:\Set\to\FinVect_k\) preserved injections. The chain of finite subsets
\[
 \{0\}\subsetneq\{0,1\}\subsetneq\{0,1,2\}\subsetneq\cdots\subseteq\mathbb N
\]
would give an infinite chain of subspaces of the fixed finite-dimensional
space \(T(\mathbb N)\). Every successive inclusion would be strict, since
an isomorphism after \(T\) would contradict conservativity. This is impossible.
\end{counterexample}

Thus the vector-space target above consists of all-dimensional vector spaces.
The same argument excludes any proposed target in which every object has the
corresponding ascending-chain bound on the distinguished subobjects.

\section{Noetherian forms for topological categories}\label{sec:top}

We construct transfer functors to sets for topological categories with
injective initial tests. The construction has two stages: encoding the
tests by equivalence relations, and passing from these relations to
reduced words. Concrete examples and the scope of the test hypothesis
follow the general theorem. The specialization to topological spaces
is developed in \cref{sec:top-spaces}.

\subsection{Initial tests and their Boolean encoding}\label{sec:top-tests}

\subsubsection{The natural factorization system}
Let $U:\C\to\Set$ be a faithful functor. A source of morphisms
$(a_j:X\to X_j)_{j\in J}$ is \emph{$U$-initial} if a function
$h:UZ\to UX$ underlies a morphism $Z\to X$ precisely when every
$(Ua_j)h$ underlies a morphism $Z\to X_j$. The functor $U$ is
\emph{topological} if every possibly large or empty source of functions
$(s_j:S\to UX_j)_{j\in J}$ has an initial lift to an object with
underlying set $S$. We use the usual amnestic normalization, in which
this lift is unique; working up to concrete isomorphism gives the same
results. Thus the index $J$ may be a class; the test family introduced
below, by contrast, is required to be set-indexed.

Write $\E$ for the morphisms with surjective underlying function and
$\M$ for the \emph{initial embeddings}: the morphisms whose underlying
functions are injective and whose one-map sources are $U$-initial.

The following standard lifting and factorization facts are recalled from
\cite[Propositions~21.14--21.15 and Corollary~21.17]{AHS}.

\begin{proposition}\label{tc:prop:fs}
Let $U:\C\to\Set$ be a topological functor. Let $\E$ consist of
the morphisms whose underlying functions are surjective, and let $\M$
consist of the $U$-initial morphisms whose underlying functions are
injective. Then $(\E,\M)$ is a proper orthogonal factorization system
on $\C$. The category $\C$ has small limits and colimits, and $U$
preserves them.
\end{proposition}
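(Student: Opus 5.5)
The plan is to build limits and colimits from initial and final lifts over $\Set$, and then obtain the factorization system from image factorizations in $\Set$ together with initiality.

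\emph{Limits and colimits.} Given a small diagram $D:\mathcal I\to\C$, I would form the limit $L$ of $UD$ in $\Set$, with projections $p_i:L\to UD_i$. Then I take the initial lift of the set-indexed source $(p_i)_{i}$ to an object $\bar L$ with $U\bar L=L$. The projections underlie morphisms, and they form a cone because $U$ is faithful. Given another cone $(c_i:Z\to D_i)$, the induced function $h:UZ\to L$ satisfies $p_ih=Uc_i$, so by initiality $h$ underlies a morphism. Uniqueness again follows from faithfulness, and $U\bar L=L$ shows that $U$ preserves the limit.

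For colimits I use the standard fact that a topological functor also admits final lifts of sinks. To see this, consider a sink $(s_i:UD_i\to S)$. Take the initial lift of the (possibly large) source consisting of all functions $g:S\to UY$ such that every $gs_i$ underlies a morphism $D_i\to Y$. Its initial property shows that each $s_i$ lifts, and that this lift is final. With final lifts available, the colimit of $UD$ in $\Set$, equipped with the final structure, is a colimit in $\C$ and is preserved by $U$. The empty source gives the indiscrete objects, and the empty sink gives the terminal and initial objects, so no separate cases are needed.

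\emph{Factorization.} For $f:X\to Y$, write $Uf=ie$ in $\Set$, with $e:UX\to S$ surjective and $i:S\to UY$ injective. Let $M$ be the initial lift of the single map $i$. Then $i$ underlies a morphism $m:M\to Y$ lying in $\M$. Since $ie=Uf$ underlies a morphism, initiality makes $e$ underlie a morphism $e':X\to M$, which lies in $\E$.

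For orthogonality, consider a commuting square $m\,a=b\,e$ with $e\in\E$ and $m\in\M$. Surjectivity of $Ue$ and injectivity of $Um$ give a unique diagonal function $d$ in $\Set$ with $Um\,d=Ub$ and $d\,Ue=Ua$. Because $Um\,d=Ub$ underlies a morphism, initiality of $m$ makes $d$ underlie a morphism, and the two triangles commute by faithfulness.

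Properness is immediate. Faithful functors reflect epimorphisms and monomorphisms, and surjections and injections are epic and monic in $\Set$. Hence members of $\E$ are epimorphisms and members of $\M$ are monomorphisms.

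The main obstacle is the final-lift step, because that source is large. I would either cite \cite[Proposition~21.14]{AHS} or note that the definition here explicitly allows class-indexed sources. The orthogonality and the amnestic uniqueness are then routine, and together they give \cite[Corollary~21.17]{AHS}.
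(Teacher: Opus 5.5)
Your proposal is correct and follows the paper's proof: the same factorization through the image carrying the initial structure, the same initiality argument showing that the set-theoretic diagonal is a morphism, and the same properness argument via faithfulness. The only difference is that you construct limits and colimits explicitly, obtaining final lifts from initial lifts of the large source of admissible functions, where the paper simply cites \cite[Proposition~21.15 and Corollary~21.17]{AHS}. One wording slip: the terminal object is the initial lift of the empty \emph{source} on a singleton, whereas the empty sink gives discrete objects and hence the initial object, but your general limit construction already covers this case.
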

\begin{proof}
Factor $Uf$ through its image in $UY$ and equip this image with the
initial structure induced by its inclusion into $Y$. Initiality makes
the resulting surjection from $X$ a morphism, giving $f=me$.
In a square with $e\in\E$ on the left and $m\in\M$ on the right,
the unique set-theoretic diagonal is a morphism because its composite
with $m$ is a morphism. Thus $e\perp m$. These factorizations and
orthogonality give the factorization system. Faithfulness of $U$ makes
the indicated surjections epimorphisms and injections monomorphisms.

The assertions about limits and colimits are the standard lifting
theorem for topological functors: they uniquely lift limits and colimits
of the base category and preserve the resulting diagrams
\cite[Proposition~21.15 and Corollary~21.17]{AHS}.
\end{proof}

\subsubsection{Initially determining injective test objects}
\begin{definition}\label{tc:def:tests}
A \emph{family of injective initial tests} for $U$ is a set-indexed
family $(Q_i)_{i\in I}$ of objects such that:
\begin{enumerate}[label=\textup{(\roman*)}]
\item each $UQ_i$ is nonempty;
\item for every $X$, the source of all morphisms $a:X\to Q_i$,
with $i\in I$, is $U$-initial;
\item every $Q_i$ is $\M$-injective: for $m:A\to B$ in $\M$,
every $a:A\to Q_i$ has an extension $b:B\to Q_i$ with $bm=a$.
\end{enumerate}
\end{definition}

We also arrange that some $UQ_i$ has at least two elements. This imposes
no additional restriction: if necessary, adjoin the indiscrete object
on $\{0,1\}$, obtained by lifting the empty source. Every function
into it is a morphism, and a function on a subset extends by zero, so
it is $\M$-injective. Adding a test preserves initial determination.
No object needs to be adjoined in the single-test examples below.

The smallness of $I$ is part of the hypothesis. In fact it implies
fibre-smallness up to concrete isomorphism: a structure on a fixed set
$S$ is determined by which functions $S\to UQ_i$ are morphisms. We
make no claim that a small family of such tests exists for every
topological category; \cref{sec:top-scope} examines this issue.

For the encoding and word constructions fix such a family, and put
\begin{equation}\label{tc:eq:testsets}
P(S)=\coprod_{i\in I}\Set(S,UQ_i),\qquad
\Omega(X)=\coprod_{i\in I}\C(X,Q_i)\ \subseteq\ P(UX).
\end{equation}
Here the inclusion uses faithfulness of $U$ and retains the label $i$.
Both constructions are contravariant, with maps given by precomposition.

\subsubsection{Boolean powers and labelled test sets}
Put $\two=\{0,1\}$. For a set $S$, let $\BP(S)=\two^S$, and for
$u:S\to T$ let $\BP(u):\two^T\to\two^S$ be precomposition with $u$.
All functions into $\two$ in this construction are arbitrary functions.

Representability and injectivity of nonempty sets give the familiar
first two assertions below; see \cite[Corollary~13.9 and
Example~9.3(1)]{AHS}. We include the intersection calculation as well.

\begin{lemma}\label{top:lem:boolean}
Let $Q$ be a nonempty set and define the contravariant power functor
$E_Q:\Set^{\op}\to\Set$ by $E_Q(S)=Q^S$ and
$E_Q(u)(a)=a\circ u$ for functions $u:S\to T$ and $a:T\to Q$.
Then:
\begin{enumerate}
\item $E_Q$ takes colimits of sets to limits of sets;
\item it takes surjections to injections and injections to surjections;
if $|Q|\geq2$, it also reflects bijectivity;
\item it takes pullbacks of two injections, including those with empty
intersection, to pushouts.
\end{enumerate}
In particular, the Boolean-power functor $\BP=E_{\{0,1\}}$ has all
these properties and reflects bijectivity.
\end{lemma}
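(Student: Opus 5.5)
The plan is to handle the three assertions in order, treating $E_Q$ as the representable functor $\Set(-,Q)$. For (1), the universal property of colimits gives $\Set(\operatorname{colim}_k S_k,Q)\cong\lim_k\Set(S_k,Q)$ naturally, which is exactly the claim; I would cite the representability statement already referenced and not expand further.

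For (2), the two halves are dual. If $u:S\to T$ is surjective, it is right cancellable, so $a\circ u=b\circ u$ forces $a=b$; hence $E_Q(u)$ is injective. If $u$ is injective, every $a:S\to Q$ extends along $u$, so $E_Q(u)$ is surjective. To see this, fix $q_0\in Q$ using nonemptiness, define the extension on $u(S)$ by transporting $a$, and send the rest of $T$ to $q_0$. For reflection of bijectivity when $|Q|\ge2$, I would argue contrapositively with the same separation arguments used elsewhere in the paper. If $u(s)=u(s')$ with $s\ne s'$, then a function $a:S\to Q$ distinguishing $s$ and $s'$ is not of the form $b\circ u$, so $E_Q(u)$ is not surjective. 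If $t\notin u(S)$, then two functions $T\to Q$ differing only at $t$ have the same image under $E_Q(u)$, so it is not injective.

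For (3), I would take subsets $A,B\subseteq X$ with $C=A\cap B$. The claim is that the square of restriction maps $Q^X\to Q^A$, $Q^X\to Q^B$, $Q^A\to Q^C$, $Q^B\to Q^C$ is a pushout in $\Set$. By (2), all four maps are surjective. As in the pushout argument in \cref{thm:cluster}, a pushout of two surjections from a common set is the quotient by the join of their kernel equivalence relations. It therefore suffices to show two things. First, the kernel of restriction to $C$ is the equivalence join $\sim_A\vee\sim_B$ on $Q^X$, where $f\sim_A g$ means $f|_A=g|_A$. Second, $Q^X\to Q^C$ is surjective, which holds because $C\to X$ is an injection and $Q$ is nonempty.

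The key step is the inclusion of the kernel of restriction to $C$ in the join. Given $f,g$ with $f|_C=g|_C$, define $h$ to agree with $f$ on $A$ and with $g$ on $X\setminus A$. Then $f\sim_A h$. Moreover $h$ agrees with $g$ on $B$: on $B\cap A=C$ because $f|_C=g|_C$, and on $B\setminus A$ by construction. Hence $f\sim_A h\sim_B g$. The reverse inclusion is immediate, since both $\sim_A$ and $\sim_B$ refine restriction to $C$. The empty-intersection case needs no separate treatment: $Q^\varnothing$ is a singleton, and the chain shows that any two functions are connected, which is precisely where nonemptiness of $Q$ enters. Finally, specializing to $Q=\{0,1\}$ gives the statements for $\BP$.

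I expect the only delicate point to be this chain argument together with the empty-intersection bookkeeping. Everything else is standard.
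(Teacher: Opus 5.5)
Your proposal is correct. Parts (1) and (2) match the paper's argument: representability for colimits, right cancellation and extension by a fixed $q_0$ for the class assertions, and the same two separation arguments (a missed point, two identified points) for reflecting bijectivity when $|Q|\geq 2$.

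For (3) you take a slightly different route. In your notation, with ambient set $X$, the paper computes $Q^A\amalg_{Q^X}Q^B$ directly as a quotient of the disjoint union $Q^A\sqcup Q^B$. An element of $Q^A$ and an element of $Q^B$ come from a common element of $Q^X$ exactly when they agree on $A\cap B$: glue them on $A\cup B$, then extend by $q_0$. Two functions on one side with the same restriction are linked through an extension on the other side. Hence the classes are classified by restriction to $A\cap B$.

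You instead use surjectivity of the restriction maps to present the pushout as $Q^X/(\sim_A\vee\sim_B)$, as in the proof of \cref{thm:cluster}. You then reduce everything to the identity between $\sim_A\vee\sim_B$ and the kernel relation of restriction to $A\cap B$, proved by a single splice. The gluing idea is the same. Your version separates the hypotheses more cleanly and mirrors the join-of-kernels computation in \cref{thm:cluster}. The paper's version avoids relying on the description of pushouts of surjections.

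One small correction to your closing remark. The chain $f\sim_A h\sim_B g$ uses no element of $Q$ and holds even for $Q=\varnothing$. Nonemptiness enters only through surjectivity of the restriction maps: for $Q=\varnothing$ and $X\neq\varnothing$ one has $Q^X=\varnothing$, while $Q^{\varnothing}$ is a singleton. You had already invoked that surjectivity correctly earlier, so this is a misattribution in the commentary, not a gap.
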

\begin{proof}
The first assertion is the universal property of a colimit, applied to
maps into $Q$. Fix $q_0\in Q$. Precomposition with a surjection is
injective, and a function from a subset to $Q$ extends by assigning
$q_0$ outside the subset. If $|Q|\geq2$, choose $q_1\ne q_0$.
A missed point gives two functions with the same precomposition,
and two identified points give a function taking the values $q_0$
and $q_1$ that cannot be obtained by precomposition. This proves
the second assertion, including reflection of bijectivity when stated.

For the third, represent the injections by subsets $A,B\subseteq C$.
The required assertion is
\begin{equation}\label{top:eq:restriction-pushout}
Q^A\amalg_{Q^C}Q^B\ \cong\ Q^{A\cap B}.
\end{equation}
Functions on $A$ and $B$ have the same restriction to $A\cap B$ if and
only if they glue on $A\cup B$, and then extend to $C$ using $q_0$.
Functions on one side with the same restriction are both identified
with any extension on the other side. All functions on the intersection
extend, and no identification changes their restriction. This proves
the assertion, including empty intersections. The Boolean-power
assertions are the specialization $Q=\{0,1\}$.
\end{proof}

\begin{lemma}\label{tc:lem:P}
Let $U:\C\to\Set$ be a functor, and let $(Q_i)_{i\in I}$ be a
set-indexed family of objects with nonempty underlying sets, with
$|UQ_i|\geq2$ for at least one $i$. Define
$P:\Set^{\op}\to\Set$ by
\[
P(S)=\coprod_{i\in I}\Set(S,UQ_i),\qquad
P(u)(i,a)=(i,a\circ u),
\]
for $u:S\to T$ and $a:T\to UQ_i$.
Then $P$ takes surjections to injections, injections to surjections,
pushouts to pullbacks, and pullbacks of two injections to pushouts.
It reflects bijectivity. Consequently, for $\two=\{0,1\}$, the
covariant functor $D:\Set\to\Set$ defined by
\[
D(S)=\two^{P(S)},\qquad D(u)(\phi)=\phi\circ P(u)
\]
preserves surjections and injections, reflects bijectivity, and
preserves pullbacks of two injections and pushouts of two surjections.
\end{lemma}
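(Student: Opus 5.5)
The plan is to reduce everything to \cref{top:lem:boolean} applied factorwise, together with the fact that coproducts in $\Set$ are disjoint and stable. By definition $P(S)=\coprod_{i\in I}E_{UQ_i}(S)$ and $P(u)=\coprod_i E_{UQ_i}(u)$, so $P$ is a coproduct of the contravariant power functors $E_{UQ_i}$, each with $UQ_i$ nonempty. The properties of $D$ will then follow by composing $P$ with $\BP=E_{\two}$, using \cref{top:lem:boolean} a second time.

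First I handle the class assertions. A coproduct of functions is injective (surjective) exactly when each summand is, because the labels $i$ are retained. Hence \cref{top:lem:boolean}(2) shows that $P$ sends surjections to injections and injections to surjections. For reflection of bijectivity, suppose $P(u)$ is bijective. Then every summand $E_{UQ_i}(u)$ is bijective, in particular the one with $|UQ_{i_0}|\ge2$, and \cref{top:lem:boolean}(2) makes $u$ bijective.

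Next I treat the diagram assertions. For a pushout square of sets, each $E_{UQ_i}$ gives a pullback by \cref{top:lem:boolean}(1). A coproduct of pullback squares in $\Set$ is a pullback, since coproducts are disjoint and pullback-stable: an element of the fibre product has matching labels on both sides. So $P$ takes pushouts to pullbacks. For a pullback of two injections, \cref{top:lem:boolean}(3) gives a pushout in each summand, including the empty-intersection case. Coproducts commute with colimits, so the coproduct of these pushout squares is again a pushout.

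Finally I pass to $D=\BP\circ P^{\op}$, which is covariant. Injections $u$ become surjections $P(u)$, and then injections $\BP(P(u))$. Surjections become injections, and then surjections. Reflection of bijectivity composes, since $\BP$ reflects it by \cref{top:lem:boolean}. A pullback of two injections becomes, under $P$, a pushout whose legs are surjections. By \cref{top:lem:boolean}(1), $\BP$ turns this pushout into a pullback, and that pullback is of two injections. A pushout of two surjections becomes, under $P$, a pullback whose legs are injections. By \cref{top:lem:boolean}(3), $\BP$ turns it into a pushout, and that pushout is of two surjections.

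The only delicate step is this last one. Applying part (3) requires the square produced by $P$ to be a pullback of two injections in the sense of the lemma, that is, an intersection of subsets. This holds because a pullback of monomorphisms in $\Set$ is, up to isomorphism, the intersection of their images. So no further work is needed beyond checking the bookkeeping of labels.
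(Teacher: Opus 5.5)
Your proposal is correct and follows the paper's own proof. You apply \cref{top:lem:boolean} componentwise, use label preservation for the class assertions, for reflection of bijectivity and for the pullback/pushout assertions, and then compose with $\BP$. Your explicit bookkeeping for $D$ spells out what the paper's final sentence leaves implicit: pullbacks of two injections become pushouts under $P$ and then pullbacks under $\BP$, while pushouts of two surjections become pullbacks of two injections and then pushouts via part~(3).
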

\begin{proof}
Apply \cref{top:lem:boolean} to each nonempty set $UQ_i$.
It supplies the class properties componentwise. Since $P(u)$
preserves labels, its bijectivity makes every component bijective;
the component with $|UQ_i|\geq2$ therefore reflects bijectivity of $u$.

Each component takes pushouts to pullbacks. In a pullback of labelled
sets, a compatible pair necessarily has the same label, so their
coproduct $P$ also takes pushouts to pullbacks. Each component takes
pullbacks of two injections to pushouts, and the coproduct of these
pushout squares is again a pushout square. Finally, composing with
the Boolean-power functor $\BP$ in \cref{top:lem:boolean} gives
all the assertions about $D$.
\end{proof}

The argument uses connected diagrams. A coproduct of representables
need not take arbitrary colimits to limits. In particular, no claim
about all colimits is being made for $P$ or $\Omega$.

\subsubsection{Equivalence relations as quotient maps}\label{top:sec:equiv}
An object of $\Equiv$ is a pair $(A,R)$, where $R$ is an equivalence
relation on $A$. Morphisms preserve the relation. A \emph{full embedding}
is an injective morphism that also reflects the relation. A
\emph{surjection} has a surjective underlying function.

The usual quotient--equivalence-relation correspondence
\cite[Example~7.86(1)]{AHS} has the following functorial form.

\begin{lemma}\label{top:lem:equiv-colimits}
Let $\Equiv$ be the category of pairs $(A,R)$ consisting of a set
and an equivalence relation, with functions that preserve the relations
as morphisms. The functor sending $(A,R)$ to its quotient map
$A\twoheadrightarrow A/R$, and a morphism to the induced commuting
square of quotient maps, is an equivalence from $\Equiv$ to the full
subcategory of the arrow category of $\Set$ consisting of surjections.
Colimits in that subcategory, including the empty colimit, are computed
componentwise in $\Set$.
\end{lemma}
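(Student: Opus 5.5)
I would split the statement into two parts: the equivalence itself, and the computation of colimits.

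For the equivalence, define $\Phi:\Equiv\to\mathrm{Surj}$, where $\mathrm{Surj}$ is the full subcategory of $\Set^{\to}$ on surjections. Put $\Phi(A,R)=(q_R:A\twoheadrightarrow A/R)$. A morphism $u:(A,R)\to(B,S)$ preserves the relation, so $q_S u$ is constant on $R$-classes. Hence there is a unique $\bar u:A/R\to B/S$ with $\bar u q_R=q_S u$, and $\Phi(u)=(u,\bar u)$. Uniqueness of $\bar u$, which holds because $q_R$ is epic, gives functoriality.

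\emph{Faithfulness} holds because the first component is $u$ itself. \emph{Fullness:} a commuting square $(u,v)$ from $q_R$ to $q_S$ satisfies $q_S u=v q_R$. So $aRa'$ implies $q_S u a=q_S u a'$, that is, $u$ preserves the relation, and then $v=\bar u$ by uniqueness. \emph{Essential surjectivity:} a surjection $p:A\to Q$ is isomorphic in the arrow category, via $(\id_A,\text{canonical bijection})$, to $q_{\ker p}$. This uses the usual correspondence $A/\ker p\cong Q$.

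For colimits, let $D$ be a small diagram in $\mathrm{Surj}$ with components $p_d:A_d\to Q_d$. In $\Set^{\to}$, colimits are computed componentwise, giving $p=\operatorname{colim}p_d:\operatorname{colim}A_d\to\operatorname{colim}Q_d$. It then suffices to show that $p$ is surjective, since the subcategory is full and so this colimit is also a colimit there. The colimit injections $\lambda_d$ into $\operatorname{colim}Q_d$ are jointly surjective. Every element has the form $\lambda_d(y)$ with $y=p_d(x)$, so it equals $p(\mu_d x)$, where $\mu_d$ are the injections into $\operatorname{colim}A_d$. For the empty diagram, $p$ is $\varnothing\to\varnothing$, which is surjective and is the initial object of the subcategory.

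The only point needing care, which I expect to be the main obstacle, is this: surjectivity must be checked on the componentwise colimit, and fullness is what transfers the universal property. Joint surjectivity of colimit injections in $\Set$ handles both.
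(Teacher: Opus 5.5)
Your proposal is correct and follows essentially the same route as the paper. The induced map on quotients, together with the fact that the upper arrow of a commuting square preserves kernel relations, gives full faithfulness; every surjection is isomorphic to its kernel quotient; and componentwise colimits of surjections remain surjective because every element of $\operatorname{colim}Q_d$ lifts through some $p_d$, with $\varnothing\to\varnothing$ as the empty case. Fullness of the subcategory then transfers the universal property, which is the step the paper compresses into its final sentence.
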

\begin{proof}
A relation-preserving function induces a unique map on quotient sets.
Conversely, the upper map in a commuting square of quotient maps
preserves the kernel equivalence relations. Every surjection is
isomorphic to the quotient map of its kernel relation.

For a diagram of surjections $A_j\twoheadrightarrow B_j$, the map
$\operatorname{colim}A_j\to\operatorname{colim}B_j$ is surjective:
a representative in $B_j$ lifts to $A_j$. The empty diagram gives
$\varnothing\to\varnothing$. Thus componentwise colimits stay in the
full subcategory and have its universal property.
\end{proof}

Pullbacks in $\Equiv$ have the usual underlying set pullbacks, with the
coordinatewise equivalence relation. In particular, a pullback of two
full embeddings is an intersection with the inherited relation.

\subsubsection{Encoding the test structure}\label{top:sec:K}
For $X\in\C$, define
\begin{equation}\label{tc:eq:Kdef}
A_X=\two^{P(UX)},\qquad
\phi\mathrel{R_X}\psi\ \Longleftrightarrow\
\phi|_{\Omega(X)}=\psi|_{\Omega(X)},\qquad KX=(A_X,R_X).
\end{equation}
For $f:X\to Y$, put
\begin{equation}\label{tc:eq:Kmap}
Kf(\phi)=\phi\circ P(Uf).
\end{equation}
Precomposition takes tests to tests, so $Kf$ preserves equivalence.
The identity and composition laws follow at once. Under
Lemma~\ref{top:lem:equiv-colimits}, $K$ is represented by the natural
restriction surjections
\begin{equation}\label{tc:eq:restriction-arrow}
\two^{P(UX)}\twoheadrightarrow\two^{\Omega(X)}.
\end{equation}
They are surjective because functions on $\Omega(X)$ extend by zero.

\begin{proposition}\label{top:prop:Kconservative}
Let $U:\C\to\Set$ be topological, and let $(Q_i)_{i\in I}$ be a
set-indexed family of objects with nonempty underlying sets, at least
one of which has at least two elements. Suppose that all maps $X\to Q_i$
jointly form a $U$-initial source for every $X\in\C$, and that
every map $A\to Q_i$ extends along every $U$-initial injection
$A\to B$. Put $\two=\{0,1\}$ and
\[
P(S)=\coprod_{i\in I}\Set(S,UQ_i),\qquad
\Omega(X)=\coprod_{i\in I}\C(X,Q_i)\subseteq P(UX),
\]
with $P(u)$ given by precomposition. Let $\Equiv$ denote sets with
equivalence relations and relation-preserving functions. Define the
encoding functor $K:\C\to\Equiv$ on objects $X$ and morphisms
$f:X\to Y$ by
\[
KX=(\two^{P(UX)},R_X),\qquad
\phi R_X\psi\ \Longleftrightarrow\
\phi|_{\Omega(X)}=\psi|_{\Omega(X)},\qquad
Kf(\phi)=\phi\circ P(Uf).
\]

Then $K$ is conservative: if $Kf$ is an isomorphism, then $f$ is
an isomorphism.
\end{proposition}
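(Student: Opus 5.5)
Suppose $Kf$ is an isomorphism in $\Equiv$, for $f:X\to Y$. The plan is to show first that $Uf$ is bijective, and then that the inverse function underlies a morphism $Y\to X$. An isomorphism in $\Equiv$ has a bijective underlying function that preserves and reflects the relations. So $\BP(P(Uf))=D(Uf)$ is bijective, where $D$ is the functor of \cref{tc:lem:P}. That lemma says $D$ reflects bijectivity, so $Uf$ is a bijection. Alternatively, $\BP$ reflects bijectivity by \cref{top:lem:boolean}, so $P(Uf)$ is bijective, and then $Uf$ is bijective by \cref{tc:lem:P}.

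Next we use the fact that $Kf$ also reflects the relations. Write $u=Uf$, so that $P(u):P(UY)\to P(UX)$ is a bijection given by precomposition. Since $f$ is a morphism, $P(u)$ maps $\Omega(Y)$ into $\Omega(X)$. I claim that $P(u)$ maps $\Omega(Y)$ onto $\Omega(X)$. Suppose some test $(i,a)\in\Omega(X)$ is not of the form $P(u)(\tau)$ with $\tau\in\Omega(Y)$. By bijectivity, $(i,a)=P(u)(\sigma)$ for a unique $\sigma\in P(UY)\setminus\Omega(Y)$. Let $\psi\in\two^{P(UY)}$ be the indicator function of $\{\sigma\}$, and let $\psi'$ be the zero function. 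Then $\psi$ and $\psi'$ agree on $\Omega(Y)$, so $\psi\mathrel{R_Y}\psi'$. Their images $Kf(\psi)=\psi\circ P(u)$ and $Kf(\psi')$ differ exactly at $(i,a)\in\Omega(X)$, so they are not $R_X$-related. This contradicts preservation of the relations by the inverse of $Kf$.

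In the orientation used above, the inverse isomorphism $(Kf)^{-1}:KX\to KY$ must preserve relations. Equivalently, $Kf$ reflects them: if $Kf(\psi)\mathrel{R_X}Kf(\psi')$ then $\psi\mathrel{R_Y}\psi'$. I must therefore check that the contradiction is obtained in the right direction, and if necessary swap the roles by taking functions that differ on a point of $\Omega(Y)$. Concretely, take $\phi,\phi'\in\two^{P(UX)}$ differing only at $(i,a)$. These are $R_X$-inequivalent. Their preimages under $Kf$ are $\phi\circ P(u)^{-1}$ and $\phi'\circ P(u)^{-1}$, which differ only at $\sigma\notin\Omega(Y)$, so they are $R_Y$-equivalent. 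Applying $Kf$, a relation-preserving map, gives $\phi\mathrel{R_X}\phi'$, which is a contradiction. This is the step I expect to need the most care: getting the direction of the relation argument right. I think it should be phrased as this direct argument, using that $Kf$ itself preserves relations.

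Hence every morphism $a:X\to Q_i$ is of the form $b\circ f$ for some morphism $b:Y\to Q_i$, at the level of underlying functions, so $a\circ u^{-1}=Ub$. Now consider $h=u^{-1}:UY\to UX$. For every test $a:X\to Q_i$, the composite $(Ua)h=Ub$ underlies a morphism $Y\to Q_i$. By $U$-initiality of the source of all tests on $X$ (condition (ii)), $h$ underlies a morphism $g:Y\to X$. Faithfulness of $U$ then gives $gf=\id$ and $fg=\id$, so $f$ is an isomorphism. The extension hypothesis (iii) is not needed for this argument.
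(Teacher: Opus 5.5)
Your route is the paper's: bijectivity of $Uf$ because $D$ reflects bijectivity (\cref{tc:lem:P}), then surjectivity of $\Omega(f)$, then initiality of the test source on $X$ to make $(Uf)^{-1}$ a morphism. You are also right that the extension hypothesis is not used. The paper handles the middle step more compactly. An isomorphism in $\Equiv$ induces a bijection of quotient sets, here $\two^{\Omega(X)}\to\two^{\Omega(Y)}$ given by precomposition with $\Omega(f):\Omega(Y)\to\Omega(X)$. \Cref{top:lem:boolean} then forces $\Omega(f)$ to be bijective. Your indicator-function argument is the elementwise version of this: a point of $\Omega(X)$ missed by $\Omega(f)$ yields two functions with the same precomposition.

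However, your middle paragraphs reverse the direction of $Kf$, and your final resolution of that confusion is wrong. Since $P(u):P(UY)\to P(UX)$, the formula $Kf(\phi)=\phi\circ P(u)$ sends $\phi\in\two^{P(UX)}$ into $\two^{P(UY)}$, so $Kf:KX\to KY$. Expressions such as $\psi\circ P(u)$ with $\psi\in\two^{P(UY)}$, or $\phi\circ P(u)^{-1}$ with $\phi\in\two^{P(UX)}$, do not typecheck.

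The correct version runs as follows. Let $\phi\in\two^{P(UX)}$ be the indicator of $\{(i,a)\}$ and let $\phi'=0$. These are not $R_X$-related. But $Kf(\phi)$ and $Kf(\phi')$ differ only at $\sigma=P(u)^{-1}(i,a)\notin\Omega(Y)$, so they are $R_Y$-related. Applying $(Kf)^{-1}$, which preserves relations because $Kf$ is an isomorphism of $\Equiv$, gives $\phi\mathrel{R_X}\phi'$, a contradiction.

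So the property used is that $Kf$ \emph{reflects} the relations, not that it preserves them, contrary to your closing remark. Preservation alone holds for every morphism and cannot suffice. Take the identity map $f$ from the discrete to the indiscrete two-point space in $\Top$, with test $\mathbb S$. Then $Kf$ is a relation-preserving bijection, yet $f$ is not a homeomorphism. With this correction your proof is complete.
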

\begin{proof}
If $Kf$ is invertible, its underlying map $D(Uf)$ is bijective, so
$Uf$ is bijective by Lemma~\ref{tc:lem:P}. Its map on quotient sets is
precomposition with $\Omega(f)$, so $\Omega(f)$ is also bijective by
Lemma~\ref{top:lem:boolean}. For each $a:X\to Q_i$, this gives
$b:Y\to Q_i$ such that $a=bf$. Therefore $a(Uf)^{-1}=b$ is a
morphism for every test $a$. Initial determination of $X$ implies that
$(Uf)^{-1}$ is a morphism. Thus $f$ is an isomorphism.
\end{proof}

\begin{proposition}\label{top:prop:Kclasses}
Let $U:\C\to\Set$ be topological, and let $(Q_i)_{i\in I}$ be a
set-indexed family of objects with nonempty underlying sets, at least
one of which has at least two elements. Suppose that all maps $X\to Q_i$
jointly form a $U$-initial source for every $X\in\C$, and that
every map $A\to Q_i$ extends along every $U$-initial injection
$A\to B$. Put $\two=\{0,1\}$ and
\[
P(S)=\coprod_{i\in I}\Set(S,UQ_i),\qquad
\Omega(X)=\coprod_{i\in I}\C(X,Q_i)\subseteq P(UX),
\]
with $P(u)$ given by precomposition. Let $\Equiv$ denote sets with
equivalence relations and relation-preserving functions. Define the
encoding functor $K:\C\to\Equiv$ on objects $X$ and morphisms
$f:X\to Y$ by
\[
KX=(\two^{P(UX)},R_X),\qquad
\phi R_X\psi\ \Longleftrightarrow\
\phi|_{\Omega(X)}=\psi|_{\Omega(X)},\qquad
Kf(\phi)=\phi\circ P(Uf).
\]

Let $\E$ consist of underlying surjections in $\C$, and $\M$ of
$U$-initial injections. Then $K$ sends $\E$-morphisms to underlying
surjections in $\Equiv$ and $\M$-morphisms to full embeddings,
meaning injective functions that preserve and reflect equivalence.
\end{proposition}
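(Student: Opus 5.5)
The plan is to separate the underlying-set part of $K$ from the relational part. The underlying function of $Kf$ is $D(Uf)$, with $D$ as in \cref{tc:lem:P}. By \cref{tc:lem:P}, $D$ preserves surjections and injections. So if $f\in\E$, then $Kf$ has surjective underlying function, and since any morphism of $\Equiv$ with surjective underlying function is by definition a surjection there, the $\E$-half is immediate. For $m\in\M$ the same lemma makes $D(Um)$ injective, and $Km$ preserves $R$ because $K$ is a functor into $\Equiv$. What remains, and this is the real content, is that $Km$ \emph{reflects} the relations.

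Fix $m:A\to B$ in $\M$ and $\phi,\psi\in\two^{P(UB)}$ with $Km(\phi)\mathrel{R_A}Km(\psi)$. This means $\phi\circ P(Um)$ and $\psi\circ P(Um)$ agree on $\Omega(A)$, that is, $\phi$ and $\psi$ agree on the image of $\Omega(A)$ under $P(Um)$. The goal is that they agree on all of $\Omega(B)$, and I would deduce this from the set identity
\[
 P(Um)\bigl(\Omega(A)\bigr)=\Omega(B).
\]
The inclusion $\subseteq$ holds because a test $b:B\to Q_i$ pulls back to the test $bm$ on $A$. For $\supseteq$ one needs that every test $(i,b)$ on $B$ is hit, but the map $P(Um)$ sends $(i,b)$ to $(i,bm)$, so this is not literally true. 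I think this direction is mis-stated and should be reversed: $P(Um)$ maps $\Omega(B)$ \emph{into} $\Omega(A)$, and the real claim is
\[
 P(Um)\bigl(\Omega(B)\bigr)=\Omega(A).
\]

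With this corrected identity I would argue as follows. Since $Km(\phi)=\phi\circ P(Um)$, the element $Km(\phi)$ restricted to $\Omega(A)$ is determined by $\phi$ on $\Omega(B)$. Reflection then asks: if $\phi\circ P(Um)$ and $\psi\circ P(Um)$ agree on $\Omega(A)$, do $\phi$ and $\psi$ agree on $\Omega(B)$? Because $m$ is injective, $P(Um)$ is injective on $P(UB)$ by \cref{tc:lem:P}. So for $x\in\Omega(B)$, if some $y\in\Omega(A)$ satisfies $P(Um)(x)=y$ for this relevant $x$, then
\[
 \phi(x)=Km(\phi)(y)=Km(\psi)(y)=\psi(x).
\]
This reduces reflection to the statement that $P(Um)$ restricts to a surjection $\Omega(B)\to\Omega(A)$. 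That is precisely $\M$-injectivity of the tests: every $a:A\to Q_i$ extends to some $b:B\to Q_i$ with $bm=a$, which is hypothesis (iii) of \cref{tc:def:tests}, stated in the proposition as extension along $U$-initial injections.

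The main obstacle is keeping the variance straight. $K$ is covariant, but it is built from the contravariant $P$ twice, through $\two^{(-)}$, so one has to check carefully which direction each set map goes and on which side the relation $R$ lives. I would write the underlying map of $Km$ as precomposition with $P(Um):P(UB)\to P(UA)$. In these terms the relation condition becomes equality of restrictions to $\Omega(A)$ along the surjection $\Omega(B)\to\Omega(A)$, and the $\M$-injectivity of the tests then supplies every required $x$. Besides \cref{tc:lem:P}, only the injectivity of $Um$, used to see that $Km$ is injective, and the extension property are needed.
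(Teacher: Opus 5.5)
Your plan is the paper's: you get class preservation from \cref{tc:lem:P} via $D(Uf)$, and reflection from surjectivity of $\Omega(B)\to\Omega(A)$, $(i,b)\mapsto(i,bm)$, which is $\M$-injectivity of the tests. Your first paragraph is fine, and the corrected identity $P(Um)(\Omega(B))=\Omega(A)$ is the right key fact. However, the reflection argument you actually write has the variance backwards and does not hold as written.

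\begin{itemize}
\item Since $Km:KA\to KB$, the elements $\phi,\psi$ lie in $\two^{P(UA)}$, not in $\two^{P(UB)}$. Then $Km(\phi)=\phi\circ P(Um)$ lies in $\two^{P(UB)}$. Reflection means $Km(\phi)\mathrel{R_B}Km(\psi)\Rightarrow\phi\mathrel{R_A}\psi$, not the reverse.
\item Your claim that $P(Um)$ is injective because $Um$ is injective is false. \Cref{tc:lem:P} says $P$ turns injections into surjections. In general $P(Um)$ is not injective: two functions $UB\to UQ_i$ that differ only outside $m(UA)$ have the same image. Surjectivity of $P(Um)$ is exactly what makes $Km=D(Um)$ injective.
\item Your chain $\phi(x)=Km(\phi)(y)$ with $y=P(Um)(x)$ presupposes $Km(\phi)\circ P(Um)=\phi$. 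That is the reverse of the definition.
\item A telltale sign of the confusion: in your version, the existence of such a $y$ for each $x\in\Omega(B)$ is just the trivial inclusion $P(Um)(\Omega(B))\subseteq\Omega(A)$. So the extension property is never genuinely used.
\end{itemize}

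The correct step is one line. The hypothesis $Km(\phi)\mathrel{R_B}Km(\psi)$ says that $\phi\circ P(Um)$ and $\psi\circ P(Um)$ agree on $\Omega(B)$. Equivalently, $\phi$ and $\psi$ agree on $P(Um)(\Omega(B))$, which equals $\Omega(A)$ by $\M$-injectivity of the tests. Hence $\phi\mathrel{R_A}\psi$. Your closing paragraph essentially states this. Once the middle paragraph is replaced by it, your proof coincides with the paper's.
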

\begin{proof}
For $e\in\E$, the map $P(Ue)$ is injective, so $Ke$ is surjective.
For $m:A\to X$ in $\M$, both
\[
P(Um):P(UX)\twoheadrightarrow P(UA),\qquad
\Omega(m):\Omega(X)\twoheadrightarrow\Omega(A)
\]
are surjective: the first by nonemptiness of the tests, the second by
their $\M$-injectivity. The first makes $Km$ injective. The second gives
\[
Km(\phi)\mathrel{R_X}Km(\psi)
\quad\Longleftrightarrow\quad\phi\mathrel{R_A}\psi,
\]
so $Km$ is a full embedding.
\end{proof}

\begin{proposition}\label{top:prop:Kpullbacks}
Let $U:\C\to\Set$ be topological, and let $(Q_i)_{i\in I}$ be a
set-indexed family of objects with nonempty underlying sets, at least
one of which has at least two elements. Suppose that all maps $X\to Q_i$
jointly form a $U$-initial source for every $X\in\C$, and that
every map $A\to Q_i$ extends along every $U$-initial injection
$A\to B$. Put $\two=\{0,1\}$ and
\[
P(S)=\coprod_{i\in I}\Set(S,UQ_i),\qquad
\Omega(X)=\coprod_{i\in I}\C(X,Q_i)\subseteq P(UX),
\]
with $P(u)$ given by precomposition. Let $\Equiv$ denote sets with
equivalence relations and relation-preserving functions. Define the
encoding functor $K:\C\to\Equiv$ on objects $X$ and morphisms
$f:X\to Y$ by
\[
KX=(\two^{P(UX)},R_X),\qquad
\phi R_X\psi\ \Longleftrightarrow\
\phi|_{\Omega(X)}=\psi|_{\Omega(X)},\qquad
Kf(\phi)=\phi\circ P(Uf).
\]

Let $\M$ consist of the $U$-initial injections in $\C$. Then $K$
preserves pullbacks of two $\M$-morphisms with common codomain.
\end{proposition}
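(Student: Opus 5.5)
We need to show: given $m:A\to X$, $n:B\to X$ in $\M$ with pullback $r:P\to X$ (projections $p,q$), the square $K$ of it is a pullback in $\Equiv$. Pullbacks in $\Equiv$ are computed on underlying sets with coordinatewise relation; for two full embeddings the pullback is the intersection of the images with the inherited relation. By \cref{top:prop:Kclasses}, $Km,Kn,Kr$ are full embeddings (the projections $p,q$ lie in $\M$ by pullback stability, and $r\in\M$). So it suffices to show that, as subsets of $A_X=\two^{P(UX)}$, the image of $Kr$ equals the intersection of the images of $Km$ and $Kn$. Relations then automatically agree, since all three are restrictions of $R_X$ via full embeddings.

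For the underlying sets, I will first identify the pullback concretely. By \cref{tc:prop:fs}, $U$ preserves limits, so $UP=UA\cap UB$ inside $UX$ after representing $Um,Un$ as subset inclusions. (The initial structure on the intersection comes for free, but we only need the underlying set.) The underlying map of $Km$ is $\BP(P(Um))$. Here $P(Um):P(UX)\to P(UA)$ is restriction, labelwise. By \cref{tc:lem:P}, $P$ sends the pullback square of injections $UA\cap UB\subseteq UA,UB\subseteq UX$ to a pushout of surjections. Then $\BP$, by \cref{top:lem:boolean}(1), sends this pushout to a pullback of sets. Hence the underlying square of $K$ applied to the pullback is $D$ of a pullback of two injections. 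This is exactly the statement in \cref{tc:lem:P} that $D$ preserves pullbacks of two injections. That settles the underlying-set level.

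The remaining point is the relation level, which is where I expect the only subtlety. The pullback relation on the vertex is the one inherited from $R_A$ and $R_B$ coordinatewise. Since $Km$ is a full embedding, this inherited relation coincides with the restriction of $R_X$ to the intersection. Likewise, $R_P$ is the restriction of $R_X$ along the full embedding $Kr$. So the comparison map from $KP$ to the pullback in $\Equiv$ is a bijection that preserves and reflects the relations, hence an isomorphism.

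The key input is that $r\in\M$, so that \cref{top:prop:Kclasses} makes $Kr$ full. This uses the $\M$-injectivity of the tests: every test on $P$ extends to $X$. Should one prefer to avoid pullback stability, $r$ is an initial injection directly, because a composite of initial morphisms is initial and $p\in\M$.
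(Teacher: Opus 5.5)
Your proposal is correct and is essentially the paper's proof. The underlying square is a pullback because $U$ preserves limits and $D$ preserves pullbacks of two injections (\cref{tc:prop:fs,tc:lem:P}), and the relations match because \cref{top:prop:Kclasses} makes the image maps full embeddings, so every relation is inherited from $R_X$; your closing aside is slightly circular, since $p\in\M$ is itself an instance of pullback stability, but nothing in the argument depends on it.
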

\begin{proof}
In such a pullback all four maps belong to $\M$. The underlying-set
component $DU$ preserves the square by Proposition~\ref{tc:prop:fs}
and Lemma~\ref{tc:lem:P}. By Proposition~\ref{top:prop:Kclasses}, all
four image maps are full embeddings. Their relations are inherited
from the common codomain, so the underlying pullback carries precisely
the relation required for a pullback in $\Equiv$.
\end{proof}

\begin{proposition}\label{top:prop:Kpushouts}
Let $U:\C\to\Set$ be topological, and let $(Q_i)_{i\in I}$ be a
set-indexed family of objects with nonempty underlying sets, at least
one of which has at least two elements. Suppose that all maps $X\to Q_i$
jointly form a $U$-initial source for every $X\in\C$, and that
every map $A\to Q_i$ extends along every $U$-initial injection
$A\to B$. Put $\two=\{0,1\}$ and
\[
P(S)=\coprod_{i\in I}\Set(S,UQ_i),\qquad
\Omega(X)=\coprod_{i\in I}\C(X,Q_i)\subseteq P(UX),
\]
with $P(u)$ given by precomposition. Let $\Equiv$ denote sets with
equivalence relations and relation-preserving functions. Define the
encoding functor $K:\C\to\Equiv$ on objects $X$ and morphisms
$f:X\to Y$ by
\[
KX=(\two^{P(UX)},R_X),\qquad
\phi R_X\psi\ \Longleftrightarrow\
\phi|_{\Omega(X)}=\psi|_{\Omega(X)},\qquad
Kf(\phi)=\phi\circ P(Uf).
\]

Let $\E$ consist of the morphisms of $\C$ whose underlying functions
are surjective. Then $K$ preserves pushouts of two $\E$-morphisms
with common domain.
\end{proposition}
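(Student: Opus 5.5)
The plan is to compute the image under $K$ of a pushout through the equivalence of \cref{top:lem:equiv-colimits}. Under that equivalence, $K$ sends $X$ to the restriction surjection $\two^{P(UX)}\twoheadrightarrow\two^{\Omega(X)}$ of \eqref{tc:eq:restriction-arrow}. It sends $f$ to the commuting square whose top edge is precomposition with $P(Uf)$ and whose bottom edge is precomposition with $\Omega(f)$; these squares commute because $\Omega(f)$ is the restriction of $P(Uf)$. Colimits in the surjection subcategory of the arrow category are computed componentwise. So it suffices to show that, for a pushout square of $e:X\to Y$ and $d:X\to Z$ in $\E$ with vertex $W$, two squares of sets are pushouts:
\[
\two^{P(UX)}\to\two^{P(UY)},\ \two^{P(UZ)}\to\two^{P(UW)}
\qquad\text{and}\qquad
\two^{\Omega(X)}\to\two^{\Omega(Y)},\ \two^{\Omega(Z)}\to\two^{\Omega(W)}.
\]

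\emph{Upper component.} By \cref{tc:prop:fs}, $U$ preserves the pushout and sends $e,d$ to surjections. \Cref{tc:lem:P} says that $D=\two^{P(-)}$ preserves pushouts of two surjections. Hence the upper square is a pushout in $\Set$.

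\emph{Lower component.} This is where the structure of $\C$ enters, and I expect it to be the main step.
\begin{enumerate}
\item Each representable $\C(-,Q_i)$ takes the pushout $W$ to a pullback. Because the coproduct over labels $i$ respects labels, as in the proof of \cref{tc:lem:P}, $\Omega$ sends the pushout square to a pullback square of sets.
\item Since $e$ and $d$ are epimorphisms, $\Omega(e)$ and $\Omega(d)$ are injective.
\item The same holds for $\Omega$ of the two coprojections into $W$: they lie in $\E$, since their underlying maps are the surjective coprojections of a pushout of surjections.
\end{enumerate}
Thus $\Omega(W)$ is identified with the intersection $\Omega(Y)\cap\Omega(Z)$ of two subsets of $\Omega(X)$. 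Applying \cref{top:lem:boolean}(3) with $Q=\two$ shows that $\two^{(-)}$ sends this pullback of two injections, including the case of empty intersection, to a pushout. So the lower square is a pushout in $\Set$.

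\emph{Conclusion.} The maps in both squares are exactly the components of $K$ applied to the pushout square, with naturality of the restriction surjections giving compatibility. Therefore the image square of arrows is the componentwise pushout in the surjection subcategory. By \cref{top:lem:equiv-colimits}, it is a pushout in $\Equiv$, which proves the proposition.

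The delicate point is the identification in the lower component. It needs the coproduct over labels to preserve the pullback, and it needs all four maps to be injections so that \cref{top:lem:boolean}(3) applies rather than a general limit–colimit statement. The nonemptiness and $\M$-injectivity of the tests are not needed here.
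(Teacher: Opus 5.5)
Your proof is correct and follows the paper's argument essentially step for step. You reduce via \cref{top:lem:equiv-colimits} to the two components of the restriction arrows, treat the $\two^{P(U-)}$ component by \cref{tc:prop:fs} and \cref{tc:lem:P}, and treat the $\two^{\Omega(-)}$ component by representability with label compatibility, injectivity of $\Omega(e)$ and $\Omega(d)$, and the intersection assertion of \cref{top:lem:boolean}; your step~3 is harmless but redundant, since the projections of a pullback of two injections are automatically injective.
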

\begin{proof}
Consider $B\xleftarrow{e}A\xrightarrow{d}C$ with $e,d\in\E$, and
let $Z=B\amalg_A C$. The underlying component $DU$ preserves this
pushout by Proposition~\ref{tc:prop:fs} and Lemma~\ref{tc:lem:P}.
Representability in each summand and compatibility of labels give
\[
\Omega(Z)\ \cong\ \Omega(B)\times_{\Omega(A)}\Omega(C).
\]
Both maps into $\Omega(A)$ are injective, since $e$ and $d$ have
surjective underlying functions. Lemma~\ref{top:lem:boolean}(iii)
therefore gives
\[
\two^{\Omega(Z)}\ \cong\
\two^{\Omega(B)}\amalg_{\two^{\Omega(A)}}\two^{\Omega(C)}.
\]
Both components of \eqref{tc:eq:restriction-arrow} thus preserve the
pushout. Lemma~\ref{top:lem:equiv-colimits} proves the assertion.
\end{proof}

\subsection{Reduced words and a conservative functor to sets}\label{top:sec:words}
\subsubsection{Reduced words and functoriality}
For the equality relation, the construction below is the classical
word model of the free left regular band
\cite[Definition~2.1]{BCR2023}. We adapt its reduction rule to an
arbitrary equivalence relation and verify the properties needed here.

Given $(A,R)\in\Equiv$, call a finite word $(a_1,\ldots,a_n)$
\emph{$R$-reduced} if its letters belong to pairwise distinct $R$-classes.
The empty word $\eps$ is included. Define
\begin{equation}\label{top:eq:Wdef}
W(A,R)=\{(a_1,\ldots,a_n):n\geq0,\ a_i\not\mathrel R a_j\text{ for }i\neq j\}.
\end{equation}
For an arbitrary finite word $w$ over $A$, let $\red_R(w)$ be obtained
by retaining only the first occurrence of each $R$-class.
For $h:(A,R)\to(B,S)$, set
\begin{equation}\label{top:eq:Wmap}
Wh(a_1,\ldots,a_n)=\red_S(h(a_1),\ldots,h(a_n)).
\end{equation}

\begin{lemma}\label{top:lem:Wfunctor}
Let $\Equiv$ be the category of sets with equivalence relations and
relation-preserving functions. For $(A,R)\in\Equiv$, let $W(A,R)$
be the set of finite words in $A$ with letters in pairwise distinct
$R$-classes, including the empty word. For a relation-preserving
function $h:(A,R)\to(B,S)$, let $Wh$ apply $h$ to each letter and
then retain only the first occurrence of each $S$-class.

These assignments define a functor $W:\Equiv\to\Set$.
\end{lemma}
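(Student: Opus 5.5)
We need to check three things: that $Wh$ is well defined, that $W$ preserves identities, and that it preserves composition. The whole argument rests on one observation about the reduction operator. For any finite word $w$ over $A$, the word $\red_R(w)$ is $R$-reduced, since it keeps exactly one letter from each $R$-class that occurs. Hence $Wh(a_1,\ldots,a_n)=\red_S(h(a_1),\ldots,h(a_n))$ is $S$-reduced, and $Wh$ is a function $W(A,R)\to W(B,S)$. No assumption on $h$ beyond being a function is needed here. Relation preservation will matter only for composition.

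For identities, a word in $W(A,R)$ already has letters in pairwise distinct $R$-classes. So $\red_R$ leaves it unchanged, and $W\id=\id$.

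Composition is the main step. Take $h:(A,R)\to(B,S)$ and $k:(B,S)\to(C,T)$, and write $\bar k$ for letterwise application of $k$ to words. I would prove the identity
\[
 \red_T\bigl(\bar k(\red_S(v))\bigr)=\red_T\bigl(\bar k(v)\bigr)
\]
for every finite word $v$ over $B$. Applying it to $v=\bar h(w)$ gives $Wk\,Wh=W(kh)$, because $\bar k\bar h=\overline{kh}$.

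To prove the identity, note that $\red_S(v)$ is obtained from $v$ by deleting letters $v_j$ that are $S$-related to some earlier letter $v_i$ with $i<j$. Since $k$ preserves the relations, $k(v_j)\mathrel{T}k(v_i)$. So each deleted letter becomes, after applying $k$, a letter whose $T$-class has already appeared earlier. Such a letter would be discarded by $\red_T$ in any case.

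To make this precise, I would use a general lemma. Suppose $u'$ is obtained from a word $u$ by deleting some letters, each of which is $T$-related to an earlier letter of $u$ that is retained in $u'$. Then $\red_T(u')=\red_T(u)$. The reason is that both words have the same set of $T$-classes, and the first occurrence of each class sits in the same relative order, in fact at the same letter.

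There is one subtlety: the witness $v_i$ might itself have been deleted. I would handle this by choosing $v_i$ to be the first letter of its $S$-class, which is always retained. Since $S$ is transitive, $v_j\mathrel{S}v_i$ still holds for this choice.

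I expect this bookkeeping of first occurrences to be the only nonroutine point. The key input is that first occurrences of $T$-classes in $\bar k(v)$ are never among the deleted letters.
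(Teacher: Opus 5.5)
Your proposal is correct and follows the paper's proof. The paper also reduces composition to the identity $\red_S\bigl(h(\red_R(w))\bigr)=\red_S(h(w))$, using exactly your observation that each letter deleted by $\red_R$ has an earlier equivalent letter whose image is again equivalent; you supply the first-occurrence bookkeeping that the paper leaves implicit, and your retained-witness precaution is harmless but unnecessary, since a letter with any earlier $T$-equivalent letter is never the first occurrence of its class.
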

\begin{proof}
Identity maps do not delete letters from a reduced word. If $h$ preserves
equivalence, a letter deleted by $\red_R$ has an earlier $R$-equivalent
letter whose image is $S$-equivalent. Consequently,
\[
\red_S\bigl(h(\red_R(w))\bigr)=\red_S(h(w)).
\]
Applying this observation to successive maps proves $W(kh)=Wk\circ Wh$.
\end{proof}

Single-letter words retain the original elements of $A$, even when those
elements are $R$-equivalent. The relation affects which longer words are
allowed and which letters disappear under a morphism. This distinction
is essential for conservativity.

\subsubsection{Conservativity, classes, and pullbacks}
\begin{proposition}\label{top:prop:Wclasses}
Let $\Equiv$ be the category of sets with equivalence relations and
relation-preserving functions. For $(A,R)\in\Equiv$, let $W(A,R)$
be the set of finite words in $A$ with letters in pairwise distinct
$R$-classes, including the empty word. For a relation-preserving
function $h:(A,R)\to(B,S)$, let $Wh$ apply $h$ to each letter and
then retain only the first occurrence of each $S$-class.

For every such morphism $h$,
\begin{align*}
Wh\text{ is surjective}&\quad\Longleftrightarrow\quad h\text{ is surjective},\\
Wh\text{ is injective}&\quad\Longleftrightarrow\quad h\text{ is a full embedding},
\end{align*}
where a full embedding is an injective function that preserves and
reflects equivalence. In particular, $W$ is conservative.
\end{proposition}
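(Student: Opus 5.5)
We prove the four implications separately, working with single-letter and two-letter words. Conservativity will then follow formally. If $Wh$ is bijective, then $h$ is surjective and a full embedding, hence a bijection that preserves and reflects the relation, hence an isomorphism in $\Equiv$.

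\emph{Surjectivity.} If $h$ is surjective, take a reduced word $(b_1,\ldots,b_n)$ over $(B,S)$ and choose preimages $a_i$ with $h(a_i)=b_i$. Because the $b_i$ lie in distinct $S$-classes and $h$ preserves equivalence, the $a_i$ lie in distinct $R$-classes: if $a_i\mathrel R a_j$ then $b_i\mathrel S b_j$. So the word $(a_1,\ldots,a_n)$ is reduced, and $h$ maps it to $(b_1,\ldots,b_n)$ with no deletions. Conversely, if $Wh$ is surjective, then every one-letter word $(b)$ is hit by some reduced word $w$. Since $Wh$ never lengthens words, and only the empty word maps to $\eps$, $w$ is nonempty. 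The first letter of $Wh(w)$ is $h(a_1)$, so $b=h(a_1)$ and $h$ is surjective.

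\emph{Injectivity.} Suppose $h$ is a full embedding. Then $h$ preserves and reflects $R$, so on an $R$-reduced word the images lie in distinct $S$-classes. Hence $Wh$ deletes nothing and simply applies $h$ letterwise, and it is injective because $h$ is. For the converse, assume $Wh$ is injective. Then $h$ is injective, by comparing one-letter words, which are never altered by $Wh$. To show $h$ reflects equivalence, take $a,a'$ with $h(a)\mathrel S h(a')$ and suppose, for contradiction, that $a$ and $a'$ are not $R$-related. Then $(a,a')$ is a reduced two-letter word, and $Wh(a,a')=\red_S(h(a),h(a'))=(h(a))=Wh(a)$. Injectivity of $Wh$ is contradicted, so $h$ reflects equivalence.

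The main point to handle carefully is this reflection step. It is exactly where the convention that single-letter words keep original elements, rather than classes, matters: that convention is what lets the one-letter word $(a)$ serve as the witness of non-injectivity. The remaining steps are routine checks that preimages of reduced words remain reduced.
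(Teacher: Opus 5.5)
Your proof is correct and follows the paper's argument step for step. Like the paper, you lift reduced words letter by letter for surjectivity, use one-letter words for the converse and for injectivity of $h$, compare the distinct words $(a,a')$ and $(a)$ to force reflection of equivalence, and deduce conservativity from the fact that a bijective full embedding is an isomorphism in $\Equiv$.
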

\begin{proof}
If $h$ is surjective, lift the letters of a target reduced word
individually. Their lifts cannot be $R$-equivalent, since their images
are pairwise $S$-inequivalent. This lifts the target word without
deletion. Conversely, if $Wh$ is surjective, lift a one-letter word
$(b)$. Its preimage is nonempty, and the image of its first letter is
$b$. Thus $h$ is surjective.

If $h$ is a full embedding, no letters are deleted by $Wh$, and its
injectivity follows from that of $h$. Conversely, injectivity of $Wh$
on one-letter words forces injectivity of $h$. If $a\not\mathrel R a'$
but $h(a)\mathrel S h(a')$, the distinct words $(a,a')$ and $(a)$ have
the same image. Therefore $h$ reflects equivalence and is a full
embedding. A bijective full embedding is an isomorphism in $\Equiv$,
which proves conservativity.
\end{proof}

\begin{proposition}\label{top:prop:Wpullbacks}
Let $\Equiv$ be the category of sets with equivalence relations and
relation-preserving functions. For $(A,R)\in\Equiv$, let $W(A,R)$
be the set of finite words in $A$ with letters in pairwise distinct
$R$-classes, including the empty word. For a relation-preserving
function $h:(A,R)\to(B,S)$, let $Wh$ apply $h$ to each letter and
then retain only the first occurrence of each $S$-class.

Then $W$ preserves pullbacks of two full embeddings, where a full
embedding is an injective function that preserves and reflects
equivalence.
\end{proposition}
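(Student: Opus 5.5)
Up to isomorphism in $\Equiv$, two full embeddings with common codomain $(C,R)$ are inclusions of subsets $A,B\subseteq C$ carrying the restricted relations $R|_A$ and $R|_B$. By the remark after \cref{top:lem:equiv-colimits}, their pullback is $(A\cap B,R|_{A\cap B})$, and all four maps in the pullback square are full embeddings. The plan is to compute the image square under $W$ directly and compare it with the set-theoretic pullback.

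First I apply \cref{top:prop:Wclasses}. Since all four arrows are full embeddings, their images under $W$ are injective. I also need the explicit form of these maps. A full embedding preserves and reflects equivalence, so an $R|_A$-reduced word remains $R$-reduced in $C$ and no letters are deleted. Hence $W$ of each inclusion is literally the inclusion of words. Concretely, $W(A,R|_A)$ is the set of $R$-reduced words in $C$ all of whose letters lie in $A$, and similarly for $B$ and for $A\cap B$. Faithfulness to the relation matters here: $R|_A$-reducedness of a word in $A$ is the same as $R$-reducedness, because the relation is inherited.

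The pullback of two injections into $W(C,R)$ in $\Set$ is the intersection of their images. So it remains to check
\[
 W(A,R|_A)\cap W(B,R|_B)=W(A\cap B,R|_{A\cap B})
\]
inside $W(C,R)$. The inclusion $\supseteq$ is clear. For $\subseteq$, a reduced word lying in both subsets has every letter in $A$ and every letter in $B$, so every letter lies in $A\cap B$. The word is still $R$-reduced, hence $R|_{A\cap B}$-reduced. The empty word lies in all three sets, which covers the case $A\cap B=\varnothing$, where the common value is $\{\eps\}$.

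Finally, I conclude that the image square is the inclusion square of this intersection, and is therefore a pullback in $\Set$. I expect no real obstacle. The only care needed is to make precise that $W$ of a full embedding deletes nothing, so that the images are exactly the "letters in the subset" word sets, and to transport along the isomorphisms used to replace arbitrary full embeddings by subset inclusions.
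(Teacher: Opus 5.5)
Your proposal is correct and follows essentially the same route as the paper. Both arguments represent the full embeddings as subset inclusions with inherited relations, identify $W$ of each inclusion with the set of reduced words whose letters lie in the subset, and observe that the intersection of these images is exactly the set of reduced words on $A\cap B$, which is the set-theoretic pullback.
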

\begin{proof}
Represent the full embeddings by subsets $A,B\subseteq C$, with the
relations induced from $(C,T)$. The image of $W(A,T|_A)$ in $W(C,T)$
is exactly the set of reduced words whose letters lie in $A$. Its
intersection with the analogous image for $B$ is exactly the set of
reduced words whose letters lie in $A\cap B$. This is the required
set-theoretic pullback.
\end{proof}

\subsubsection{The left regular band adjunction}
A \emph{left regular band monoid} is a monoid satisfying
\begin{equation}\label{top:eq:lrb-identities}
x^2=x,\qquad xyx=xy.
\end{equation}
Let $\LRB$ denote their category. These identities and the classical
free model by words without repeated letters are standard; see
\cite[Section~2.1]{BCR2023}. The construction below also allows a prescribed
equivalence relation among the letters.

Define a multiplication on $W(A,R)$ by concatenation followed by reduction:
\begin{equation}\label{top:eq:wordproduct}
u\cdot v=\red_R(uv).
\end{equation}
The reduction identities
\[
\red_R(\red_R(u)v)=\red_R(uv)
=\red_R(u\,\red_R(v))
\]
imply associativity. The empty word is an identity. Every letter in a
second copy of $u$ is deleted, as is every letter in the final copy of
$u$ in $uvu$. Hence \eqref{top:eq:lrb-identities} holds. Formula
\eqref{top:eq:Wmap} respects this multiplication, defining a functor
\[
\mathsf L:\Equiv\to\LRB
\]
whose underlying-set functor is $W$.

For $M\in\LRB$, define a relation on its underlying set by
\begin{equation}\label{top:eq:band-relation}
b\approx_M c\quad\Longleftrightarrow\quad bc=b\text{ and }cb=c.
\end{equation}
This is an equivalence relation. Reflexivity follows from idempotence,
and symmetry is built into the definition. For transitivity, if
$b\approx_M c$ and $c\approx_M d$, then
\[
bd=(bc)d=b(cd)=bc=b,
\qquad
db=(dc)b=d(cb)=dc=d.
\]
Monoid homomorphisms preserve this relation. Thus there is a functor
$\mathcal R:\LRB\to\Equiv$, with $\mathcal R(M)=(UM,\approx_M)$.

\begin{proposition}\label{top:prop:wordadjunction}
Let $\Equiv$ denote sets with equivalence relations and
relation-preserving functions, and let $\LRB$ denote monoids satisfying
$x^2=x$ and $xyx=xy$, with monoid homomorphisms. Define
$\mathsf L:\Equiv\to\LRB$ by taking $\mathsf L(A,R)$ to consist
of finite words with letters in pairwise distinct $R$-classes,
including the empty word. Its product concatenates words and retains
only the first occurrence of each $R$-class, and its identity is the
empty word. On morphisms, $\mathsf L$ applies the function to letters
and performs the same reduction in the target relation. Define
$\mathcal R:\LRB\to\Equiv$ by
\[
\mathcal R(M)=(UM,\approx_M),\qquad
b\approx_M c\ \Longleftrightarrow\ bc=b\text{ and }cb=c,
\]
where $UM$ is the underlying set of $M$, and let $\mathcal R$ act on
homomorphisms by their underlying functions. Then
$\mathsf L\dashv\mathcal R$.
\end{proposition}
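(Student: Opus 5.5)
We construct the unit and the hom-set bijection directly.

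For $(A,R)\in\Equiv$, take the unit
\[
\eta_{(A,R)}:(A,R)\to\mathcal R\mathsf L(A,R),\qquad a\mapsto(a).
\]
We first check that it is a morphism of $\Equiv$. If $a\mathrel R a'$, then $(a)\cdot(a')=\red_R(a,a')=(a)$ and symmetrically $(a')\cdot(a)=(a')$. Hence $(a)\approx(a')$ in the monoid $\mathsf L(A,R)$.

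Next we show that every reduced word is the product of its letters. For $w=(a_1,\ldots,a_n)$ with pairwise inequivalent letters, $w=(a_1)\cdot(a_2)\cdots(a_n)$, because no reduction occurs. Thus the one-letter words generate $\mathsf L(A,R)$ as a monoid, and any homomorphism out of $\mathsf L(A,R)$ is determined by its values on the image of $\eta$. This gives uniqueness.

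For existence, let $M\in\LRB$ and let $h:(A,R)\to\mathcal R(M)$ preserve the relation. Define
\[
\bar h(a_1,\ldots,a_n)=h(a_1)\cdots h(a_n),\qquad \bar h(\eps)=1.
\]
Then $\bar h\eta=h$, and it remains to show that $\bar h$ is a monoid homomorphism. Unwinding the product, this amounts to the following key identity: for any finite word $w$ over $A$, the product in $M$ of the images of its letters equals the product for $\red_R(w)$. Granting it, $\bar h(u\cdot v)=\bar h(\red_R(uv))=\bar h(u)\bar h(v)$.

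We prove the key identity by induction on the number of deleted letters. It suffices to delete one letter $a_k$ that has an earlier $R$-equivalent letter $a_j$, $j<k$. Write $x=h(a_j)$ and $y=h(a_k)$, so $x\approx_M y$ and in particular $xy=x$. The product then has the form $p\,x\,q\,y\,r$, and we must show $xqy=xq$. Compute
\[
xqy=xq(xy)=(xqx)y=(xq)y,
\]
which is circular as written. The correct route is to use $y=yx$:
\[
xqy=xq(yx)=(xqy)x.
\]
Then apply the left-regular identity $zxz'\ldots$ in the form $(s)t(s)=st$ with $s=xq$, noting that $xqy\,x$ involves the prefix $x$. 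Concretely, $xq\,y=xq\,yx$, and $xqyx=x(qy)x=x(qy)$ by $xzx=xz$ with $z=qy$, so this only returns to $xqy$. Instead we argue
\[
xq=xqx=xq(xy)=(xqx)y=xqy,
\]
using $xqx=xq$ twice and $xy=x$. This establishes the deletion step.

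The main obstacle is this deletion lemma, in particular choosing which of $xy=x$ and $yx=y$ to use; the chain just displayed settles it. Naturality of the bijection $h\mapsto\bar h$ in $(A,R)$ and $M$ is routine, since $\mathsf L$ acts letterwise followed by reduction and $\bar h$ is defined letterwise. Combining the unit, uniqueness, and existence gives the universal property of $\eta$, hence $\mathsf L\dashv\mathcal R$.
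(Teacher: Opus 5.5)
Your proposal is correct and follows essentially the same route as the paper. Both use the one-letter unit, extension by letterwise products, and generation by one-letter words for uniqueness. Your deletion step $xq=xqx=xq(xy)=(xqx)y=xqy$ is the same lemma the paper proves as $bzc=(bc)zc=b(czc)=b(cz)=bz$. You should remove the abandoned attempts before that chain: the first of them invalidly writes $xqy=xq(xy)$, which would require $y=xy$ rather than $xy=x$.
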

\begin{proof}
An equivalence-preserving function $u:(A,R)\to\mathcal R(M)$ extends by
\begin{equation}\label{top:eq:wordextension}
\overline u(a_1,\ldots,a_n)=u(a_1)\cdots u(a_n),\qquad
\overline u(\eps)=1_M.
\end{equation}
To verify that reductions do not affect this product, suppose $b\approx_M c$.
For any $z\in M$,
\begin{equation}\label{top:eq:deletion}
bzc=(bc)zc=b(czc)=b(cz)=(bc)z=bz.
\end{equation}
Thus a later letter equivalent to an earlier letter can be deleted even
when other letters intervene. Repeated use of \eqref{top:eq:deletion} proves
that \eqref{top:eq:wordextension} is a monoid homomorphism.

Conversely, if $v:\mathsf L(A,R)\to M$ is a monoid homomorphism, its restriction
to one-letter words preserves equivalence: when $a\mathrel R a'$, the
identities $(a)\cdot(a')=(a)$ and $(a')\cdot(a)=(a')$ imply that the
images satisfy \eqref{top:eq:band-relation}. Since one-letter words generate
$\mathsf L(A,R)$, the extension is unique. These correspondences are natural
and give the required adjunction.
\end{proof}

\subsubsection{Why the word functor preserves the required pushouts}
\begin{proposition}\label{top:prop:Wpushouts}
Let $\Equiv$ be the category of sets with equivalence relations and
relation-preserving functions. For $(A,R)\in\Equiv$, let $W(A,R)$
be the set of finite words in $A$ with letters in pairwise distinct
$R$-classes, including the empty word. For a relation-preserving
function $h:(A,R)\to(B,S)$, let $Wh$ apply $h$ to each letter and
then retain only the first occurrence of each $S$-class.

Then $W:\Equiv\to\Set$ preserves pushouts of two morphisms with
surjective underlying functions.
\end{proposition}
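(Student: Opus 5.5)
The plan is to avoid computing with words directly. Instead I would factor $W$ as the composite of the left adjoint $\mathsf L:\Equiv\to\LRB$ of \cref{top:prop:wordadjunction} with the underlying-set functor $U:\LRB\to\Set$. By construction in the paragraph preceding \cref{top:prop:wordadjunction}, $W=U\mathsf L$, since the underlying set of $\mathsf L(A,R)$ is $W(A,R)$ and $\mathsf L h$ has underlying function $Wh$. The statement then reduces to two pushout-preservation facts, one for each factor.

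\emph{Step 1 (existence of the pushout in $\Equiv$).} Let $e:(A,R)\to(B,S)$ and $d:(A,R)\to(C,T)$ be morphisms with surjective underlying functions. I would first note that their pushout exists in $\Equiv$. It can be read off from \cref{top:lem:equiv-colimits}: take componentwise pushouts in $\Set$ of the quotient arrows. Concretely, it is the set pushout $Z=B\amalg_A C$, equipped with the equivalence relation generated by the images of $S$ and $T$. Its coprojections are again surjective.

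\emph{Step 2 ($\mathsf L$ preserves it).} As a left adjoint, $\mathsf L$ preserves all existing colimits. In particular $\mathsf L$ sends this pushout square to a pushout square in $\LRB$. By \cref{top:prop:Wclasses}, the maps $We$ and $Wd$, which are the underlying functions of $\mathsf Le$ and $\mathsf Ld$, are surjective. So the image square is a pushout in $\LRB$ of two surjective homomorphisms with common domain $\mathsf L(A,R)$.

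\emph{Step 3 ($U$ preserves it).} The category $\LRB$ is a single-sorted finitary variety, defined by the monoid axioms together with $x^2=x$ and $xyx=xy$. By \cref{prop:variety-forgetful}, its underlying-set functor preserves pushouts of two surjective homomorphisms. This works because the join of the two kernel congruences is computed as the join of the equivalence relations on the carrier. Applying $U$ to the square from Step~2 therefore yields a pushout in $\Set$. That square is exactly the image under $W$ of the original pushout, which proves the proposition.

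The only point needing care is Step~2: I must ensure that the preserved diagram really consists of surjective homomorphisms, so that \cref{prop:variety-forgetful} applies. That is exactly what \cref{top:prop:Wclasses} supplies. A direct combinatorial proof would instead connect two words with the same image in $W(Z)$ by chains in the kernels of $We$ and $Wd$. That route is awkward because equivalence in $Z$ is only generated by zigzags, and I would avoid it by using the adjunction as above.
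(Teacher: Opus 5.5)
Your proposal is correct and follows essentially the paper's own proof. It factors $W$ as the underlying-set functor composed with the left adjoint $\mathsf L:\Equiv\to\LRB$, uses \cref{top:prop:Wclasses} to see that the preserved pushout square has surjective legs, and then applies \cref{prop:variety-forgetful} to the variety $\LRB$; your Step~1 on existence of the pushout in $\Equiv$ is a harmless addition that the paper leaves implicit.
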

\begin{proof}
Write $V:\LRB\to\Set$ for the underlying-set functor, so that $W=V\mathsf L$.
The functor $\mathsf L$ preserves pushouts because it is a left adjoint.
By Proposition~\ref{top:prop:Wclasses}, it sends surjections in $\Equiv$ to
surjective monoid homomorphisms. Proposition~\ref{top:lem:variety-pushout}, applied
to the finitary variety $\LRB$, therefore shows that $V$ preserves the
resulting pushout. Hence $W$ preserves the original pushout.
\end{proof}

\subsubsection{The representation theorem for topological categories}\label{top:sec:GH}
We can now combine the encoding with the word construction.

\begin{theorem}\label{tc:thm:form}
Let $U:\C\to\Set$ be topological. Suppose there is a set-indexed
family $(Q_i)_{i\in I}$ of objects with nonempty underlying sets such
that all morphisms $X\to Q_i$ jointly form a $U$-initial source for
every $X\in\C$, and every map $A\to Q_i$ extends along every
$U$-initial injection $A\to B$. Let $\E$ consist of morphisms with
surjective underlying functions, and $\M$ of $U$-initial injections.
Then there is a conservative functor $G:\C\to\Set$ such that $Gf$
is surjective, respectively injective, if and only if $f\in\E$,
respectively $f\in\M$, and $G$ preserves pullbacks of two
$\M$-morphisms and pushouts of two $\E$-morphisms. Let $\Ab$ denote
the category of abelian groups, and let $\FZ:\Set\to\Ab$ send a set to its free
abelian group and a function to its linear extension, and put
$H=\FZ G$. The form with subgroup fibres and transport
\begin{equation}\label{tc:eq:form}
\N_X=\Sub_{\Ab}(HX),\qquad
f_!(S)=Hf(S),\qquad f^*(T)=(Hf)^{-1}(T)
\end{equation}
for $X\in\C$, $f:X\to Y$, $S\leq HX$, and $T\leq HY$ is
noetherian, has complete algebraic modular fibres, and induces $(\E,\M)$.
\end{theorem}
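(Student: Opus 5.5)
We take $G=WK$, the composite of the encoding functor $K:\C\to\Equiv$ of \cref{top:sec:K} with the word functor $W:\Equiv\to\Set$ of \cref{top:sec:words}. First we normalize the test family. As remarked after \cref{tc:def:tests}, if no $UQ_i$ has two elements we adjoin the indiscrete object on $\{0,1\}$. This object is $\M$-injective, and adjoining a test preserves initial determination. So we may assume the hypotheses of \cref{top:prop:Kconservative,top:prop:Kclasses,top:prop:Kpullbacks,top:prop:Kpushouts}. \Cref{tc:prop:fs} supplies the proper factorization system $(\E,\M)$ together with all limits and colimits in $\C$, so the homogeneous diagrams exist.

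Next we check the transfer properties of $G$ one at a time. For conservativity, $K$ is conservative by \cref{top:prop:Kconservative} and $W$ by \cref{top:prop:Wclasses}, so their composite is. For preservation of classes, \cref{top:prop:Kclasses} sends $\E$ to surjections and $\M$ to full embeddings, and \cref{top:prop:Wclasses} turns these into surjections and injections respectively. For pullbacks of two $\M$-morphisms, $K$ preserves them by \cref{top:prop:Kpullbacks}, and the image square consists of full embeddings, which $W$ preserves by \cref{top:prop:Wpullbacks}. For pushouts of two $\E$-morphisms, $K$ preserves them by \cref{top:prop:Kpushouts}, the image legs are surjective, and \cref{top:prop:Wpushouts} applies. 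Hence $G$ is a transfer functor $\C\to\Set$.

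It remains to show reflection of the classes. The converse directions ("$Gf$ surjective implies $f\in\E$", and likewise for injective and $\M$) follow from \cref{lem:classreflection}: $G$ is conservative and maps the specified classes into the surjection and injection classes, so it reflects them.

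Finally, by \cref{thm:free-variety} the functor $\FZ:\Set\to\Ab$ is a transfer functor, since $\Ab$ has the closed term $0$. The composite $H=\FZ G$ is therefore a transfer functor by \cref{prop:transfer-elementary}. The subgroup form on $\Ab$ is noetherian and induces the surjection--injection system. By \cref{thm:transfer}, pulling it back along $H$ gives \eqref{tc:eq:form}: a noetherian form inducing $(\E,\M)$. Completeness, algebraicity and modularity of the fibres $\Sub_{\Ab}(HX)$ are checked exactly as in the proof of \cref{thm:modular-replacement}.

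The main point needing care is the pullback step. We must confirm that the pullback computed in $\Equiv$ really has full embeddings as its legs, so that \cref{top:prop:Wpullbacks} applies; that proposition only treats pullbacks of two full embeddings. This is settled by \cref{top:prop:Kclasses}, since all four maps in a pullback of two $\M$-morphisms lie in $\M$. Everything else is bookkeeping.
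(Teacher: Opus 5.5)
Your proposal is correct and follows the paper's proof essentially step for step. After adjoining the indiscrete two-point test if needed, you take $G=WK$, get conservativity, class preservation, and preservation of the homogeneous pullbacks and pushouts from the same propositions about $K$ and $W$, reflect the classes via \cref{lem:classreflection}, and finish by composing with the free abelian group functor and applying \cref{thm:transfer}, exactly as the paper does.
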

\begin{proof}
Adjoin the indiscrete two-point test if necessary, and take $G=WK$.
Conservativity follows from Propositions~\ref{top:prop:Kconservative}
and \ref{top:prop:Wclasses}. Class preservation follows from
Propositions~\ref{top:prop:Kclasses} and \ref{top:prop:Wclasses};
reflection follows from Lemma~\ref{lem:classreflection}. The pullback
assertion follows from Propositions~\ref{top:prop:Kpullbacks} and
\ref{top:prop:Wpullbacks}, and the pushout assertion from
Propositions~\ref{top:prop:Kpushouts} and \ref{top:prop:Wpushouts}.

By Theorem~\ref{thm:free-variety}, applied to abelian groups, $H$ has the same conservativity,
class, and preservation properties. The subgroup form on $\Ab$ is
noetherian, and the required diagrams exist by
Proposition~\ref{tc:prop:fs}. The transfer theorem
(\cref{thm:transfer}) gives \eqref{tc:eq:form} and its stated
factorization system. Its fibres are subgroup lattices of abelian groups,
so they are complete algebraic modular lattices, as also verified in
the proof of \cref{thm:modular-replacement}.
\end{proof}

Explicitly, $GX$ consists of the finite words in functions
$P(UX)\to\two$ whose restrictions to $\Omega(X)$ are pairwise
distinct, including the empty word. The map $Gf$ applies
precomposition with $P(Uf)$ to each letter and then deletes later
letters with a previously occurring test restriction. Thus the theorem
supplies a set-sized construction at every object, rather than only
an abstract existence argument.

\subsection{Quotients, induced subobjects, and their concrete interpretation}
\label{sec:concrete-dictionary}\label{sec:top-examples}

The quotient and embedding orders of a noetherian form are determined by
its proper factorization system; see \cite[Remark~42 and Lemma~44]{JVN}
and \cref{lem:basic,lem:intervals}. We now express these orders in the
concrete language of topological categories, using initial structures
and transport along bijections as in \cite{AHS}. This gives a common
derivation of the isomorphism and correspondence statements in the
examples below.

\begin{proposition}[The concrete quotient dictionary]
\label{tc:prop:quotient-dictionary}
Let $U:\C\to\Set$ be a topological functor, and let $\F$ be a
noetherian form on $\C$ whose quotients are the morphisms with
surjective underlying functions and whose embeddings are the
$U$-initial injections. For $X\in\C$, write $\Nrm_\F(X)$ and
$\Cnm_\F(X)$ for its normal and conormal cluster orders.
A \emph{weak quotient datum} on $X$ is a pair $\alpha=(R,Q)$,
where $R$ is an equivalence relation on $UX$, $Q$ is a
$\C$-structure on $UX/R$, and the class map
$q_\alpha:X\to Q$ is a morphism. Put $X/\alpha=Q$, identify
structures up to isomorphism over their underlying sets, and order
these data by
\[
 \alpha\le\beta
 \quad\Longleftrightarrow\quad
 q_\beta=rq_\alpha\text{ for a morphism }r:X/\alpha\to X/\beta.
\]
Write $\mathcal Q_U(X)$ for this order, and let $X|_A$ denote the
initial structure on a subset $A\subseteq UX$. Then:
\begin{enumerate}[label=\textup{(\roman*)}]
\item There are order isomorphisms
\[
 \mathcal Q_U(X)\cong\Nrm_\F(X),\quad
 \alpha\longmapsto\Ker_\F q_\alpha,
 \qquad
 \Pow(UX)\cong\Cnm_\F(X),\quad
 A\longmapsto\Img_\F(X|_A\hookrightarrow X).
\]
\item For a morphism $f:X\to Y$, let $R_f$ be equality of its
underlying values. Give $UX/R_f$ the structure transported from
$Y|_{f(UX)}$ along $[x]\mapsto f(x)$, and call the resulting weak
quotient datum $\rho_f$. The canonical map is an isomorphism
\[
 X/\rho_f\xrightarrow{\ \cong\ }Y|_{f(UX)}.
\]
This is the quotient--embedding factorization of $f$.
\item For a weak quotient datum $\alpha$ on $X$ and a subset
$A\subseteq UX$, let $\alpha|_A=\rho_{q_\alpha i_A}$, where
$i_A:X|_A\hookrightarrow X$. Its equivalence relation is the
restriction of $R_\alpha$ to $A$. There is a canonical isomorphism
\[
 (X|_A)/(\alpha|_A)\cong(X/\alpha)|_{q_\alpha(A)}.
\]
\item If $\alpha\le\beta$, let $r:X/\alpha\to X/\beta$ be the
induced surjection and put $\beta/\alpha=\rho_r$. Then
\[
 (X/\alpha)/(\beta/\alpha)\cong X/\beta,\qquad
 \mathcal Q_U(X/\alpha)\cong
 \{\beta\in\mathcal Q_U(X):\alpha\le\beta\}.
\]
The order isomorphism sends a quotient of $X/\alpha$ to its
composite with $q_\alpha$. On normal clusters it is inverse image
along $q_\alpha$, with inverse given by direct image.
\item For a set-indexed family of weak quotient data
$(\alpha_i)_{i\in I}$ on $X$, the joint morphism
$X\to\prod_i X/\alpha_i$ is an initial embedding exactly when
\[
 \bigcap_i R_{\alpha_i}=\Delta_{UX}
 \quad\text{and}\quad
 (q_{\alpha_i}:X\to X/\alpha_i)_{i\in I}
 \text{ is a }U\text{-initial source}.
\]
In this case it is a subdirect embedding, since every component
is surjective.
\end{enumerate}
\end{proposition}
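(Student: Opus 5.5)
The plan is to reduce every assertion to the facts about the induced factorization system recorded in \cref{lem:basic,lem:intervals}, together with \cref{tc:prop:fs} for the description of $(\E,\M)$. The first step is to identify normal clusters with $\E$-quotients ordered by factorization, and conormal clusters with $\M$-subobjects. By \cref{lem:basic}(v), every kernel is the kernel of an $\E$-morphism, and every image is the image of an $\M$-morphism. The universal properties of quotients and embeddings in (N2) show that $e\mapsto\Ker_\F e$ and $m\mapsto\Img_\F m$ reflect and preserve the factorization orders. This is the argument used in \cref{thm:N2}, applied with $T$ the identity.

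\emph{Item (i).} It then remains to identify $\Quot_\E(X)$ with $\mathcal Q_U(X)$ and $\Sub_\M(X)$ with $\Pow(UX)$.
\begin{itemize}
\item For quotients, every $e\in\E$ is isomorphic over $X$ to the class map of its kernel relation $R$, with codomain structure transported along the bijection $UX/R\cong UY$. Two such data are comparable exactly when a morphism $r$ exists with $q_\beta=rq_\alpha$.
\item For subobjects, an $\M$-morphism is, up to isomorphism, the inclusion $X|_A\hookrightarrow X$. Factorization through another such inclusion is inclusion of subsets, by initiality and faithfulness.
\end{itemize}
Identifying structures up to isomorphism over the underlying set makes these bijections. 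Amnesticity removes the remaining choices.

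\emph{Item (ii).} The factorization $f=me$ from the proof of \cref{tc:prop:fs} has $e$ equal to the corestriction onto $Y|_{f(UX)}$. Rewriting its codomain through $UX/R_f$ gives $\rho_f$, and uniqueness of factorizations yields the stated isomorphism.

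\emph{Item (iii).} Apply (ii) to $q_\alpha i_A$. Its image is $q_\alpha(A)$ with the structure initial for the inclusion into $X/\alpha$. Its kernel relation is $R_\alpha\cap(A\times A)$.

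\emph{Item (iv).} This is (ii) applied to $r$. Since $r$ is surjective, $r$ is itself $q_{\beta/\alpha}$ up to transport, and composing quotients gives $q_\beta$. The correspondence of quotients of $X/\alpha$ with quotients $\beta\ge\alpha$ is obtained in two directions. A quotient $s$ of $X/\alpha$ is sent to $sq_\alpha$. Conversely, $\beta\ge\alpha$ gives the factor $r$, which is unique and surjective because $q_\alpha$ is epic. Under (i), this is the interval isomorphism of \cref{lem:intervals}: inverse image along the quotient $q_\alpha$ identifies $\Lambda_{X/\alpha}$ with $\uparrow\Ker q_\alpha$, and direct image is its inverse. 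To see that kernels correspond, use $\Ker(sq_\alpha)=q_\alpha^*\Ker s$ together with $q_{\alpha!}q_\alpha^*=\id$.

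\emph{Item (v).} The product $\prod_iX/\alpha_i$ and its projections are lifted from $\Set$ with initial structure, by \cref{tc:prop:fs}. The joint map $u$ is injective exactly when $\bigcap_iR_{\alpha_i}=\Delta_{UX}$. It is initial exactly when the source $(\pi_iu)_i=(q_{\alpha_i})_i$ is initial, because initial sources compose and cancel: a source $(g_i)$ is initial when $(\pi_i g)$ is initial and the product source is initial. Each component $q_{\alpha_i}$ is surjective, which gives the subdirect conclusion.

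The main obstacle is bookkeeping rather than substance. Once structures are identified only up to isomorphism over underlying sets, each correspondence must be well defined and bijective. The most delicate point is (iv), because the order on $\mathcal Q_U(X/\alpha)$ lives on a different underlying set. I would handle this by working throughout with representatives in $\Quot_\E$ and transporting along the canonical bijections $(UX/R_\alpha)/R'\cong UX/R_\beta$.
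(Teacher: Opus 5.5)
Your proposal is correct and follows essentially the same route as the paper: you classify $\E$-quotients and $\M$-subobjects concretely and match them with normal and conormal clusters through the (N2) universal properties. You then get (ii)--(iv) from the surjection--initial-embedding factorization applied to $f$, $q_\alpha i_A$ and $r$, identify the correspondence with transport via $\Ker(sq_\alpha)=q_\alpha^*\Ker s$ and \cref{lem:intervals}, and obtain (v) from the initial structure on the product. One small slip: in (iv) the factor $r$ is unique because $q_\alpha$ is epic, but it is surjective because $Uq_\beta=Ur\circ Uq_\alpha$ is surjective.
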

\begin{proof}
Every underlying surjection is, up to a unique isomorphism commuting
with the quotient map, the class map for a weak quotient datum.
The universal property of a quotient gives
$\Ker e\le\Ker d$ exactly when $d$ factors through $e$.
Conversely every normal cluster is the kernel of such a quotient,
by (N2). Every initial injection represents the initial structure
on its image, and the dual embedding property gives the second
order isomorphism in (i).

The initial lift of the inclusion $f(UX)\subseteq UY$ gives the
usual surjection--initial-embedding factorization. This proves (ii),
and applying it to $q_\alpha i_A$ and to $r$ gives (iii) and the
first assertion of (iv). Quotients of $X$ factoring through
$q_\alpha$ correspond exactly to quotients of its codomain, by
cancellation of the underlying surjection. The kernel identity
$\Ker(sq_\alpha)=q_\alpha^*\Ker s$ and
\cref{lem:intervals} identify this correspondence with transport.
Finally, the underlying joint function is injective exactly when
the displayed intersection is the diagonal, and the initial
structure of a product makes its initiality equivalent to that
of the component source. This proves (v).
\end{proof}

A weak quotient retains a specified structure on the set of classes;
the quotient map need not be final. This is necessary because our
left class consists of \emph{all} underlying surjections. The
examples below spell out the admissible quotient structures and the
initial-source condition, rather than repeating the general proof.
They impose no separation, countability, or finiteness assumptions
except those explicitly stated. All forms supplied by
\cref{tc:thm:form} have complete algebraic modular fibres. The
dictionary describes their normal and conormal parts; it does not
identify all clusters with weak quotients or with subsets, nor does
it identify intrinsic meets of quotient data with ambient cluster
meets.

\subsection{Topological spaces and their congruences}\label{sec:top-spaces}
\label{top:sec:form}

For topological spaces, the general construction yields a form in which
normal clusters record Veldsman's congruences and conormal clusters record
subspaces. The familiar homeomorphism theorems then become instances of
the quotient--image calculus, without further transfer arguments.

\paragraph{The test and the explicit form.}
The Sierpi\'nski space $\mathbb S=\{0,1\}$, with open sets
$\varnothing,\{1\},\mathbb S$, is the classical initial test for
topology. It is injective with respect to topological embeddings
\cite[Examples~9.3(4)(c),(d) and Proposition~9.5]{AHS}; this is also
the two-element-frame case of \cref{tc:cor:fuzzy}.
Indeed, continuous maps $X\to\mathbb S$ are characteristic functions
of open subsets of $X$. Every open subset of a subspace is the trace
of an ambient open subset, so each such characteristic function extends.
The initial embeddings are the usual topological embeddings, and the
associated factorization system is that of continuous surjections and
topological embeddings \cite[Section~2]{ELA2015}.

\begin{corollary}\label{top:cor:topnoetherian}
Let $\Top$ be the category of all topological spaces and continuous
maps. For a space $X$, write $UX$ for its underlying set and
$\Omega X$ for its topology, and put $A_X=\{0,1\}^{\Pow(UX)}$.
Let $G(X)$ consist of all finite words in $A_X$, including the empty
word, whose letters have pairwise distinct restrictions to $\Omega X$.
For a continuous map $f:X\to Y$, define $Gf$ by replacing each
letter $\phi$ by $B\mapsto\phi(f^{-1}B)$, for $B\subseteq UY$,
and retaining only the first letter with each restriction to $\Omega Y$.
Let $H:\Top\to\Ab$ be its free abelian linearization:
\begin{equation}\label{top:eq:Hexplicit}
HX=\mathbb Z^{(G(X))},\qquad Hf([w])=[Gf(w)].
\end{equation}
Equip $\Set$ and $\Ab$ with their surjection--injection systems.
Then $G$ and $H$ are transfer functors from the continuous-surjection--
topological-embedding system to these systems. For $f:X\to Y$,
$S\le HX$, and $T\le HY$, the form $\N$ given by
\begin{equation}\label{top:eq:form-fibres}
\N_X=\Sub_{\Ab}(HX),\qquad
f_!(S)=Hf(S),\qquad f^*(T)=(Hf)^{-1}(T)
\end{equation}
is noetherian, has complete algebraic modular fibres, and induces
precisely that factorization system. Empty spaces are included.
\end{corollary}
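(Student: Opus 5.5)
We will obtain the corollary as the specialization of \cref{tc:thm:form} to the forgetful functor $U:\Top\to\Set$ with the single test $Q=\mathbb S$, the Sierpi\'nski space. The first step is to check the hypotheses of that theorem. The functor $U$ is topological: initial lifts of sources are given by the initial topology generated by preimages of open sets. The test $\mathbb S$ has nonempty underlying set with two elements, so no indiscrete test needs to be adjoined. A continuous map $X\to\mathbb S$ is the characteristic function of an open set. Hence a function $h:UZ\to UX$ is continuous exactly when each composite with a test is continuous, i.e.\ when preimages of open sets of $X$ are open; this is initial determination. $\M$-injectivity is the trace argument recalled before the statement: an open subset of a subspace $A$ is $A\cap V$ for some open $V$, and $\chi_V$ extends $\chi_{A\cap V}$. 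Finally, the initial injections are precisely the topological embeddings, and $\E$ consists of continuous surjections.

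Second, we identify the abstract construction $G=WK$ from the proof of \cref{tc:thm:form} with the explicit description in the statement. With a single test, $P(UX)=\Set(UX,\{0,1\})$, which we identify with $\Pow(UX)$ via characteristic functions. Under this identification, $\Omega(X)=\Top(X,\mathbb S)$ corresponds to the topology $\Omega X$. Thus $A_X=\{0,1\}^{P(UX)}$ becomes $\{0,1\}^{\Pow(UX)}$, and $R_X$ is equality of restrictions to $\Omega X$. The map $P(Uf)$ sends $\chi_B$ to $\chi_B\circ f=\chi_{f^{-1}B}$, so $Kf(\phi)$ is $B\mapsto\phi(f^{-1}B)$. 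Then $W$ takes reduced words and deletes later letters whose restrictions to $\Omega Y$ repeat an earlier one, exactly as stated. Since the single-test coproduct carries a trivial label, this identification is natural, and $G$ agrees with the functor of \cref{tc:thm:form} up to natural isomorphism. Likewise $H=\FZ G$ is given by \eqref{top:eq:Hexplicit}.

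Third, \cref{tc:thm:form} gives directly that $G$ is conservative, reflects and preserves the classes, and preserves the homogeneous diagrams. These diagrams exist by \cref{tc:prop:fs}, so $G$ is a transfer functor to $\Set$. Composing with $\FZ$, a transfer functor by \cref{thm:free-variety} (abelian groups have the closed term $0$), makes $H$ one as well, by \cref{prop:transfer-elementary}. The same theorem yields that the form \eqref{top:eq:form-fibres} is noetherian, induces (continuous surjections, embeddings), and has complete algebraic modular fibres.

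Empty spaces need only a remark. For empty $X$, $P(\varnothing)$ is a singleton and $\Omega(\varnothing)$ is that singleton, so $G(\varnothing)$ consists of the empty word and two one-letter words. Nothing in the cited lemmas excluded empty domains, since \cref{top:lem:boolean} explicitly covers empty intersections. The main point requiring care is the second step, namely that the identification $P(UX)\cong\Pow(UX)$ transforms precomposition into inverse image compatibly with the reduction rule. This is a direct check.
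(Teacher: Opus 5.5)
Your proposal is correct and follows the paper's proof. The paper simply applies \cref{tc:thm:form} to the single injective initial test $\mathbb S$ and notes that the Boolean-function and reduced-word constructions specialize to the displayed formulas; your checks of the test hypotheses and of the identification $P(UX)\cong\Pow(UX)$ spell this out. There is one slip in your empty-space remark, which does not affect the argument: since $\Omega(\varnothing)=P(\varnothing)$ is a singleton, the two elements of $A_\varnothing$ have distinct restrictions, so $G(\varnothing)$ also contains the two two-letter words and has five elements (it is the free left regular band monoid on two generators), not three.
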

\begin{proof}
Apply \cref{tc:thm:form} to the single injective initial test
$\mathbb S$. The Boolean-function and reduced-word constructions
specialize to the displayed formulas.
\end{proof}

\begin{corollary}\label{cor:top-target}
Let $\C$ be a category with a proper factorization system
$(\E,\M)$, pullbacks of pairs of $\M$-morphisms with common
codomain, and pushouts of pairs of $\E$-morphisms with common
domain. Equip $\Top$ with continuous surjections and topological
embeddings. Then $\C$ admits a noetherian form inducing
$(\E,\M)$ if and only if there is a transfer functor
$\C\to\Top$ for these systems.
\end{corollary}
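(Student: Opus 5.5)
The plan is to apply \cref{prop:target} with $\D=\Top$, equipped with continuous surjections and topological embeddings. That proposition reduces the universal-target property to two transfer functors, $\Set\rightsquigarrow\Top$ and $\Top\rightsquigarrow\Set$. Before invoking it, I check its standing hypotheses on $\Top$. By \cref{tc:prop:fs}, applied to the topological forgetful functor, the system is a proper factorization system and all limits and colimits exist. In particular, pullbacks of two embeddings and pushouts of two continuous surjections exist in $\Top$.

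The direction $\Top\rightsquigarrow\Set$ is already done: \cref{top:cor:topnoetherian} gives the transfer functor $G:\Top\to\Set$. Equivalently, $\Top$ has a noetherian form inducing the system, which is condition (ii) of \cref{prop:target} once the other direction is supplied.

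For $\Set\rightsquigarrow\Top$, I would take the discrete-space functor $D:\Set\to\Top$, or, as an alternative, the indiscrete-space functor. I prefer the discrete functor because it is a left adjoint to the forgetful functor and therefore preserves all pushouts, in particular pushouts of two surjections. The required checks are as follows.
\begin{enumerate}[label=\textup{(\alph*)}]
\item $D$ sends surjections to continuous surjections.
\item $D$ sends injections to topological embeddings, since a subspace of a discrete space is discrete.
\item $D$ is conservative: a continuous map between discrete spaces with bijective underlying function has a continuous inverse, and the forgetful functor detects bijectivity.
\item $D$ preserves pullbacks of two injections. Such a pullback is the intersection $A\cap B\subseteq X$. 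In $\Top$, the pullback of two embeddings of discrete subspaces is the set-theoretic intersection with the initial (subspace) topology, which is discrete. This agrees with $D(A\cap B)$, including the case $A\cap B=\varnothing$.
\end{enumerate}
Pushout preservation can be cross-checked via \cref{prop:transfer-elementary}: for a conservative left adjoint preserving the classes, only preservation of pullbacks of two right-class morphisms needs to be verified.

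Combining the two directions, \cref{prop:target}\,(iii)$\Rightarrow$(i) yields the stated equivalence. Concretely, given a noetherian form on $\C$, compose the cluster-set transfer functor of \cref{thm:cluster} with $D$. Conversely, given a transfer functor $\C\to\Top$, pull back the form of \cref{top:cor:topnoetherian} using \cref{thm:transfer}.

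The main point needing care is step (d), together with the correct identification of limits and colimits in $\Top$ via the lifting of \cref{tc:prop:fs}. Pullbacks of embeddings are computed on underlying sets with the initial topology, so the check reduces to the observation that the initial topology induced from a discrete space is discrete. I expect no genuine obstacle there.
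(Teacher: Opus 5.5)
Your proposal is correct and follows the paper's own proof. The paper likewise uses the discrete-space functor: as a conservative left adjoint that preserves the classes and pullbacks of two injections, it is a transfer functor by \cref{prop:transfer-elementary}. Combined with $G:\Top\to\Set$ from \cref{top:cor:topnoetherian}, this gives the equivalence via \cref{prop:target}.
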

\begin{proof}
The discrete-space functor $\Set\to\Top$ is conservative and
left adjoint to the underlying-set functor, and preserves the
designated classes and pullbacks of two injections. It is therefore
a transfer functor by \cref{prop:transfer-elementary}. Together
with the preceding transfer $G:\Top\to\Set$, this gives the
claim by \cref{prop:target}.
\end{proof}

\paragraph{Congruences and their cluster interpretation.}
We use Veldsman's congruences and weak quotients
\cite{Veldsman2019}\cite[Section~3.1]{Veldsman2022}.
A \emph{congruence} on a space $X$ is a pair
$\rho=(R_\rho,\sigma_\rho)$, where $R_\rho$ is an equivalence
relation on $UX$ and $\sigma_\rho\subseteq\Omega X$ is a topology
of $R_\rho$-saturated sets. Its \emph{weak quotient} is the set
$X/R_\rho$, written $X/\rho$, with topology
\begin{equation}\label{vl:eq:weak-topology}
\Omega(X/\rho)=\{V\subseteq X/R_\rho:q_\rho^{-1}(V)\in\sigma_\rho\}
=\{q_\rho(U):U\in\sigma_\rho\},
\end{equation}
where $q_\rho(x)=[x]_\rho$. It is a continuous surjection.
It is an ordinary topological quotient map exactly when $\rho$ is
\emph{strong}, meaning that $\sigma_\rho$ contains every
$R_\rho$-saturated open subset of $X$.
Write $\VCon(X)$ for these congruences, with order
\begin{equation}\label{vl:eq:con-order}
\rho\leq\eta\quad\Longleftrightarrow\quad
R_\rho\subseteq R_\eta\ \text{ and }\ \sigma_\eta\subseteq\sigma_\rho.
\end{equation}
Its least and greatest elements are
$\iota_X=(\Delta_{UX},\Omega X)$ and
$\upsilon_X=(UX\times UX,\{\varnothing,UX\})$.

The quotient classification used below is Veldsman's First
Homeomorphism Theorem \cite[Theorem~3.2]{Veldsman2022}:
continuous surjections out of $X$, up to isomorphism of quotient
arrows, correspond to congruences. We combine this classification
with the general cluster dictionary of
\cref{tc:prop:quotient-dictionary}.

\begin{proposition}[Topological dictionary]\label{vl:thm:comparison}
\label{vl:prop:dictionary}
Let $\F$ be a noetherian form on $\Top$ whose quotients are
continuous surjections and whose embeddings are topological
embeddings. For each space $X$, let $\VCon(X)$ be the poset of
pairs $(R,\sigma)$ consisting of an equivalence relation on $UX$
and a subtopology of $\Omega X$ of $R$-saturated sets, ordered by
inclusion of relations and reverse inclusion of topologies. For
$\rho=(R,\sigma)$ let $q_\rho:X\to X/R$ have codomain topology
$\{q_\rho(U):U\in\sigma\}$. For $S\subseteq UX$, let
$i_S:S_{\mathrm{sub}}\hookrightarrow X$ be the subspace inclusion.
Then the assignments
\begin{equation}\label{vl:eq:abstract-kappa}
\rho\longmapsto\kappa_X(\rho)=\Ker_{\F}q_\rho,
\qquad S\longmapsto c_X(S)=\Img_{\F}i_S
\end{equation}
are order isomorphisms from $\VCon(X)$ and $\Pow(UX)$ to the
normal- and conormal-cluster orders of $\F$ over $X$, respectively.
For a continuous map $f:X\to Y$, put
\[
\rho_f=(R_f,f^{-1}(\Omega Y)),\qquad
xR_fx'\ \Longleftrightarrow\ f(x)=f(x').
\]
Then $\kappa_X(\rho_f)=\Ker_{\F}f$ and
$c_Y(f(X))=\Img_{\F}f$.
\end{proposition}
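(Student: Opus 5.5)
The plan is to specialize the concrete quotient dictionary, \cref{tc:prop:quotient-dictionary}, to $U:\Top\to\Set$. Part (i) of that proposition already gives order isomorphisms
\[
\mathcal Q_U(X)\cong\Nrm_\F(X),\qquad \Pow(UX)\cong\Cnm_\F(X).
\]
The conormal half is exactly $c_X$, since $X|_S$ is the subspace $S_{\mathrm{sub}}$. So the real work is to identify $\VCon(X)$ with the weak-quotient-datum order $\mathcal Q_U(X)$, compatibly with the kernel assignment.

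\emph{Step 1: the bijection.} Define $\VCon(X)\to\mathcal Q_U(X)$ by sending $\rho=(R,\sigma)$ to $(R,Q_\rho)$, where $Q_\rho$ is the topology $\{q_\rho(U):U\in\sigma\}$ on $UX/R$.
\begin{enumerate}[label=\textup{(\alph*)}]
\item This is a topology and $q_\rho$ is continuous. Because every $U\in\sigma$ is saturated, $q_\rho^{-1}q_\rho(U)=U$, so $U\mapsto q_\rho(U)$ is a bijection from saturated sets onto subsets of the quotient preserving unions and finite intersections. Hence $Q_\rho$ is a topology, and preimages of its open sets lie in $\sigma\subseteq\Omega X$.
\item The inverse sends a datum $(R,Q)$ to $(R,\{q^{-1}V:V\in Q\})$. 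This is a topology of saturated opens, contained in $\Omega X$ by continuity of $q$.
\item The two assignments are mutually inverse, by the same saturation identity $q^{-1}q(U)=U$.
\end{enumerate}
This is essentially Veldsman's First Homeomorphism Theorem, and I would cite it alongside the elementary check.

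\emph{Step 2: order.} Show that $\rho\le\eta$ in \eqref{vl:eq:con-order} exactly when $q_\eta=rq_\rho$ for a continuous $r$.
\begin{enumerate}[label=\textup{(\alph*)}]
\item A set function $r$ with $q_\eta=rq_\rho$ exists iff $R_\rho\subseteq R_\eta$, and it is then unique because $q_\rho$ is surjective.
\item Given such $r$, it is continuous iff for each open $V$ of $X/\eta$ the set $r^{-1}V$ is open in $X/\rho$. Since $q_\rho^{-1}r^{-1}V=q_\eta^{-1}V$, this says $q_\eta^{-1}V\in\sigma_\rho$. As $V$ ranges, these preimages are exactly $\sigma_\eta$, so the condition is $\sigma_\eta\subseteq\sigma_\rho$.
\end{enumerate}
Composing with the isomorphism of part (i) of \cref{tc:prop:quotient-dictionary}, $\alpha\mapsto\Ker_\F q_\alpha$, yields $\kappa_X$ as an order isomorphism.

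\emph{Step 3: the formulas for $f$.} Factor $f$ as in part (ii) of \cref{tc:prop:quotient-dictionary}: $f=m\,q$ with $q=q_{\rho'}$ a quotient and $m:Y|_{f(UX)}\hookrightarrow Y$ an embedding. The datum transported from the subspace topology on $f(UX)$ has relation $R_f$ and open sets $q^{-1}(\text{open})=f^{-1}(W)$, $W\in\Omega Y$. So the corresponding congruence is $\rho_f=(R_f,f^{-1}(\Omega Y))$, with $f^{-1}(\Omega Y)$ automatically saturated.

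By \cref{lem:basic}(v), $\Ker_\F f=\Ker_\F q_{\rho_f}=\kappa_X(\rho_f)$. Likewise $\Img_\F f=\Img_\F m$. This equals $c_Y(f(X))$, since $m$ is the subspace inclusion $i_{f(X)}$ up to isomorphism over $Y$, and images depend only on the isomorphism class of the embedding.

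The main obstacle is bookkeeping in Step 2. One must keep track that \emph{reverse} inclusion of topologies matches factorization of quotients in the increasing direction of \cref{thm:N2}. One must also treat isomorphism classes of quotient arrows rather than literal codomain sets, since $\mathcal Q_U(X)$ identifies structures up to isomorphism over their underlying sets.
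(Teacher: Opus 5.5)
Your proposal is correct and follows the paper's own proof. Both identify Veldsman congruences with weak quotient data, check that the congruence order is the factorization order, apply \cref{tc:prop:quotient-dictionary}, and read off $\rho_f$ from the quotient--embedding factorization of $f$; you verify explicitly the saturation bijection, the continuity criterion $\sigma_\eta\subseteq\sigma_\rho$, and the appeal to \cref{lem:basic}\textup{(v)}, where the paper cites Veldsman's classification and leaves these details implicit.
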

\begin{proof}
Veldsman's quotient classification identifies $\rho$ with $q_\rho$;
its order is the factorization order because
$\rho\leq\eta$ exactly when $q_\eta$ factors continuously through
$q_\rho$. Topological embeddings into $X$, up to isomorphism over
$X$, are precisely its subspaces, ordered by inclusion. Apply
\cref{tc:prop:quotient-dictionary}. The factorization of $f$
through $f(X)_{\mathrm{sub}}$ has the same congruence $\rho_f$,
which gives the final assertions.
\end{proof}

Thus all continuous surjections, including those which are not ordinary
quotient maps, are projections in $\F$. The topology component is
essential: the identity function from the discrete two-point space
$D$ to the indiscrete two-point space has congruence
$(\Delta_{UD},\{\varnothing,UD\})\ne\iota_D$. Its cluster kernel
records loss of topology without any point identifications.

\paragraph{Homeomorphisms, correspondence, and subdirect products.}
The following packages Veldsman's homeomorphism, correspondence,
and subdirect-product theorems
\cite[Theorems~3.2--3.3 and 3.5, Lemma~3.4, Corollary~3.6, and
Theorem~3.7]{Veldsman2022}. Here they follow from the preceding
dictionary and the general noetherian-form calculus.

\begin{corollary}\label{vl:thm:first}\label{vl:thm:second}
\label{vl:thm:third}\label{vl:thm:correspondence}\label{vl:cor:subdirect}
For a space $Z$, let $\VCon(Z)$ consist of pairs
$\gamma=(R_\gamma,\sigma_\gamma)$, where $R_\gamma$ is an
equivalence relation on $UZ$ and $\sigma_\gamma\subseteq\Omega Z$
is a topology of $R_\gamma$-saturated sets. Order these pairs by
relation inclusion and reverse topology inclusion, and give
$Z/\gamma=UZ/R_\gamma$ the topology
$\{q_\gamma(U):U\in\sigma_\gamma\}$, where
$q_\gamma(z)=[z]_\gamma$. Then the following hold.
\begin{enumerate}
\item For a continuous $f:X\to Y$, define
$\rho_f=(R_f,f^{-1}(\Omega Y))$ by
$xR_fx'\Longleftrightarrow f(x)=f(x')$. The canonical map
\[
X/\rho_f\xrightarrow{\ \cong\ }f(X)_{\mathrm{sub}},
\qquad [x]\longmapsto f(x),
\]
is a homeomorphism, where the target has the subspace topology.
\item For $\rho\in\VCon(X)$ and $S\subseteq UX$ with its
subspace topology, put
$\rho|_S=(R_\rho\cap(S\times S),\{U\cap S:U\in\sigma_\rho\})$.
There is a canonical homeomorphism
\[
S/(\rho|_S)\xrightarrow{\ \cong\ }q_\rho(S)_{\mathrm{sub}},
\qquad [s]_{\rho|_S}\longmapsto[s]_\rho.
\]
\item For $\alpha\leq\beta$ in $\VCon(X)$, define
$\beta/\alpha$ on $X/\alpha$ by
$[x]_\alpha R_{\beta/\alpha}[y]_\alpha\Longleftrightarrow xR_\beta y$
and $\sigma_{\beta/\alpha}=\{q_\alpha(U):U\in\sigma_\beta\}$.
There is a canonical homeomorphism
\[
(X/\alpha)/(\beta/\alpha)\xrightarrow{\ \cong\ }X/\beta,
\qquad [[x]_\alpha]\longmapsto[x]_\beta.
\]
Moreover, $\beta\mapsto\beta/\alpha$ is an order isomorphism
$\{\beta\in\VCon(X):\alpha\leq\beta\}\cong\VCon(X/\alpha)$.
\item For a set-indexed family
$(\rho_i=(R_i,\sigma_i))_{i\in I}$ in $\VCon(X)$, the joint map
$j=(q_{\rho_i})_{i\in I}:X\to\prod_{i\in I}X/\rho_i$, with
the product topology on the target, is a topological embedding
if and only if
\begin{equation}\label{vl:eq:subdirect-criterion}
\bigcap_{i\in I}R_i=\Delta_{UX},\qquad
\topgen{\bigcup_{i\in I}\sigma_i}=\Omega X.
\end{equation}
Its coordinate maps are surjective, so this is exactly a subdirect
embedding. Empty spaces and the empty family are allowed.
\end{enumerate}
\end{corollary}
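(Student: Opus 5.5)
The plan is to derive all four items from the general concrete dictionary, \cref{tc:prop:quotient-dictionary}, specialized to $U:\Top\to\Set$, together with \cref{vl:prop:dictionary}. For this we first translate Veldsman congruences into weak quotient data. Given $\gamma=(R_\gamma,\sigma_\gamma)$, the space $Z/\gamma$ is a $\Top$-structure on $UZ/R_\gamma$. Its class map is continuous, since $q_\gamma^{-1}(q_\gamma(U))=U$ for saturated $U$. Conversely, a weak quotient datum $(R,Q)$ determines the saturated subtopology $\{q^{-1}V:V\in\Omega Q\}$. These assignments are mutually inverse and preserve order: $\rho\le\eta$ holds exactly when $q_\eta$ factors continuously through $q_\rho$, and the continuity of the factor amounts to $\sigma_\eta\subseteq\sigma_\rho$. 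This is essentially the content of Veldsman's classification already used in \cref{vl:prop:dictionary}. A noetherian form exists by \cref{top:cor:topnoetherian}, although only the factorization system is really needed.

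\emph{Item 1.} We apply \cref{tc:prop:quotient-dictionary}(ii). The datum $\rho_f$ there carries the topology transported from $f(X)_{\mathrm{sub}}$. We must check that this is exactly the weak-quotient topology of $(R_f,f^{-1}(\Omega Y))$. An open set of the subspace has the form $W\cap f(X)$ with $W$ open in $Y$. Its preimage under $[x]\mapsto f(x)$ is $q(f^{-1}W)$, and the sets $f^{-1}W$ are precisely the members of $f^{-1}(\Omega Y)$.

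\emph{Item 2.} We apply \cref{tc:prop:quotient-dictionary}(iii). The only work is to identify $\rho_{q_\rho i_S}$ with the stated $\rho|_S$. The relation restricts to $S$, and the topology is the preimage of $\Omega(X/\rho)$ under $q_\rho i_S$. That preimage is $\{U\cap S:U\in\sigma_\rho\}$, because $q_\rho^{-1}(q_\rho U)=U$.

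\emph{Item 3.} We apply \cref{tc:prop:quotient-dictionary}(iv) with $r:X/\alpha\to X/\beta$, $[x]_\alpha\mapsto[x]_\beta$. We then identify $\rho_r$ with the stated $\beta/\alpha$. The relation is clear. The topology is $r^{-1}(\Omega(X/\beta))$, and $r^{-1}(q_\beta U)=q_\alpha U$ for $U\in\sigma_\beta$, since $U$ is $R_\beta$-saturated and hence $R_\alpha$-saturated. The order isomorphism comes from the correspondence in (iv), transported through the translation of the first paragraph.

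\emph{Item 4.} We apply \cref{tc:prop:quotient-dictionary}(v). It remains to show that the source $(q_{\rho_i})_i$ is initial exactly when $\langle\bigcup_i\sigma_i\rangle_{\mathrm{top}}=\Omega X$. The initial topology of this source is generated by the sets $q_{\rho_i}^{-1}(V)$ for $V$ open in $X/\rho_i$, and these are exactly the members of $\sigma_i$. Initiality means this generated topology equals $\Omega X$; note that it is always contained in $\Omega X$. For the empty family, the target is a one-point space. Then $\bigcap_i R_i$ is $UX\times UX$ and the generated topology is indiscrete. The criterion therefore says that $X$ has at most one point, and this matches the fact that $X\to1$ is an embedding exactly in that case. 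Empty spaces cause no trouble.

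The main obstacle is bookkeeping rather than conceptual. It lies in the repeated use of saturation, $q^{-1}q\,U=U$, which shows that the abstract topologies produced by the dictionary (transported and initial structures) coincide with Veldsman's explicit formulas. The translation step between congruences and weak quotient data should therefore be stated carefully once, and then reused in all four items.
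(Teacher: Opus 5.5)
Your proposal is correct and follows essentially the same route as the paper. Both arguments translate Veldsman congruences into weak quotient data and then read items (1)--(4) off \cref{tc:prop:quotient-dictionary} by identifying the congruences of $f$, $q_\rho i_S$, $r$ and $j$. The paper phrases the correspondence in (3) as normal-cluster transport along $q_\alpha$ and item (4) as the condition $\rho_j=\iota_X$, which carry the same content as parts (iv) and (v) of the dictionary that you invoke.
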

\begin{proof}
Choose the form of \cref{top:cor:topnoetherian}. By the dictionary,
its quotient--image factorization gives (1). Apply the same
factorization to $q_\rho i_S$ for (2), and to
$r:X/\alpha\to X/\beta$, $r([x]_\alpha)=[x]_\beta$, for (3).
Their congruences are respectively $\rho|_S$ and $\beta/\alpha$.
Under the dictionary these operations read
\[
\kappa_S(\rho|_S)=i_S^*\kappa_X(\rho),\qquad
\kappa_{X/\alpha}(\beta/\alpha)=(q_\alpha)_!\kappa_X(\beta).
\]
The correspondence assertion is therefore the normal-cluster
correspondence along $q_\alpha$ from
\cref{tc:prop:quotient-dictionary}. Finally, the relation and
inverse-image topology of $j$ are
$\bigcap_iR_i$ and $\topgen{\bigcup_i\sigma_i}$. Thus (4)
expresses that its congruence is $\iota_X$, equivalently that its
cluster kernel is bottom and $j$ is an embedding.
\end{proof}

For clarity, the meets used in the subdirect criterion are intrinsic
congruence meets. Veldsman's lattice formulas are
\cite[Section~3.1, ``Ordering of congruences'']{Veldsman2022}
\begin{equation}\label{vl:eq:con-meet}
\bigwedge_i\rho_i=
\left(\bigcap_iR_i,\topgen{\bigcup_i\sigma_i}\right),\qquad
\bigvee_i\rho_i=
\left(\operatorname{Eq}(\bigcup_iR_i),\bigcap_i\sigma_i\right),
\end{equation}
where $\operatorname{Eq}$ denotes generated equivalence relation and
the topology intersection is taken inside $\Omega X$. Empty meets
and joins are $\upsilon_X$ and $\iota_X$, respectively. The dictionary
identifies these with the intrinsic normal-cluster order; it does
not identify intrinsic meets with arbitrary ambient cluster meets.
In particular, modularity of the full subgroup fibres does not
assert modularity of the congruence lattice. The canonical maps
above are realized in $\Top$ by the form, so their construction
requires no lifting of arbitrary abelian-group isomorphisms.

\subsection{Measurable spaces}\label{tc:sec:measurable}
Here $\mathbf{Meas}$ denotes all measurable spaces and measurable maps;
no measure, separation condition, or countability condition is imposed.
The surjection--trace-embedding factorization system used below is
described by Jacobs and Sokolova \cite[Section~3.1]{JS2010}.

\begin{corollary}\label{tc:cor:measurable}
Let $\mathbf{Meas}$ be the category of sets equipped with
$\sigma$-algebras and functions whose inverse images preserve
measurable subsets. Then $\mathbf{Meas}$ has a noetherian form with
complete algebraic modular fibres, inducing surjective measurable maps
and injective measurable maps whose domains carry the trace
$\sigma$-algebras from their codomains.
\end{corollary}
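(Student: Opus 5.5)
The plan is to apply \cref{tc:thm:form} to the forgetful functor $U:\mathbf{Meas}\to\Set$ with a single test object. First, $U$ is topological: for any source of functions $s_j:S\to UX_j$, the initial $\sigma$-algebra on $S$ is the one generated by all $s_j^{-1}(B)$ with $B$ measurable in $X_j$. The generated $\sigma$-algebra exists for class-indexed sources, because it is contained in $\Pow(S)$. The initial injections are then exactly the injective maps whose domain carries the trace $\sigma$-algebra, since the $\sigma$-algebra induced by an inclusion $A\subseteq B$ is $\{A\cap E:E\text{ measurable}\}$. Hence \cref{tc:prop:fs} identifies the factorization system $(\E,\M)$ as the one in the statement.

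For the test object, I will take $Q=\{0,1\}$ with the discrete $\sigma$-algebra $\Pow(\{0,1\})$. The measurable maps $X\to Q$ are exactly the characteristic functions of measurable subsets of $X$. The checks against \cref{tc:def:tests} are as follows.
\begin{enumerate}[label=\textup{(\roman*)}]
\item $UQ$ is nonempty and has two elements, so no indiscrete test needs to be adjoined.
\item Initial determination: a function $h:UZ\to UX$ is measurable exactly when $h^{-1}(E)$ is measurable for every measurable $E\subseteq X$. This is the same as saying that $\chi_E\circ h$ is measurable for every measurable characteristic function $\chi_E:X\to Q$.
\item $\M$-injectivity: given a trace embedding $A\subseteq B$ and a measurable $\chi_{E'}:A\to Q$, the set $E'$ equals $A\cap E$ for some measurable $E\subseteq B$. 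Then $\chi_E$ is a measurable extension.
\end{enumerate}

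With these checks, \cref{tc:thm:form} supplies the conservative functor $G=WK$ and the subgroup form on $H=\FZ G$. That form is noetherian, has complete algebraic modular fibres, and induces $(\E,\M)$. The required homogeneous diagrams exist by \cref{tc:prop:fs}.

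The only step needing care is the extension property (iii). Here I must make sure that the trace $\sigma$-algebra really consists of the sets $A\cap E$, rather than the $\sigma$-algebra these sets generate. This holds because traces of a $\sigma$-algebra already form a $\sigma$-algebra on $A$: complements relative to $A$ are $A\cap(B\setminus E)$, and countable unions are preserved. Empty measurable spaces cause no difficulty, since the theorem imposes no nonemptiness condition on objects.
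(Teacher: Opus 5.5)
Your proposal is correct and follows the paper's proof. It uses the same single discrete test $Q=\{0,1\}$, the same identification of measurable maps into $Q$ with characteristic functions of measurable sets, and the same trace-extension argument for $\M$-injectivity before invoking \cref{tc:thm:form}; your remarks that traces already form a $\sigma$-algebra and that class-indexed sources cause no size problem only spell out details the paper leaves implicit.
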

\begin{proof}
Initial $\sigma$-algebras are generated by inverse images, so the
underlying-set functor is topological. The discrete measurable space
$Q=\{0,1\}$ is an initial test: measurable maps to $Q$ are exactly
characteristic functions of measurable subsets. If $S\subseteq X$
has the trace $\sigma$-algebra, each measurable $B\subseteq S$ is
$S\cap A$ for some measurable $A\subseteq X$; hence $\chi_B$
extends to $\chi_A$. Thus $Q$ is injective with respect to initial
embeddings, and \cref{tc:thm:form} applies.
\end{proof}

We now make the normal and conormal clusters explicit. Saturated
measurable sets and their relation to sub-$\sigma$-algebras are classical;
see Battenfeld \cite[Section~2.2]{Battenfeld2010}. The following paired
description retains both the equivalence relation and the measurable
structure chosen on its quotient.

\begin{definition}\label{meas:def:congruence}
For a measurable space $X=(X,\Sigma_X)$, a \emph{measurable congruence}
is a pair $\rho=(R_\rho,\mathcal A_\rho)$, where $R_\rho$ is an
equivalence relation on $X$ and $\mathcal A_\rho\subseteq\Sigma_X$
is a sub-$\sigma$-algebra consisting of $R_\rho$-saturated sets.
Write $\Con_{\sigma}(X)$ for these pairs, ordered by
\[
\rho\leq\eta\quad\Longleftrightarrow\quad
R_\rho\subseteq R_\eta\quad\text{and}\quad
\mathcal A_\eta\subseteq\mathcal A_\rho.
\]
Its \emph{weak quotient} is the measurable space
\begin{equation}\label{meas:eq:weak-quotient}
X/\rho=\left(X/R_\rho,
 \{B\subseteq X/R_\rho:q_\rho^{-1}(B)\in\mathcal A_\rho\}\right),
\qquad q_\rho(x)=[x]_{R_\rho}.
\end{equation}
\end{definition}

Thus $q_\rho$ is measurable and surjective, and the inverse images of
the measurable subsets of $X/\rho$ are exactly $\mathcal A_\rho$.
It has the usual final quotient $\sigma$-algebra precisely when
$\mathcal A_\rho$ contains every $R_\rho$-saturated member of $\Sigma_X$.
Allowing a smaller $\mathcal A_\rho$ is necessary because the quotient
class of the form consists of all surjective measurable maps.

The next description combines the factorization of
\cite[Section~3.1]{JS2010} with the general quotient dictionary
of \cref{tc:prop:quotient-dictionary}.
\begin{proposition}\label{meas:prop:dictionary}
Let $\F$ be a noetherian form on $\mathbf{Meas}$ whose quotient maps
are the surjective measurable maps and whose embeddings are the
injective measurable maps carrying the trace $\sigma$-algebra.
For $X=(X,\Sigma_X)$, write $\Nrm_{\F}(X)$ and $\Cnm_{\F}(X)$
for its ordered sets of normal and conormal clusters. Let
$\Con_{\sigma}(X)$ consist of pairs $(R,\mathcal A)$ with $R$ an
equivalence relation and $\mathcal A\subseteq\Sigma_X$ a
sub-$\sigma$-algebra of $R$-saturated sets, ordered by increasing
$R$ and decreasing $\mathcal A$; equip $X/R$ with
$\{B:q^{-1}(B)\in\mathcal A\}$ and write $q:X\to X/R$ for
the class map. Then there are order isomorphisms
\begin{equation}\label{meas:eq:cluster-dictionary}
\begin{aligned}
\Con_{\sigma}(X)&\longrightarrow\Nrm_{\F}(X),&
 (R,\mathcal A)&\longmapsto\Ker_{\F}q,\\
\Pow(X)&\longrightarrow\Cnm_{\F}(X),&
 S&\longmapsto\Img_{\F}(S\hookrightarrow X),
\end{aligned}
\end{equation}
where $S$ has $\Sigma_X|_S=\{S\cap A:A\in\Sigma_X\}$.
In particular, $S$ need not be measurable in $X$.
\end{proposition}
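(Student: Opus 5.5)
The plan is to derive this as a direct specialization of \cref{tc:prop:quotient-dictionary}, in the same way \cref{vl:prop:dictionary} was obtained for $\Top$. First I will confirm the hypotheses of that proposition. The underlying-set functor $U:\mathbf{Meas}\to\Set$ is topological, as noted in the proof of \cref{tc:cor:measurable}. Its initial injections are exactly the injective maps whose domain carries the trace $\sigma$-algebra: the initial $\sigma$-algebra for one map $m$ is $\{m^{-1}A:A\in\Sigma\}$. So $\F$ has the quotient and embedding classes required there. Part (i) of \cref{tc:prop:quotient-dictionary} then gives order isomorphisms $\mathcal Q_U(X)\cong\Nrm_\F(X)$ and $\Pow(UX)\cong\Cnm_\F(X)$. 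The second of these is already the conormal half of \eqref{meas:eq:cluster-dictionary}, since $X|_S$ is $S$ with the trace $\sigma$-algebra. This requires no measurability of $S$, because initial lifts exist for arbitrary subsets.

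The remaining step is to identify $\mathcal Q_U(X)$ with $\Con_\sigma(X)$, order included. A weak quotient datum is a pair $(R,\Sigma_Q)$ with $\Sigma_Q$ a $\sigma$-algebra on $X/R$ making $q$ measurable. I will send it to $\mathcal A=q^{-1}(\Sigma_Q)=\{q^{-1}B:B\in\Sigma_Q\}$. This is a sub-$\sigma$-algebra of $\Sigma_X$, because preimage commutes with complements and countable unions, and it consists of $R$-saturated sets. Conversely, $(R,\mathcal A)$ is sent to the weak quotient of \eqref{meas:eq:weak-quotient}. These two maps are mutually inverse. Since $q$ is surjective, $B\mapsto q^{-1}B$ is injective, so $\Sigma_Q$ is recovered as $\{B:q^{-1}B\in\mathcal A\}$. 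In the other direction, every $R$-saturated set $A$ satisfies $A=q^{-1}(q(A))$, so $\mathcal A$ is recovered. The same surjectivity shows that structures are genuinely equal, not merely isomorphic over $X/R$.

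For the order, suppose $\alpha\le\beta$ in $\mathcal Q_U(X)$. Then $q_\beta=rq_\alpha$ for a measurable $r$, which forces $R_\alpha\subseteq R_\beta$. Measurability of $r$ gives $\mathcal A_\beta=q_\alpha^{-1}r^{-1}\Sigma_\beta\subseteq\mathcal A_\alpha$. Conversely, suppose $R_\alpha\subseteq R_\beta$ and $\mathcal A_\beta\subseteq\mathcal A_\alpha$. The induced function $r$ satisfies $q_\alpha^{-1}(r^{-1}B)=q_\beta^{-1}B\in\mathcal A_\beta\subseteq\mathcal A_\alpha$, so $r^{-1}B$ is measurable in $X/\alpha$ by the description of $\Sigma_{X/\alpha}$. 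Composing this identification with part (i) yields the first isomorphism in \eqref{meas:eq:cluster-dictionary}, with the stated formula $(R,\mathcal A)\mapsto\Ker_\F q$.

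I expect the main care point to be this converse order step together with the recovery of $\mathcal A$ from $\Sigma_Q$. Both rest on surjectivity of $q$ and on saturation, and once they are checked everything else is inherited from the general dictionary.
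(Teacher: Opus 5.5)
Your proposal is correct and follows essentially the same route as the paper. Both reduce the statement to \cref{tc:prop:quotient-dictionary} after identifying trace embeddings with initial injections and weak quotient data with $\Con_\sigma(X)$; the paper phrases the latter through the congruence $\rho_f=(R_f,f^{-1}\Sigma_Y)$ of a surjection and its factorization criterion, while you use the direct bijection $\Sigma_Q\mapsto q^{-1}\Sigma_Q$ and its inverse, with the order check carried out explicitly.
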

\begin{proof}
For a measurable map $f:X\to Y$, put
\begin{equation}\label{meas:eq:map-congruence}
\rho_f=(R_f,f^{-1}\Sigma_Y),\qquad
xR_fx'\ \Longleftrightarrow\ f(x)=f(x').
\end{equation}
The canonical bijection $X/\rho_f\to f(X)$ identifies the quotient
$\sigma$-algebra with the trace from $Y$. Consequently the weak
quotients classify all surjective measurable maps, up to isomorphism
under $X$. Moreover, $f$ factors measurably through $q_\rho$ exactly
when $R_\rho\subseteq R_f$ and
$f^{-1}\Sigma_Y\subseteq\mathcal A_\rho$. This identifies quotient
factorization order with the displayed congruence order. Embeddings
over $X$ are classified by arbitrary trace subspaces. Apply
\cref{tc:prop:quotient-dictionary}.
\end{proof}

The following isomorphisms are elementary consequences of that same
surjection--trace-embedding factorization
\cite[Section~3.1]{JS2010}; their role here is to express its
isomorphism and correspondence principles in concrete terms.
\begin{corollary}[Measurable isomorphism and correspondence rules]
\label{meas:cor:isomorphisms}
For each measurable space $Z=(Z,\Sigma_Z)$, let
$\Con_{\sigma}(Z)$ be the pairs $(R,\mathcal A)$ with $R$ an
equivalence relation and $\mathcal A\subseteq\Sigma_Z$ a
sub-$\sigma$-algebra of $R$-saturated sets; order them by
$(R,\mathcal A)\leq(S,\mathcal B)$ when
$R\subseteq S$ and $\mathcal B\subseteq\mathcal A$.
For such a pair $\gamma$, give $Z/\gamma=Z/R_\gamma$ the
$\sigma$-algebra $\{B:q_\gamma^{-1}(B)\in\mathcal A_\gamma\}$,
where $q_\gamma(z)=[z]_{R_\gamma}$.
The following canonical maps are isomorphisms of measurable spaces.
\begin{enumerate}[label=\textup{(\roman*)}]
\item If $f:X\to Y$ is measurable and
$\rho_f=(\{(x,x'):f(x)=f(x')\},f^{-1}\Sigma_Y)$, then
\[
 X/\rho_f\xrightarrow{\sim} f(X),\qquad [x]\longmapsto f(x),
\]
where $f(X)$ has the trace $\sigma$-algebra from $Y$.
\item If $\rho=(R,\mathcal A)\in\Con_{\sigma}(X)$ and
$S\subseteq X$ has the trace $\sigma$-algebra, put
$\rho|_S=(R\cap(S\times S),\mathcal A|_S)$, where
$\mathcal A|_S=\{S\cap A:A\in\mathcal A\}$. Then
\[
 S/(\rho|_S)\xrightarrow{\sim} q_\rho(S),\qquad
 [s]\longmapsto q_\rho(s),
\]
where $q_\rho(S)$ has the trace $\sigma$-algebra from $X/\rho$.
\item If $\alpha=(R,\mathcal A)\leq\beta=(S,\mathcal B)$
in $\Con_{\sigma}(X)$, define $\beta/\alpha$ on $X/\alpha$ by
\[
 [x]_R\mathrel{(S/R)}[y]_R\ \Longleftrightarrow\ xSy,
 \qquad \mathcal B/\alpha=\{q_\alpha(B):B\in\mathcal B\}.
\]
Then
\[
 (X/\alpha)/(\beta/\alpha)\xrightarrow{\sim}X/\beta,
 \qquad [[x]_R]_{S/R}\longmapsto[x]_S.
\]
For fixed $\alpha$, the assignment $\beta\mapsto\beta/\alpha$
is an order isomorphism
\[
 \{\beta\in\Con_{\sigma}(X):\alpha\leq\beta\}
 \xrightarrow{\sim}\Con_{\sigma}(X/\alpha).
\]
\end{enumerate}
\end{corollary}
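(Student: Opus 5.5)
The plan is to derive all three items from the surjection--trace-embedding factorization together with the general dictionary of \cref{tc:prop:quotient-dictionary}, as was done for topological spaces in \cref{vl:thm:first}. Each canonical map is obtained by factoring a suitable measurable map. We then check that the resulting weak quotient datum coincides with the displayed measurable congruence; this only requires computing inverse-image $\sigma$-algebras.

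For (i), the map $[x]\mapsto f(x)$ is a well-defined bijection $X/R_f\to f(X)$. A subset $B\subseteq f(X)$ in the trace has the form $B=f(X)\cap C$ with $C\in\Sigma_Y$. Its preimage under $q_{\rho_f}$ is $f^{-1}C\in f^{-1}\Sigma_Y$, so $B$ is measurable in $X/\rho_f$. Conversely, a set $B$ measurable in $X/\rho_f$ has $q^{-1}B=f^{-1}C$ for some $C\in\Sigma_Y$. Surjectivity of $q$ then gives $B=f(X)\cap C$. This is the computation already indicated in the proof of \cref{meas:prop:dictionary}, and it is item (ii) of \cref{tc:prop:quotient-dictionary}.

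For (ii), apply (i) to $f=q_\rho i_S$. Its relation is $R\cap(S\times S)$, and its image is $q_\rho(S)$ with the trace from $X/\rho$. Its inverse-image $\sigma$-algebra is
\[
 i_S^{-1}q_\rho^{-1}\Sigma_{X/\rho}=i_S^{-1}\mathcal A=\mathcal A|_S .
\]
This uses $q_\rho^{-1}\Sigma_{X/\rho}=\mathcal A_\rho$, which holds by the definition \eqref{meas:eq:weak-quotient} and surjectivity of $q_\rho$. Hence $\rho_f=\rho|_S$, and (i) gives the isomorphism.

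For (iii), let $r:X/\alpha\to X/\beta$, $[x]_R\mapsto[x]_S$. It is well defined because $R\subseteq S$. It is measurable because $q_\alpha^{-1}r^{-1}(B)=q_\beta^{-1}(B)\in\mathcal B\subseteq\mathcal A$. The map $r$ is surjective, so (i) applies and gives $(X/\alpha)/\rho_r\cong X/\beta$. It remains to identify $\rho_r$ with $\beta/\alpha$.
\begin{itemize}
\item The relation of $\rho_r$ is $S/R$ by construction.
\item The inverse images under $r$ of measurable sets are the sets $q_\alpha(B)$ with $B\in\mathcal B$. Indeed, $r^{-1}(V)=q_\alpha(q_\beta^{-1}V)$ because $q_\beta^{-1}V$ is $S$-saturated, hence $R$-saturated. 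Conversely, every $B\in\mathcal B$ equals $q_\beta^{-1}(q_\beta B)$ by saturation.
\end{itemize}

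For the order isomorphism, the plan is to invoke \cref{tc:prop:quotient-dictionary}(iv), transported through \cref{meas:prop:dictionary}. An alternative is to check directly that $\beta\mapsto\beta/\alpha$ has an inverse. The inverse sends $(S',\mathcal B')$ on $X/\alpha$ to the congruence on $X$ with relation $(q_\alpha\times q_\alpha)^{-1}S'$ and $\sigma$-algebra $q_\alpha^{-1}\mathcal B'$. Here $q_\alpha^{-1}\mathcal B'\subseteq\mathcal A$, because $\mathcal B'$ consists of measurable sets of $X/\alpha$. In both directions, monotonicity is immediate from the definition of the order.

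The main obstacle is the saturation bookkeeping in (iii). One must ensure that $q_\alpha$ maps $\mathcal B$ bijectively onto a sub-$\sigma$-algebra of $\Sigma_{X/\alpha}$ consisting of $S/R$-saturated sets. This rests on $R$-saturation of members of $\mathcal B$, which follows from $S$-saturation and $R\subseteq S$. That property makes $q_\alpha$ commute with complements and countable unions on saturated sets.
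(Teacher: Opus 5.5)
Your proposal is correct and follows the paper's own route. You factor $q_\rho i_S$ and the comparison $r:X/\alpha\to X/\beta$, identify their congruences with $\rho|_S$ and $\beta/\alpha$, and obtain the correspondence from the dictionary or from the explicit inverse $(T,\mathcal C)\mapsto((q_\alpha\times q_\alpha)^{-1}T,\,q_\alpha^{-1}\mathcal C)$, which is exactly the one the paper records. One small correction: in (ii), the inclusion $\mathcal A_\rho\subseteq q_\rho^{-1}\Sigma_{X/\rho}$ rests on $R$-saturation of the members of $\mathcal A_\rho$, which gives $A=q_\rho^{-1}q_\rho(A)$, and not on surjectivity of $q_\rho$.
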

\begin{proof}
These are the first isomorphism, restriction, iteration, and interval
correspondence of \cref{tc:prop:quotient-dictionary}, expressed through
\cref{meas:prop:dictionary}. In (ii), the congruence of
$q_\rho|_S$ is $\rho|_S$; in (iii), that of
$X/\alpha\to X/\beta$ is $\beta/\alpha$.
The inverse of the correspondence sends
$(T,\mathcal C)$ on $X/\alpha$ to
$((q_\alpha\times q_\alpha)^{-1}(T),q_\alpha^{-1}\mathcal C)$.
\end{proof}

For completeness, the concrete order also displays the subdirect
representation criterion. This uses only the usual initial
$\sigma$-algebra on a product and the trace-embedding class above.
\begin{proposition}\label{meas:prop:subdirect}
Let $X=(X,\Sigma_X)$ be a measurable space and let
$\rho_i=(R_i,\mathcal A_i)$, $i\in I$, be a set-indexed family
with $R_i$ an equivalence relation on $X$ and
$\mathcal A_i\subseteq\Sigma_X$ a sub-$\sigma$-algebra of
$R_i$-saturated sets. Give $X/R_i$ the $\sigma$-algebra
$\{B:q_i^{-1}(B)\in\mathcal A_i\}$, where $q_i$ is the class map,
and give $\prod_iX/R_i$ its product $\sigma$-algebra. Then
$\langle q_i\rangle:X\to\prod_iX/R_i$ is a measurable embedding
with surjective coordinate maps if and only if
\[
 \bigcap_{i\in I}R_i=\Delta_X,
 \qquad \sigma\!\left(\bigcup_{i\in I}\mathcal A_i\right)=\Sigma_X.
\]
Here $\sigma(\mathcal H)$ denotes the $\sigma$-algebra generated
by a family $\mathcal H$ of subsets of $X$.
\end{proposition}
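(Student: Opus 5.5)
The plan is to apply \cref{tc:prop:quotient-dictionary}\textup{(v)} to $\mathbf{Meas}$ with its underlying-set functor. By \cref{tc:cor:measurable} this functor is topological, and the initial embeddings are exactly the injective measurable maps carrying the trace $\sigma$-algebra. The argument can also be given directly without the dictionary, and I will carry it out that way to make the concrete conditions visible.

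\emph{Step 1: surjectivity.} Each coordinate $q_i$ is surjective by construction, since it is a class map. Hence the clause about surjective coordinate maps holds automatically, and only the embedding condition needs to be characterized.

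\emph{Step 2: injectivity.} The underlying function of $j=\langle q_i\rangle$ is injective exactly when no two distinct points are identified by every $q_i$, that is, exactly when $\bigcap_iR_i=\Delta_X$.

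\emph{Step 3: measurability and the initial structure.} The product $\sigma$-algebra on $\prod_iX/R_i$ is generated by the cylinders $\pi_i^{-1}(B)$, with $B$ measurable in $X/R_i$. Since inverse image commutes with generation of $\sigma$-algebras,
\[
 j^{-1}\Bigl(\sigma\bigl(\{\pi_i^{-1}B\}\bigr)\Bigr)=\sigma\bigl(\{q_i^{-1}B\}\bigr)=\sigma\Bigl(\bigcup_i\mathcal A_i\Bigr).
\]
The last equality uses the remark after \cref{meas:def:congruence}: the inverse images under $q_i$ of the measurable subsets of $X/R_i$ are exactly $\mathcal A_i$. Because every $\mathcal A_i\subseteq\Sigma_X$, the map $j$ is always measurable. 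The trace $\sigma$-algebra on the image $j(X)$, pulled back along the bijection $X\to j(X)$, is exactly $j^{-1}$ of the product $\sigma$-algebra. Therefore $j$ is an initial injection, i.e.\ a trace embedding, exactly when it is injective and $\sigma(\bigcup_i\mathcal A_i)=\Sigma_X$.

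\emph{Main obstacle.} The only delicate step is the identity $j^{-1}(\sigma(\mathcal H))=\sigma(j^{-1}\mathcal H)$. I will prove it by the standard good-sets argument: the family of sets whose inverse image lies in $\sigma(j^{-1}\mathcal H)$ is a $\sigma$-algebra containing $\mathcal H$, which gives one inclusion; for the other, $j^{-1}(\sigma(\mathcal H))$ is itself a $\sigma$-algebra containing $j^{-1}\mathcal H$. The identification of the trace $\sigma$-algebra with the initial $\sigma$-algebra along $j$ should also be checked directly.

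\emph{Degenerate cases.} Empty $I$ gives a one-point product. The conditions then say that $X$ has at most one point and $\Sigma_X=\{\varnothing,X\}$, which agrees with the embedding criterion in this case.
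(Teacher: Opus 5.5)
Your proof is correct and follows the paper's argument. The paper cites the subdirect criterion of the quotient dictionary and then makes your two observations: joint injectivity is $\bigcap_iR_i=\Delta_X$, and the inverse images of the product generators generate $\sigma(\bigcup_i\mathcal A_i)$, so the second condition is initiality. You carry out the same calculation directly instead of citing the dictionary.
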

\begin{proof}
Apply the subdirect criterion of \cref{tc:prop:quotient-dictionary}.
The first equality is joint injectivity. The inverse images of the
product generators generate $\sigma(\bigcup_i\mathcal A_i)$, so
the second equality is initiality. Each $q_i$ is already surjective.
\end{proof}

\subsection{Quantale-enriched categories}\label{tc:sec:quantales}
Preorders, equivalence relations, and several metric examples belong
to a common enriched construction. Let
$V=(V,\leq,\otimes,k)$ be a set-sized commutative unital quantale:
$V$ is a complete lattice and $\otimes$ preserves arbitrary joins in
each variable. Write $u\Rightarrow v$ for its residual, characterized by
$u\otimes w\leq v$ if and only if $w\leq(u\Rightarrow v)$.
A $V$-category is a set $X$ with a function $a:X\times X\to V$ such that
\[
k\leq a(x,x),\qquad a(x,y)\otimes a(y,z)\leq a(x,z).
\]
A $V$-functor $f:(X,a)\to(Y,b)$ satisfies
$a(x,y)\leq b(fx,fy)$. No separation condition is imposed.

Hofmann and Nora recall that $V\text{-}\mathbf{Cat}$ is topological
and that the residual $V$-category is an injective initial test
\cite[Theorem~2.5 and Proposition~2.6]{HN2023}. Combining these
facts with our transfer construction gives the following result;
we also spell out the symmetric variant.

\begin{theorem}\label{tc:thm:quantale}
Let $V=(V,\leq,\otimes,k)$ be a set-sized commutative unital
quantale: $V$ is a complete lattice, and $\otimes$ is associative,
commutative, has unit $k$, and preserves arbitrary joins in each
variable. Let $V\text{-}\mathbf{Cat}$ have as objects sets $X$ with
functions $a:X\times X\to V$ satisfying
\[
k\leq a(x,x),\qquad a(x,y)\otimes a(y,z)\leq a(x,z),
\]
and as morphisms functions $f:(X,a)\to(Y,b)$ satisfying
$a(x,y)\leq b(fx,fy)$. No separation condition is imposed.
Then both $V\text{-}\mathbf{Cat}$ and its full subcategory of
symmetric objects, for which $a(x,y)=a(y,x)$, admit noetherian forms
inducing surjective morphisms and injective fully faithful morphisms,
where fully faithful means $a(x,y)=b(fx,fy)$ for every $x,y$.
\end{theorem}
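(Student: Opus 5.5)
The plan is to apply \cref{tc:thm:form} to $V\text{-}\mathbf{Cat}$ with a single injective initial test. I then obtain the symmetric case either from a second application of the same theorem or from \cref{prop:closed-subcategory}.

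First, the underlying-set functor $U:V\text{-}\mathbf{Cat}\to\Set$ is topological. For a source $(s_j:S\to UX_j)_j$, the initial structure is
\[
 a(x,y)=\bigwedge_j a_j(s_jx,s_jy),
\]
with the empty meet equal to $\top$. Reflexivity and transitivity hold because $\otimes$ is monotone, so $\bigwedge_j u_j\otimes\bigwedge_j v_j\le\bigwedge_j(u_j\otimes v_j)$. Consequently the $U$-initial injections are exactly the injective fully faithful $V$-functors, and \cref{tc:prop:fs} identifies the factorization system with the one asserted.

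For the test I take $Q=(V,\Rightarrow)$, the residual $V$-category, following \cite{HN2023}. It is a $V$-category: $k\le(u\Rightarrow u)$, and $(u\Rightarrow v)\otimes(v\Rightarrow w)\le(u\Rightarrow w)$. Initial determination means that for any $x,y\in X$,
\[
 a(x,y)=\bigwedge_{\varphi:X\to Q}\bigl(\varphi x\Rightarrow\varphi y\bigr).
\]
I verify this with the representable $\varphi=a(z,-)$ for $z=x$. It is a $V$-functor, because $a(x,y')\otimes a(y',y)\le a(x,y)$ gives $a(y',y)\le(a(x,y')\Rightarrow a(x,y))$. It gives $(a(x,x)\Rightarrow a(x,y))\le(k\Rightarrow a(x,y))=a(x,y)$, and the reverse inequality is the morphism condition. $UQ=V$ is nonempty.

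The main obstacle is $\M$-injectivity: every $V$-functor $\varphi:A\to Q$ on a fully faithful subobject $A\subseteq B$ must extend to $B$. I will use the weighted extension
\[
 \bar\varphi(y)=\bigvee_{x\in A}\varphi(x)\otimes b(x,y).
\]
It restricts to $\varphi$: one inequality uses $b(x,y)=a(x,y)\le(\varphi x\Rightarrow\varphi y)$, and the other uses $k\le b(y,y)$. It is a $V$-functor because $\otimes$ distributes over joins and $b(x,y)\otimes b(y,y')\le b(x,y')$. Whether this direction convention matches the residual structure must be checked carefully; if the opposite orientation of $\Rightarrow$ is the one that is injective, I will use a meet-based Kan extension $\bigwedge_x(b(y,x)\Rightarrow\varphi x)$ instead. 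With that, \cref{tc:thm:form} applies.

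For symmetric objects, the full subcategory is closed under substructures along fully faithful injections, since these are initial. It is not obviously closed under arbitrary surjective images in $V\text{-}\mathbf{Cat}$, because the codomain structure need not be symmetric, so \cref{prop:closed-subcategory} does not apply directly. Instead I will observe that symmetric $V$-categories form a topological category in their own right: initial structures of symmetric sources are meets of symmetric functions, hence symmetric. I then apply \cref{tc:thm:form} with the symmetrized test
\[
 Q^s=\bigl(V,\ (u,v)\mapsto(u\Rightarrow v)\wedge(v\Rightarrow u)\bigr).
\]
Its initiality follows because a symmetric $a$ satisfies $a(x,y)=a(y,x)$, so the same representable witnesses work with both residuals. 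Injectivity uses the same extension formula, noting that the symmetric residual is the meet of the two directional conditions and the extension satisfies both. If that injectivity fails, the fallback is to check directly that the symmetric inclusion into $V\text{-}\mathbf{Cat}$ is conservative, preserves the classes and pullbacks, and that its pushouts of two surjections coincide with those computed after symmetrization.
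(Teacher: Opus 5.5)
Your proposal is correct and follows the paper's proof essentially step for step. It uses the same residual test $Q=(V,\Rightarrow)$, detected by the representables $a(x,-)$, and the same extension $\bar\varphi(y)=\bigvee_{x\in A}\varphi(x)\otimes b(x,y)$; your chosen orientation is the right one, so the meet-based fallback is never needed. It also uses the same symmetrized test $(u\Rightarrow v)\wedge(v\Rightarrow u)$ for the symmetric category, with everything fed into \cref{tc:thm:form}.

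For the symmetric case, the paper compresses your two-directional check into one observation: every $V$-functor from a symmetric domain into $Q$ automatically lands in $Q^{s}$ (interchange the variables in its functor inequality). This yields initiality and embedding-injectivity at once.
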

\begin{proof}
Initial structures are given by
$a(x,y)=\bigwedge_i a_i(f_i x,f_i y)$; hence the forgetful functor is
topological, also in the symmetric case. Its initial embeddings are
the injective maps for which the $V$-functor inequality is an equality.

The single test for $V\text{-}\mathbf{Cat}$ is
$Q=(V,q)$, where $q(u,v)=u\Rightarrow v$. The following formulas
give a direct verification of its two required properties. The $V$-functors
$a(z,-):X\to Q$ satisfy
\begin{equation}\label{tc:eq:quantale-initial}
a(x,y)=\bigwedge_{z\in X}\bigl(a(z,x)\Rightarrow a(z,y)\bigr).
\end{equation}
Transitivity gives one inequality. For the other, take $z=x$ and use
$k\leq a(x,x)$. Thus these maps initially determine $X$.

For a full induced subcategory $A\subseteq(X,a)$ and a $V$-functor
$f:A\to Q$, define
\begin{equation}\label{tc:eq:quantale-extension}
\overline f(x)=\bigvee_{u\in A} f(u)\otimes a(u,x).
\end{equation}
The functor inequality for $f$ shows that $\overline f|_A=f$.
Transitivity gives
$\overline f(x)\otimes a(x,y)\leq\overline f(y)$, so this is the
required extension. The formula includes $A=\varnothing$.

For symmetric $V$-categories use instead the symmetric test
\[
q_{\mathrm s}(u,v)=(u\Rightarrow v)\wedge(v\Rightarrow u).
\]
A $V$-functor from a symmetric domain to $Q$ automatically takes
values in this symmetric structure: interchange $x,y$ in its functor
inequality. Consequently \eqref{tc:eq:quantale-initial} still detects
the structure, and \eqref{tc:eq:quantale-extension} still extends every
test. Apply Theorem~\ref{tc:thm:form} in both cases.
\end{proof}

The metric interpretations in the next table are recalled in
\cite[Example~2.3]{HN2023}; the probabilistic enrichment below is
developed in \cite{HR2013}.

\begin{corollary}\label{tc:cor:quantale-examples}
For each quantale $V=(V,\leq,\otimes,k)$ in the table below, let
$V\text{-}\mathbf{Cat}$ consist of sets $X$ with functions
$a:X\times X\to V$ satisfying $k\leq a(x,x)$ and
$a(x,y)\otimes a(y,z)\leq a(x,z)$, and functions
$f:(X,a)\to(Y,b)$ with $a(x,y)\leq b(fx,fy)$.
Each such category, and its full subcategory defined by
$a(x,y)=a(y,x)$, has a noetherian form inducing underlying
surjections and injective maps with $a(x,y)=b(fx,fy)$.
\begin{center}
\begin{tabularx}{\textwidth}{@{}>{\raggedright\arraybackslash}p{.43\textwidth}Y@{}}
\toprule
Quantale $(V,\leq,\otimes,k)$ & Category of $V$-categories\\
\midrule
$([0,\infty],\geq,+,0)$ & Extended quasi-pseudometric spaces\\
$([0,\infty],\geq,\max,0)$ & Extended quasi-ultrametric spaces\\
$([0,1],\geq,\min(1,u+v),0)$ & Quasi-pseudometric spaces bounded by $1$\\
$([0,1],\leq,T,1)$, $T$ a left-continuous t-norm & Fuzzy preorders\\
\bottomrule
\end{tabularx}
\end{center}
Here a t-norm is an associative, commutative, monotone operation on
$[0,1]$ with unit $1$. Morphisms are nonexpansive in the metric cases
and preserve degrees in the fuzzy case. Symmetric examples include
extended pseudo-ultrametric spaces and fuzzy similarity spaces.
Distinct points may have zero distance, or similarity degree $1$.
\end{corollary}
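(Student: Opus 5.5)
The statement follows from \cref{tc:thm:quantale}. For each row of the table, I will check that the displayed quadruple $(V,\leq,\otimes,k)$ is a set-sized commutative unital quantale in the sense used there. That theorem then gives noetherian forms on $V\text{-}\mathbf{Cat}$ and on its symmetric full subcategory. These forms induce underlying surjections and injective fully faithful maps, which are exactly the maps with $a(x,y)=b(fx,fy)$ in the statement.

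For the first three rows the order is reversed: joins are infima of real numbers and the top element is $0$. For $([0,\infty],\geq,+,0)$, addition extended by $u+\infty=\infty$ is associative and commutative with unit $0$. It preserves arbitrary joins in each variable because $u+\inf_i v_i=\inf_i(u+v_i)$, including the empty join $\infty$. For $\max$ the same check uses $\max(u,\inf_i v_i)=\inf_i\max(u,v_i)$, again including the empty family. For truncated addition on $[0,1]$, the map $v\mapsto\min(1,u+v)$ is monotone and continuous, so it preserves infima; the empty infimum $1$ goes to $1$. In the fuzzy row the order is the usual one. A t-norm is associative, commutative and monotone with unit $1$. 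Left-continuity means $T(u,\sup_iv_i)=\sup_iT(u,v_i)$ for nonempty families, and the empty join holds because $T(u,0)\le T(1,0)=0$.

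It remains to match the $V$-category axioms with the named structures. In the first row, $k\le a(x,x)$ reads $a(x,x)=0$ and transitivity reads as the triangle inequality $a(x,z)\le a(x,y)+a(y,z)$. $V$-functors are exactly the nonexpansive maps. The ultrametric and bounded cases are read off in the same way. In the fuzzy row the axioms are $a(x,x)=1$ and $T(a(x,y),a(y,z))\le a(x,z)$, which is the definition of a $T$-fuzzy preorder, and morphisms preserve degrees. The symmetric subcategories give extended pseudo-ultrametric spaces (and the other symmetric metric cases) and fuzzy similarity spaces. No separation axiom appears, so distinct points may have distance $0$ or degree $1$.

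The only point needing care is join preservation for the empty family and for unattained infima in the reversed-order cases, especially $\infty$ in the first row. Even there it is a one-line check, so I expect no real obstacle once \cref{tc:thm:quantale} is available.
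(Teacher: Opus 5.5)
Your proposal is correct and follows the paper's proof, which applies \cref{tc:thm:quantale} to each row, using its symmetric case for the symmetric subcategories. Your explicit checks that each row is a set-sized commutative unital quantale are the verifications the paper leaves implicit. These include the empty-join cases and the equivalence of left-continuity of $T$ with preservation of nonempty suprema.
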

\begin{proof}
Apply \cref{tc:thm:quantale} to each displayed quantale, using its
symmetric case for the symmetric examples.
\end{proof}

There is also a probabilistic family. For a fixed left-continuous
t-norm $T$, let $\Delta$ consist of functions
$\varphi:[0,\infty]\to[0,1]$ satisfying
$\varphi(t)=\sup_{s<t}\varphi(s)$, ordered pointwise, and put
\[
(\varphi\otimes_T\psi)(t)
 =\sup_{r+s\leq t}T(\varphi(r),\psi(s)),\qquad
\kappa(0)=0,\quad\kappa(t)=1\ (t>0).
\]
These operations give a commutative unital quantale; see
\cite{HR2013}. Theorem~\ref{tc:thm:quantale} therefore gives forms
on generalized probabilistic quasi-pseudometric spaces and on their
symmetric counterparts. Precisely, these are $\Delta$-categories,
with $\kappa\leq d(x,x)$ and
$d(x,y)\otimes_T d(y,z)\leq d(x,z)$, and maps satisfying
$d(x,y)\leq d'(fx,fy)$. This convention allows defective distributions
and does not require $d(x,y)(\infty)=1$ or separation. Thus no
normalization or separation subcategory is being asserted to be
topological by this argument.

The quotient dictionary is explicit in this setting as well. For a
$V$-category $(X,a)$, its weak quotient data are an equivalence
relation $R$ on $X$ and a $V$-category structure $b$ on $X/R$
such that
\[
 a(x,y)\leq b([x]_R,[y]_R)\qquad(x,y\in X).
\]
In the symmetric case require $b$ to be symmetric. These data
describe normal clusters by \cref{tc:prop:quotient-dictionary};
conormal clusters are arbitrary subsets with $a$ restricted to
them. For a $V$-functor $f:(X,a)\to(Y,c)$, the first isomorphism
uses equality of $f$-values and the structure
$b([x],[y])=c(fx,fy)$. Restrictions and iterated quotients use the
corresponding restricted and transported structures. A family of
surjective $V$-functors $f_i:(X,a)\to(Y_i,b_i)$, indexed by a set,
is subdirect
exactly when it jointly separates points and
\[
 a(x,y)=\bigwedge_i b_i(f_i x,f_i y),
\]
where the meet is taken in $V$. These formulas specialize to the
order and distance calculi below.

\subsection{Preorders}
The two-element chain is the standard injective order test; this is
also the Boolean-quantale case of \cite[Proposition~2.6]{HN2023}.
The following is its noetherian-form consequence.

\begin{corollary}\label{tc:cor:preord}
Let $\mathbf{Preord}$ be the category of sets equipped with reflexive,
transitive relations and monotone maps; antisymmetry is not required.
Then $\mathbf{Preord}$ has a noetherian form inducing surjective
monotone maps and order-reflecting monotone injections.
\end{corollary}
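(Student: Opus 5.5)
The plan is to apply \cref{tc:thm:form} to the underlying-set functor $U:\mathbf{Preord}\to\Set$ with the single test object $\two=\{0<1\}$, the two-element chain. Two shortcuts are available. One is to identify $\mathbf{Preord}$ with $V\text{-}\mathbf{Cat}$ for the Boolean quantale $V=(\{0,1\},\le,\wedge,1)$ and quote \cref{tc:thm:quantale}. There the residual category is exactly the chain $\two$, and fully faithful injective maps are exactly the order-reflecting monotone injections. The other is to check the hypotheses directly; I would give this check explicitly so that the concrete test is visible.

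\emph{Topologicity.} For a source $(s_j:S\to UX_j)_{j\in J}$, put $x\le y$ iff $s_j(x)\le s_j(y)$ for every $j$. This relation is reflexive and transitive. A function into $S$ is monotone for it exactly when all composites are monotone, so it is the initial lift; for $J=\varnothing$ it is the indiscrete preorder. Hence the initial injections are precisely the order-reflecting monotone injections, and by \cref{tc:prop:fs} the factorization system is (surjective monotone maps, order-reflecting injections).

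\emph{Initial determination.} A monotone map $X\to\two$ is the characteristic function of an up-set. If $x\not\le y$, the up-set ${\uparrow}x$ contains $x$ but not $y$. Hence $x\le y$ holds exactly when every monotone map to $\two$ preserves the pair, and the source of all such maps is initial.

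\emph{$\M$-injectivity.} This is the only step needing a formula. Let $A\subseteq B$ carry the induced preorder and let $f:A\to\two$ be monotone, so $F=f^{-1}(1)$ is an up-set of $A$. Extend $f$ by $\overline f=\chi_{{\uparrow}_B F}$. Then $\overline f$ is monotone, and ${\uparrow}_BF\cap A=F$ because the preorder on $A$ is induced and $F$ is an up-set in $A$. This is the case $V=\two$ of \eqref{tc:eq:quantale-extension}, and it includes $A=\varnothing$. Finally, $\two$ is nonempty with two elements, so no additional test needs to be adjoined, and \cref{tc:thm:form} gives the noetherian form inducing the stated system.
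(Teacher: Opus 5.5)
Your proposal is correct and takes the paper's route: the paper's proof is your first shortcut, applying \cref{tc:thm:quantale} to the Boolean quantale $(\{0,1\},\le,\wedge,1)$, whose residual test is the chain $0<1$. Your direct verification is that theorem's proof specialized to this quantale: your detecting maps $\chi_{{\uparrow}x}$ are its maps $a(z,-)$, and your extension $\chi_{{\uparrow}_B F}$ is its formula \eqref{tc:eq:quantale-extension}.
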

\begin{proof}
Apply \cref{tc:thm:quantale} to the Boolean quantale
$V=(\{0,1\},\leq,\wedge,1)$. A $V$-category is precisely a
preorder, with $a(x,y)=1$ if and only if $x\leq y$;
$V$-functors are monotone maps and injective fully faithful maps
are order-reflecting injections. The residual test is the chain
$0<1$.
\end{proof}

This assertion concerns preorders. The underlying-set functor on
partially ordered sets is not topological: an arbitrary initial
preorder need not be antisymmetric. This distinction, and the
topologicity of $\mathbf{Preord}$, are recorded in
\cite[Examples~21.2 and 21.8]{AHS}.

The following calculation specializes the quotient dictionary to the
usual induced preorders and surjective monotone maps. In particular,
the target preorder is part of the quotient data: it can contain
comparisons not generated by the source preorder.

\begin{corollary}\label{tc:cor:preorder-calculus}
Let $\mathbf{Preord}$ be the category of preordered sets and monotone
maps, and let $\mathcal F$ be a noetherian form on it inducing
surjective monotone maps and order-reflecting monotone injections.
For a preordered set $(X,\leq_X)$, define a quotient datum to be
$\rho=(R_\rho,\preceq_\rho)$, where $R_\rho$ is an equivalence
relation on the underlying set and $\preceq_\rho$ is a preorder on
$X/R_\rho$ such that
\[
 x\leq_X y\quad\Longrightarrow\quad
 [x]_\rho\preceq_\rho[y]_\rho.
\]
Write $q_\rho:X\to X/\rho=(X/R_\rho,\preceq_\rho)$ for the
canonical surjection. Order these data by
\[
 \rho\leq\sigma\ \Longleftrightarrow\
 R_\rho\subseteq R_\sigma\ \text{ and }\
 \bigl([x]_\rho\preceq_\rho[y]_\rho
 \Longrightarrow[x]_\sigma\preceq_\sigma[y]_\sigma\bigr)
 \quad(x,y\in X).
\]
Then $\rho\mapsto\Ker_{\mathcal F}q_\rho$ identifies this ordered
set with the normal clusters on $X$. The conormal clusters are
identified with all subsets $A\subseteq X$, equipped with the
preorder induced from $X$.

For a monotone map $f:X\to Y$, put
$xR_fy\Longleftrightarrow f(x)=f(y)$ and
$[x]_{\rho_f}\preceq_{\rho_f}[y]_{\rho_f}
\Longleftrightarrow f(x)\leq_Y f(y)$. Then there is a canonical
isomorphism of preordered sets
\[
 X/\rho_f\ \cong\ f(X),
\]
where $f(X)$ has the preorder induced from $Y$.
For a quotient datum $\rho$ and a subset $A\subseteq X$, restrict
$R_\rho$ to $A$ and transport the induced preorder on $q_\rho(A)$
to $A/(R_\rho\cap A^2)$; call the resulting datum $\rho|_A$.
Then
\[
 A/(\rho|_A)\cong q_\rho(A).
\]
For $\rho\leq\sigma$, the quotient datum $\sigma/\rho$ on
$X/\rho$ identifies $R_\rho$-classes having the same
$R_\sigma$-class and equips the resulting set with
$\preceq_\sigma$. It satisfies
\[
 (X/\rho)/(\sigma/\rho)\cong X/\sigma.
\]
Moreover, $\sigma\mapsto\sigma/\rho$ identifies quotient data on
$X$ above $\rho$ with all quotient data on $X/\rho$.

For a set-indexed family of surjective monotone maps $f_i:X\to Y_i$, the
joint map $X\to\prod_iY_i$ is an embedding precisely when the
$f_i$ jointly separate points and
\[
 x\leq_X y\quad\Longleftrightarrow\quad
 f_i(x)\leq_{Y_i}f_i(y)\text{ for every }i.
\]
\end{corollary}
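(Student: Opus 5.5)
The plan is to specialize the general quotient dictionary \cref{tc:prop:quotient-dictionary} to the topological functor $U:\mathbf{Preord}\to\Set$, in the same way as \cref{meas:prop:dictionary} and \cref{vl:prop:dictionary}. By \cref{tc:cor:preord}, the form $\mathcal F$ exists and induces the system of surjective monotone maps and order-reflecting injections. These are the underlying surjections and the $U$-initial injections, since initial preorders are given by $x\le y$ iff all $f_i(x)\le f_i(y)$.

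\textbf{Step 1: weak quotient data are the stated quotient data.} A weak quotient datum in the sense of \cref{tc:prop:quotient-dictionary} is a pair $(R,Q)$, where $Q$ is a preorder on $X/R$ making the class map monotone. This is precisely the condition $x\le_X y\Rightarrow[x]\preceq[y]$. Next I check that the factorization order matches the displayed order. There is a monotone $r$ with $q_\sigma=rq_\rho$ iff $R_\rho\subseteq R_\sigma$, so that $r([x]_\rho)=[x]_\sigma$ is well defined and unique by surjectivity. Monotonicity of $r$ is then exactly the implication on classes. Part (i) of the dictionary then gives both the normal-cluster and conormal-cluster identifications, with $X|_A$ the induced preorder.

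\textbf{Step 2: the isomorphism rules.} Parts (ii)--(iv) of the dictionary give the remaining rules, once the abstract data $\rho_f$, $\alpha|_A$ and $\beta/\alpha$ are computed concretely. For (ii), $\rho_f$ has relation $R_f$ and the structure transported from $f(X)$ with its induced preorder along $[x]\mapsto f(x)$. This is $[x]\preceq[y]\iff f(x)\le_Y f(y)$, as stated. For (iii), $\rho|_A=\rho_{q_\rho i_A}$ has relation $R_\rho\cap A^2$ and the preorder transported from the induced preorder on $q_\rho(A)$, which is the stated definition. For (iv), $\sigma/\rho=\rho_r$ for $r:X/\rho\to X/\sigma$. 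Its relation identifies $R_\rho$-classes with the same $R_\sigma$-class, and its preorder is transported from $X/\sigma$, since $r$ is surjective and $r(X/\rho)=X/\sigma$ carries $\preceq_\sigma$. This yields the third isomorphism and the correspondence.

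\textbf{Step 3: subdirect criterion.} Given surjective monotone maps $f_i$, replace each by its quotient datum $\rho_{f_i}$; by Step~2, $X/\rho_{f_i}\cong Y_i$ over $X$. Part (v) says the joint map is an initial embedding iff $\bigcap R_{f_i}=\Delta$ and the source is initial. Joint separation of points is the first condition. Initiality of the source in $\mathbf{Preord}$ is exactly $x\le_X y\iff f_i(x)\le f_i(y)$ for all $i$, since $\le_X$ is already contained in the initial preorder by monotonicity.

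The only real care needed, and the main obstacle, is Step~1's order verification together with the well-definedness of the transported preorders. One must see that monotonicity of the comparison $r$ is a condition on representatives. Everything else is a routine unwinding of \cref{tc:prop:quotient-dictionary}.
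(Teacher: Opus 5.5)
Your proposal is correct and follows essentially the same route as the paper: you identify the stated quotient data with weak quotient data, by checking that the class map and the comparison $r$ are monotone, and then read off every assertion from \cref{tc:prop:quotient-dictionary}. The only cosmetic difference is that you obtain the subdirect criterion through part (v) of the dictionary, while the paper appeals directly to the coordinatewise preorder on the product; this amounts to the same computation.
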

\begin{proof}
Every surjective monotone map determines exactly the displayed
quotient data, up to its unique identification with the quotient of
its underlying set. Factorization through another such map is
equivalent to the displayed order condition. Apply
\cref{tc:prop:quotient-dictionary}; the three isomorphisms carry
each class to its named image. The final assertion follows from
the coordinatewise preorder on the product.
\end{proof}

\subsection{Equivalence relations, graphs, and relational structures}
\label{tc:sec:relations}

\paragraph{Equivalence relations and graphs.}
\begin{corollary}\label{tc:cor:equiv}
Let $\Equiv$ be the category of sets equipped with equivalence
relations and functions preserving those relations. Then $\Equiv$
has a noetherian form inducing underlying surjections and full
embeddings, meaning injective functions that preserve and reflect
equivalence.
\end{corollary}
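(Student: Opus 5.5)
The plan is to apply \cref{tc:thm:form} to the underlying-set functor $U:\Equiv\to\Set$ with a single test object. The first step is to check that $U$ is topological. The initial lift of a source $(s_j:S\to UX_j)_{j\in J}$ is the relation
\[
 x\mathrel R y\iff s_j(x)\mathrel{R_j}s_j(y)\text{ for all }j.
\]
This is an intersection of equivalence relations, so it is again an equivalence relation; for an empty source it is the total relation. The initial injections are then exactly the full embeddings, so the specified system consists of underlying surjections and full embeddings, as claimed. Alternatively, one could note that $\Equiv$ is the full subcategory of symmetric objects of $V\text{-}\mathbf{Cat}$ for the Boolean quantale, and apply the symmetric case of \cref{tc:thm:quantale}. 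I would mention this as a remark, but give the direct argument, since it is short.

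For the test I take $Q=(\{0,1\},\Delta)$, the two-element set with the equality relation. A morphism $X\to Q$ is a function constant on $R_X$-classes, that is, the characteristic function of an $R_X$-saturated subset. Its underlying set has two elements, so no extra test needs to be adjoined.

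\emph{Initial determination.} Suppose a function $h:UZ\to UX$ has the property that $ah$ is a morphism for every morphism $a:X\to Q$. Let $z\mathrel{R_Z}z'$. Then $h(z)$ and $h(z')$ lie in the same saturated sets of $X$, because each characteristic function $ah$ is constant on $R_Z$-classes. Now test against the saturated set $[h(z)]_{R_X}$. Since $h(z)$ belongs to it, so does $h(z')$, hence $h(z)\mathrel{R_X}h(z')$, and $h$ is a morphism. Equivalently, $x\mathrel{R_X}y$ holds if and only if $a(x)=a(y)$ for every test $a$.

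\emph{Injectivity along full embeddings.} Let $A\subseteq X$ carry the restricted relation, and let $a:A\to Q$ be a morphism. Then $a^{-1}(1)$ is $R_X|_A$-saturated. I extend $a$ by taking the $R_X$-saturation of $a^{-1}(1)$ in $X$. Its trace on $A$ is again $a^{-1}(1)$, precisely because the relation on $A$ is the restriction of $R_X$. This is the only step that uses fullness, and I expect it to be the main, though mild, obstacle. The case $A=\varnothing$ is included, with the extension being the zero function.

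With all hypotheses of \cref{tc:thm:form} verified, that theorem yields a noetherian form on $\Equiv$ inducing $(\E,\M)$, and it has complete algebraic modular fibres as well.
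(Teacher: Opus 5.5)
Your proposal is correct and is essentially the paper's proof. The paper applies the symmetric case of \cref{tc:thm:quantale} to the Boolean quantale, and there the symmetric residual test is exactly your $(\{0,1\},\Delta)$, the detecting maps $a(z,-)$ are the characteristic functions of classes, and the extension formula reduces to your saturation; you have simply unwound that specialization and applied \cref{tc:thm:form} directly.
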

\begin{proof}
Apply the symmetric case of \cref{tc:thm:quantale} to the Boolean
quantale $V=(\{0,1\},\leq,\wedge,1)$. Symmetric preorders are
equivalence relations, and injective fully faithful maps are exactly
the full embeddings in the statement.
\end{proof}

Alternatively, the transfer functor $W$ constructed above and
\cref{thm:characterization} give this conclusion directly.

\begin{corollary}\label{tc:cor:graphs}
Consider either the category of pairs $(X,R)$ with $X$ a set and
$R\subseteq X\times X$ a reflexive relation, or its full subcategory
where $R$ is also symmetric. In both cases morphisms are functions
that preserve the relations. These categories of reflexive directed
graphs and reflexive symmetric graphs have noetherian forms inducing
underlying surjections and induced-subgraph embeddings, namely
injective functions that preserve and reflect the relations.
\end{corollary}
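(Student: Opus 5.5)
The plan is to apply \cref{tc:thm:form}, or equivalently to reuse the quantale argument of \cref{tc:thm:quantale} with the Boolean quantale relaxed. I will treat the two categories separately but in parallel.

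The first step is topologicity of the underlying-set functor. Given a source $(f_j:S\to UX_j)_{j\in J}$ with $X_j=(X_j,R_j)$, the initial lift puts $x\mathrel R y$ exactly when $f_j(x)\mathrel{R_j}f_j(y)$ for every $j$. This relation is reflexive, and it is symmetric whenever every $R_j$ is. The empty source gives the total relation. Under this structure the initial embeddings are the injective maps that preserve and reflect the relation, which are exactly the induced-subgraph embeddings. Hence \cref{tc:prop:fs} supplies the surjection--induced-embedding system together with all the homogeneous diagrams.

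The second step is to choose a single test object and check the conditions of \cref{tc:def:tests}. For directed graphs I take $Q=\{0,1\}$ with the reflexive relation $\{(0,0),(1,1),(0,1),(1,0),(1,1)\}$ minus one arrow. Concretely, its edges are all pairs except $(1,0)$, so that $Q$ is the preorder $0\le1$ regarded as a reflexive graph. This choice fails initiality, however, because a non-transitive graph cannot be detected by maps into a preorder. The test therefore has to be non-transitive, and I would instead use a three-element reflexive graph $Q=\{0,1,*\}$. In $Q$ every pair is an edge except $(1,0)$; the point $*$ is related to everything in both directions, and $0$ and $1$ are related only in the direction $0\to1$.

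For initiality, take $x,y$ with $(x,y)\notin R$ and define $a:X\to Q$ by $a(x)=1$, $a(y)=0$ and $a(z)=*$ otherwise. Note that $x\ne y$ by reflexivity. I need to check that $a$ preserves edges. The only forbidden edge in $Q$ is $(1,0)$, which would require $(x,y)\in R$, so $a$ is a morphism and it witnesses the non-edge. For injectivity, a morphism $A\to Q$ on an induced subgraph extends by sending every new point to $*$. This extension preserves edges because $*$ is adjacent to everything in both directions, and edges inside $A$ are already preserved. Since $UQ$ is nonempty with at least two elements, the extra indiscrete test is not needed.

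In the symmetric case I take instead the symmetric graph on $\{0,1,*\}$ in which every pair is an edge except $\{0,1\}$. The same assignment of $1$, $0$ and $*$ detects each non-edge, and the same extension by $*$ shows that $Q$ is injective.

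The main obstacle is choosing a test that works without transitivity, since the residual test from \cref{tc:thm:quantale} only detects preorders. The three-point construction above resolves this; after that the check is routine. Applying \cref{tc:thm:form} then gives the required noetherian forms, and they have complete algebraic modular fibres.
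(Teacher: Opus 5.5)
Your argument is correct, but it takes a more hands-on route than the paper. The paper constructs no test for graphs at all. It regards reflexivity $\top\vdash_x R(x,x)$ and symmetry $R(x,y)\vdash_{x,y}R(y,x)$ as relational Horn rules in which every premise variable reappears in the conclusion, and invokes \cref{tc:thm:horn-tests}. That theorem's proof uses a whole set-indexed family of tests: one graph $A^+=A\amalg\{*\}$ for each finite model $A$ up to isomorphism, with every tuple containing $*$ declared to be an edge.

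Your two three-point graphs are exactly members of that family. In the directed case yours is $A^+$ for the two-point graph with loops and the single edge $0\to1$. In the symmetric case it is $A^+$ for the two-point graph with loops only. What you add is the observation that, for a single binary relation, this one member already detects every non-edge. The extra edge $0\to1$ in the directed test does no harm, because only $(1,0)$ has to be avoided. Extension by $*$ then gives embedding-injectivity, just as in the paper.

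Your version buys an explicit single Sierpi\'nski-type test, so \cref{tc:thm:form} produces a concrete form built from $P(UX)=\Set(UX,\{0,1,*\})$, in parallel with \cref{top:cor:topnoetherian}. The paper's version buys a one-line appeal that needs no invented test. The same appeal covers arbitrary relational signatures and every rule set satisfying the variable condition, without a suitable small test having to be found case by case.

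In a final write-up you should delete the abandoned two-point preorder attempt, whose edge set is described confusingly. Your reason for rejecting it is right: a non-edge closed by a two-step path cannot be detected by a map into a preorder.
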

\begin{proof}
Apply \cref{tc:thm:horn-tests} to the axiom
$\top\vdash_x R(x,x)$, adding
$R(x,y)\vdash_{x,y}R(y,x)$ in the symmetric case.
The first axiom has no premise variables, and the second retains both
premise variables in its conclusion. The factorization classes in that
theorem are exactly those stated here.
\end{proof}

\paragraph{Relational structures without additional axioms.}
Topologicity of unrestricted relational structures is a standard
instance of functor-structured categories
\cite[Example~21.8(2)]{AHS}. The finite-test theorem for relational
Horn axioms below gives the noetherian-form conclusion as a special case.

\begin{proposition}\label{tc:prop:relational}
Let $\Sigma$ be a set-sized finitary relational signature whose
symbols have positive arity; the signature may be empty. Let
$\mathbf{Str}(\Sigma)$ have as objects sets with an arbitrary
interpretation of each symbol as a relation of its specified arity,
and as morphisms functions that preserve every interpreted relation.
No additional axioms on these relations are imposed. Then
$\mathbf{Str}(\Sigma)$ has a noetherian form inducing surjective
homomorphisms and induced-substructure embeddings, namely injective
homomorphisms that reflect every relation.
\end{proposition}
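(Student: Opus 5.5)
I plan to apply \cref{tc:thm:form} directly, by exhibiting a small family of injective initial tests for the forgetful functor $U:\mathbf{Str}(\Sigma)\to\Set$. The preceding text says the result is a special case of the later Horn-axiom theorem, but a direct argument is short, so I will not depend on that theorem.

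First, topologicity. Given a source of functions $s_j:S\to UX_j$, interpret each symbol $\varrho$ of arity $n$ on $S$ by declaring $\varrho(x_1,\dots,x_n)$ to hold exactly when $\varrho(s_jx_1,\dots,s_jx_n)$ holds in $X_j$ for every $j$. This is the largest structure making all $s_j$ homomorphisms, so it is the initial lift. It is unique, and for the empty source it is the structure in which every relation is full. Consequently the initial injections are exactly the injective homomorphisms reflecting every relation, that is, the induced-substructure embeddings. The surjective homomorphisms form the left class, as in \cref{tc:prop:fs}.

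Second, the tests. For each symbol $\varrho$ of arity $n\ge1$, let $Q_\varrho$ have underlying set $\{0,1\}^n$. I would interpret $\varrho$ there as every tuple except the single tuple $(e_1,\dots,e_n)$, where $e_i$ is the $i$-th unit vector. Every other symbol is interpreted as full. A map $X\to Q_\varrho$ is then a function $a$ such that $a(x_1,\dots,x_n)\ne(e_1,\dots,e_n)$ whenever $\varrho(x_1,\dots,x_n)$ holds in $X$.

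Initial determination requires the following. Suppose $\varrho(hz_1,\dots,hz_n)$ fails in $X$ for some $z_1,\dots,z_n$ in $Z$. I must produce a homomorphism $a:X\to Q_\varrho$ with $a(hz_i)=e_i$ for all $i$. The difficulty is repeated entries, for instance $hz_1=hz_2$, which cannot be sent to both $e_1$ and $e_2$.

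To avoid this, I would change the tests to one test per symbol and per equality pattern of the failing tuple. Alternatively, I could take $Q_\varrho$ to be the structure on $\{0,1\}$-valued $n$-tuples of characteristic data. The cleanest choice is to index tests by pairs $(\varrho,\pi)$, where $\pi$ is a partition of $\{1,\dots,n\}$. Take the underlying set of $Q_{\varrho,\pi}$ to be the blocks of $\pi$ together with an extra point $*$. Interpret $\varrho$ as everything except the tuple $i\mapsto[i]_\pi$, and interpret all other symbols as full. Given a failing tuple whose equality pattern is $\pi$, send each $hz_i$ to $[i]_\pi$ and everything else to $*$. The result is a homomorphism, because the one excluded tuple is never hit by a true instance of $\varrho$. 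Since $\Sigma$ is a set and arities are finite, these tests form a set. Each test is nonempty, since $*$ is present, and each has at least two elements when $n\ge1$. If $\Sigma$ is empty, I adjoin the indiscrete two-point test, as the paper allows.

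Third, and I expect this to be the main obstacle, I must check $\M$-injectivity. Let $A\subseteq B$ carry the induced structure and let $a:A\to Q_{\varrho,\pi}$ be a homomorphism. I would extend $a$ by sending $B\setminus A$ to $*$. A $\varrho$-tuple in $B$ that meets $B\setminus A$ is sent to a tuple containing $*$, which is never the excluded tuple. A $\varrho$-tuple lying entirely in $A$ holds in $A$, because the structure is induced, so $a$ already avoids the excluded tuple on it. Hence the extension is a homomorphism.

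With these facts, \cref{tc:thm:form} yields the asserted form, with complete algebraic modular fibres.
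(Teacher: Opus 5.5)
Your proposal is correct, and it works by the same mechanism as the paper. The paper deduces the statement from \cref{tc:thm:horn-tests} with the empty set of axioms. That proof uses as tests the structures $A^+=A\amalg\{*\}$, for representatives $A$ of all finite structures, with every tuple meeting $*$ added to every relation. Injectivity is extension by $*$. A missing tuple is detected by the map that is the identity on the induced substructure spanned by its entries and sends everything else to $*$.

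Your $Q_{\varrho,\pi}$ is exactly $A^+$ for one particular $A$: the structure on the blocks of $\pi$ in which only the tuple $i\mapsto[i]_\pi$ is missing. So you show that a much smaller subfamily suffices, namely one test per symbol and equality pattern, and your verification avoids the Horn theorem entirely.

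What the paper's larger family buys is the extension to Horn axioms. There the tests must themselves be models. The induced substructure on the entries of a missing tuple is a model, whereas your one-tuple-missing structures usually are not; for instance, $Q_{\varrho,\pi}$ with $\pi$ discrete violates $R(x,y)\vdash_{x,y}R(y,x)$.

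Two presentational points for a final write-up. First, drop the abandoned unit-vector and $\{0,1\}^n$ variants and state only the partition tests. Second, say explicitly that your detecting map is injective on $\{hz_1,\ldots,hz_n\}$ and sends everything else to $*$. Hence $(hz_1,\ldots,hz_n)$ is the only tuple of $X$ sent to the excluded tuple, which is what makes the map a homomorphism.
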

\begin{proof}
This is \cref{tc:thm:horn-tests} for the empty set of axioms.
Its hypotheses hold vacuously, including when the signature is empty.
The resulting initial embeddings are precisely the injective
homomorphisms that reflect every relation.
\end{proof}

Additional axioms are treated in \cref{tc:sec:matrix-conditions}.
The example in \cref{sec:top-scope} shows why arbitrary universal
Horn axioms cannot automatically be included in the finite-test argument.

For relational structures, induced substructures and their product
relations give an equally explicit quotient dictionary. Here, too,
relations on a quotient may be larger than the direct images of
the source relations.

\begin{corollary}\label{tc:cor:relational-calculus}
Let $\Sigma$ be a set-sized finitary relational signature with
positive arities, let $\mathbf{Str}(\Sigma)$ be the category of
arbitrary $\Sigma$-structures and relation-preserving functions,
and let $\mathcal F$ be a noetherian form inducing surjective
homomorphisms and induced-substructure embeddings. For a
$\Sigma$-structure $X$, a quotient datum is
$\rho=(R_\rho,(P_\rho)_{P\in\Sigma})$, where $R_\rho$ is an
equivalence relation on its underlying set and, for each symbol
$P$ of arity $n$, the relation $P_\rho$ on $X/R_\rho$ satisfies
\[
 q_\rho^{\,n}(P^X)\subseteq P_\rho,
 \qquad q_\rho:X\to X/R_\rho.
\]
Write $X/\rho$ for this quotient structure. Order quotient data
by $\rho\leq\sigma$ when $R_\rho\subseteq R_\sigma$ and the
canonical map $X/R_\rho\to X/R_\sigma$ preserves all relations.
Then $\rho\mapsto\Ker_{\mathcal F}q_\rho$ identifies this ordered
set with the normal clusters on $X$. The conormal clusters are
all subsets of $X$ with their induced relations.

For a homomorphism $f:X\to Y$, let $R_f$ be its equality kernel
and define $\rho_f$ by
\[
 ([x_1],\ldots,[x_n])\in P_{\rho_f}
 \quad\Longleftrightarrow\quad
 (f(x_1),\ldots,f(x_n))\in P^Y.
\]
For a quotient datum $\rho$ and a subset $A\subseteq X$, let
$\rho|_A$ have equivalence relation $R_\rho\cap A^2$ and the
relations transported from the induced substructure $q_\rho(A)$.
For $\rho\leq\sigma$, let $\sigma/\rho$ on $X/\rho$ identify
the classes with the same $R_\sigma$-class and have the relations
transported from $X/\sigma$. The canonical bijections give
isomorphisms of relational structures
\[
 X/\rho_f\cong f(X),\qquad
 A/(\rho|_A)\cong q_\rho(A),\qquad
 (X/\rho)/(\sigma/\rho)\cong X/\sigma,
\]
where images carry induced relations. The assignment
$\sigma\mapsto\sigma/\rho$ is an order isomorphism from quotient
data on $X$ above $\rho$ to quotient data on $X/\rho$.

For a set-indexed family of surjective homomorphisms $f_i:X\to Y_i$, the
joint map into $\prod_iY_i$ is an induced-substructure embedding
if and only if it is injective and, for every symbol $P$ of arity
$n$ and tuple $(x_1,\ldots,x_n)$,
\[
 (x_1,\ldots,x_n)\in P^X
 \quad\Longleftrightarrow\quad
 (f_i(x_1),\ldots,f_i(x_n))\in P^{Y_i}\text{ for every }i.
\]
\end{corollary}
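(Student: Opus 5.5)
The plan is to apply \cref{tc:prop:quotient-dictionary} to $\mathbf{Str}(\Sigma)$ with its forgetful functor. That functor is topological: initial structures are given by intersecting inverse images of relations. The initial embeddings are the induced-substructure embeddings, as recorded in \cref{tc:prop:relational}. Every assertion of the corollary should then be a translation of a clause of the dictionary. For each clause I will only identify the concrete weak quotient data, their order, and the structures produced by the factorizations.

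\textbf{Step 1: identify the order of quotient data.} A weak quotient datum in the sense of \cref{tc:prop:quotient-dictionary} consists of an equivalence relation $R$ on $UX$ and a $\Sigma$-structure on $UX/R$ for which the class map is a homomorphism. This is exactly the condition $q_\rho^{\,n}(P^X)\subseteq P_\rho$ for every symbol $P$. Since the forgetful functor is faithful, $q_\sigma=rq_\rho$ forces $r$ to be the canonical map $X/R_\rho\to X/R_\sigma$. This map exists as a function exactly when $R_\rho\subseteq R_\sigma$, and it is a morphism exactly when it preserves all relations. That is the displayed order. Part (i) of the dictionary then gives both cluster identifications, with conormal clusters given by subsets carrying the initial structure, which is the induced structure.

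\textbf{Step 2: compute the three constructions.} Part (ii) of the dictionary builds $\rho_f$ by transporting the induced structure on $f(X)$ along $[x]\mapsto f(x)$. This is precisely the displayed rule $([x_i])\in P_{\rho_f}\iff(f(x_i))\in P^Y$, and it yields $X/\rho_f\cong f(X)$. For $\rho|_A$, part (iii) defines it as $\rho_{q_\rho i_A}$. Its relation is $R_\rho\cap A^2$, and its structure is transported from the induced substructure $q_\rho(A)$, which gives the second isomorphism. For $\sigma/\rho=\rho_r$ with $r$ the canonical surjection, $r$ identifies classes with the same $R_\sigma$-class. Its image is all of $X/\sigma$, with its own relations. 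So part (iv) gives the third isomorphism and the order isomorphism onto quotient data above $\rho$.

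\textbf{Step 3: subdirect criterion.} I apply part (v) of the dictionary to the weak quotient data $\rho_{f_i}$, using $X/\rho_{f_i}\cong Y_i$ from Step 2 since each $f_i$ is surjective. The equation $\bigcap R_{f_i}=\Delta$ is joint injectivity. Initiality of the source $(f_i)$ for the initial $\Sigma$-structure means that each $P^X$ is the intersection of the inverse images $(f_i^{\,n})^{-1}P^{Y_i}$. This is the displayed biconditional. Equivalently, I can read it directly from the coordinatewise relations on the product.

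\textbf{Main obstacle.} I expect the only delicate point to be in Step 1: confirming that the dictionary's identification of structures ``up to isomorphism over underlying sets'' causes no collapse here. In $\mathbf{Str}(\Sigma)$ the amnestic lift is strict, because distinct relation families on the same set are distinct structures. So quotient data correspond bijectively to the pairs stated, and the order matches on the nose. The rest is routine bookkeeping.
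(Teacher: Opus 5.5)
Your proposal is correct and follows the paper's route. The paper's proof likewise reads the defining containment as saying that the class map is a homomorphism, reads the order as factorization of these surjections, applies \cref{tc:prop:quotient-dictionary}, and uses the coordinatewise product relations for the subdirect criterion; you simply spell out the bookkeeping that the paper leaves implicit, including the strictness of concrete isomorphisms in $\mathbf{Str}(\Sigma)$.
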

\begin{proof}
The containment defining a quotient datum says exactly that its
canonical surjection is a homomorphism. Its order expresses
factorization of these surjections. Thus
\cref{tc:prop:quotient-dictionary} applies, and the displayed
isomorphisms follow by the specified transport of relations.
The last assertion uses the coordinatewise relations on products.
\end{proof}

For the reflexive graph categories of \cref{tc:cor:graphs}, the
same formulas hold with the additional requirement that the
quotient relation be reflexive and, in the symmetric case,
symmetric. For the category $\Equiv$ of \cref{tc:cor:equiv},
write $E_X$ for the given equivalence relation on $X$. Quotient
data consist of an arbitrary equivalence relation $R$ on the
underlying set and an equivalence relation $D$ on $X/R$ satisfying
$(q\times q)(E_X)\subseteq D$. In particular, $R$ and $E_X$
serve different roles; no containment between them is required.
Induced subsets, transported quotient relations, and products
stay in these categories, so the proof and all the formulas above
apply without change.

\subsubsection{Matrix conditions and relational Horn theories}
\label{tc:sec:matrix-conditions}
We use the matrix-closure terminology introduced in
\cite{Janelidze2006}; the distinction between common and row-wise
interpretations is recalled in \cite[Section~2.1]{HJ2024}.
For a single designated $n$-ary relation $R$, an unpointed,
variable-only matrix condition has the form
\[
 R(\mathbf c_1)\land\cdots\land R(\mathbf c_m)
 \ \vdash_{\vec x}\ R(\mathbf c_0),
\]
where the $\mathbf c_i$ are columns of variables. Ordinary matrix
closure uses one common assignment of the variables. Strict matrix
closure allows independent assignments in different rows: to express
it by this sequent, first rename variables in distinct rows
independently. The results below concern structures with designated
relations satisfying these sequents, rather than the categorical
condition that every internal relation be matrix-closed.

We allow more generally a set-sized, single-sorted relational
signature $\Sigma$ of positive finite arities and a set $\mathbb H$
of rules
\begin{equation}\label{tc:eq:relational-horn}
 \bigwedge_{j=1}^{m}R_j(\mathbf t_j)
 \ \vdash_{\vec x}\ S(\mathbf t_0),
 \qquad m\geq0,
\end{equation}
where every tuple consists of variables; there are no constants,
operations, or equality atoms. The empty conjunction is $\top$.
Let $\mathbf{Mod}(\mathbb H)$ be the full subcategory of
$\mathbf{Str}(\Sigma)$ on the models, with all relation-preserving
functions as morphisms. The standard relational initial-lift
construction \cite[Example~21.8(2)]{AHS} applies here: initial
structures are obtained by
intersecting inverse images of relations. Such structures still
satisfy every rule: an assignment satisfying all premises satisfies
them in every target, where the conclusion follows. The empty
source gives the full relations. Thus this category is topological
over $\Set$, and its natural factorization system consists of
surjective homomorphisms and induced-substructure embeddings,
by \cref{tc:prop:fs}.

\begin{proposition}\label{tc:prop:horn-inclusion}
Let $\Sigma$ be a set-sized relational signature of positive finite
arities, and let $\mathbb H$ be a set of universally quantified
implications from a finite conjunction of relation atoms to one
relation atom, all arguments being variables. Equip
$\mathbf{Mod}(\mathbb H)$ and $\mathbf{Str}(\Sigma)$ with the
factorization systems of surjective homomorphisms and
induced-substructure embeddings. The inclusion
\[
 J:\mathbf{Mod}(\mathbb H)\longrightarrow\mathbf{Str}(\Sigma)
\]
is a transfer functor if and only if every pushout of two surjective
homomorphisms between models, computed in
$\mathbf{Str}(\Sigma)$, is again a model. When this holds,
$\mathbf{Mod}(\mathbb H)$ has a noetherian form inducing the
displayed factorization system, with complete algebraic modular
fibres.
\end{proposition}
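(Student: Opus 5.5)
The plan is to check the four transfer axioms of \cref{def:transfer-functor} for the inclusion $J$ one by one, and then apply \cref{thm:transfer}. The target $\mathbf{Str}(\Sigma)$ carries the form of \cref{tc:prop:relational}, which induces its system; by \cref{thm:modular-replacement} we may take one with complete algebraic modular fibres.

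Three axioms hold automatically. As recalled before the statement, $\mathbf{Mod}(\mathbb H)$ is topological over $\Set$ and its initial structures are computed as in $\mathbf{Str}(\Sigma)$. So the two factorization systems agree on $\mathbf{Mod}(\mathbb H)$, and $J$ preserves both classes. Since $J$ is full and faithful, it is conservative. For pullbacks of two induced-substructure embeddings, the pullback in $\mathbf{Str}(\Sigma)$ is the intersection with induced relations. This is the initial lift of the single inclusion into the common codomain, so it is a model by the closure of models under initial lifts noted above. It is therefore also the pullback in $\mathbf{Mod}(\mathbb H)$, and $J$ preserves it.

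The remaining axiom concerns pushouts of two surjective homomorphisms $B\leftarrow A\rightarrow C$ between models. These exist in $\mathbf{Mod}(\mathbb H)$, because this category is topological (\cref{tc:prop:fs}). The pushout in $\mathbf{Mod}(\mathbb H)$ has the same underlying set $Z$ as the set pushout, with the smallest model structure containing the images of the relations of $B$ and $C$; this is the final lift. The pushout in $\mathbf{Str}(\Sigma)$ is $Z$ with exactly the union of those image relations.

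\begin{itemize}
\item If the $\mathbf{Str}(\Sigma)$-pushout is already a model, fullness of $J$ makes it the pushout in $\mathbf{Mod}(\mathbb H)$, so $J$ preserves it.
\item Conversely, if $J$ is a transfer functor, it preserves the pushout. Hence the $\mathbf{Str}(\Sigma)$-pushout is isomorphic to the image of a model, and it is a model because the subcategory is replete.
\end{itemize}

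This gives the stated equivalence. When it holds, \cref{thm:transfer} yields the noetherian form $J^*\F$ inducing the displayed system, with the fibres of $\F$.

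I expect the main obstacle to be the existence and description of pushouts inside $\mathbf{Mod}(\mathbb H)$ as final lifts, and in particular checking that the comparison argument compares the right objects. The cleanest route is to avoid the explicit description altogether and argue with fullness. A cocone in $\mathbf{Mod}(\mathbb H)$ is also a cocone in $\mathbf{Str}(\Sigma)$, so whenever the ambient pushout is a model, its universal property restricts to the subcategory.
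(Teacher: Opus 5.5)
Your proposal is correct and follows the paper's proof essentially step by step. Fullness gives conservativity, and pullbacks of induced embeddings are models by closure under initial lifts. The pushout condition is handled by fullness in one direction and repleteness in the other, and the conclusion then follows from \cref{tc:prop:relational}, \cref{thm:transfer} and \cref{thm:modular-replacement}; as you note yourself, the explicit final-lift description of pushouts in $\mathbf{Mod}(\mathbb H)$ is not actually needed.
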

\begin{proof}
Both categories are topological over sets, so the required diagrams
exist. The inclusion is full and hence conservative. It preserves
surjections and induced embeddings. Pullbacks of induced embeddings
are computed by taking the set-theoretic pullback with its induced
relations, which satisfy $\mathbb H$ by the initial-structure
calculation above. Thus only preservation of pushouts of two
surjections remains. An ambient pushout whose vertex is a model
is also a pushout in the full subcategory. Conversely, preservation
of such a pushout identifies its ambient vertex with a model.
The final assertion follows from \cref{tc:prop:relational,thm:transfer}
and \cref{thm:modular-replacement}.
\end{proof}

Here are two syntactic ways of obtaining noetherian forms. They
apply to a whole family of rules; rules justified by different
criteria cannot be combined without checking one of the criteria
for the combined family.

\begin{theorem}[Rules with at most one premise]\label{tc:thm:single-premise}
Let $\Sigma$ be a set-sized relational signature of positive finite
arities. Let $\mathbb H$ be a set of universally quantified rules,
each of one of the forms
\[
 \top\ \vdash_{\vec x}\ S(\mathbf t),
 \qquad
 R(x_1,\ldots,x_n)\ \vdash_{\vec x}\ S(\mathbf t),
\]
where all arguments are variables and, in the second form,
$x_1,\ldots,x_n$ are pairwise distinct. Variables in $\mathbf t$
may repeat, omit premise variables, or include additional variables.
Then the inclusion of the category of $\mathbb H$-models and
relation-preserving functions into $\mathbf{Str}(\Sigma)$ is a
transfer functor for the surjection--induced-embedding systems.
Consequently this model category has a noetherian form inducing
that system, with complete algebraic modular fibres.
\end{theorem}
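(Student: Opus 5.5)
By \cref{tc:prop:horn-inclusion}, it suffices to verify one thing: every pushout in $\mathbf{Str}(\Sigma)$ of two surjective homomorphisms between $\mathbb H$-models is again an $\mathbb H$-model. The noetherian form with complete algebraic modular fibres then follows from that proposition.

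First I describe such a pushout concretely. Let $e:X\to Y$ and $d:X\to Z$ be surjective homomorphisms between models, and form the pushout in $\mathbf{Str}(\Sigma)$. By \cref{tc:prop:fs}, its underlying set is the set pushout $P=UX/\theta$, where $\theta$ is the equivalence join of the two kernels. Call the quotient map $p:X\to P$, and write $r:Y\to P$ and $s:Z\to P$ for the induced maps, so that $p=re=sd$. The pushout is final for the cospan, so each relation $S^P$ is the union of direct images $r^n(S^Y)\cup s^n(S^Z)$. Finality for a colimit sink of relational structures means exactly this union, because a function out of $P$ is a homomorphism iff its composites with $r$ and $s$ are.

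Next I check the rules of the first form, $\top\vdash_{\vec x}S(\mathbf t)$. Take any assignment $\vec x\mapsto P$. Since $r$ is surjective, I lift it along $r$ to an assignment into $Y$. As $Y$ is a model, $S^Y$ contains the lifted tuple $\mathbf t$, and applying $r$ places $\mathbf t$ in $S^P$ under the original assignment. Empty $P$ is vacuous. Lifting through $r$ coordinatewise is allowed because the variables are independent.

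Now the rules of the second form, $R(x_1,\ldots,x_n)\vdash_{\vec x}S(\mathbf t)$, with the $x_i$ distinct. Suppose an assignment sends $(x_1,\ldots,x_n)$ into $R^P$. By the union description, the resulting tuple equals $r^n(\mathbf y)$ for some $\mathbf y\in R^Y$, or the analogous statement holds with $Z$; take the $Y$ case. Since the $x_i$ are pairwise distinct, I define an assignment into $Y$ by $x_i\mapsto y_i$. Any additional variables of $\vec x$ occurring only in $\mathbf t$ I send to arbitrary $r$-preimages of their values, using surjectivity of $r$. This new assignment lifts the original one along $r$. It satisfies the premise in $Y$, so $\mathbf t\in S^Y$ under it, and applying $r$ yields the conclusion in $P$. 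Repeated or omitted variables in $\mathbf t$ cause no trouble, because the conclusion is just the image of a valid tuple.

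The step I expect to be the main obstacle is the one just described, in which distinctness of the premise variables is used. It is exactly what makes a single premise witness tuple in $Y$ or $Z$ define a consistent lifted assignment. With repeated variables, or with two premises whose witnesses lie in different factors, the lift could fail, which is why those cases are excluded. I would also confirm the union description of pushout relations against \cref{tc:prop:fs} before relying on it.
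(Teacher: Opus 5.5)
Your proposal is correct and follows the paper's proof essentially step for step. You reduce to \cref{tc:prop:horn-inclusion} and describe the ambient pushout relations as unions of direct images. You then lift the whole assignment for premise-free rules, and for single-premise rules you lift the witness tuple via distinctness of the premise variables and the remaining variables via surjectivity.
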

\begin{proof}
Consider an ambient pushout of surjections $A\to B$ and $A\to C$.
Its underlying set $P$ is the pushout of sets; both maps $B\to P$
and $C\to P$ are surjective. For each symbol, its relation on $P$
is the union of the direct images of the relations on $B$ and $C$.
Suppose an assignment in $P$ satisfies a premise
$R(x_1,\ldots,x_n)$. This tuple comes from, say, an $R$-tuple
in $B$. Since the premise variables are distinct, those entries
specify a lift of their assignment to $B$. Lift all remaining
variables using the surjection $B\to P$. The conclusion holds
in $B$, hence in $P$. For a premise-free rule, lift the entire
assignment through either surjection. Thus $P$ satisfies every
rule. Apply \cref{tc:prop:horn-inclusion}.
\end{proof}

In particular, this covers coordinate-permutation rules such as
symmetry and cyclic invariance, together with premise-free rules
such as reflexivity. Repeated premise variables are excluded
because a tuple in a pushout can acquire equal coordinates without
having a lift with the same equalities.

\begin{theorem}[Premise variables retained in the conclusion]\label{tc:thm:horn-tests}
Let $\Sigma$ be a set-sized relational signature of positive finite
arities, and let $\mathbb H$ be a set of universally quantified
rules
\[
 \bigwedge_{j=1}^{m}R_j(\mathbf t_j)
 \ \vdash_{\vec x}\ S(\mathbf t_0),\qquad m\geq0,
\]
whose arguments are variables. Suppose that, in each rule, every
variable occurring in a premise occurs in the conclusion tuple
$\mathbf t_0$. Then the category $\mathbf{Mod}(\mathbb H)$ of
set-based models and relation-preserving functions has a noetherian
form inducing surjective homomorphisms and induced-substructure
embeddings, with complete algebraic modular fibres.
\end{theorem}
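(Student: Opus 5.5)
The plan is to apply \cref{tc:thm:form} directly to the topological functor $U:\mathbf{Mod}(\mathbb H)\to\Set$. We already know this functor is topological, with initial structures given by intersecting inverse images of relations. So the task is to exhibit a set-indexed family of injective initial tests. I would not go through \cref{tc:prop:horn-inclusion}. With several premises, a tuple in an ambient pushout can draw its premise tuples from different sides, and lifting an assignment then seems hopeless. The point of the hypothesis that premise variables survive in the conclusion is instead to make the test objects models.

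\emph{The tests.} Fix a symbol $P$ of arity $n$ and an equality pattern on $n$ coordinates, realised by a tuple $t$ over $\{1,\dots,k\}$ that uses all $k$ values. Let $Q_{P,t}$ have underlying set $\{1,\dots,k,*\}$, where $*$ is a fresh point. Give it the relations of the least $\mathbb H$-model on this set containing every tuple, of every symbol, with at least one coordinate equal to $*$. This closure exists because $\mathbb H$-models are closed under intersections. The key syntactic observation is the following. Suppose a rule fires with a $*$-free conclusion tuple. Every premise variable occurs in the conclusion, so all premise tuples are $*$-free as well. By induction on derivations, the $*$-free part of $Q_{P,t}$ is therefore exactly the least $\mathbb H$-model on the set $\{1,\dots,k\}$ with no given tuples, which is generated only by premise-free rules. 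This least model maps homomorphically into every model along any function, since there is nothing to preserve initially. Consequently, if $t$ lies in it, then every $P$-pattern tuple of shape $t$ in any model already holds. The family indexed by pairs $(P,t)$ is a set, since $\Sigma$ is a set and there are finitely many patterns per arity. Every $Q_{P,t}$ is nonempty.

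\emph{Initiality.} Let $h:UZ\to UX$ be such that every composite with a test map $X\to Q_{P,t}$ is a homomorphism. Suppose $(z_1,\dots,z_n)\in P^Z$ but $(hz_1,\dots,hz_n)\notin P^X$. Let $t$ be the pattern of $(hz_i)$. Send the entries $hz_i$ to the corresponding values of $t$, and everything else to $*$. This map is a homomorphism $X\to Q_{P,t}$: a tuple of $X$ lands on a $*$-free tuple only if all of its entries are among the $hz_i$, and then the homomorphism from the least model, applied via $i\mapsto$ the corresponding entry, shows that its image lies in $Q_{P,t}$ whenever needed. Here I would check carefully that the only potentially bad image is $t$ itself, arising from the tuple $(hz_i)\notin P^X$. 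The image $t$ is not in $Q_{P,t}$, because otherwise, by the least-model remark above, $(hz_i)\in P^X$. Composing with $h$ then fails to be a homomorphism, a contradiction.

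\emph{Injectivity and conclusion.} Given an induced embedding $A\subseteq B$ and a homomorphism $a:A\to Q_{P,t}$, extend $a$ by $*$ off $A$. A tuple of $B$ with some coordinate outside $A$ goes to a $*$-tuple, which lies in $Q_{P,t}$. A tuple of $B$ with all coordinates in $A$ lies in the relation of $B$ exactly when it lies in that of $A$, because the embedding is induced. Hence the extension is a homomorphism. Now \cref{tc:thm:form} gives the noetherian form with complete algebraic modular fibres, inducing surjections and initial injections, and the latter are the induced-substructure embeddings. I expect the main obstacle to be the initiality verification: one must confirm that the closure defining $Q_{P,t}$ adds no $*$-free tuples beyond those forced by premise-free rules, which is precisely where the retained-variable hypothesis is used.
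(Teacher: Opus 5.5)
\textbf{The initiality step fails, because of how you chose the tests.} The $*$-free part of $Q_{P,t}$ is only the least $\mathbb H$-model on $\{1,\dots,k\}$. So the map $a:X\to Q_{P,t}$ that sends the $hz_i$ to $1,\dots,k$ and everything else to $*$ is a homomorphism only if every tuple of $X$ supported on $\{hz_1,\dots,hz_n\}$ lands in that least model. Your appeal to the homomorphism from the least model into $X$ runs in the wrong direction. It shows that tuples forced by the theory are present in $X$. It does not show that the tuples of $X$ on these points are present in $Q_{P,t}$.

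In general they are not, and then the family is not initial at all. Take $\mathbb H=\varnothing$, a single binary symbol $P$, and $X=\{u,v\}$ with $P^X=\{(u,u)\}$. Both tests $Q_{P,(1,1)}$ and $Q_{P,(1,2)}$ have as $P$ only the pairs containing $*$. The loop therefore forces every homomorphism $a$ from $X$ to a test to send $u$ to $*$, so $(au,av)$ lies in $P$ of the test for every test map $a$. Now let $Z=\{u,v\}$ with $P^Z=\{(u,u),(u,v)\}$. The identity function $Z\to X$ becomes a homomorphism after composition with every test map, yet it is not a homomorphism. Your key syntactic lemma about the $*$-free part is correct, but it is not what detection needs.

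\textbf{The repair is to put the full induced structure on the $*$-free part, as the paper does.} For each finite model $A$, taking one representative per isomorphism class (a set, since $\Sigma$ is set-sized), let
\[
A^+=A\amalg\{*\},\qquad R^{A^+}=R^A\cup\{\text{tuples with some coordinate equal to }*\}.
\]
Your retained-variable observation shows directly that $A^+$ is already a model, with no closure needed. If a conclusion tuple avoids $*$, then every premise variable takes a value in $A$, and the rule holds there.

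Your extension-by-$*$ argument gives embedding-injectivity unchanged. For detection, suppose $(x_1,\dots,x_n)\notin P^X$. Let $A$ be the induced substructure of $X$ on $\{x_1,\dots,x_n\}$, which is a model because induced substructures satisfy the rules. The map $X\to A^+$ that is the identity on $A$ and sends everything else to $*$ is a homomorphism, precisely because $A$ carries all tuples of $X$ on those points. It still omits the missing tuple, since that tuple is $*$-free and not in $P^A$.

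Equivalently, in your closure formulation, index the tests by finite models $A$ and close $R^A$ together with the $*$-tuples. Your induction then shows that the $*$-free part is $A$ itself.
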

\begin{proof}
The category is topological by the initial-structure calculation
above. For every finite model $A$, form $A^+=A\amalg\{*\}$,
interpreting each $k$-ary symbol $R$ by
\[
 R^{A^+}=R^A\ \cup\
 \{(a_1,\ldots,a_k)\in(A^+)^k:
                         a_i=*\text{ for some }i\}.
\]
This is a model. Indeed, if the conclusion tuple of a rule contains
$*$, its conclusion is true by definition. Otherwise every variable
in a premise takes its value in $A$, by the stated variable
condition. The premises and conclusion can therefore be evaluated
in $A$, where the rule holds. Unused context variables may be
discarded.

Every $A^+$ is injective with respect to induced embeddings:
extend a homomorphism defined on an induced substructure by
assigning $*$ to every remaining element. A relation tuple lying
entirely in the substructure is preserved by the given map, and
any other tuple maps to one containing $*$.

Choose one representative of every finite model up to isomorphism;
these form a set since $\Sigma$ is set-sized. The corresponding
$A^+$ initially determine all models. To see this, let
$(x_1,\ldots,x_k)$ be a missing $R$-tuple in a model $X$, and
let $A$ be the induced substructure on its finitely many entries.
Induced substructures satisfy the rules. The map $X\to A^+$
that is the identity on $A$ and sends every other element to $*$
is a homomorphism and still detects that missing tuple. Thus every
relation is recovered from the inverse images along these maps.
All tests are nonempty, and the full two-point structure can be
adjoined as an additional injective test. Apply
\cref{tc:thm:form}.
\end{proof}

The second criterion allows several premises and repeated premise
variables, but it does not require the inclusion into unrestricted
structures to preserve the relevant pushouts. Its transfer functor
is supplied instead by the injective-test construction. Neither
criterion is necessary: transitivity satisfies neither, whereas
preorders have the form of \cref{tc:cor:preord}.

For a strict row-wise matrix with at least one premise column,
the variable condition in \cref{tc:thm:horn-tests}, after renaming
variables independently by row, forces every premise entry in a
row to equal that row's conclusion entry. Such a matrix rule is
tautological. Thus the nontrivial applications of this particular
criterion to a single matrix concern common-variable closure;
it does not establish the general strict row-wise case.

The quotient-closure criterion of \cref{prop:closed-subcategory}
cannot replace these arguments for general matrix rules. For a
single relation and a family of rules each having at least one
premise, $(X,\varnothing)$ is always a model, and the identity
function gives a surjective homomorphism
\[
 (X,\varnothing)\longrightarrow(X,R)
\]
for every relation $R$. Closure under all ambient surjective
homomorphic images would therefore force every relation to be a
model. The homogeneous-pushout condition in
\cref{tc:prop:horn-inclusion} is weaker.

\subsubsection{Positive disjunctive extensions}
Once a relational model category has a noetherian form with the
natural factorization system, it admits a further class of
geometric extensions, parallel to the algebraic case.

\begin{proposition}\label{tc:prop:positive-relational}
Let $\Sigma$ be a set-sized relational signature of positive finite
arities. Let $\mathcal C$ be a full replete subcategory of
$\mathbf{Str}(\Sigma)$ with a noetherian form whose quotients
are exactly the surjective homomorphisms and whose embeddings
are exactly the induced-substructure embeddings in $\mathcal C$.
Let $\mathbb G$ be a set of geometric axioms
\[
 \top\ \vdash_{\vec x}\
 \bigvee_{i\in I}\ \bigwedge_{j=1}^{n_i}\alpha_{ij}(\vec x),
\]
where each context is finite, $I$ is a set, each $n_i$ is finite,
and each $\alpha_{ij}$ is a relation atom or an equality between
variables. Then the full subcategory $\mathcal C_{\mathbb G}$
of objects satisfying $\mathbb G$ inherits the given noetherian
form, with the restricted quotient and embedding classes. In
particular, it also admits such a form with complete algebraic
modular fibres.
\end{proposition}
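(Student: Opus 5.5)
The plan is to apply \cref{prop:closed-subcategory} to the full replete subcategory $\mathcal C_{\mathbb G}\subseteq\mathcal C$, with the given noetherian form on $\mathcal C$. Under that proposition's hypotheses, the restricted classes form a proper factorization system, the inclusion is a transfer functor, and the pulled-back form is noetherian with the restricted classes. Closure conditions (i) and (ii) of that proposition therefore suffice. These say: if $m:A\to X$ is an induced-substructure embedding in $\mathcal C$ with $X\models\mathbb G$, then $A\models\mathbb G$; and if $e:X\to B$ is a surjective homomorphism in $\mathcal C$ with $X\models\mathbb G$, then $B\models\mathbb G$. Repleteness of $\mathcal C_{\mathbb G}$ is immediate, because satisfaction of $\mathbb G$ is invariant under isomorphism of structures.

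For subobjects, I follow the argument of \cref{thm:geometric-equational}. Take an assignment $\vec a$ in $A$ of the finite context. Its image $m(\vec a)$ in $X$ satisfies some disjunct $\bigwedge_j\alpha_{ij}$. Each atom then transfers back to $A$:
\begin{itemize}
\item a relation atom holds in $A$ because $m$ reflects relations, being an induced embedding;
\item an equality of variables holds in $A$ because $m$ is injective.
\end{itemize}
So the same disjunct holds at $\vec a$ in $A$.

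For quotients, take an assignment $\vec b$ in $B$. Since the context is finite and $e$ is surjective, lift each component to obtain $\vec a$ in $X$ with $e(\vec a)=\vec b$. Some disjunct holds at $\vec a$ in $X$. Its atoms are preserved by $e$: relation atoms because $e$ is a homomorphism, and equalities trivially. Hence that disjunct holds at $\vec b$ in $B$.

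The modular-fibre assertion then follows from \cref{thm:modular-replacement} applied to the inherited form, since that theorem keeps the factorization classes unchanged. Alternatively, if the form on $\mathcal C$ already has complete algebraic modular fibres, the last sentence of \cref{prop:closed-subcategory} gives this directly.

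I expect no serious obstacle. The one point to watch is that the axioms are positive: they contain no negated atoms and no existential quantifiers. This is exactly why they are both reflected along induced embeddings and preserved along surjections. Finiteness of the context is what makes the lifting of an assignment through $e$ possible. I would also note that $I$ may be empty, in which case the axiom forces an empty model. The argument above covers that case vacuously.
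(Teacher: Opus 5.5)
Your proposal is correct and is essentially the paper's own proof. The paper also checks closure of $\mathcal C_{\mathbb G}$ under induced-substructure embeddings (atoms are reflected) and under surjective homomorphisms (lift the finite assignment, atoms are preserved), and then applies \cref{prop:closed-subcategory} and \cref{thm:modular-replacement}. Your remarks on repleteness and on why positivity and finite contexts matter make explicit what the paper leaves implicit. One small correction to your aside: with $I=\varnothing$ and an empty context, the axiom excludes every structure rather than forcing an empty model, but your vacuous argument still covers that case.
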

\begin{proof}
Atomic formulas are reflected by induced-substructure embeddings.
Thus an assignment in an induced subobject satisfies the same
disjunct that it satisfies in the ambient object. Along a
surjective homomorphism, lift a finite assignment, choose a
satisfied disjunct upstairs, and use preservation of atomic
formulas to obtain it downstairs. Hence
$\mathcal C_{\mathbb G}$ is closed under the specified subobjects
and quotients inside $\mathcal C$. Apply
\cref{prop:closed-subcategory,thm:modular-replacement}.
\end{proof}

\begin{corollary}\label{tc:cor:total-preorders}
Let $\mathbf{TotPreord}$ be the category of sets equipped with a
reflexive, transitive, total relation and monotone functions;
the empty set is allowed. It has a noetherian form whose quotients
are the surjective monotone maps and whose embeddings are the
injective maps preserving and reflecting the relation. The fibres
can be chosen complete, algebraic and modular.
\end{corollary}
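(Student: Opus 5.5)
The plan is to apply \cref{tc:prop:positive-relational} with $\mathcal C=\mathbf{Preord}$, regarded as a full replete subcategory of $\mathbf{Str}(\Sigma)$ for the signature $\Sigma$ with a single binary symbol $\le$. The extending axiom set will be $\mathbb G=\{\top\vdash_{x,y} x\le y\vee y\le x\}$.

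First I check that $\mathbf{Preord}$ sits inside $\mathbf{Str}(\Sigma)$ in the required way. Its objects are exactly the $\Sigma$-structures satisfying reflexivity and transitivity. Its morphisms are the relation-preserving functions, so it is a full subcategory, and it is clearly replete.

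Next I need a noetherian form on $\mathbf{Preord}$ with the right classes. By \cref{tc:cor:preord}, there is such a form whose quotients are the surjective monotone maps and whose embeddings are the order-reflecting monotone injections. These are exactly the surjective homomorphisms and the induced-substructure embeddings of $\Sigma$-structures lying in $\mathbf{Preord}$, which is the hypothesis of \cref{tc:prop:positive-relational}.

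Then I check the shape of the totality axiom. It has a finite context $\{x,y\}$ and an index set with two elements. Each disjunct is a single relation atom, so the axiom has the form required in \cref{tc:prop:positive-relational}.

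\Cref{tc:prop:positive-relational} then shows that $\mathcal C_{\mathbb G}=\mathbf{TotPreord}$ inherits the form, with the restricted classes. These are the surjective monotone maps and the injective maps preserving and reflecting the relation. The same proposition supplies complete algebraic modular fibres, via \cref{thm:modular-replacement}.

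The empty set is a model, since the axiom is vacuous on it, so it is allowed. I expect no real obstacle here. The only point needing care is that the classes of the inherited form are the restrictions to $\mathbf{TotPreord}$ of the classes on $\mathbf{Preord}$, and \cref{prop:closed-subcategory} guarantees this.
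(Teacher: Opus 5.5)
Your proposal is correct and matches the paper's proof: apply \cref{tc:prop:positive-relational} to $\mathcal C=\mathbf{Preord}$, using the form of \cref{tc:cor:preord} and the single totality axiom $\top\vdash_{x,y}(x\leq y)\lor(y\leq x)$. The paper leaves implicit the hypothesis checks you spell out (fullness, repleteness, the identification of the two classes, and the shape of the axiom).
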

\begin{proof}
Apply \cref{tc:prop:positive-relational} to
$\mathcal C=\mathbf{Preord}$, using \cref{tc:cor:preord},
and the single axiom
$\top\vdash_{x,y}(x\leq y)\lor(y\leq x)$.
\end{proof}

\subsection{Extended pseudometric spaces}\label{tc:sec:metrics}
Let $\mathbf{PMet}_\infty$ denote the category of sets equipped with
symmetric distances in $[0,\infty]$ satisfying the triangle inequality
and $d(x,x)=0$, with nonexpansive maps. Distinct points may have
distance zero. The initial-structure formula in the symmetric case of
\cref{tc:thm:quantale}, specialized to
$V=([0,\infty],\geq,+,0)$, gives for a source $(f_j:S\to UX_j)_j$
\begin{equation}\label{tc:eq:initial-metric}
d(x,y)=\sup_j d_j(f_j(x),f_j(y)),\qquad \sup\varnothing=0.
\end{equation}
The same specialization identifies the initial embeddings as isometric
injections. These are the pseudometric conventions of
\cite[Example~8.2(4), footnote~44, and Example~10.42(5)]{AHS}.

The extension formula below is the McShane formula
\cite{McShane1934}; its extended-valued version is a specialization
of \eqref{tc:eq:quantale-extension}. The existence result follows
from the same quantale theorem.

\begin{proposition}\label{tc:prop:pmetric}
Let $\mathbf{PMet}_\infty$ be the category of sets $X$ with symmetric
functions $d:X\times X\to[0,\infty]$ satisfying $d(x,x)=0$ and
$d(x,z)\leq d(x,y)+d(y,z)$. Morphisms $f:(X,d)\to(Y,e)$ are
nonexpansive functions, so $e(fx,fy)\leq d(x,y)$; distinct points
may have distance zero, and infinite distances are allowed. Then
$\mathbf{PMet}_\infty$ has a noetherian form inducing surjective
nonexpansive maps and isometric injections.
\end{proposition}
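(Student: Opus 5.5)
The plan is to recognize $\mathbf{PMet}_\infty$ as the symmetric case of \cref{tc:thm:quantale} for the Lawvere quantale $V=([0,\infty],\geq,+,0)$, and then translate the conclusions into metric language. In this quantale the order is reversed, so the lattice meet $\bigwedge$ is $\sup$ and the unit is $k=0$. The axiom $k\leq a(x,x)$ in $V$ reads $d(x,x)\leq 0$, that is, $d(x,x)=0$. The composition axiom $a(x,y)\otimes a(y,z)\leq a(x,z)$ reads $d(x,y)+d(y,z)\geq d(x,z)$, which is the triangle inequality. Symmetric $V$-categories are therefore exactly the objects of $\mathbf{PMet}_\infty$, including infinite distances and distinct points at distance zero. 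A $V$-functor satisfies $a(x,y)\leq b(fx,fy)$ in $V$, which reads $d(x,y)\geq e(fx,fy)$; so the $V$-functors are precisely the nonexpansive maps. Hence $\mathbf{PMet}_\infty$ is isomorphic, over $\Set$, to the full subcategory of symmetric objects of $V\text{-}\mathbf{Cat}$.

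I will check that $V$ meets the hypotheses of \cref{tc:thm:quantale}. It is a set-sized complete lattice. Addition on $[0,\infty]$, with $u+\infty=\infty$, is associative and commutative and has unit $0$. Preservation of arbitrary joins in $V$ means $u+\inf_j v_j=\inf_j(u+v_j)$ in the usual order. This holds, including for the empty join: the empty $V$-join is the $V$-bottom $\infty$, and $u+\infty=\infty$. This quantale already appears in the first row of \cref{tc:cor:quantale-examples}, so I can simply cite that table entry together with its symmetric case.

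The theorem then yields a noetherian form whose quotients are the surjective morphisms and whose embeddings are the injective fully faithful maps. Fully faithful means $a(x,y)=b(fx,fy)$, which here reads $e(fx,fy)=d(x,y)$, so these maps are exactly the isometric injections. This matches \eqref{tc:eq:initial-metric}.

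The only step needing care is the bookkeeping of the reversed order and the treatment of $\infty$ in the join-preservation check. For the reader, I may also record the symmetric test explicitly. The residual is $u\Rightarrow v=\max(v-u,0)$, with the conventions $\infty\Rightarrow v=0$ and $u\Rightarrow\infty=\infty$ for finite $u$. The symmetric test $q_{\mathrm s}$ is then $|u-v|$ with those conventions. The extension formula \eqref{tc:eq:quantale-extension} becomes the McShane formula
\[
\overline f(x)=\inf_{u\in A}\bigl(f(u)+d(u,x)\bigr),
\]
with $\inf\varnothing=\infty$. This remark is not logically needed, since the quantale theorem already covers it.
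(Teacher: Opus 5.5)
Your proposal is correct and follows the paper's own proof: you apply the symmetric case of \cref{tc:thm:quantale} to the quantale $V=([0,\infty],\geq,+,0)$. As in the paper, you then identify symmetric $V$-categories, $V$-functors, and injective fully faithful maps with extended pseudometric spaces, nonexpansive maps, and isometric injections. Your extra checks are also correct: join preservation including $\infty$, the residual $u\Rightarrow v$, the symmetric test (which agrees with the paper's $\delta$), and the McShane extension formula.
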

\begin{proof}
Apply the symmetric case of \cref{tc:thm:quantale} to
$V=([0,\infty],\geq,+,0)$. Its objects and morphisms are exactly
the stated extended pseudometric spaces and nonexpansive maps,
and injective fully faithful maps are isometric injections.
\end{proof}

For this specialization, the symmetric residual test in the proof of
\cref{tc:thm:quantale} is $Q=[0,\infty]$ with distance
\[
\delta(s,t)=
\begin{cases}
|s-t|,&s,t<\infty,\\
0,&s=t=\infty,\\
\infty,&\text{exactly one of }s,t\text{ is }\infty.
\end{cases}
\]
The initial-test formula becomes
\begin{equation}\label{tc:eq:metric-tests}
d(x,y)=\sup_{a:X\to Q}\delta(a(x),a(y)),
\end{equation}
where the supremum ranges over nonexpansive maps. For an isometric
inclusion $A\subseteq X$ and a nonexpansive map $h:A\to Q$, the
extension formula becomes
\begin{equation}\label{tc:eq:metric-extension}
\overline h(x)=\inf_{a\in A}\bigl(h(a)+d(x,a)\bigr),
\qquad\inf\varnothing=\infty.
\end{equation}
Thus both formulas, including their infinite-value and empty-domain
conventions, are supplied by the general construction.

Both the value $\infty$ and zero distances between distinct points
belong to this example. Restricting to separated or finite-valued
metric spaces does not give the same topological category over $\Set$.

The metric quotient data retain the entire target distance.
Consequently the relevant quotients include surjective contractions
that strictly decrease distances, as required by
\cref{tc:prop:pmetric}.

\begin{corollary}\label{tc:cor:pmetric-calculus}
Let $\mathbf{PMet}_\infty$ be the category of extended pseudometric
spaces and nonexpansive maps, with symmetric distances in
$[0,\infty]$, zero diagonal, and no separation condition. Let
$\mathcal F$ be a noetherian form inducing surjective nonexpansive
maps and isometric injections. For an object $(X,d_X)$, a quotient
datum is $\rho=(R_\rho,d_\rho)$, where $R_\rho$ is an equivalence
relation on $X$ and $d_\rho$ is an extended pseudometric on
$X/R_\rho$ such that
\[
 d_\rho([x]_\rho,[y]_\rho)\leq d_X(x,y)
 \qquad(x,y\in X).
\]
Write $q_\rho:X\to X/\rho=(X/R_\rho,d_\rho)$ for the
canonical surjection. Order these data by
\[
 \rho\leq\sigma\ \Longleftrightarrow\
 R_\rho\subseteq R_\sigma\ \text{ and }\
 d_\sigma([x]_\sigma,[y]_\sigma)
 \leq d_\rho([x]_\rho,[y]_\rho)\quad(x,y\in X).
\]
Then $\rho\mapsto\Ker_{\mathcal F}q_\rho$ identifies this ordered
set with the normal clusters on $X$, and the conormal clusters
are all subsets with the distances restricted from $X$.

For a nonexpansive map $f:X\to Y$, define $R_f$ by equality
of images and put
$d_{\rho_f}([x],[y])=d_Y(f(x),f(y))$.
For a quotient datum $\rho$ and a subset $A\subseteq X$, define
$\rho|_A$ by restricting $R_\rho$ and using the distance
$d_{\rho|_A}([a],[b])=d_\rho([a]_\rho,[b]_\rho)$.
For $\rho\leq\sigma$, define $\sigma/\rho$ on $X/\rho$ by
identifying $R_\rho$-classes with the same $R_\sigma$-class and
transporting $d_\sigma$ to the resulting quotient set. Then the
canonical bijections are isometries:
\[
 X/\rho_f\cong f(X),\qquad
 A/(\rho|_A)\cong q_\rho(A),\qquad
 (X/\rho)/(\sigma/\rho)\cong X/\sigma.
\]
The two images have their restricted target distances, and
$\sigma\mapsto\sigma/\rho$ identifies quotient data on $X$
above $\rho$ with quotient data on $X/\rho$.

For a set-indexed family of surjective nonexpansive maps $f_i:X\to Y_i$,
the joint map into $\prod_iY_i$ is an isometric injection if and
only if the family jointly separates points and
\[
 d_X(x,y)=\sup_i d_{Y_i}(f_i(x),f_i(y))
 \qquad(x,y\in X),\qquad\sup\varnothing=0.
\]
\end{corollary}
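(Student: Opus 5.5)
The plan is to specialize the general quotient dictionary \cref{tc:prop:quotient-dictionary} to $\mathbf{PMet}_\infty$. We use the topological structure recorded before \cref{tc:prop:pmetric}. Initial structures are given by \eqref{tc:eq:initial-metric}. Initial embeddings are isometric injections, and the initial structure on a subset $A\subseteq X$ is the restricted distance. So the only work is to translate weak quotient data, their order, and the three canonical constructions into the stated metric formulas. No new transfer argument is needed: the form $\mathcal F$ is given, and \cref{tc:prop:pmetric} guarantees that one exists.

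\emph{Quotient data and normal clusters.} A weak quotient datum in the sense of \cref{tc:prop:quotient-dictionary} is a pair $(R,Q)$ with $Q$ an extended pseudometric on $X/R$ for which the class map is a morphism. Nonexpansiveness of $q_\rho$ is exactly $d_\rho([x],[y])\le d_X(x,y)$, so these are precisely the displayed quotient data. For the order, I will show that $q_\sigma=rq_\rho$ for a nonexpansive $r$ exactly when the displayed condition holds. A set map $r$ with $rq_\rho=q_\sigma$ exists iff $R_\rho\subseteq R_\sigma$, and it is then unique, since $q_\rho$ is surjective. Its nonexpansiveness, tested on classes $[x]_\rho,[y]_\rho$, is literally the inequality $d_\sigma([x]_\sigma,[y]_\sigma)\le d_\rho([x]_\rho,[y]_\rho)$. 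Part (i) of the dictionary then identifies the ordered set of data with $\Nrm_{\mathcal F}(X)$ and subsets with $\Cnm_{\mathcal F}(X)$, the latter carrying restricted distances.

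\emph{Isomorphism theorems.} Each isomorphism comes from parts (ii)--(iv) of \cref{tc:prop:quotient-dictionary} once the abstract datum is checked to coincide with the displayed one.
\begin{itemize}
\item For $f$, the datum $\rho_f$ of part (ii) uses $R_f$. Its distance is transported from $Y|_{f(X)}$, namely the restricted distance $d_Y$, along $[x]\mapsto f(x)$. This gives $d_{\rho_f}([x],[y])=d_Y(fx,fy)$, and the isometry $X/\rho_f\cong f(X)$ follows.
\item For $\rho|_A=\rho_{q_\rho i_A}$, the same rule gives relation $R_\rho\cap A^2$ and distance $d_\rho([a]_\rho,[b]_\rho)$, which yields $A/(\rho|_A)\cong q_\rho(A)$.
\item For $\rho\le\sigma$, with $r$ as above, $\sigma/\rho=\rho_r$ identifies $R_\rho$-classes having the same $R_\sigma$-class and carries $d_\sigma$ transported along $r$. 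This gives $(X/\rho)/(\sigma/\rho)\cong X/\sigma$ together with the interval correspondence of part (iv).
\end{itemize}
The one point requiring care is well-definedness: the transported distances are well defined because $r$ and $f$ are constant on the relevant classes.

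\emph{Subdirect criterion.} Apply part (v). Joint injectivity of the map into $\prod_iY_i$ is joint separation of points. The product in $\mathbf{PMet}_\infty$ carries the initial structure \eqref{tc:eq:initial-metric}, the supremum of the coordinate distances with $\sup\varnothing=0$. So initiality of the source $(f_i)$ means $d_X(x,y)=\sup_i d_{Y_i}(f_ix,f_iy)$. Here each $f_i$ is first identified with the class map of $\rho_{f_i}$ via part (ii).

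I expect the main obstacle to be bookkeeping rather than mathematics. One must be careful that the "identify structures up to isomorphism" convention in \cref{tc:prop:quotient-dictionary} matches the concrete data, so that a surjective $f_i$ is replaced by the isomorphic $q_{\rho_{f_i}}$. One must also check that the empty family and infinite distances cause no issues. The convention $\sup\varnothing=0$ gives the indiscrete zero pseudometric as the empty product structure, consistent with the empty product being a point.
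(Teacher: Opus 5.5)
Your proposal is correct and follows the paper's own argument. Like the paper, you note that the two defining inequalities are exactly nonexpansiveness of $q_\rho$ and of the comparison $X/\rho\to X/\sigma$, apply \cref{tc:prop:quotient-dictionary}, and use that the product distance is the supremum of the coordinate distances; you simply spell out the identification of $\rho_f$, $\rho|_A$ and $\sigma/\rho$ with the abstract data of parts (ii)--(iv), which the paper leaves implicit.
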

\begin{proof}
The defining inequalities express nonexpansiveness of $q_\rho$
and of the comparison $X/\rho\to X/\sigma$, respectively.
Apply \cref{tc:prop:quotient-dictionary}. The three isometries
preserve distances by their definitions, and the product distance
is the supremum of the coordinate distances.
\end{proof}

The equivalence relation $R_\rho$ records equality of quotient
points, whereas $d_\rho$ may assign zero distance to distinct
points. Thus these formulas neither impose metric separation nor
require the distance on a quotient to be the largest distance
making its quotient map nonexpansive.
For example, let $X=\{a,b\}$ have $d_X(a,b)=2$, and give the
same set the distance $d_\rho(a,b)=1$. The identity function is
a surjective nonexpansive map and has quotient datum
$(\Delta_X,d_\rho)$, distinct from $(\Delta_X,d_X)$. These give
distinct normal clusters although their underlying equivalence
relations are identical.
\subsection{Further concrete examples}\label{tc:sec:further-examples}

\subsubsection{Pretopological spaces}\label{tc:sec:pretopological}
A pretopological space, equivalently a \v{C}ech closure space, has an
extensive operator $c:\Pow(X)\to\Pow(X)$ with
$c(\varnothing)=\varnothing$ and
$c(A\cup B)=c(A)\cup c(B)$; idempotence is not required.
Continuity means $f(c_X A)\subseteq c_Y(fA)$. These spaces form a
topological category, as is also seen from their description by
vicinity filters. For an induced subspace $A\subseteq X$ one has
$c_A(B)=A\cap c_X(B)$ for $B\subseteq A$.

The three-point test used here is Bourdaud's space: its injectivity
and initial determination are proved in
\cite[Propositions~8.2--8.3]{Heckmann2006}. Thus our construction
has the following consequence.

\begin{corollary}\label{tc:cor:pretopological}
Let $\mathbf{PrTop}$ have as objects sets $X$ with extensive operators
$c_X:\Pow(X)\to\Pow(X)$ satisfying $c_X(\varnothing)=\varnothing$
and $c_X(A\cup B)=c_X(A)\cup c_X(B)$; idempotence is not required.
Its morphisms are continuous functions, meaning
$f(c_X(A))\subseteq c_Y(f(A))$ for every $A\subseteq X$.
Then this category of pretopological spaces admits a noetherian form
inducing surjective continuous maps and induced-subspace embeddings,
where a subset $A\subseteq X$ has closure
$c_A(B)=A\cap c_X(B)$ for $B\subseteq A$.
\end{corollary}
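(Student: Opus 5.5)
The plan is to apply \cref{tc:thm:form} to the forgetful functor $U:\mathbf{PrTop}\to\Set$ with the single three-point test $Q$ of Bourdaud. The argument has four steps, taken in order.

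\emph{Step 1: topologicity.} I would first record that $U$ is topological. Given a source $(f_j:S\to UX_j)_j$, the initial closure is
\[
 c(A)=\{s\in S: f_j(s)\in c_{X_j}(f_jA)\text{ for all }j\},
\]
with $c=\id$ on all subsets in the empty case. I would check that:
\begin{itemize}
\item $c$ is extensive and $c(\varnothing)=\varnothing$;
\item $c$ preserves binary unions;
\item a function $h:UZ\to S$ is continuous exactly when every $f_jh$ is.
\end{itemize}
The union axiom is the delicate point. It fails for an arbitrary intersection of finitely additive operators in general. So rather than verify it directly, I would invoke the vicinity-filter description mentioned before the statement. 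There the initial structure is obtained by taking, at each point $s$, the filter generated by the inverse images $f_j^{-1}(N)$ of vicinities $N$ of the points $f_j(s)$. This is the standard pretopological initial lift, and the closure above is recovered from it.

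\emph{Step 2: the factorization classes.} For a one-map source $A\hookrightarrow X$ the initial closure is $c_A(B)=A\cap c_X(B)$. So \cref{tc:prop:fs} identifies $\M$ with the induced-subspace embeddings of the statement, and $\E$ with the continuous surjections.

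\emph{Step 3: the test family.} This step needs nonemptiness, initial determination and $\M$-injectivity of $Q$. Nonemptiness is clear, and $UQ$ has three points, so no indiscrete test need be adjoined. Initial determination and injectivity with respect to induced embeddings are exactly \cite[Propositions~8.2--8.3]{Heckmann2006}; I would cite them rather than reprove them. The one thing to confirm is that Heckmann's notion of subspace embedding coincides with our initial injections. By Step 2 these are the maps with $c_A(B)=A\cap c_X(B)$, so I would check that this is also the subspace structure used in his injectivity statement.

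\emph{Step 4: conclusion.} With these hypotheses verified, \cref{tc:thm:form} produces the conservative functor $G=WK$ and the subgroup-fibre form \eqref{tc:eq:form}. That form is noetherian and induces $(\E,\M)$, which is the asserted factorization system.

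I expect the main obstacle to be Step 1, together with the matching of conventions in Step 3. The rest is a direct instantiation of the general theorem.
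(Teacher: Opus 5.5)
Your route is the paper's: apply \cref{tc:thm:form} to Bourdaud's single test $\Lambda=\{0,1,*\}$, taking topologicity as standard via vicinity filters and the test properties from Heckmann. Step~2 is correct.

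Step~1, however, contains a false claim that you should remove. The operator $d(A)=\{s: f_j(s)\in c_{X_j}(f_jA)\text{ for all }j\}$ is not the initial closure of a source with two or more maps, and the vicinity-filter lift does \emph{not} recover it. That lift puts $s\in c(A)$ exactly when every finite intersection $\bigcap_k f_{j_k}^{-1}(N_k)$ of preimages of vicinities meets $A$, not merely each single preimage.

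Here is a counterexample. Let $S=\{s,a,b\}$, and let $f_1,f_2$ map $S$ into the discrete two-point space, with $f_1$ identifying only $s$ and $a$, and $f_2$ only $s$ and $b$. Then $s\in d(\{a,b\})$ but $s\notin d(\{a\})\cup d(\{b\})$. The true initial structure is discrete, because the joint map embeds $S$ into a product of discrete spaces.

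The empty source is also wrong. It lifts to the indiscrete closure, with $c(A)=S$ for $A\ne\varnothing$ and $c(\varnothing)=\varnothing$. It does not lift to $c=\id$, which is the discrete structure.

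None of this damages the proof:
\begin{itemize}
\item Your formula is correct for one-map sources, which is all Step~2 uses.
\item Initial determination by the tests only needs $c_X(B)=\bigcap_a a^{-1}c_\Lambda(a(B))$ over all tests $a$. Then continuity of every $ah$ forces $h(c_Z C)\subseteq c_X(h(C))$, so $h$ is continuous.
\end{itemize}

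On Step~3, the paper avoids your convention-matching concern by recalling the verification directly.
\begin{itemize}
\item A function $f:X\to\Lambda$ is continuous if and only if $f^{-1}\{1\}\cap c_X(f^{-1}\{0\})=\varnothing$.
\item A point $x\notin c_X(B)$ is detected by the test sending $B$ to $0$, $x$ to $1$, and everything else to $*$.
\item Extending a test on $A$ by $*$ leaves the $0$- and $1$-fibres unchanged. It therefore satisfies the criterion precisely because $c_A(B)=A\cap c_X(B)$.
\end{itemize}
These three lines make the citation to Heckmann optional.
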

\begin{proof}
Use Bourdaud's test $\Lambda=\{0,1,*\}$, whose closure is
\[
c_\Lambda(S)=
\begin{cases}
\varnothing,&S=\varnothing,\\
\{0,*\},&S=\{0\},\\
\Lambda,&S\cap\{1,*\}\ne\varnothing.
\end{cases}
\]
We recall the test verification explicitly. A function
$f:X\to\Lambda$ is continuous exactly when
\begin{equation}\label{tc:eq:bourdaud-test}
f^{-1}\{1\}\cap c_X(f^{-1}\{0\})=\varnothing.
\end{equation}
The only nonempty set in $\Lambda$ with non-total closure is $\{0\}$,
which proves this criterion. If $x\notin c_X(B)$, send $B$ to $0$,
$x$ to $1$, and all remaining points to $*$. This continuous test
detects that failure of closure membership, so the tests initially
determine $X$.

Given a continuous map from an induced subspace $A$ to $\Lambda$,
extend it by $*$ outside $A$. Its $0$- and $1$-fibres are unchanged,
and \eqref{tc:eq:bourdaud-test} follows from
$c_A(B)=A\cap c_X(B)$. Thus $\Lambda$ is embedding-injective,
and Theorem~\ref{tc:thm:form} applies.
\end{proof}

For a pretopological space $(X,c_X)$, a weak quotient is specified by
an equivalence relation $R$ on $X$ and a pretopological closure
$\bar c$ on $X/R$ satisfying
\[
 q(c_X(B))\subseteq\bar c(q(B))\qquad(B\subseteq X),
 \quad q(x)=[x]_R.
\]
Thus normal clusters retain both $R$ and $\bar c$, while conormal
clusters are the induced subspaces. The first isomorphism for a
continuous $f:X\to Y$ equips its image with
$c_{f(X)}(B)=f(X)\cap c_Y(B)$. All restriction, iteration, and
correspondence rules follow from \cref{tc:prop:quotient-dictionary}.

\subsubsection{Moore closure spaces and abstract convexity}\label{tc:sec:closure}
Here a Moore closure space is a set $X$ with a family
$\mathcal C_X\subseteq\Pow(X)$ closed under arbitrary intersections,
including the empty intersection $X$. Its associated closure operator
is extensive, monotone, and idempotent. Unlike a \v{C}ech closure,
it need not preserve finite unions. We consider both the unrestricted
convention and the convention $\varnothing\in\mathcal C_X$.
An abstract convexity space additionally has
$\varnothing\in\mathcal C_X$ and closure under directed unions.
In all these categories, morphisms preserve distinguished subsets
under inverse image. These convexity conventions agree with
\cite[Section~2.1]{Kenney2023}.

\begin{corollary}\label{tc:cor:closure-convexity}
Consider each of the following categories of sets $X$ with
distinguished families $\mathcal C_X\subseteq\Pow(X)$:
\begin{enumerate}
\item Moore closure spaces, where $\mathcal C_X$ is closed under
arbitrary intersections, including the empty intersection $X$;
\item Moore closure spaces with the additional requirement
$\varnothing\in\mathcal C_X$;
\item abstract convexity spaces, where $\mathcal C_X$ contains
$\varnothing$ and $X$, and is closed under arbitrary intersections
and directed unions.
\end{enumerate}
In every case, morphisms are functions whose inverse images preserve
distinguished subsets. Each category admits a noetherian form inducing
surjective morphisms and embeddings with the trace structure
$\{A\cap C:C\in\mathcal C_X\}$ on a subset $A\subseteq X$.
\end{corollary}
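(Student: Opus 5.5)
The plan is to apply \cref{tc:thm:form} in each of the three cases, with the single two-point test $\two=\{0,1\}$ in the first two cases and a three-point test in the third. Before that, I check topologicity. Given a source $(f_j:S\to UX_j)_j$, the initial structure on $S$ is the family of all intersections of sets $f_j^{-1}(C)$ with $C\in\mathcal C_{X_j}$. This family is closed under arbitrary intersections and contains $S$ as the empty intersection. In case~2 it contains $\varnothing=f_j^{-1}(\varnothing)$ whenever $J\ne\varnothing$; for the empty source we include $\varnothing$ by fiat, i.e.\ take $\{\varnothing,S\}$.

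Case~3 needs more care, since intersections of inverse images need not be closed under directed unions. There I would instead take the convexity space generated by the inverse images, meaning the closure under intersections and directed unions. A function $h$ into $S$ is then a morphism iff each composite $f_jh$ is, because inverse images along $h$ preserve intersections and directed unions. This gives topologicity in all three cases, and \cref{tc:prop:fs} then yields the stated factorization system with trace structures on embeddings.

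For the tests in cases~1 and~2, take $Q=\{0,1\}$ with $\mathcal C_Q=\{\{0\},Q\}$ in case~1 and $\{\varnothing,\{0\},Q\}$ in case~2. In case~1 a map $a:X\to Q$ is a morphism iff $a^{-1}\{0\}\in\mathcal C_X$ (note $a^{-1}Q=X$ is automatic); in case~2 the extra condition $a^{-1}\varnothing=\varnothing$ is automatic too. So the tests are the characteristic-type maps of distinguished sets, and the inverse images of $\{0\}$ recover $\mathcal C_X$ exactly. That is initial determination, since the initial family consists of intersections of members of $\mathcal C_X$, which are already in $\mathcal C_X$.

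Injectivity of these tests goes as follows. Given a trace subspace $A$ and a morphism $a:A\to Q$, the set $a^{-1}\{0\}$ equals $A\cap C$ for some $C\in\mathcal C_X$. Extending $a$ by $\chi$ with $\chi^{-1}\{0\}=C$ works, because it agrees with $a$ on $A$.

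Case~3 is the main obstacle. Here the trace structure $\{A\cap C\}$ must itself be a convexity (directed unions of traces are traces of the generated convex set, since $A\cap\bigcup$ of a directed family is a trace). Moreover, the two-point space $\{\varnothing,\{0\},Q\}$ is a convexity space, but arbitrary intersections of inverse images in the initial structure must still be detected by single tests. I would first try the same two-point test $Q$: each $C\in\mathcal C_X$ is $a^{-1}\{0\}$ for a morphism $a$, because $a^{-1}$ of the three convex sets of $Q$ is $\varnothing,C,X$, all convex. So the tests recover $\mathcal C_X$, and since $\mathcal C_X$ is already closed under the generating operations, initial determination holds. Injectivity is the same extension argument, provided the trace on $A$ is exactly $\{A\cap C\}$. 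For this I would verify that $\{A\cap C:C\in\mathcal C_X\}$ is closed under directed unions, using closure in $X$ of $\bigcup_i\langle A\cap C_i\rangle$ for directed families. I expect this to be the delicate point: if it fails, I would replace the trace by the generated convexity, which still has the same tests as traces. After adjoining the indiscrete two-point test if required (it has $|UQ|\ge2$ anyway), \cref{tc:thm:form} applies in all three cases.
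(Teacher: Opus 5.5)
Your proposal is correct and follows the paper's proof. Topologicity comes from generated initial structures, and the two-point test has as morphisms exactly the characteristic functions of distinguished sets. Injectivity comes from extending characteristic functions of traces. The ``three-point test'' you announce for case~3 is never needed; the two-point test with $\{\varnothing,\{0\},Q\}$ suffices, as you eventually use.

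The point you flag as delicate does go through, by exactly the hull argument you indicate, and you should state it outright rather than hedge. Let $D_i=A\cap C_i$ be a directed family of traces. The hulls $\operatorname{hull}_X(D_i)$ form a directed family, and $\operatorname{hull}_X(D_i)\subseteq C_i$, so $A\cap\operatorname{hull}_X(D_i)=D_i$. Their union is therefore convex with trace $\bigcup_i D_i$. The $C_i$ themselves need not be directed, which is why one passes to hulls.

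Your fallback would not have rescued the argument. Suppose traces were not closed under directed unions. Then the characteristic function of a convex subset of $A$ that is not a trace would be a test on $A$ with no extension to $X$. So $Q$ would fail to be embedding-injective, and embeddings would not carry the trace structure claimed in the statement. This verification is therefore essential, not optional — and it succeeds.
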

\begin{proof}
Initial structures are generated by inverse images of distinguished
subsets under the required intersection and directed-union operations.
Thus the categories are topological. The test is the two-point set
$Q=\{0,1\}$ with distinguished subsets
\[
\mathcal C_Q=\{\{1\},Q\}
\quad\text{or}\quad
\mathcal C_Q=\{\varnothing,\{1\},Q\},
\]
respectively, according to whether the empty set is required to be
distinguished. Its morphisms are exactly characteristic functions of
distinguished subsets, so they initially determine every structure.

For an induced subspace $A\subseteq X$, the distinguished subsets
are the traces $A\cap C$, $C\in\mathcal C_X$. In the convexity case
these traces are indeed closed under directed unions: if $(D_i)$ is
a directed family of traces, its ambient convex hulls
$\operatorname{hull}_X(D_i)$ form a directed family and satisfy
$A\cap\operatorname{hull}_X(D_i)=D_i$. Their union is an ambient
convex set with trace $\bigcup_i D_i$. Every characteristic function
of a trace therefore extends to the characteristic function of an
ambient distinguished subset. The test is injective, and
Theorem~\ref{tc:thm:form} applies.
\end{proof}

In each of these closure and convexity categories, the weak quotient
data can be written $(R,\mathcal A)$, where $R$ is an equivalence
relation on $X$ and $\mathcal A\subseteq\mathcal C_X$ consists of
$R$-saturated sets and satisfies the same intersection, empty-set,
and directed-union requirements as the chosen category. The
distinguished subsets of $X/R$ are $\{q(A):A\in\mathcal A\}$.
The order is inclusion of relations and reverse inclusion of
families. Conormal clusters are trace subspaces, and restriction
of quotient data uses traces $\mathcal A|_S$. Consequently the
three isomorphisms and quotient correspondence have exactly the
form in \cref{tc:prop:quotient-dictionary}.

\subsubsection{Fuzzy and frame-valued topologies}\label{tc:sec:fuzzy}
Fix a set-sized frame $L$. An $L$-topology on $X$ is a subframe
$\tau_X\subseteq L^X$, with joins and finite meets taken pointwise;
the empty operations include the constant $0$ and $1$ functions.
A function $f:X\to Y$ is continuous when $u\circ f\in\tau_X$
for every $u\in\tau_Y$. Initial structures are the subframes
generated by these composites, so $L\text{-}\mathbf{Top}$ is
topological over sets.

The Sierpi\'nski $L$-space and its initial-test property are those of
\cite[Proposition~3.1 and Theorem~3.1]{NoorSrivastava2013}.
Their proof of Theorem~3.2 gives the needed extension property;
as recalled below, that argument does not require separation.

\begin{corollary}\label{tc:cor:fuzzy}
Let $L$ be a set-sized frame, and let $L\text{-}\mathbf{Top}$ be the
category whose objects are sets $X$ equipped with a subframe
$\tau_X\subseteq L^X$, using pointwise arbitrary joins and finite
meets, including the constant $0$ and $1$ functions. Its morphisms
$f:X\to Y$ satisfy $u\circ f\in\tau_X$ for every $u\in\tau_Y$.
No separation condition is imposed. Then $L\text{-}\mathbf{Top}$
has a noetherian form inducing surjective continuous maps and initial
embeddings, with trace topology $\{u|_A:u\in\tau_X\}$ on
$A\subseteq X$. For $L=[0,1]$ with its usual order, this includes
fuzzy topological spaces.
\end{corollary}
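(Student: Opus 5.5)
The plan is to apply \cref{tc:thm:form} with the single test $Q=(L,\tau_Q)$. Here $\tau_Q\subseteq L^L$ is the subframe generated by the identity function $\id_L$, that is, the Sierpi\'nski $L$-space. Topologicity of $L\text{-}\mathbf{Top}$ over $\Set$ has already been observed: initial structures are the subframes generated by the composites. The underlying set $L$ is nonempty, and if $L$ is the trivial frame the indiscrete two-point test can be adjoined, as the paper allows. So only two hypotheses need to be verified.

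First, identify the morphisms $X\to Q$. A function $a:X\to L$ is continuous exactly when $\id_L\circ a=a$ lies in $\tau_X$. This holds because the subframe generated by $\id_L$ consists of functions $L\to L$ built from $\id_L$ and constants by finite meets and arbitrary joins, namely the maps $\ell\mapsto\bigvee_k(c_k\wedge\ell)\vee c$, with the constants $0$ and $1$ included. Precomposing such a map with $a$ gives a member of $\tau_X$, since $\tau_X$ contains the constant functions and is closed under pointwise frame operations. One point needs care here: a pointwise constant $c\in L$ other than $0,1$ need not belong to $\tau_X$. I will therefore take $\tau_Q$ to be generated by $\id_L$ together with only the constants $0$ and $1$. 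Its elements are then finite meets and arbitrary joins of $\id_L$, $0$ and $1$, which reduce to $0$, $1$ and $\id_L$. This matches the Noor--Srivastava Sierpi\'nski $L$-space, so continuous maps $X\to Q$ are precisely the elements of $\tau_X$. Initial determination is then immediate: for each $u\in\tau_X$, the test $u:X\to Q$ pulls $\id_L$ back to $u$, so the initial structure of all tests contains $\tau_X$, and conversely it is contained in $\tau_X$.

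Second, verify extension along initial embeddings. If $A\subseteq X$ carries the trace structure $\{u|_A:u\in\tau_X\}$, the first step shows that a test on $A$ is exactly some $u|_A$. It extends to $u:X\to Q$, which is continuous. I also need the trace family to actually be the initial structure, which requires closure of the traces under finite meets and arbitrary joins. This holds because restriction preserves pointwise operations, so the traces form a subframe. The same observation identifies $\M$ with injective maps inducing the trace topology, and $\E$ with continuous surjections.

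The step I expect to be the main obstacle is the exact description of $\tau_Q$, specifically ensuring that continuity into $Q$ does not force nonconstant-valued constants into $\tau_X$. Once $\tau_Q=\{0,1,\id_L\}$ is confirmed to be closed under arbitrary joins and finite meets, everything else is routine. Closure under arbitrary joins holds because a join of a subfamily of $\{0,\id_L,1\}$ is again one of the three, and finite meets behave likewise. With both hypotheses in place, \cref{tc:thm:form} yields the noetherian form, with complete algebraic modular fibres and the stated classes. The fuzzy case is simply the specialization $L=[0,1]$.
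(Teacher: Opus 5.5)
Your proposal is correct and follows the paper's proof. Both use the single Sierpi\'nski test $(L,\{0,\id_L,1\})$, show that continuity into it is exactly membership in $\tau_X$, show that traces form a subframe so every test on $A$ extends, and then apply \cref{tc:thm:form}. The only thing to tidy is your first paragraph. Under the stated convention, the subframe generated by $\id_L$ contains only the constants $0$ and $1$, as its empty join and empty meet, so it is already $\{0,\id_L,1\}$. No modification of the test is needed, and the description involving arbitrary constants $c_k,c$ should simply be deleted.
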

\begin{proof}
Let $L_{\mathrm S}$ be the set $L$ with the subframe of $L^L$
generated by $\id_L$. A function $u:X\to L_{\mathrm S}$ is continuous
exactly when $u\in\tau_X$. These tests initially determine $X$.
For an induced subspace $A\subseteq X$,
$\tau_A=\{u|_A:u\in\tau_X\}$, since restrictions preserve all the
frame operations. Hence every test on $A$ extends to $X$.
The argument requires no $T_0$ assumption. Apply
Theorem~\ref{tc:thm:form}.
\end{proof}

For an $L$-space $(X,\tau_X)$ the normal-cluster data are
$(R,\sigma)$, with $R$ an equivalence relation and
$\sigma\subseteq\tau_X$ a subframe of functions constant on
$R$-classes. The quotient $L$-topology is
\[
 \{v:X/R\to L:vq\in\sigma\},\qquad q(x)=[x]_R.
\]
Conormal clusters are arbitrary trace subspaces. In particular,
for a continuous $f:X\to Y$, its quotient data are equality of
$f$-values together with $\{vf:v\in\tau_Y\}$; its quotient is
canonically isomorphic to the induced $L$-subspace $f(X)$.
The remaining rules are those of \cref{tc:prop:quotient-dictionary}.

\subsubsection{Approach spaces}\label{tc:sec:approach}
An approach space has a distance $\delta:X\times\Pow(X)\to[0,\infty]$
satisfying
\[
\begin{gathered}
\delta(x,\{x\})=0,\qquad\delta(x,\varnothing)=\infty,\\
\delta(x,A\cup B)=\min\{\delta(x,A),\delta(x,B)\},\\
\delta(x,A)\leq\delta(x,A^{(\varepsilon)})+\varepsilon,
\qquad A^{(\varepsilon)}=\{y:\delta(y,A)\leq\varepsilon\}
\quad(0\leq\varepsilon<\infty).
\end{gathered}
\]
Its morphisms are contractions, namely functions with
$\delta_Y(fx,fA)\leq\delta_X(x,A)$. The category
$\mathbf{App}$ is topological; on a subspace one restricts $\delta$
to its points and subsets. We use the conventions of
\cite[Section~2.2]{GutierresHofmann2013}.

The Sierpi\'nski approach space and its embedding-injectivity are
recalled in \cite[Examples~2.8 and~3.14]{GutierresHofmann2013}.
Together with its initial-test property, verified below, they give
the following application.

\begin{corollary}\label{tc:cor:approach}
Let $\mathbf{App}$ be the category of sets $X$ with functions
$\delta_X:X\times\Pow(X)\to[0,\infty]$ satisfying
\[
\begin{gathered}
\delta_X(x,\{x\})=0,\qquad\delta_X(x,\varnothing)=\infty,\\
\delta_X(x,A\cup B)=\min\{\delta_X(x,A),\delta_X(x,B)\},\\
\delta_X(x,A)\leq\delta_X(x,A^{(\varepsilon)})+\varepsilon,
\qquad A^{(\varepsilon)}=\{y:\delta_X(y,A)\leq\varepsilon\}
\quad(0\leq\varepsilon<\infty).
\end{gathered}
\]
Morphisms are contractions $f:X\to Y$, meaning
$\delta_Y(fx,fA)\leq\delta_X(x,A)$ for all $x\in X$ and
$A\subseteq X$. This category of approach spaces, without a
separation assumption, admits a noetherian form inducing surjective
contractions and initial embeddings; the induced structure on a
subset restricts $\delta_X$ to that subset's points and subsets.
\end{corollary}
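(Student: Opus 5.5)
The plan is to apply \cref{tc:thm:form} to a single test object, the Sierpi\'nski approach space. First I would record that $\mathbf{App}$ is topological over $\Set$, as recalled above. Initial structures for a source $(f_j:S\to UX_j)_j$ are given by
\[
\delta(x,A)=\sup_j\delta_{X_j}(f_jx,f_jA),
\]
with the empty source giving the indiscrete structure. I would also check that the initial embeddings are exactly the injective contractions whose domain carries the restricted distance. \Cref{tc:prop:fs} then supplies the surjection--initial-embedding system and the homogeneous diagrams.

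The test is $\mathbb P=[0,\infty]$ with
\[
\delta_{\mathbb P}(s,B)=\max\{s-\sup B,0\},\qquad \delta_{\mathbb P}(s,\varnothing)=\infty,
\]
using the conventions of \cite[Example~2.8]{GutierresHofmann2013}. Its underlying set has at least two points, so no indiscrete test needs to be adjoined. Embedding-injectivity of $\mathbb P$ is \cite[Example~3.14]{GutierresHofmann2013}, which I would cite. It only needs a remark that the cited extension argument uses no separation, and that the empty subspace is harmless because the constant map $0$ is a contraction.

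The step requiring an actual verification, and the one I expect to be the main obstacle, is initial determination. One has to show that for every approach space $X$, every $x\in X$ and every $A\subseteq X$,
\[
\delta_X(x,A)=\sup\{\delta_{\mathbb P}(\varphi x,\varphi A):\varphi:X\to\mathbb P\text{ a contraction}\}.
\]
The inequality $\geq$ is just the contraction property. For $\leq$, I would use the test $\varphi_A=\delta_X(-,A)$. To see that it is a contraction, take $B\subseteq X$ and put $\varepsilon=\sup_{b\in B}\delta_X(b,A)$. If $\varepsilon<\infty$ then $B\subseteq A^{(\varepsilon)}$, so the last approach axiom gives
\[
\delta_X(x,A)\le\delta_X(x,A^{(\varepsilon)})+\varepsilon\le\delta_X(x,B)+\varepsilon,
\]
using monotonicity of $\delta_X(x,-)$, which follows from the union axiom. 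This is exactly $\delta_{\mathbb P}(\varphi_Ax,\varphi_AB)\le\delta_X(x,B)$. The cases $\varepsilon=\infty$ and $B=\varnothing$ are trivial.

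Finally, $\varphi_A$ attains the required value. If $A\ne\varnothing$ then $\sup\varphi_A(A)=0$ because $\delta_X(a,\{a\})=0$, so $\delta_{\mathbb P}(\varphi_Ax,\varphi_AA)=\delta_X(x,A)$. If $A=\varnothing$, both sides equal $\infty$. Hence every function $h:UZ\to UX$ all of whose test composites are contractions is itself a contraction, which is initiality. With the single test $\mathbb P$, \cref{tc:thm:form} then yields the noetherian form with the stated classes and complete algebraic modular fibres.
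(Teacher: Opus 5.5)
Your argument is the paper's own proof. It uses the same Sierpi\'nski test $\mathbb P=[0,\infty]$, cites Gutierres--Hofmann for embedding-injectivity, and obtains initial determination from the contractions $\delta_X(-,A)$, which attain $\delta_X(x,A)$ for nonempty $A$. The main line is correct. Using $\varepsilon=\sup_{b\in B}\delta_X(b,A)$ directly is fine, because $A^{(\varepsilon)}$ is defined with $\le$.

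One preliminary claim is false and should be deleted. Initial structures in $\mathbf{App}$ are \emph{not} given by the pointwise supremum $\sup_j\delta_{X_j}(f_jx,f_jA)$, because that supremum generally violates the union axiom. For example, on $\{x,a,b\}$ take pseudometrics $d_1,d_2$ with $d_1(x,b)=0=d_2(x,a)$ and all other distances between distinct points equal to $1$. Take the two identity maps into the associated approach spaces. The supremum gives distance $1$ from $x$ to $\{a\}$ and to $\{b\}$, but distance $0$ to $\{a,b\}$. The actual initial distance is the least approach distance above this supremum.

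Nothing in your proof depends on the general formula:
\begin{itemize}
\item topologicity of $\mathbf{App}$ is a known fact;
\item identifying the initial embeddings with restrictions of $\delta$ needs only the one-map case of your formula, where it is correct;
\item your final initiality step uses the definition of a $U$-initial source directly, with your displayed supremum identity serving only as a sufficient condition.
\end{itemize}

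Finally, $\max\{s-\sup B,0\}$ must be read with the convention $\infty\mathbin{\dot{-}}\infty=0$. This convention is already forced by $\delta_{\mathbb P}(\infty,\{\infty\})=0$, and it is what makes your case $\varepsilon=\infty$ trivial.
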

\begin{proof}
The test is the Sierpi\'nski approach space $P=[0,\infty]$ with
\begin{equation}\label{tc:eq:approach-test}
\delta_P(t,B)=
\begin{cases}
t\mathbin{\dot{-}}\sup B,&B\ne\varnothing,\\
\infty,&B=\varnothing,
\end{cases}
\end{equation}
where $s\mathbin{\dot{-}}t=\inf\{r\geq0:s\leq t+r\}$;
in particular, $\infty\mathbin{\dot{-}}\infty=0$.
This space is embedding-injective by the cited result.

For completeness, its initial determination follows directly from
the distance functions $r_A(x)=\delta_X(x,A)$. The approach axioms give
\[
\delta_X(x,A)\leq\delta_X(x,B)+\sup_{b\in B}\delta_X(b,A),
\]
so $r_A:X\to P$ is a contraction. Indeed, when the supremum is
finite, apply the last axiom to every larger $\varepsilon$ and use
$B\subseteq A^{(\varepsilon)}$; the other case is automatic.
For nonempty $A$, $r_A$ vanishes on $A$, whence
\[
\delta_P(r_A(x),r_A(A))=\delta_X(x,A).
\]
Consequently, if a function into $X$ becomes a contraction after
every test, apply the test $r_{f(B)}$ to see that it is itself a
contraction. The empty-set condition is automatic. Thus $P$ is an
injective initial test, and Theorem~\ref{tc:thm:form} applies.
\end{proof}

For an approach space $(X,\delta_X)$, weak quotient data consist of
an equivalence relation $R$ and an approach distance $\bar\delta$
on $X/R$ for which
\[
 \bar\delta(qx,qA)\leq\delta_X(x,A)
 \qquad(x\in X,\ A\subseteq X).
\]
These describe normal clusters; conormal clusters restrict the
point-to-subset distance to an arbitrary subset. For a contraction
$f:X\to Y$, the quotient associated with its factorization has
the approach structure of $f(X)$, obtained by restricting
$\delta_Y$. This also specifies restriction and iteration of
quotients through \cref{tc:prop:quotient-dictionary}.

\subsubsection{Totally bounded uniform spaces and proximities}\label{tc:sec:uniform}
Let $\mathbf{TBUnif}$ consist of totally bounded uniform spaces and
uniformly continuous maps, without a separation assumption. Initial
uniformities remain totally bounded: a basic entourage uses only
finitely many target coordinates, and their finite uniform covers
give a finite cover of the source. Thus this category is topological
over sets, with the usual induced uniform embeddings.

The following application uses classical uniform completion and
extension theory \cite{Isbell1964}, together with the correspondence
between proximities and totally bounded uniformities
\cite[Corollary~1.2.10.8]{Dimitrijevic2009}.

\begin{corollary}\label{tc:cor:uniform}
Let $\mathbf{TBUnif}$ be the category of all totally bounded uniform
spaces, without a separation assumption, and uniformly continuous
maps. Then $\mathbf{TBUnif}$ admits a noetherian form inducing
surjective uniformly continuous maps and uniform embeddings. The
category of Efremovi\v{c} proximity spaces, also allowing
non-separated spaces, and proximity-preserving functions likewise
admits a noetherian form inducing underlying surjections and
embeddings with the induced proximity.
\end{corollary}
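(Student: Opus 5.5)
We apply Theorem~\ref{tc:thm:form} to $\mathbf{TBUnif}$ with a single injective initial test $Q=[0,1]$, carrying its usual (totally bounded) uniformity. Then we transfer the result to proximities through the classical isomorphism of categories.

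\textbf{Topologicity and factorization.} Topologicity over $\Set$ was already observed before the statement, since initial uniformities of totally bounded targets stay totally bounded. By Proposition~\ref{tc:prop:fs}, the initial embeddings are exactly the injective maps carrying the induced uniformity, that is, the uniform embeddings.

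\textbf{Initial determination.} Let $X$ be totally bounded. It is standard that every uniformity is initial for its uniformly continuous pseudometrics. In the totally bounded case one may take bounded ones with values in $[0,1]$, and each such pseudometric $d$ is in turn initially determined by the uniformly continuous maps $x\mapsto d(z,x)$ into $[0,1]$. Hence the source of all uniformly continuous maps $X\to[0,1]$ is initial. An alternative route uses the proximity: a totally bounded uniformity is the coarsest compatible with its proximity, and that proximity is determined by the functions $X\to[0,1]$.

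\textbf{Injectivity.} Let $A\subseteq X$ carry the induced uniformity and let $h:A\to[0,1]$ be uniformly continuous. We extend $h$ by passing to completions. The completion (Samuel compactification) $\hat X$ of $X$ is compact, and the closure of the image of $A$ in $\hat X$ is the completion $\hat A$. The map $h$ extends to a continuous map on the compact set $\hat A$. By Tietze, this extends continuously to $\hat X$, and on a compact space continuity is the same as uniform continuity. Composing with $X\to\hat X$ gives the required extension.

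Non-separated spaces need care here, because completions are usually formed after identifying points. The $T_0$-reflection map is initial, and $h$ is constant on the indistinguishability classes since $[0,1]$ is separated, so the argument goes through. This step is the main obstacle, and I would cite Isbell's extension theorem for totally bounded spaces (\cite{Isbell1964}) rather than reprove it.

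With these three properties, Theorem~\ref{tc:thm:form} gives the form on $\mathbf{TBUnif}$.

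\textbf{Proximities.} The correspondence between proximities and totally bounded uniformities (\cite[Corollary~1.2.10.8]{Dimitrijevic2009}) is a concrete isomorphism of categories: proximity maps correspond to uniformly continuous maps. It preserves underlying surjections and carries uniform embeddings to embeddings with the induced proximity, because subspace proximities correspond to subspace uniformities. An isomorphism is an equivalence transporting the factorization systems, so by Proposition~\ref{prop:transfer-elementary} it is a transfer functor. Pulling back the form just constructed along it, Theorem~\ref{thm:transfer} gives the asserted noetherian form on Efremovi\v{c} proximity spaces.
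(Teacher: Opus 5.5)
Your proposal is correct. Two of its three parts coincide with the paper's proof.

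\textbf{Same as the paper.} Embedding-injectivity of $[0,1]$ is proved there exactly as you do it: pass to the separated quotient $X_0$ and extend along the closure of $A_0$ in the compact completion $K$ of $X_0$. Then apply Tietze, use compactness for uniform continuity, and restrict along $X\to K$. The proximity case is likewise transported along the concrete equivalence with totally bounded uniformities.

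\textbf{Different route: initial determination.} The paper reuses the same compactification here. The structure of $X$ is initial along $X\to X_0\hookrightarrow K$. Continuous functions $K\to[0,1]$ initially determine the compact Hausdorff space $K$ and are automatically uniformly continuous, so their restrictions determine $X$. You argue intrinsically instead, through the uniformly continuous pseudometrics and the distance functions $d(z,-)$. This avoids completions at this step.

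\textbf{A missing justification in your route.} As written, your argument hides where total boundedness is used. Truncating to $\min(d,1)$ works for every uniformity, so that is not where the hypothesis enters. What needs total boundedness is the claim that the maps $d(z,-)$ initially determine $d$, and this fails in general. For the $\{0,1\}$-valued discrete metric on $\mathbb N$, these maps induce a non-discrete uniformity, because finitely many of them cannot separate points outside finitely many $z_i$. The line you need is this:
\begin{itemize}
\item Since $\{(x,y):d(x,y)<\varepsilon\}$ is an entourage of the totally bounded space $X$, $d$ has a finite $\varepsilon$-net $z_1,\dots,z_n$.
\item If $|d(z_i,x)-d(z_i,y)|<\varepsilon$ for all $i$, choose $i$ with $d(z_i,x)<\varepsilon$. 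Then $d(z_i,y)<2\varepsilon$, so $d(x,y)<3\varepsilon$.
\end{itemize}

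\textbf{Comparison.} With that line added, your route is a sound and more elementary alternative for this step. The paper's route has the economy of a single construction, the compact completion of the separated quotient, serving both test properties at once.
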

\begin{proof}
The test is $I=[0,1]$ with its usual uniformity. For a totally bounded
space $X$, let $q:X\to X_0$ be its separated quotient and let
$K$ be the completion of $X_0$. Then $K$ is compact Hausdorff,
$X_0\hookrightarrow K$ is a uniform embedding, and the structure
of $X$ is initial along this composite. Continuous functions
$K\to I$ initially determine $K$ and are uniformly continuous.
Their restrictions therefore initially determine $X$.

If $A\hookrightarrow X$ is a uniform embedding, a uniformly
continuous map $A\to I$ factors through $A_0\subseteq X_0$.
Completeness of $I$ extends it to the closure of $A_0$ in $K$.
Tietze's theorem extends this function to $K$, and compactness makes
that extension uniformly continuous. Restriction along $X\to K$
gives the required extension. These standard uniform completion
facts are treated in \cite{Isbell1964}. Apply
Theorem~\ref{tc:thm:form}.

The concrete equivalence between totally bounded uniform spaces
and Efremovi\v{c} proximity spaces transports the result. Under it,
$A\mathrel\delta B$ means that $(A\times B)\cap V\ne\varnothing$
for every entourage $V$. The equivalence preserves underlying
sets, maps, and initial structures, hence also the specified classes.
\end{proof}

Total boundedness is part of this example. The interval tests used
here are not asserted to initially determine arbitrary uniform
spaces.

A weak quotient of a totally bounded uniform space $(X,\mathcal U_X)$
is an equivalence relation $R$ together with a totally bounded
uniformity $\mathcal V$ on $X/R$ such that
$(q\times q)^{-1}(V)\in\mathcal U_X$ for every $V\in\mathcal V$.
Normal clusters classify these quotient data, and conormal clusters
classify subsets with induced uniformities. The first isomorphism
identifies the quotient associated with a uniformly continuous map
with its image carrying the induced uniformity. The proximity
version is obtained through the concrete equivalence above, and
\cref{tc:prop:quotient-dictionary} gives restriction, iteration,
and correspondence in both descriptions.

\subsubsection{Bornologies generated by an \texorpdfstring{$\ell^\infty$}{ell-infinity}-structure}\label{tc:sec:bornological}
A bornology $\mathcal B_X$ contains all finite subsets of $X$ and
is closed under subsets and finite unions. Morphisms are bounded
maps, sending bounded sets to bounded sets. An $\ell^\infty$-bornology
has the further property that every unbounded subset contains a
countably infinite subset whose bounded subsets are exactly its
finite subsets. Write $\mathbf{Bor}_\infty$ for these spaces.
This is a topological subcategory of bornological spaces and
includes metric bornologies; see
\cite[Section~4]{ColebundersLowen2009}.

Colebunders and Lowen establish the initial determination of these
bornologies by $(\mathbb N,\mathrm{Fin})$
\cite[Theorem~4.7]{ColebundersLowen2009}. The elementary extension
argument below supplies the remaining hypothesis for our theorem.

\begin{corollary}\label{tc:cor:bornological}
Let $\mathbf{Bor}_\infty$ have as objects sets $X$ with a bornology
$\mathcal B_X\subseteq\Pow(X)$ containing all finite subsets and
closed under subsets and finite unions, subject to the further
condition that every unbounded subset of $X$ contains a countably
infinite subset whose bounded subsets are exactly its finite subsets.
Morphisms are bounded functions, which send bounded sets to bounded
sets. Then $\mathbf{Bor}_\infty$ admits a noetherian form inducing
surjective bounded maps and embeddings with the induced bornology
$\{B\subseteq A:B\in\mathcal B_X\}$ on a subset $A\subseteq X$.
\end{corollary}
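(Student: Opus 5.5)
The plan is to apply \cref{tc:thm:form} with the single test $Q=(\mathbb N,\mathrm{Fin})$, the natural numbers with the bornology of finite subsets. First, $\mathbf{Bor}_\infty$ is topological over $\Set$ with initial embeddings carrying the induced bornology; this is recorded in \cite[Section~4]{ColebundersLowen2009}. For a single map the initial bornology on a subset $A\subseteq X$ is $\{B\subseteq A:B\in\mathcal B_X\}$. I will check that it still satisfies the $\ell^\infty$ condition: an unbounded $B\subseteq A$ is unbounded in $X$, so it contains a countably infinite $C$ whose $X$-bounded subsets are exactly its finite subsets. The bounded subsets of $C$ are the same in $A$ and in $X$. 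The test $\mathbb N$ is nonempty with at least two points, so no extra test needs to be adjoined.

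Initial determination is \cite[Theorem~4.7]{ColebundersLowen2009}. For the reader I will recall the mechanism. A function $h:Z\to X$ fails to be bounded precisely when some bounded $B\subseteq Z$ has unbounded image $h(B)$. By the $\ell^\infty$ condition, $h(B)$ contains a countably infinite $C=\{c_0,c_1,\dots\}$ with only finite bounded subsets. The map $a:X\to\mathbb N$ sending $c_n$ to $n$ and everything else to $0$ is then bounded, since a bounded subset of $X$ meets $C$ in a finite set. But $a\circ h$ sends $B$ onto an infinite set, so it is not bounded.

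The remaining step is $\M$-injectivity of $Q$, which I expect to be the only real point. Let $A\subseteq X$ carry the induced bornology and let $a:A\to\mathbb N$ be bounded. My first attempt is to extend $a$ by $0$ outside $A$. For bounded $B\subseteq X$, the set $B\cap A$ is bounded in $A$, hence has finite image under $a$. The image of $B$ under the extension is therefore contained in $a(B\cap A)\cup\{0\}$, which is finite. So the extension is bounded, and it is the required morphism. The empty $A$ is covered as well.

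Finally, I will invoke \cref{tc:thm:form}. This yields a noetherian form whose quotients are the surjective bounded maps and whose embeddings are the injective maps carrying the induced bornology. Its fibres are complete, algebraic and modular.
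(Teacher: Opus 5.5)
Your proposal is correct and follows the paper's proof: it uses the same single test $(\mathbb N,\mathrm{Fin})$, the cited Colebunders--Lowen initial-determination result, and the same extension-by-$0$ argument for embedding-injectivity, and then applies \cref{tc:thm:form}. Your sketch of the detection mechanism and your check that induced bornologies still satisfy the $\ell^\infty$ condition are both correct; the paper leaves these points to the citation.
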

\begin{proof}
Take the test $Q=(\mathbb N,\mathrm{Fin})$. By the cited detection
result, bounded maps to $Q$ initially determine every
$\mathbf{Bor}_\infty$ object. If $A\subseteq X$ has the induced bornology, extend a
bounded $f:A\to Q$ by $0$ outside $A$. For every bounded $B\subseteq X$,
the set $f(B\cap A)$ is finite, and the extended image of $B$ is
contained in $f(B\cap A)\cup\{0\}$. The extension is therefore
bounded. Thus $Q$ is also embedding-injective, and
Theorem~\ref{tc:thm:form} applies.
\end{proof}

This statement concerns $\mathbf{Bor}_\infty$, rather than all
bornological spaces or a category of bornological vector spaces.

For $(X,\mathcal B_X)$ in $\mathbf{Bor}_\infty$, a weak quotient
is $(R,\mathcal B)$, where $R$ is an equivalence relation and
$\mathcal B$ is an $\ell^\infty$-bornology on $X/R$ containing
every $q(B)$ for $B\in\mathcal B_X$. The target bornology must
remain in the stated category. These are the normal-cluster data;
conormal clusters are induced subsets. For a bounded $f:X\to Y$,
its image has bornology
$\{B\subseteq f(X):B\in\mathcal B_Y\}$. This identifies its
first isomorphism, and \cref{tc:prop:quotient-dictionary} supplies
the remaining quotient rules.

\subsubsection{Abstract simplicial complexes}\label{tc:sec:simplicial}
An abstract simplicial complex on a set $X$ is a family of finite
subsets, called faces, containing $\varnothing$ and all singletons
and closed under taking subsets. A simplicial map is a function
sending faces to faces. For a source of functions $(f_i:X\to X_i)_i$,
declare a finite subset $S\subseteq X$ to be a face exactly when
every $f_i(S)$ is a face. This supplies initial lifts; initial
embeddings are embeddings of full induced subcomplexes.

This standard topological category is described in
\cite[Example~22.2(3)]{AHS}. Its noetherian form is also a
consequence of the single-premise relational criterion.

\begin{corollary}\label{tc:cor:simplicial}
Let $\mathbf{Simp}$ be the category whose objects are sets $X$
equipped with families of finite subsets, called faces, containing
$\varnothing$ and all singletons and closed under taking subsets.
Morphisms are functions sending faces to faces. Then
$\mathbf{Simp}$ admits a noetherian form inducing surjective
simplicial maps and full induced-subcomplex embeddings, where the
faces on a subset $A\subseteq X$ are precisely the faces of $X$
contained in $A$.
\end{corollary}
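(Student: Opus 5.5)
The plan is to deduce this from the single-premise relational criterion, \cref{tc:thm:single-premise}, by identifying $\mathbf{Simp}$ concretely with a category of relational models. Since faces are finite but of unbounded size, the natural signature has one $n$-ary symbol $F_n$ for every $n\geq1$. The intended reading is that $F_n(x_1,\ldots,x_n)$ holds exactly when $\{x_1,\ldots,x_n\}$ is a face, with repetitions allowed in the tuple. The signature is set-sized with positive finite arities. For the theory $\mathbb H$ I would take the following families of rules:
\[
 \top\vdash_x F_1(x),\qquad
 F_n(x_1,\ldots,x_n)\vdash_{\vec x}F_m(x_{\phi(1)},\ldots,x_{\phi(m)})
\]
for all $n,m\geq1$ and all functions $\phi:\{1,\ldots,m\}\to\{1,\ldots,n\}$. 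The premise variables are pairwise distinct, so each rule has one of the two forms allowed in \cref{tc:thm:single-premise}, even though repeated and omitted variables occur in the conclusion.

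The next step is to check that $\mathbf{Mod}(\mathbb H)$ is concretely isomorphic to $\mathbf{Simp}$. Take a model and call a finite nonempty $S$ a face when some, equivalently every, enumeration of $S$ with repetitions lies in $F_{|\cdot|}$. The independence of enumeration, together with closure under subsets, follows from the $\phi$-rules. Adding $\varnothing$ then gives a simplicial complex, and singletons are faces by the premise-free rule. Conversely, a complex determines relations $F_n$ satisfying $\mathbb H$, and the two passages are mutually inverse. A function preserves every $F_n$ exactly when it sends faces to faces, since the image of an enumeration of $S$ is an enumeration of $f(S)$. I would also record that $\varnothing$ is automatically a face on both sides, so no information is lost.

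Under this isomorphism the surjective homomorphisms correspond to surjective simplicial maps. Induced-substructure embeddings are injective maps reflecting every $F_n$, and these correspond to the simplicial injections for which a finite $S\subseteq A$ is a face exactly when its image is. These are precisely the full induced-subcomplex embeddings, with faces on $A$ being the faces of $X$ contained in $A$. \cref{tc:thm:single-premise} then gives the noetherian form with this factorization system. Transporting along the concrete isomorphism, which is an equivalence preserving the classes, is justified by \cref{prop:transfer-elementary}.

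The only delicate point is the encoding. Binary or other fixed-arity relations would not capture faces of all finite sizes, so the full infinite family $F_n$ with repetitions is needed. The rules must also have distinct premise variables, which is why repetitions are placed only in conclusions. With that choice, verifying that models are exactly simplicial complexes is routine.
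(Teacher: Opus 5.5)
Your proposal is correct and follows essentially the same route as the paper. It uses the same signature of $n$-ary face predicates for all $n\geq1$, the same premise-free rule together with the coordinate-map rules $F_n(x_1,\ldots,x_n)\vdash F_m(x_{\phi(1)},\ldots,x_{\phi(m)})$ with distinct premise variables, and an application of \cref{tc:thm:single-premise}; your checks of enumeration-independence and of transport along the concrete isomorphism spell out details the paper leaves implicit.
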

\begin{proof}
Use a relation symbol $R_n$ of arity $n$ for every $n\geq1$,
with $R_n(x_1,\ldots,x_n)$ meaning that
$\{x_1,\ldots,x_n\}$ is a face. The axioms are
\[
 \top\vdash_x R_1(x),\qquad
 R_n(x_1,\ldots,x_n)\vdash_{x_1,\ldots,x_n}
 R_k(x_{u(1)},\ldots,x_{u(k)})
\]
for all $n,k\geq1$ and functions
$u:\{1,\ldots,k\}\to\{1,\ldots,n\}$.
They express closure under taking nonempty subsets, permutations,
and repetitions of coordinates. Their models correspond exactly
to abstract simplicial complexes by adjoining the empty face,
and their homomorphisms correspond to simplicial maps.
Each nonempty premise has pairwise distinct variables, so
\cref{tc:thm:single-premise} applies. Induced relational substructures
correspond precisely to full induced subcomplexes.
\end{proof}

For an abstract complex $(X,\mathcal K_X)$, a weak quotient is an
equivalence relation $R$ together with a complex $\mathcal K$
on $X/R$ containing $q(F)$ for every face $F\in\mathcal K_X$.
Additional faces are allowed. These are its normal-cluster data,
and conormal clusters are full induced subcomplexes. For a
simplicial map $f:X\to Y$, the image complex consists of all
faces of $Y$ contained in $f(X)$, including any such faces that
are not images of faces of $X$. With this structure the canonical
first isomorphism, restriction and iterated-quotient isomorphisms,
and quotient correspondence are precisely those of
\cref{tc:prop:quotient-dictionary}.

\subsubsection{Independent structures on one set}\label{tc:sec:independent}
The test hypothesis is closed under combining independent structures.
This gives further examples without changing the construction of the form.

\begin{proposition}\label{tc:prop:independent}
Let $(U_j:\C_j\to\Set)_{j\in J}$ be a set-indexed family of
topological categories. For each $j$, suppose that there is a
set-indexed family $(Q_{j,i})_{i\in I_j}$ of objects with nonempty
underlying sets such that all maps from any $X\in\C_j$ to these
objects jointly form a $U_j$-initial source, and every map
$A\to Q_{j,i}$ extends along every $U_j$-initial injection $A\to B$.
Let $\C$ be the category of sets equipped independently with one
$\C_j$-structure for each $j$, with functions that are morphisms in
every component, and let $U:\C\to\Set$ forget all structures.
Then $U$ is topological and has a set-indexed family of nonempty-valued
tests whose maps initially determine every object and extend along
every $U$-initial injection. Consequently $\C$ admits a noetherian
form inducing underlying surjections and injections initial in every
component. The assertion includes $J=\varnothing$, when $\C=\Set$.
\end{proposition}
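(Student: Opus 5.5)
The plan is to reduce everything to \cref{tc:thm:form}, applied to the forgetful functor $U:\C\to\Set$. That requires three things: topologicity of $U$, a set-indexed family of nonempty tests that initially determine every object, and $\M$-injectivity of those tests.

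\emph{Step 1: topologicity.} Given a source $(s_k:S\to UX_k)_{k\in K}$ with $X_k\in\C$, take for each $j$ the $U_j$-initial lift of the source $(s_k:S\to U_j(X_k)_j)_k$, where $(X_k)_j$ is the $j$-th component structure. Equip $S$ with all these components simultaneously. A function $h:UZ\to S$ is a $\C$-morphism exactly when it is a $\C_j$-morphism for every $j$. By $U_j$-initiality in each component, this holds exactly when every $s_kh$ is a $\C_j$-morphism for every $j$, that is, a $\C$-morphism. Amnestic uniqueness holds componentwise. When $J=\varnothing$, $\C$ is just $\Set$ with $U$ the identity.

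\emph{Step 2: the tests.} For each $j$ and $i\in I_j$, let $\widehat Q_{j,i}$ be the object with underlying set $U_jQ_{j,i}$, with $j$-th component $Q_{j,i}$ and indiscrete $j'$-components for every $j'\ne j$. The indiscrete structure is the initial lift of the empty source, so every function into it is a morphism. The index set $\coprod_jI_j$ is a set, and every test is nonempty.

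A function $a:UX\to U_jQ_{j,i}$ is therefore a $\C$-morphism $X\to\widehat Q_{j,i}$ exactly when it is a $\C_j$-morphism $X_j\to Q_{j,i}$.

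For initial determination, let $h:UZ\to UX$ be such that $ah$ is a morphism for every test map $a$. Restricting to the tests with index $j$ shows that $h$ is a $\C_j$-morphism $Z_j\to X_j$, by the $j$-th hypothesis. This holds for all $j$, so $h$ is a $\C$-morphism.

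\emph{Step 3: injectivity.} First identify the $U$-initial injections: by the computation in Step 1, they are exactly the injections that are $U_j$-initial in every component. Given such an $m:A\to B$ and a morphism $a:A\to\widehat Q_{j,i}$, the map $a$ is a $\C_j$-morphism from $A_j$. Since $m$ is $U_j$-initial, the $j$-th hypothesis gives a $\C_j$-extension $b:B_j\to Q_{j,i}$ with $bm=a$. The function $b$ is automatically a morphism into the indiscrete components, so it is a $\C$-morphism $B\to\widehat Q_{j,i}$.

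\emph{Conclusion and main obstacle.} Applying \cref{tc:thm:form} now gives the noetherian form, with classes the underlying surjections and the injections initial in every component. The main point to watch is the degenerate case $J=\varnothing$: the test family is then empty, whereas \cref{tc:thm:form} quietly adjoins the indiscrete two-point test. Initial determination of $\Set$ by the empty family holds trivially, because every function is a morphism. The remaining verifications are routine componentwise bookkeeping.
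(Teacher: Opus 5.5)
Your proposal is correct and follows essentially the same route as the paper: componentwise initial lifts, tests obtained by giving each $Q_{j,i}$ indiscrete structures in the other components, extension needed only in the distinguished component, and then \cref{tc:thm:form}. The only cosmetic difference is at $J=\varnothing$, where the paper directly takes the indiscrete two-point set as the test, while you use the empty family and let \cref{tc:thm:form} adjoin that test; both work.
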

\begin{proof}
Initial lifts are formed componentwise. For a test $Q$ in component
$j$, equip its underlying set with the indiscrete structure in
every other component. Every function into any of those other
components is a morphism. Hence maps into this combined test are
exactly the test maps for component $j$.

The family of all these tests, indexed by the disjoint union of
the component test families, is set-sized and nonempty-valued.
It initially determines every component, hence the combined
structure. An initial embedding is initial in every component;
extension of a map to one of the tests requires only the extension
in its distinguished component. All remaining continuity conditions
are automatic. Theorem~\ref{tc:thm:form} applies. If $J$ is empty,
the category is $\Set$, with the indiscrete two-point set as a test.
\end{proof}

For the topological-convexity convention in the last example below,
see \cite[Section~2.1]{Kenney2023}.

\begin{corollary}\label{tc:cor:combined}
Each of the following categories admits a noetherian form inducing
underlying surjections and injections with all component structures
induced from their codomains:
\begin{enumerate}
\item for a fixed set $J$, sets equipped with a topology for each
$j\in J$, with functions continuous for each corresponding pair of
topologies, including bitopological spaces when $|J|=2$;
\item sets with a topology and a $\sigma$-algebra, with functions
that are continuous and measurable;
\item sets with a topology and a preorder, with continuous monotone
functions;
\item sets with a topology and an abstract convexity, namely a family
of subsets containing $\varnothing$ and the whole set and closed
under arbitrary intersections and directed unions, with continuous
functions whose inverse images preserve convex subsets.
\end{enumerate}
In every case, the structures are independent: no compatibility
axiom between the components is imposed.
\end{corollary}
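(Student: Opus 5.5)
The plan is to derive all four cases from \cref{tc:prop:independent}. In each case the combined category is the category of sets equipped independently with one structure from each of a set-indexed family of topological categories over $\Set$. Morphisms are the functions that are morphisms in every component. So it suffices to check, for each component category, the test hypotheses of that proposition: a set-indexed family of nonempty-valued tests that initially determine every object and extend along initial injections. The proposition then gives a noetherian form whose quotients are the underlying surjections and whose embeddings are the injections initial in every component. For a single component, initiality of an injection along a one-map source means the component structure is the induced one, so these embeddings are exactly the injections carrying all component structures induced from the codomain, as the statement requires.

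The component verifications are already in the paper.
\begin{enumerate}
\item \emph{Topologies.} The Sierpi\'nski space $\mathbb S$ is an injective initial test; see the discussion preceding \cref{top:cor:topnoetherian}. For case (1) we take $J$ copies of $\Top$, indexed by the fixed set $J$; bitopological spaces are the case $|J|=2$.
\item \emph{$\sigma$-algebras.} The discrete two-point measurable space is an injective initial test for trace embeddings, by the proof of \cref{tc:cor:measurable}.
\item \emph{Preorders.} The chain $0<1$ is an injective initial test, by the proof of \cref{tc:cor:preord} via \cref{tc:thm:quantale}; this handles case (3).
\item \emph{Abstract convexities.} The two-point space with distinguished sets $\{\varnothing,\{1\},Q\}$ is an injective initial test, and traces are closed under directed unions, by the proof of \cref{tc:cor:closure-convexity}(3); this handles case (4).
\end{enumerate}

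The remaining step is to confirm that each combined category in the statement really is the independent combination of \cref{tc:prop:independent}. Objects carry arbitrary pairs (or $J$-tuples) of structures with no compatibility axiom, and morphisms are componentwise morphisms. This is immediate from the definitions. I expect no genuine obstacle: the only delicate points are already settled in the cited proofs. These are the directed-union closure of traces for convexities and the fact that each test has nonempty underlying set. Nonemptiness is clear for each test listed above, and \cref{tc:prop:independent} itself supplies the indiscrete two-point test if needed.
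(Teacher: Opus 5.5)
Your proposal is correct and follows the paper's proof: it applies \cref{tc:prop:independent} to the injective initial tests of \cref{tc:cor:measurable,tc:cor:preord,tc:cor:closure-convexity}, together with a Sierpi\'nski test for each topological component. The paper cites that last test as the case $L=\{0,1\}$ of the proof of \cref{tc:cor:fuzzy}, which is the same test you take from the discussion preceding \cref{top:cor:topnoetherian}.
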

\begin{proof}
Apply \cref{tc:prop:independent} to the injective initial tests in
\cref{tc:cor:measurable,tc:cor:preord,tc:cor:closure-convexity} and
the proof of \cref{tc:cor:fuzzy}, taking $L=\{0,1\}$ for each
topological component.
\end{proof}

These examples impose no compatibility axiom between their
components. Additional requirements, such as
closedness of the order relation or connectedness of every convex
set, require a separate verification of the test hypothesis.

For independent structures, a weak quotient uses one equivalence
relation $R$ on the underlying set and an admissible quotient
structure on $X/R$ in each component. Conormal clusters carry all
the induced component structures. For example, a measurable
topological space has quotient data $(R,\sigma,\mathcal A)$,
where $\sigma$ is a subtopology and $\mathcal A$ a
sub-$\sigma$-algebra of the source, both consisting of
$R$-saturated sets. The quotient carries both corresponding
structures. The rules of \cref{tc:prop:quotient-dictionary} hold
componentwise, and the subdirect condition requires joint
injectivity and initial determination of every component.

\subsection{What topologicity alone does not supply}\label{sec:top-scope}
Theorem~\ref{tc:thm:form} answers the existence question for a broad class of
topological categories. Its additional test hypothesis is substantial.
The following necessary consequence provides a way to detect its failure.

The argument below is the standard pushout rule for injectivity
\cite[Example~2.2(3)]{AHeS2007}, applied to a family that detects
initiality.

\begin{proposition}\label{tc:prop:pushout-stable}
Let $U:\C\to\Set$ be topological, and suppose that a set-indexed
family $(Q_i)_{i\in I}$ has nonempty underlying sets, that all maps
$X\to Q_i$ jointly form a $U$-initial source for every $X\in\C$,
and that every map $A\to Q_i$ extends along every $U$-initial
injection $A\to B$. Then the class of $U$-initial injections is
stable under pushout along arbitrary morphisms of $\C$.
\end{proposition}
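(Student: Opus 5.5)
Take an initial injection $m:A\to B$ and an arbitrary morphism $f:A\to C$, and form the pushout
\[
 B\xrightarrow{\ g\ }P\xleftarrow{\ n\ }C,\qquad gm=nf,
\]
which exists in $\C$ by \cref{tc:prop:fs}. We must show that $n$ is an initial injection. The plan is to prove two things from the test hypotheses: first that every test map out of $C$ extends along $n$, and then that this extension property forces $n$ to be an initial injection. This is the usual pushout rule for injective objects, combined with the fact that the tests detect initiality.

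\emph{Step 1 (extension along $n$).} Let $c:C\to Q_i$ be a morphism. Then $cf:A\to Q_i$ extends along the initial injection $m$, by $\M$-injectivity of $Q_i$, to some $b:B\to Q_i$ with $bm=cf$. The pushout property gives a unique $p:P\to Q_i$ with $pg=b$ and $pn=c$. So every test map out of $C$ factors through $n$.

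\emph{Step 2 (injectivity of $Un$).} Suppose $x\ne x'$ in $UC$. Separate them by a test map. Initial determination of $C$ by the tests lets us do this: if every test map $c:C\to Q_i$ had $c(x)=c(x')$, then the function $UZ\to UC$ from the indiscrete two-point object $Z$ that picks out $\{x,x'\}$ would become a morphism after every test. By initiality it would then be a morphism $Z\to C$. But it is also a morphism from the discrete two-point object. That alone is no contradiction, so instead use the self-map of $UC$ swapping $x$ and $x'$. This swap becomes a morphism after every test, since each test identifies the two points. Hence it is an endomorphism. That is still not enough, so the cleanest route is to adjoin the indiscrete two-point test, as the paper permits. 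The real point, though, is separation by a \emph{two-element-valued} test, which the indiscrete object supplies directly: the function sending $x$ to $0$ and everything else to $1$ is a morphism into it. Then Step 1 gives $p$ with $pn$ separating $x$ and $x'$, so $Un(x)\ne Un(x')$. Note that adjoining the indiscrete test preserves both hypotheses, as stated after \cref{tc:def:tests}, so we may assume it is present.

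\emph{Step 3 (initiality of $n$).} Let $h:UZ\to UC$ be a function such that $(Un)h$ underlies a morphism $k:Z\to P$. For every test $c:C\to Q_i$, choose $p$ with $pn=c$ as in Step 1. Then $(Uc)h=(Up)(Un)h=U(pk)$ is a morphism. Since all test maps from $C$ form a $U$-initial source, $h$ is a morphism $Z\to C$. So $n$ is a $U$-initial injection.

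The main obstacle is Step 2: deriving injectivity from the extension property requires tests that separate points. The plan is to resolve this by adjoining the indiscrete two-point test, which is harmless for the hypotheses, rather than trying to extract point separation from initiality of the original family alone.
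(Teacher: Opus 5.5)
Your proposal is correct. Steps~1 and~3 are exactly the paper's argument: extend $cf$ along $m$, glue it with $c$ through the pushout to get $p$ with $pn=c$, and then use initial determination of $C$ to conclude that $n$ is initial.

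The only difference is Step~2. The paper gets injectivity of $Un$ in one line from \cref{tc:prop:fs}. Since $U$ preserves colimits, the underlying square is a pushout in $\Set$, and there a pushout of an injection along any function is injective. You instead adjoin the indiscrete two-point object $I$ to the test family and run the Step~1 extension-and-gluing with the morphism $C\to I$ sending $x$ to $0$ and every other point to $1$. In effect you reprove the $\Set$ fact inside $\C$, using only the universal property of the pushout in $\C$ and the fact that every function into $I$ is a morphism.

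This is valid. The conclusion does not mention the tests, so enlarging the family (which preserves the hypotheses) is harmless. You are also right that the original family cannot give the separation: it need not separate points at all, since a single one-point test satisfies every hypothesis for $\Set$ itself. The paper's route is shorter and leaves the test family untouched; yours keeps the whole proof inside the test mechanism at the cost of that enlargement.

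In a write-up, delete the two abandoned attempts in Step~2. Beyond being dead ends, the first one asserts that the map from the indiscrete two-point object becomes a morphism after each test. That composite is constant, and constant maps out of an indiscrete object need not be morphisms in a topological category; for example, in sets with a unary relation, a constant map to a point outside the relation is not a morphism.
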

\begin{proof}
Consider a pushout
\[
\begin{tikzcd}
A\arrow[r,"m"]\arrow[d,"f"']&B\arrow[d,"g"]\\
C\arrow[r,"j"']&Z
\end{tikzcd}
\qquad(m\in\M).
\]
The underlying square is a pushout of sets, so $Uj$ is injective.
For every test $a:C\to Q_i$, extend $af$ across $m$ to
$b:B\to Q_i$. The pair $a,b$ glues to $c:Z\to Q_i$, giving
$cj=a$. Thus every test on $C$ extends along $j$.

If a function $h:UX\to UC$ has $Uj\,h$ underlying a morphism,
then $a h=cj h$ underlies a morphism for every test $a$.
Initial determination of $C$ implies that $h$ underlies a morphism.
Therefore $j$ is initial, and hence belongs to $\M$.
\end{proof}

\begin{example}[A topological category without these tests]\label{tc:ex:horn}
Let $\C_4$ consist of sets with a reflexive binary relation satisfying
\begin{equation}\label{tc:eq:fourcycle}
xRy\ \wedge\ yRz\ \wedge\ zRt\ \wedge\ tRx
\quad\Longrightarrow\quad xRz,
\end{equation}
with relation-preserving maps. For a source of functions to objects
of $\C_4$, take the intersection of the inverse-image relations.
It is reflexive and satisfies \eqref{tc:eq:fourcycle}, since the
implication holds in each target. For an empty source it is the full
relation. This gives all initial lifts, so $\C_4\to\Set$ is
topological. Its initial embeddings are induced-substructure embeddings.

Let $A=\{a,b\}$ have only its loops. Let $B=\{a,x,b\}$ have,
besides loops, exactly $aRx$ and $xRb$, and let $C=\{a,y,b\}$ have,
besides loops, exactly $bRy$ and $yRa$. Each relation satisfies
\eqref{tc:eq:fourcycle}: its only closed directed walks are constant.
The inclusions of $A$ into $B$ and $C$ are initial embeddings.

Their pushout has underlying set $\{a,b,x,y\}$ and contains the
directed cycle
\[
a\longrightarrow x\longrightarrow b\longrightarrow y
\longrightarrow a.
\]
Its relation must therefore contain $aRb$ by
\eqref{tc:eq:fourcycle}. This pair was absent from $B$, so the
canonical map $B\to B\amalg_A C$ is not initial. Thus initial
embeddings in $\C_4$ are not stable under pushout. By
Proposition~\ref{tc:prop:pushout-stable}, $\C_4$ has no family of
injective initial tests. The category is already fibre-small: its
structures on a set $S$ form a subset of $\Pow(S\times S)$.
\end{example}

This example disproves an automatic passage from topologicity to the
test hypothesis. It does not disprove existence of a noetherian form:
the homogeneous pushouts in the transfer theorem involve two
surjections, whereas Proposition~\cref{tc:prop:pushout-stable} concerns
pushouts of embeddings along arbitrary maps. These are different
requirements.

Difunctionality is the relation property expressed by the row-wise
Mal'tsev matrix \cite[Section~2.2]{HJ2024}. It gives a further
limitation of the injective-test construction.

\begin{example}[Difunctional relations and injective tests]
\label{tc:ex:difunctional}
Let $\mathbf{DiffRel}$ be the category of sets with a binary
relation satisfying
\[
R(x,u)\land R(y,u)\land R(y,v)\ \vdash\ R(x,v).
\]
Morphisms are relation-preserving functions. This topological
category has no family of injective
initial tests of the kind used in \cref{tc:thm:form}. To see this,
take the induced embeddings from the empty relation on
$A=\{a,b\}$ into the difunctional relations
\[
 R^B=\{(a,u),(b,u)\},\quad B=\{a,b,u\},
 \qquad
 R^C=\{(a,v),(b,w)\},\quad C=\{a,b,v,w\}.
\]
Their pushout has the set-theoretic union as underlying set and
the least difunctional relation containing the two displayed
relations. Difunctionality forces $(a,w)$ and $(b,v)$, so
$C\to B\amalg_A C$ is not an induced embedding. This contradicts
the pushout stability forced by injective initial tests in
\cref{tc:prop:pushout-stable}. It obstructs that test construction;
it does not prove nonexistence of a noetherian form on
$\mathbf{DiffRel}$.
\end{example}

The classical equivalence between fibre-smallness and
co-wellpoweredness for topological constructs
\cite[Corollary~21.17(2)]{AHS} gives the following obstruction.
We include its direct kernel-cluster proof.

\begin{proposition}[A necessary size condition]\label{tc:prop:size}
Let $U:\C\to\Set$ be a topological functor, and suppose that
$\C$ admits a noetherian form with set-sized fibres inducing the
factorization system of morphisms with surjective underlying
functions and $U$-initial injections. Then $U$ is fibre-small up to
concrete isomorphism: for every set $S$, the objects $X$ with $UX=S$
form a set of classes under isomorphisms whose underlying function
is $\id_S$.
\end{proposition}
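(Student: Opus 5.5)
Fix a set $S$ and a noetherian form $\F$ on $\C$ with the stated factorization system and set-sized fibres. The idea is to attach to each object on $S$ a single quotient out of one fixed object, and then read off a normal cluster in the set-sized fibre over that object. Take the discrete object $D_S$ on $S$, namely the final lift of the empty sink, or the initial lift of any source making all functions out of $S$ morphisms. Since $U$ is topological, every function $S\to UX$ underlies a morphism $D_S\to X$. For each $X$ with $UX=S$, the identity function therefore gives a morphism $e_X:D_S\to X$ with $Ue_X=\id_S$. This morphism has surjective underlying function, so $e_X\in\E$, and $e_X$ is an $\F$-quotient of its kernel.

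Now consider the assignment $X\mapsto\Ker_\F e_X\in\Lambda_{D_S}$. Its codomain is a set by hypothesis, so it suffices to show the assignment is injective up to concrete isomorphism. Suppose $\Ker_\F e_X=\Ker_\F e_Y$. Both $e_X$ and $e_Y$ are quotients of the same normal cluster, so by the universal property of quotients in (N2) (equivalently \cref{lem:basic} together with the quotient order reflection used in \cref{thm:N2}) there is an isomorphism $h:X\to Y$ with $he_X=e_Y$. Applying $U$ gives $Uh\circ\id_S=\id_S$, so $Uh=\id_S$. Hence $X$ and $Y$ are concretely isomorphic. The structures on $S$ therefore inject, up to concrete isomorphism, into the set $\Nrm_\F(D_S)\subseteq\Lambda_{D_S}$, which proves the claim.

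The only point needing care is the existence of the discrete object $D_S$ with the required property. For this I would use the final-lift half of topologicity, which follows from initial lifts by taking the initial lift of the large source of all functions $S\to UZ$. This gives the structure whose morphisms out of it are all functions; the source is class-indexed, which is allowed by the conventions in \cref{sec:top}. Alternatively one can avoid discreteness entirely by noting that any two quotients $e,d$ in $\E$ with equal kernels differ by a unique isomorphism. With that fixed, the rest is a direct application of the universal property of quotients, with no further obstacle.
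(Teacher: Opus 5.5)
Your proof is correct and is essentially the paper's argument: you take the discrete lift $D_S$, regard each structure $X$ on $S$ as the quotient $D_S\to X$ with underlying identity, and use that quotients with equal kernels in the set $\Lambda_{D_S}$ differ by an isomorphism under $D_S$, whose underlying function is then necessarily $\id_S$. Drop your closing ``alternative'', though: the unique-isomorphism fact is already the step you use, and it is the common domain $D_S$ that puts all the kernels into a single set-sized fibre, so discreteness cannot be avoided that way.
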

\begin{proof}
For a set $S$, let $D(S)$ be its discrete lift: every function
$S\to UX$ underlies a morphism $D(S)\to X$. For every structure $X$
on $S$, the identity function therefore gives an $\E$-quotient
$q_X:D(S)\to X$. Distinct structures up to concrete isomorphism give
distinct quotient classes. By the noetherian axioms, these classes are
determined by their kernel clusters in the set $\N_{D(S)}$: two such
quotients with the same kernel are isomorphic as quotients. Thus there
are only a set of structures on $S$ up to concrete isomorphism.
\end{proof}

\begin{example}[A size obstruction to the natural system]\label{tc:ex:large}
Let $L=\mathrm{Ord}\cup\{\infty\}$ be the class of ordinals with a
new greatest element, viewed as a category under its usual order.
Consider the projection
\[
U:\Set\times L\longrightarrow\Set.
\]
A morphism $(S,\alpha)\to(T,\beta)$ is a function $S\to T$ when
$\alpha\leq\beta$, and there are no such morphisms otherwise. This
category is locally small and the projection is faithful and amnestic.
The initial lift of a source to objects labelled $\alpha_j$ equips
its domain with the label $\inf_j\alpha_j$, with empty infimum
$\infty$. These infima exist even for class-indexed sources: if a
source contains an ordinal label, its ordinal labels have a least
element; otherwise the infimum is $\infty$. Indeed,
$\gamma\leq\inf_j\alpha_j$ holds precisely when
$\gamma\leq\alpha_j$ for all $j$, which verifies initiality.

The projection is therefore topological, but every fibre has a proper
class of pairwise nonisomorphic structures. In particular, the identity
maps $(S,0)\to(S,\alpha)$ give a proper class of distinct underlying
surjections with fixed domain. By Proposition~\ref{tc:prop:size}, no
noetherian form on this category can induce underlying surjections and
initial embeddings. This conclusion concerns that specified system;
it does not rule out a form inducing a different one.
\end{example}

\begin{question}\label{tc:question:all}
Does every topological category over $\Set$ admit a noetherian form
with some associated factorization system? Does every fibre-small
topological category admit one inducing underlying surjections and
initial embeddings?
\end{question}

The present argument leaves these questions unresolved. In particular,
the fibre-small category $\C_4$ avoids the size obstruction, but the
test construction does not apply to it. We obtain neither a noetherian
form on $\C_4$ nor an obstruction to every possible such form there.

\Needspace{12\baselineskip}
\phantomsection

\bigskip
\begingroup
\small
\noindent\textsc{Kishan Dayaram}\\
Mathematics Division, Department of Mathematical Sciences, Stellenbosch University, Private Bag X1, Matieland 7602, South Africa.

\medskip
\noindent\textsc{Zurab Janelidze}\\
Mathematics Division, Department of Mathematical Sciences, Stellenbosch University, Private Bag X1, Matieland 7602, South Africa;\\
National Institute for Theoretical and Computational Sciences
(NITheCS), Private Bag X1, Matieland 7602, South Africa.

\medskip
\noindent\textsc{Francois van Niekerk}\\
Mathematics Division, Department of Mathematical Sciences, Stellenbosch University, Private Bag X1, Matieland 7602, South Africa.
\par
\endgroup

\end{document}